\newtheorem{theorem}{Theorem}[section]
\newtheorem{claim}{}[theorem]
\newtheorem{lemma}[theorem]{Lemma}
\newtheorem{corollary}[theorem]{Corollary}
\newtheorem{conjecture}[theorem]{Conjecture}
\theoremstyle{definition}
\newcommand{\bF}{\mathbb F}
\newcommand{\bZ}{\mathbb Z}
\newcommand{\cE}{\mathcal{E}}
\newcommand{\cG}{\mathcal{G}}
\newcommand{\cM}{\mathcal{M}}
\newcommand{\cP}{\mathcal{P}}
\newcommand{\vs}[1]{\left[{#1}\right]}
\newcommand{\lvs}[1]{\vs{#1}}
\DeclareMathOperator{\cl}{cl}
\DeclareMathOperator{\PG}{PG}
\newcommand{\floor}[1]{\left\lfloor #1 \right\rfloor}
\newcommand{\ceil}[1]{\left\lceil #1 \right\rceil}
\newcommand{\del}{ \backslash  }
\numberwithin{subcase}{case}
\numberwithin{subsubcase}{subcase}
\newenvironment{subproof}[1][\proofname]{%
  \begin{proof}[Subproof:]%
}{%
  \end{proof}%
}
\newcommand{\ls}{\otimes}
\begin{document}
%\sloppy

\title{The structure of claw-free binary matroids}
\author[Nelson]{Peter Nelson}
\address{Department of Combinatorics and Optimization, University of Waterloo, Waterloo, Canada. Email address: {\tt apnelson@uwaterloo.ca}}
\author[Nomoto]{Kazuhiro Nomoto}
\address{Department of Combinatorics and Optimization, University of Waterloo, Waterloo, Canada. Email address: {\tt 	knomoto@uwaterloo.ca}}
\thanks{This work was supported by a discovery grant from the Natural Sciences and Engineering Research Council of Canada and an Early Researcher Award from the government of Ontario}
\subjclass{05B35}
\keywords{matroids, claw-free}
\date{\today}
\begin{abstract}
	A simple binary matroid is called \emph{claw-free} if none of its rank-3 flats are independent sets. These objects can be equivalently defined as the sets $E$ of points in $\PG(n-1,2)$ for which $|E \cap P|$ is not a basis of $P$ for any plane $P$, or as the subsets $X$ of $\bF_2^n$ containing no linearly independent triple $x,y,z$ for which $x+y,y+z,x+z,x+y+z \notin X$. 
	
	We prove a decomposition theorem that exactly determines the structure of all claw-free matroids. The theorem states that claw-free matroids either belong to one of three particular basic classes of claw-free matroids, or can be constructed from these basic classes using a certain `join' operation. 
\end{abstract}

\maketitle

\section{Introduction}

This paper proves an exact structure theorem for the simple binary matroids with no three-element independent flat. All our material could be stated in terms of finite geometry, additive combinatorics, or matroid theory; the terminology we use borrows from all three areas, with an emphasis on the last.

A \emph{simple binary matroid} (hereon just a \emph{matroid}) is a pair $M = (E,G)$, where $G$ is a finite binary projective geometry $\PG(n-1,2)$, and the \emph{ground set} $E$ is any subset of the points of $G$. We abuse notation by writing $G$ for the set of points of $G$. The \emph{dimension} of $M$ is the dimension $n$ of $G$ as a geometry. Two matroids $(E_1,G_1)$ and $(E_2,G_2)$ are \emph{isomorphic} if some isomorphism from $G_1$ to $G_2$ maps $E_1$ to $E_2$. A matroid $N$ is an \emph{induced restriction} (or \emph{induced submatroid}) of $M$ if $N = (E \cap F, F)$ for some subgeometry $F$ of $G$; write $M|F$ for this matroid. If $M$ has no induced restriction isomorphic to $N$, then $M$ is \emph{$N$-free}. Our terminology is somewhat nonstandard in matroid theory; our matroids are essentially simple binary matroids in the usual sense, except we include the extrinsic ambient space $G$, which need not be spanned by the ground set, in the definition. 

If $B$ is a basis of an $n$-dimensional projective geometry $G$, we write $I_n$ for the matroid $(B,G)$. Call $I_3$ a \emph{claw}. Note that a matroid $(E,G)$ is claw-free if and only if $P \cap E$ is not a basis of $P$ for any plane $P$ of $G$. We prove that claw-free matroids can all be constructed from matroids in one of three `basic classes' of claw-free matroids via a single `join' operation that preserves the property of being claw-free. We need to define these classes and the join operation before stating the result. 

A \emph{triangle} of $G$ is a two-dimensional subgeometry. If $E$ contains no triangle of $G$ then $M = (E,G)$ is \emph{triangle-free} (a triangle-free matroid is also known as a \emph{cap} or \emph{capset}). The \emph{complement} of $M$ is the matroid $M^c = (G\del E,G)$. We say $M$ is a \emph{PG-sum} if $E$ is the disjoint union of two (possibly empty) subgeometries of $G$. Finally, call $M$ \emph{even-plane} if all of its $3$-dimensional induced restrictions have even-sized ground sets. It is easy to see that PG-sums and even-plane matroids are claw-free, and, since the complement of a claw contains a triangle, that the complements of triangle-free matroids are claw-free.

To define our join operation, it is useful to think of the points of a binary projective geometry $G$ as vectors; write $\vs{G}$ for the $n$-dimensional vector space $G \cup \{0\} \cong \bF_2^n$ associated with $G \cong \PG(n-1,2)$. Write $+ \colon \vs{G}^2 \to \vs{G}$ for the sum operation in this vector space, and extend this notation to sets, writing $X+Y$ for $\{x+y\colon x \in X, y \in Y\}$ and $x+ Y$ for $\{x\} + Y$. 

The \emph{lift-join}  of two matroids $M_1 = (E_1,G_1)$ and $M_2 = (E_2,G_2)$ is the matroid $M_1 \ls M_2 = (E,G)$, where $G$ is a projective geometry of dimension $\dim(G_1) + \dim(G_2)$ containing disjoint copies of both $G_1$ and $G_2$, and $E = E_1 \cup (E_2 + \vs{G_1})$. 

 While $\ls$ is noncommutative, we will prove in Lemma~\ref{lsassoc} that it is associative, and preserves the property of being claw-free. We can now state our main result.

\begin{theorem}\label{structure}
	A matroid $M$ is claw-free if and only if $M$ can be obtained via lift-joins from matroids that are either PG-sums, even-plane matroids, or the complements of triangle-free matroids.
\end{theorem}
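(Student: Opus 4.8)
The ``if'' direction is immediate: each of the three basic classes is claw-free (as observed above), and by Lemma~\ref{lsassoc} the lift-join of two claw-free matroids is claw-free; hence every matroid built from basic matroids by lift-joins is claw-free.

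For ``only if'' the plan is to induct on $\dim(G)$. The facts that make the induction work are that induced restrictions of claw-free matroids are claw-free, that $M_1$ and $M_2$ are induced restrictions of $M_1\ls M_2$ (restrict to the copies of $G_1$ and $G_2$ inside $G$), and that $\ls$ is associative (Lemma~\ref{lsassoc}). It therefore suffices to prove that every claw-free matroid which is \emph{not} a lift-join $M_1\ls M_2$ with $\dim(G_1),\dim(G_2)\ge 1$ --- call such a matroid \emph{prime} --- is basic: a non-prime claw-free matroid is then a lift-join of two claw-free matroids of smaller dimension, and induction together with associativity finishes. The cases $\dim(G)\le 3$ are an easy base.

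So let $M=(E,G)$ be prime and claw-free with $\dim(G)\ge 4$. Several reductions come first. If $E$ lies in a hyperplane $H$ of $G$, then $M$ is the lift-join of $M|H$ with the one-point matroid that has empty ground set; if $G\setminus H\subseteq E$, then $M$ is the lift-join of $M|H$ with the one-point matroid whose ground set is its single point. Both are lift-joins with both factors of positive dimension, so primeness forces $E$ and $G\setminus E$ each to span $G$. Next, if $E$ is triangle-free, then claw-freeness easily implies that either $|E|\le 2$ or $E=x+\vs{F}$ for a subgeometry $F$ of $G$ and a point $x\notin\vs{F}$; in the latter case $E$ or $G\setminus E$ lies in a proper subspace of $\vs{G}$, contradicting primeness, so $|E|\le 2$ and $M$ is a PG-sum. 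Symmetrically, if $G\setminus E$ is triangle-free then $M$ is the complement of a triangle-free matroid. Hence we may assume that $E$ contains a triangle $T$ and is disjoint from some triangle $T_0$ of $G$.

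The crux is to show that such an $M$ is even-plane, i.e.\ that $|E\cap P|$ is even for every plane $P$. I would argue by contradiction: choose a plane $P^*$ with $|E\cap P^*|$ odd, together with a triangle $T_0\subseteq G\setminus E$. Claw-freeness first yields local structure around $T_0$; for instance, a short argument shows that no coset of $\vs{T_0}$ other than $\vs{T_0}$ itself meets $E$ in exactly three points, and one extracts further constraints relating $E$ to the cosets of $\vs{T_0}$ and to $P^*$. One then runs a case analysis on the value $|E\cap P^*|\in\{1,3,5,7\}$ together with the relative positions of $P^*$, $T_0$ and $T$; in every case the outcome is either a direct violation of claw-freeness or a nonzero $w\in\vs{G}$ such that $x\in E\iff x+w\in E$ for every point $x\ne w$. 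Such a $w$ displays $M$ as a lift-join over $\langle w\rangle$ with both factors of positive dimension, contradicting primeness and completing the induction. I expect this last case analysis --- distilling the local consequences of claw-freeness, the triangle $T$, and the missed triangle $T_0$ into an honest lift-join, uniformly over the many possible configurations of $P^*$, $T$ and $T_0$ --- to be the main obstacle, and to carry the bulk of the technical work.
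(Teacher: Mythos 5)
Your high-level plan---induct on dimension, and show that every \emph{prime} (non-lift-join) claw-free matroid is basic---is the right one, and it matches the paper's reformulation (Theorem~\ref{main}). But the crux step of your proposal is false as stated. After your reductions you claim that a prime claw-free $M$ in which both $E$ and $G\setminus E$ contain a triangle must be even-plane. Consider instead $E = F_1 \cup F_2$, where $F_1,F_2$ are disjoint flats of $G$ with $\dim F_1 = 2$, $\dim F_2 = 3$, and $G = \cl(F_1\cup F_2)$. This is claw-free; $E$ contains a triangle (any triangle of $F_1$ or $F_2$); $G\setminus E = F_1+F_2$ contains a triangle such as $\{x_1+y_1,\; x_2+y_2,\; (x_1+x_2)+(y_1+y_2)\}$ for independent pairs $x_i\in F_1$, $y_i\in F_2$; and $M$ is prime: checking each possibility for a flat $F$ (contained in $F_1$ or in $F_2$, meeting both, or contained in $F_1+F_2$) always produces a mixed coset. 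Yet $M$ is not even-plane, since any plane inside $F_2$ meets $E$ in $7$ points. So the outcome ``$M$ is a strict PG-sum'' is a fourth, unavoidable possibility for prime claw-free matroids; your reductions only catch the degenerate PG-sums with $|E|\le 2$.

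This is exactly the shape of the paper's dichotomy: Theorem~\ref{main} has four alternatives (even-plane, complement triangle-free, strict PG-sum, has a decomposer), not three. The paper's proof also carries much more structure than a one-shot case analysis around a bad plane $P^*$ and a missed triangle $T_0$: its inductive engine is Theorem~\ref{hyperplane_decomposer}, which shows that a decomposer of some hyperplane restriction $M|H$ lifts to $M$ or forces $M$ into a basic class, and this rests on four lemmas in Sections 3 and 4 treating hyperplane-size and singleton decomposers of $M|H$; the remaining case is then squeezed down to dimension $5$ (via Theorem~\ref{computation_reduce} and Lemma~\ref{basecase_shortcut}) and resolved there. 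Adding the missing PG-sum outcome to your plan would leave you re-deriving much of that machinery, so while your framework is sound, the proposal as written has a genuine gap and is far from a complete argument.
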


This theorem resembles Chudnovsky and Seymour's classification of claw-free graphs [\ref{cp}]. The matroids $I_t$, being maximally acyclic, play the role of trees, motivating the use of the term `claw' (which describes the more symmetric of the two three-edge trees) to refer to $I_3$. Moreover, two of our basic classes are similar to natural classes of claw-free graphs; PG-sums are analogous to the graphs $G$ having a vertex $v$ adjacent to every other vertex, for which $G-v$ is the union of two cliques, and the complements of triangle-free matroids behave similarly to the complements of triangle-free graphs, which are clearly claw-free.  

 It seems that this is part of a larger phenomenon, and that our theorem forms part of the natural `exponential' analogue of the theory of the subgraph order. Binary matroids with the `restriction' order, as we have defined them, resemble simple graphs with the subgraph and induced subgraph order in a variety of deep contexts (see [\ref{bb},\ref{bkknp},\ref{fl},\ref{es},\ref{crit_threshold},\ref{green}], for example). 

Although there is a well-studied generalisation of graphs to `graphic matroids' that is vital in the theory of matroid minors, it takes less of a 
central role here, and usually the natural theorems in the `binary restriction' setting do not imply much about graphs; the techniques used to solve matroidal problems tend to   differ greatly from those for the analogous questions in graph theory, even while the graph-theoretic flavour of the theorems remains present.
This can be a blessing and a curse. The matroidal analogues of theorems about subgraphs can be very difficult, or even fail completely. However, in many other cases the matroidal proofs go more smoothly in a way that can make matroids even seem more pleasant than graphs. The main theorem of this paper is certainly one of the latter cases -- the classification of claw-free graphs required a series of long papers, and (necessarily) incorporates both technically defined basic classes and sporadic examples -- by contrast, Theorem~\ref{structure} is easy to state, and its entire proof, while not short, is contained in this paper. This gives hope that harder questions about induced restrictions are within reach, even when their graph-theoretic analogues appear outside the range of current techniques. 

In particular, one might hope for a structure theorem for the $I_t$-free matroids for larger $t$. We show later (see Corollary~\ref{claw_free_lift_join}) that such a class is closed under lift-joins, but it is not clear what other operations and basic classes might be required. 

\subsection*{$\chi$-boundedness}
Let $\chi(G)$ and $\omega(G)$ respectively denote the chromatic number and the clique number of a graph $G$. Clearly $\chi(G) \ge \omega(G)$; a class $\cG$ of graphs is \emph{$\chi$-bounded} if there is a function $f$ such that $\chi(G) \le f(\omega(G))$ for all $G \in \cG$. The class of claw-free graphs is known to have this property ([\ref{s81}]), and it is a notorious conjecture ([\ref{g85},\ref{s81}]) that the same is true when `claw-free' is replaced by `$T$-free' for any fixed tree $T$. 

For an $n$-dimensional matroid $M = (E,G)$, write $\omega(M)$ for the dimension of the largest subgeometry of $M$ contained in $E$, and define $\chi(M) = n - \omega(M^c)$. The parameter $\omega$ is a natural analogue of clique number. The parameter $\chi$, known as the \emph{critical number} of $M$ and also called \emph{critical exponent}, is an analogue of chromatic number (this may not seem obvious at first look, but turns out to be a natural analogy in many ways; for example, if $M = M(G)$ is the graphic matroid of the graph $G$, then $\chi(M) = \ceil{\log_2(\chi(G))}$ -- see [\ref{oxley}] p. 588 for a discussion). The lower bound $\chi(M) \ge \omega(M)$ is easy to show, since the dimensions of disjoint subgeometries of $G$ sum to at most $n$. As with graphs, a class $\cM$ of matroids is \emph{$\chi$-bounded} by a function $f$ if $\chi(M) \le f(\omega(M))$ for all $M \in \cM$. Unfortunately, [\ref{bkknp}] shows that the class of claw-free matroids does not enjoy this property for any function.

\begin{theorem}[{[\ref{bkknp}], Theorem 1.3}]\label{e3_unbounded}
	For all $k \ge 0$, there is an even-plane matroid $M$ with $\omega(M) \le 2$ and $\chi(M) \ge k$. 
\end{theorem}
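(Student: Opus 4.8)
The plan is to exhibit the required matroids explicitly, using quadratic forms over $\bF_2$. For $k \ge 1$ (the case $k = 0$ being trivial) set $n = 2k$, let $G = \PG(n-1,2)$ so that $\vs{G} = \bF_2^n$, and let $g \colon \bF_2^n \to \bF_2$ be the quadratic form $g(x) = x_1x_2 + x_3x_4 + \dots + x_{2k-1}x_{2k}$. Define $M = (E,G)$ with $E = \{x \in \bF_2^n \del \{0\} : g(x) = 1\}$. I claim that $M$ is even-plane, that $\omega(M) \le 2$, and that $\chi(M) \ge k$, which proves the statement.

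For the even-plane property I would invoke the standard fact that if $p$ is a polynomial function $\bF_2^n \to \bF_2$ of degree less than $d$, then $\sum_{y \in A} p(y) = 0$ for every $d$-dimensional affine subspace $A$ of $\bF_2^n$ (an affine parametrisation $\bF_2^d \to A$ writes $p|_A$ as a polynomial of degree less than $d$ in $d$ variables, whose sum over $\bF_2^d$ is the coefficient of $y_1 \cdots y_d$, namely $0$). A $3$-dimensional subgeometry $F$ of $G$ corresponds to a $3$-dimensional linear subspace $W \le \bF_2^n$; applying the fact with $d = 3$ to $p = g$, which has degree $2$, gives $\sum_{x \in W} g(x) = 0$, and since $g(0) = 0$ we conclude that $E \cap F = \{x \in W \del\{0\} : g(x) = 1\}$ has even size. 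Thus $M$ is even-plane. The bound $\omega(M) \le 2$ is then automatic: a $3$-dimensional subgeometry has $7$ points, which is odd, so it cannot lie inside $E$.

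It remains to bound $\chi(M) = n - \omega(M^c)$ from below, and for this it suffices to show $\omega(M^c) \le k$. A $d$-dimensional subgeometry of $G$ lies in the ground set $G \del E$ of $M^c$ precisely when the corresponding $d$-dimensional subspace $W \le \bF_2^n$ is totally singular, i.e.\ $g|_W \equiv 0$; so I must show that $g$ vanishes on no subspace of dimension exceeding $k$. I would pass to the associated alternating bilinear form $b(x,y) = g(x+y)+g(x)+g(y) = \sum_{i=1}^{k}(x_{2i-1}y_{2i} + x_{2i}y_{2i-1})$, which is nondegenerate on $\bF_2^{2k}$. If $g|_W \equiv 0$ then also $b|_{W \times W} \equiv 0$, so $W \subseteq W^{\perp_b}$; since $\dim W + \dim W^{\perp_b} = 2k$, this forces $\dim W \le k$. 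Hence $\omega(M^c) \le k$ and $\chi(M) = 2k - \omega(M^c) \ge k$, as required.

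I expect no serious obstacle here: the argument is a short explicit construction together with two dictionary translations — the even-plane condition into a degree bound on the polynomial defining $E$, and $\omega(M^c)$ into the maximal dimension of a totally singular subspace of $g$. The only point needing a moment's care is verifying that these translations are correct; once they are in hand, the content is simply that a nondegenerate quadratic form in $2k$ variables has no totally singular subspace of dimension more than $k$, so that $\chi$ grows without bound while $\omega$ never exceeds $2$. (Replacing the hyperbolic form by an elliptic one would improve the Witt defect, and hence the constant, slightly, but is unnecessary for the qualitative statement.)
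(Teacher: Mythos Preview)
Your construction is correct and complete. The paper itself does not prove this statement at all: it is quoted verbatim as Theorem~1.3 of the cited reference [\ref{bkknp}] and invoked only to motivate Theorem~\ref{chi_bounded_simple}. So there is no in-paper proof to compare against; you have supplied a self-contained argument where the paper relies on an external citation.

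For what it is worth, your approach---taking the nonzero set of a nondegenerate quadratic form on $\bF_2^{2k}$---is the standard construction, and is essentially what appears in [\ref{bkknp}] as well. The three verifications you give (degree-$2$ polynomials have zero sum over $3$-dimensional subspaces, hence even-plane; even-plane forces $\omega \le 2$ by parity; totally singular subspaces of a nondegenerate form have dimension at most half, hence $\omega(M^c) \le k$) are all correct as stated.
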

Since even-plane matroids are claw-free, this implies that the claw-free matroids are not a $\chi$-bounded class. As a consequence of our structure theorem, we prove that even-plane matroids present the only obstruction to $\chi$-boundedness. 

\begin{theorem}\label{chi_bounded_simple}
	If $N$ is an even-plane matroid, then the class of $N$-free, claw-free matroids is $\chi$-bounded. 
\end{theorem}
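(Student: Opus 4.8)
The plan is to deduce the statement from the structure theorem (Theorem~\ref{structure}) by observing that both $\omega$ and $\chi$ are additive under the lift-join, and then showing that each of the three basic classes becomes $\chi$-bounded once $N$ is forbidden (in fact two of the three are $\chi$-bounded with no hypothesis at all).

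\emph{Arithmetic of $\ls$.} Writing $M_1\ls M_2=(E,G)$ and identifying $\vs G$ with $\vs{G_1}\oplus\vs{G_2}$, one reads off from the definition that a point whose $\vs{G_2}$-component is nonzero lies in $E$ exactly when that component represents a point of $E_2$, and that a point of $\vs{G_1}$ lies in $E$ exactly when it represents a point of $E_1$; complementing this description gives $(M_1\ls M_2)^c=M_1^c\ls M_2^c$. Next $\omega(M_1\ls M_2)=\omega(M_1)+\omega(M_2)$: for `$\ge$' use the subspace $\vs{H_1}\oplus\vs{H_2}$ where $H_i$ is a largest subgeometry contained in $E_i$, and for `$\le$' note that if $L$ is a subgeometry inside $E$ then $L\cap\vs{G_1}$ lies in $E_1$ while the image of $L$ under the projection to $\vs{G_2}$ lies in $E_2$. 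Since $\dim(G_1\ls G_2)=\dim G_1+\dim G_2$, these two facts together give $\chi(M_1\ls M_2)=\chi(M_1)+\chi(M_2)$. Finally, restricting $M_1\ls M_2$ to the standard copies of $G_1$ and of $G_2$ recovers $M_1$ and $M_2$; so, using associativity (Lemma~\ref{lsassoc}) and transitivity of `induced restriction', every factor in a lift-join expression of $M$ is an induced restriction of $M$, and is therefore $N$-free whenever $M$ is.

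\emph{Reduction and the two easy classes.} It thus suffices to produce a nondecreasing $g=g_N$ with $\chi(N')\le g(\omega(N'))$ for every $N$-free matroid $N'$ lying in one of the three basic classes: given such $g$ and a decomposition $M=N_1\ls\cdots\ls N_k$ into basic pieces, discard the pieces with empty ground set (which contribute $0$ to both parameters), so that $\omega(N_i)\ge 1$ for the rest and hence $k\le\sum_i\omega(N_i)=\omega(M)$, and then $\chi(M)=\sum_i\chi(N_i)\le\sum_i g(\omega(N_i))\le k\,g(\omega(M))\le\omega(M)\,g(\omega(M))$, which has the required shape. For a PG-sum $N'=(A\sqcup B,G)$ we have $\vs A\cap\vs B=\{0\}$, so $\omega(N')=\max(\dim A,\dim B)$ and a vector-space complement of $\vs A\oplus\vs B$ is a subgeometry disjoint from $A$ and from $B$, giving $\omega((N')^c)\ge n-\dim A-\dim B$ and hence $\chi(N')\le\dim A+\dim B\le 2\omega(N')$, with no use of $N$. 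For a complement $M_0^c$ of a triangle-free $M_0$, a triangle being a $2$-dimensional subgeometry forces $\omega(M_0)\le 1$, so (the case $E_0=\emptyset$ being trivial) $\chi(M_0^c)=\dim(M_0)-\omega(M_0)=\dim(M_0)-1=\omega(M_0^c)+\chi(M_0)-1$; here I would prove a lemma on sum-free subsets of $\bF_2^n$ bounding the critical number $\chi(M_0)$ in terms of $\omega(M_0^c)$ (even a crude bound suffices), the main point being that $E_0$ lies in a single coset of the subspace $\langle E_0+E_0\rangle$, which lets one recurse down to the case where the differences of $E_0$ span the whole space. Again $N$ plays no role.

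\emph{The even-plane class: the crux.} This class is not $\chi$-bounded at all, by Theorem~\ref{e3_unbounded}, so here forbidding $N$ must do real work. The key identification is: $(E,G)$ is even-plane if and only if the indicator $\mathds{1}_E\colon\bF_2^n\to\bF_2$ has degree at most $2$. One direction is clear, since a polynomial of degree $\le 2$ sums to $0$ over every $3$-dimensional subspace; conversely, if $q$ sums to $0$ over every $3$-dimensional subspace then, writing $D_aq(x):=q(x)+q(x+a)$, the function $D_aq$ sums to $0$ over every $2$-dimensional subspace, and iterating gives $D_aD_bD_cq\equiv 0$ for all $a,b,c$, i.e.\ $\deg q\le 2$. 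So an even-plane matroid is $(\{x\ne 0:q(x)=1\},G)$ for a quadratic $q$ with $q(0)=0$; its induced restriction to a subspace $W$ is the even-plane matroid of $q|_W$, and the fixed $N$ corresponds to a fixed quadratic $q_N$ on $\bF_2^m$. Now I would use the classification of quadratic forms over $\bF_2$: once the rank of the polar form of $q$ exceeds some $\rho(m)$ there is a subspace on which $q$ restricts to the hyperbolic form in $2m$ variables, and hence a further subspace on which $q$ restricts to \emph{any} prescribed quadratic on $\bF_2^m$ vanishing at $0$, in particular to $q_N$; so an $N$-free even-plane matroid $M_q$ has bounded rank, and then $q=\bar q\circ\pi$ for a quotient $\pi$ of dimension $\le\rho(m)$. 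This exhibits $M_q$ as $(\emptyset,\PG(\cdot,2))\ls M_{\bar q}$, and since $\chi$ is additive and vanishes on a matroid with empty ground set, $\chi(M_q)=\chi(M_{\bar q})\le\dim M_{\bar q}\le\rho(m)$, a constant. Taking $g_N(t)=\max\bigl(2t,\,g_1(t),\,\rho(m)\bigr)$ (with $g_1$ the cap bound above) then completes the proof. The hard part is exactly this last step — converting `the polar form of $q$ has large rank' into `$q$ restricts to $q_N$' uniformly over all quadratics $q_N$ of a fixed dimension; by comparison the degree characterisation of even-plane matroids and the sum-free lemma of the previous step are routine.
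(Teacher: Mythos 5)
Your overall strategy is the same as the paper's: decompose $M$ via Theorem~\ref{structure}, use the additivity of $\omega$ and $\chi$ under $\ls$ (this is the paper's Lemma~\ref{lj_parameters}, and your verification of it is correct), observe each basic factor is an induced restriction and hence $N$-free, and then $\chi$-bound each basic class. Your reduction step (bounding the number of nonempty factors by $\omega(M)$, giving $\chi(M)\le\omega(M)\,g(\omega(M))$) is a slightly weaker bookkeeping than the paper's Lemma~\ref{lj_boundedness} (which upgrades $g$ to a superadditive majorant so the bound is just $g$), but both work. For PG-sums you get $\chi\le 2\omega$; the paper's Lemma~\ref{pg_sum_chi} gives the sharper $\chi=\omega$, but yours is enough.

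There are, however, two real gaps in the ``hard'' steps, both of which the paper handles by citation rather than from scratch. For complements of triangle-free matroids, the step ``$E_0$ lies in a coset of $\langle E_0+E_0\rangle$, so recurse'' stalls exactly in the case your sentence brushes past: when $E_0$ has an odd linear dependence, $\langle E_0+E_0\rangle=\langle E_0\rangle$, the coset is the trivial one, and the recursion makes no progress. A sum-free set can certainly have an odd dependence (e.g.\ $\{e_1,e_2,e_3,e_4,e_1+e_2+e_3+e_4\}$), so this case is nonvacuous. What is actually needed here is a Ramsey-type theorem (every sufficiently high-dimensional matroid has $\omega(M)\ge s$ or $\omega(M^c)\ge 2$); the paper derives an explicit such bound from Sanders' theorem (Theorem~\ref{sandersthm}, Corollary~\ref{off_diagonal}) rather than from elementary manipulations of $E_0+E_0$. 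You should either prove the Ramsey statement directly or cite it.

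For the even-plane class — the crux, as you say — your degree-$\le 2$ characterisation of $\cE_3$ is correct and nicely self-contained. But the subsequent appeal to the classification of quadratic forms is incomplete: a degree-$\le 2$ function $q$ on $\bF_2^n$ is not a quadratic form; it has an arbitrary linear part, and the polar form only sees the pure quadratic part. When you restrict to a subspace to recover a hyperbolic form, the restricted linear part is uncontrolled, so you cannot conclude that $q$ restricts (on a linear, not affine, subspace through $0$) to a prescribed quadratic $q_N$ simply because the polar form has large rank. One can fix this (e.g.\ first pass to a large subspace on which the linear part vanishes, then apply form classification), but as written the step is not justified. Likewise your factoring $q=\bar q\circ\pi$ through the radical requires checking $q$ vanishes on the radical, which costs at most one more dimension — a detail worth including. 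The paper sidesteps all of this by citing Theorem~\ref{even_plane_bounded_cn} from [\ref{bkknp}], which gives $\chi(M)\le\dim(N)+4$ directly; if you want a self-contained argument you will need to close the linear-part gap.

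Net assessment: same top-level architecture as the paper, with genuinely different (and in the even-plane case, more hands-on) treatments of the basic classes; but two of the three basic-class bounds, as sketched, are incomplete, and these are precisely the two places where the paper invokes nontrivial external results.
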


\subsection*{Rough structure}

%When $M = (E,G)$ is a matroid with $E = G \cong \PG(n-1,2)$, we refer to $M$ itself as a \emph{projective geometry}, and write $G = \PG(n-1,2)$. Excluding $\PG(t-1,2)$ as an induced restriction from $M$ is equivalent to asserting that $\omega(M) < t$, much like excluding a complete subgraph from a graph. 

The $\chi$-boundedness in Theorem~\ref{chi_bounded_simple} follows from the fact that claw-free matroids with bounded $\omega$ are very highly structured. One way to state this structure is the following theorem. 

\begin{theorem}\label{roughclaw}
	For all $s \ge 1$ there exists $k \ge 2$ so that, if $M$ is a claw-free matroid and $\omega(M) \le s$, then $M$ is the lift-join of $2s+1$ matroids, each of which either is even-plane, or has dimension at most $k$. 
\end{theorem}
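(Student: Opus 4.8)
The plan is to derive Theorem~\ref{roughclaw} from the main structure theorem (Theorem~\ref{structure}) by controlling how $\omega$ behaves under lift-joins and by bounding the dimension of the ``small'' basic pieces. First I would record the behaviour of $\omega$ under $\ls$: since $E = E_1 \cup (E_2 + \vs{G_1})$ and any subgeometry of $M_1 \ls M_2$ contained in $E$ projects to a subgeometry of $M_2$ contained in $E_2$ while meeting the copy of $G_1$ in a subgeometry contained in $E_1$, one gets $\omega(M_1 \ls M_2) = \omega(M_1) + \omega(M_2)$ (or a close variant; the exact additive identity is the natural guess and should follow from unwinding the definition of $\ls$). Iterating, if $M = N_1 \ls \cdots \ls N_m$ then $\sum_i \omega(N_i) = \omega(M) \le s$. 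In particular at most $s$ of the factors $N_i$ have $\omega(N_i) \ge 1$, and the rest have $\omega(N_i) = 0$.

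Next I would bound the dimension of each basic factor that is \emph{not} even-plane, in terms of $\omega$. By Theorem~\ref{structure} each $N_i$ is a PG-sum, an even-plane matroid, or the complement of a triangle-free matroid. For a PG-sum, $E$ is a disjoint union of two subgeometries, so $\dim(N_i) \le 2\omega(N_i) + 1$ — this already has bounded dimension once $\omega$ is bounded, but one must be careful about the ambient geometry $G$: the definition of matroid here allows $G$ to be larger than the span of $E$, so ``$\dim(N_i)$ small'' is a genuine claim requiring that we first replace $N_i$ by its restriction to the span of $E_i$ (absorbing the extra projective coordinates into a zero-$\omega$, trivially-even-plane factor, using associativity of $\ls$ from Lemma~\ref{lsassoc}). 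For the complement of a triangle-free matroid $T$: here $\omega(N_i)$ small forces $T$ to have no large independent flat avoided by... more precisely, a subgeometry of $G$ contained in $G \del E(T)$ of dimension $d$ is a $d$-dimensional cap-free-ish configuration, so if $\omega(N_i) \le s$ we need a Ramsey-type bound saying a triangle-free (cap) matroid whose complement has bounded $\omega$ has bounded dimension. This is a known fact about caps — the complement of a cap in $\PG(d-1,2)$ contains a large subgeometry once $d$ is large (equivalently, the cap is ``almost all'' of the space only in bounded dimension) — and I would cite or quickly re-derive it, giving a bound $\dim(N_i) \le k_0(s)$.

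So after these reductions every factor is either even-plane or has dimension at most $k := k_0(s)$. The last step is purely bookkeeping to get the count $2s+1$: we have at most $s$ factors with $\omega \ge 1$ (each either even-plane or of dimension $\le k$), plus possibly many factors with $\omega = 0$. A matroid with $\omega(N)=0$ has no triangle-free... wait — rather, $\omega(N)=0$ means $N$ contains no line (no $1$-dimensional subgeometry in $E$, i.e. $|E| \le$ small), so such a matroid is triangle-free, hence its own complement's complement is triangle-free; but more usefully, a PG-sum with $\omega = 0$ has $E$ a disjoint union of two $(\le 0)$-dimensional geometries so $|E| \le 2$ and $\dim \le$ ... again we must peel off empty ambient space. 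The clean way: group all the consecutive zero-$\omega$ factors; since $\ls$ of two even-plane matroids need not be even-plane, I would instead argue that the zero-$\omega$ factors can each be taken either even-plane or of dimension $\le k$ as well, and then consolidate runs so that between/around the $\le s$ ``heavy'' factors there are at most $s+1$ ``light'' blocks, each of which — being a lift-join of bounded-dimension or even-plane matroids — can itself be absorbed; re-examining, the target $2s+1 = s + (s+1)$ is exactly ``$s$ heavy factors interleaved with $s+1$ light factors,'' so the combinatorial claim to verify is that a lift-join of zero-$\omega$ basic matroids is again a single basic matroid of one of the two allowed types (or can be rewritten as one via associativity). The main obstacle I expect is precisely this last consolidation together with the care needed around the non-spanning ambient geometry: making ``dimension at most $k$'' literally true forces one to track where the surplus projective coordinates go under repeated applications of $\ls$, and to check that the even-plane property (which is \emph{not} preserved by $\ls$) only ever needs to be invoked for individual factors, never for a product. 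The Ramsey/cap bound for the triangle-free-complement case is the only genuinely quantitative input, and it is standard.
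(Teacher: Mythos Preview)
Your approach is the paper's approach: invoke Theorem~\ref{structure}, use the additivity $\omega(M_1 \ls M_2) = \omega(M_1)+\omega(M_2)$ (this is Lemma~\ref{lj_parameters}), bound the dimension of the non-even-plane basic pieces, and then do bookkeeping to cut the number of factors to $2s+1$. The Ramsey-type input for the triangle-free-complement case is exactly Corollary~\ref{off_diagonal}, derived from Sanders' theorem.

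Two places where you are making life harder than necessary. First, the ambient-space worry for PG-sums disappears because the inductive form of the structure theorem (Theorem~\ref{main}) yields \emph{strict} PG-sums as basic pieces; these are full-rank by definition, so $\dim(N_i) \le 2\omega(N_i) \le 2s$ with no surplus coordinates to absorb.

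Second, and more importantly, your consolidation step is not an obstacle at all, because you have mis-identified what $\omega(N)=0$ means. A single point of $G$ is a $1$-dimensional subgeometry, so $\omega(N)=0$ is equivalent to $E(N)=\varnothing$. Thus the ``light'' factors are literally empty matroids, and the lift-join of two empty matroids is empty, hence even-plane. The paper's argument is then pure pigeonhole: at most $s$ factors have nonempty ground set, so if $t>2s+1$ there must be two \emph{consecutive} empty factors, which merge into one empty (even-plane) factor, contradicting minimality of $t$. Your worry that ``$\ls$ of two even-plane matroids need not be even-plane'' is irrelevant here because the only merges performed are of empty matroids.
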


We will show (see Lemma~\ref{lj_parameters}) that $\omega(M_1 \ls M_2) = \omega(M_1) + \omega(M_2)$ for all matroids $M_1,M_2$. Since even-plane matroids have $\omega \le 2$ while $\omega(N) \le \dim(N)$ for each $N$, the conclusion of the theorem above implies that $\omega(M) \le (2s+1)k$. Thus, we can view the theorem as a qualitative structure theorem for claw-free matroids with bounded $\omega$. 

We hope that this kind of statement will apply equally when, instead of excluding a claw, we exclude $I_t$ for some arbitrary fixed $t$. That is, we want to describe the $M$ for which $\omega(M) \le s$, but also $\sigma(M) \le t$, where $\sigma(M)$ is the largest $\ell$ for which $M$ has an induced $I_{\ell}$-restriction. Ideally, we would like a theorem that describes all such matroids as belonging to some constructively described class $\cM_{s,t}$ within which $\omega$ and $\sigma$ is bounded. Theorem~\ref{roughclaw} does this when $t = 2$ (i.e. in the claw-free case); the description incorporates small matroids, even-plane matroids, and lift-joins. 

For larger $t$, the description in Theorem~\ref{roughclaw} will fail for two reasons. The first is the existence of classes that generalise the even-plane matroids. For each $t \ge 1$, let $\cE_t$ denote the class of matroids whose $t$-dimensional induced restrictions all have ground sets of even size. It was shown in [\ref{bkknp}] that $\cE_j$ properly contains $\cE_i$ for $i < j$, and that these classes are structurally rich, growing richer as $t$ grows. Theorem 3.2 of [\ref{bkknp}] gives a complete description of all these classes in terms of an iterative construction. It is easily seen that $\omega(M) < t$ for all $M \in \cE_t$, and that $\sigma(M) < t$ for all $t$ odd and $M \in \cE_t$. Therefore, any description of the matroids for which $\omega$ and $\sigma$ are bounded by $t$ must describe all matroids in $\cE_{t'}$, where $t'$ is the largest odd integer with $t' \le t$.

The second reason is because lift-joins are not general enough. For example, the \emph{direct sum} $M = M_1 \oplus M_2$ of two matroids $M_1 = (E_1,G_1)$ and $M_2 = (E_2,G_2)$ (defined as $M = (E_1 \cup E_2, G)$ where $G$ is a minimal projective geometry containing $G_1$ and $G_2$ as disjoint subgeometries) satisfies $\omega(M) = \max(\omega(M_1),\omega(M_2))$ and $\sigma(M) = \sigma(M_1)+\sigma(M_2)$; therefore, performing a bounded number of direct sums does not increase $\omega$ and $\sigma$ arbitrarily, and so (say) the class of matroids that are direct sums of at most $5$ even-plane matroids satisfies $\omega \le 2$ and $\sigma \le 10$, and is not described by any classes or constructions already discussed. 

In fact, there is a common generalisation of lift-joins and direct sums. Call a projective subgeometry of $G$ a \emph{flat} of $G$. For matroids $M_1 = (E_1,G_1)$ and $M_2 = (E_2,G_2)$ and flats $F_1,F_2$ of $G_1,G_2$ respectively, let $M_1 \ls_{F_1,F_2} M_2 = (E,G)$, where $G$, as usual, is a minimal projective geometry containing $G_1$ and $G_2$ as disjoint flats, and 
\[E= (E_1 \cup E_2) \cup (F_1  + (E_2 \cap F_2)).\]
 This is a \emph{partial lift-join}; if $F_1$ or $F_2$ is empty then it is a direct sum, and if $F_i = E_i$ for each $i$, it is a lift-join. We will show in Lemma~\ref{partial_lj} that $\sigma(M) \le 2(\sigma(M_1) + \sigma(M_2))+3$ and $\omega(M) \le \omega(M_1) + \omega(M_2)$, regardless of the $F_i$. We believe that this operation is general enough to write down a structure conjecture. 

\begin{conjecture}
	For all $s,t \ge 1$ there exists $k \ge 2$ such that, if $M$ is a matroid with $\omega(M) \le s$ and $\sigma(M) \le t$, then $M$ can be constructed via partial lift-joins from matroids $M_1, \dotsc, M_k$, where each $M_i$ is in $\cE_{k}$.
\end{conjecture}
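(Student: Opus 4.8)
The plan is to follow the blueprint of the $t = 2$ case. Call a matroid \emph{reducible} if it equals $M_1 \ls_{F_1,F_2} M_2$ for some matroids $M_1,M_2$ of positive dimension (in particular the direct sum of two matroids of positive dimension is reducible), and \emph{irreducible} otherwise. Since $\dim(M_1 \ls_{F_1,F_2} M_2) = \dim(M_1)+\dim(M_2)$, repeatedly splitting a matroid terminates, so every $M$ is obtained from finitely many irreducible matroids by iterated partial lift-joins, arranged in a binary \emph{decomposition tree}. One checks directly that if $M = M_1 \ls_{F_1,F_2} M_2$ then $E \cap G_i = E_i$ for $i = 1,2$, so $M|G_i = M_i$; since an induced restriction of an induced restriction is again one, and $\omega$ and $\sigma$ are monotone under induced restriction, every leaf $L$ of the tree satisfies $\omega(L) \le \omega(M)$ and $\sigma(L) \le \sigma(M)$. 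It therefore suffices to prove, for some $k = k(s,t)$: \textbf{(A)}~every irreducible matroid with $\omega \le s$ and $\sigma \le t$ lies in $\cE_k$; and \textbf{(B)}~the decomposition tree of any $M$ with $\omega(M)\le s$ and $\sigma(M)\le t$, after normalisation to absorb empty pieces, has at most $k$ leaves. (For $t\le 2$ the conjecture follows at once from Theorem~\ref{roughclaw}, which writes such an $M$ as a lift-join --- hence a partial lift-join --- of $2s+1$ matroids, each even-plane, so in $\cE_3$, or of dimension at most some $k_0(s)$, so in $\cE_{k_0+1}$ as a $d$-dimensional matroid has no $(d+1)$-dimensional induced restriction; take $k=\max(2s+1,\,k_0+1)$.)

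Statement \textbf{(A)} is the heart of the matter, being essentially the generalisation of Theorem~\ref{structure} (the case $t = 2$). One would let $t'$ be the largest odd integer with $t' \le t$; if $M \in \cE_{t'}$ then $M$ is already the desired matroid, as $\cE_{t'} \subseteq \cE_k$, so one may assume $M \notin \cE_{t'}$ and fix a $t'$-dimensional flat $F_0$ of $G$ with $|E \cap F_0|$ odd, then study how $M$ is assembled around $F_0$, aiming to show that irreducibility together with the odd flat $F_0$ forces $\dim(M)$ to be bounded in terms of $s$ and $t$ (equivalently, $M \in \cE_k$). I expect this to be the main obstacle by a wide margin. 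In the claw-free case the relevant tool is the intricate local analysis of the planes of $G$ underlying Theorem~\ref{structure}; for general $t$ the pertinent local objects are $t'$-flats and induced $I_t$-restrictions rather than planes, one would need the analogue for $I_t$-free matroids of the roles played by PG-sums, even-plane matroids, and complements of triangle-free matroids --- a list of `basic' matroids that is not available even conjecturally in explicit form beyond the classes $\cE_j$ --- and the parity and connectivity arguments behind Theorem~\ref{structure} would have to be substantially extended, most likely via some induction on $t$. A natural intermediate goal is the analogue of Theorem~\ref{chi_bounded_simple}, that the $N$-free, $I_t$-free matroids with bounded $\omega$ have bounded dimension whenever $N \in \cE_{t'}$, since such a bound would presumably be obtained in tandem with the structure result.

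Statement \textbf{(B)} is a bookkeeping problem that should be tractable via the parameter lemmas. Because $\omega$ is additive along lift-joins (Lemma~\ref{lj_parameters}) and every nonempty matroid has $\omega \ge 1$, a decomposition tree cannot have many nonempty leaves combined purely by lift-joins without forcing $\omega(M) > s$; symmetrically, as $\sigma$ is additive along direct sums and every nonempty matroid has $\sigma \ge 1$, it cannot have many leaves combined purely by direct sums without forcing $\sigma(M) > t$; and the bound $\sigma(M_1 \ls_{F_1,F_2} M_2) \le 2(\sigma(M_1)+\sigma(M_2))+3$ of Lemma~\ref{partial_lj}, applied along the tree, should let one interpolate between these extremes. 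Converting this into a clean bound on the number of leaves, and handling the trivial (empty) pieces with appropriate care, will take some effort but does not seem to present a genuine obstacle. Thus essentially all of the difficulty in the conjecture is concentrated in \textbf{(A)}: extending the local structure theory of Theorem~\ref{structure} from planes and $I_3$ to $t'$-flats and $I_t$.
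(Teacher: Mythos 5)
This statement is a \emph{conjecture} in the paper, not a theorem: the authors offer no proof, and indeed they explain why they expect it to be hard (the classes $\cE_j$ grow strictly richer with $j$, and no candidate list of basic classes beyond $\cE_j$ is available even conjecturally for $t > 2$). So there is no paper proof to compare against; your ``proposal'' is, as you say, a roadmap rather than a proof. Your identification of \textbf{(A)} --- the generalisation of Theorem~\ref{structure} from planes and $I_3$ to $t'$-flats and $I_t$, showing that irreducible matroids with bounded $\omega$ and $\sigma$ lie in some $\cE_k$ --- as the central open obstacle is accurate and matches the paper's own framing, and your reduction of the $t \le 2$ case to Theorem~\ref{roughclaw} is sound.

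However, you understate the difficulty of \textbf{(B)}. The bounds in Lemma~\ref{partial_lj} are \emph{upper} bounds on $\omega$ and $\sigma$ of a partial lift-join; they do not give lower bounds and so do not by themselves control the size of a decomposition tree. The clean ``interpolation'' you invoke only holds in the two pure cases: $\omega$ is exactly additive for full lift-joins (Lemma~\ref{lj_parameters}), and $\sigma$ is exactly additive for direct sums, but for a general $\ls_{F_1,F_2}$ the only cheap lower bounds are $\omega(M) \ge \max(\omega(M_1),\omega(M_2))$ and $\sigma(M) \ge \max(\sigma(M_1),\sigma(M_2))$, which are far too weak to bound the number of leaves. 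Indeed, the decomposition into irreducibles can have roughly $\dim(M)$ leaves, which is unbounded in $s,t$: direct-summing with many one-dimensional empty matroids $\varnothing_1$ changes neither $\omega$ nor $\sigma$ and each $\varnothing_1$ is an irreducible leaf. So even granting (A), you must show that \emph{some} coarser partial-lift-join decomposition into boundedly many $\cE_k$-pieces exists, which means regrouping leaves. You gesture at ``normalisation to absorb empty pieces'', but $\ls_{F_1,F_2}$ is not associative in general (only $\ls$ is, by Lemma~\ref{lsassoc}, and $\oplus$ separately), so when the tree mixes operations the regrouping is not a formality. Part (B) is therefore a second genuine gap, not just bookkeeping, and should be treated as such in any serious attack on the conjecture.
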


Note, since $\sigma(M) \le k$ and $\omega(M) < k$ for all $M \in \cE_k$,  that the conclusion implies that $\sigma(M)$ and $\omega(M)$ are bounded. Note also that we do not need to distinguish the case where the $M_i$ are small, as small matroids are all in $\cE_k$ whenever $k$ is sufficiently large. We expect that if the above conjecture is true, the required $k$ will be very large compared to $s$ and $t$. 

The \emph{codimension} of a flat $F$ in a projective geometry $G$ is defined to be $\dim(G)-\dim(F)$. The following is an alternative version of the statement in Theorem~\ref{roughclaw}.

\begin{theorem}\label{flatroughomega}
	For all $s \ge 1$ there exists $k \ge 2$ such that, for every claw-free matroid $M = (E,G)$ with $\omega(M) \le s$, there is a flat $F$ of $G$ whose codimension at most $k$, such that $M|F$ is the lift-join of $2s+1$ even-plane matroids. 
\end{theorem}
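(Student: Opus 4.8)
The plan is to derive Theorem~\ref{flatroughomega} from the (harder) Theorem~\ref{roughclaw} by restricting the ``small'' factors of the lift-join to low-dimensional flats. Fix $s \ge 1$ and let $k_0 \ge 2$ be the constant supplied by Theorem~\ref{roughclaw}. Given a claw-free $M = (E,G)$ with $\omega(M) \le s$, that theorem together with the associativity of $\ls$ (Lemma~\ref{lsassoc}) lets us assume, after passing to an isomorphic copy, that $M = M_1 \ls M_2 \ls \cdots \ls M_{2s+1}$ with each $M_i = (E_i,G_i)$ either even-plane or of dimension at most $k_0$, and that $G = G_1 \oplus \cdots \oplus G_{2s+1}$ with each $G_i$ a flat of $G$ and $\dim(G) = \sum_i \dim(G_i)$.

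The one structural fact needed is that restriction distributes over lift-joins along ``product'' flats: if $F_1,F_2$ are flats of $G_1,G_2$, then $F_1\oplus F_2$ is a flat of $G_1\oplus G_2$ and
\[
(M_1 \ls M_2)|(F_1\oplus F_2) = (M_1|F_1) \ls (M_2|F_2).
\]
This is immediate from the definition $M_1 \ls M_2 = (E_1 \cup (E_2+\vs{G_1}),\,G)$: a point of $G$ lying in $G_1$ lies both in $F_1\oplus F_2$ and in the ground set of the left-hand side exactly when it lies in $E_1 \cap F_1$, while a point $p \notin G_1$, written uniquely as $p = e + g$ with $e \in G_2$ and $g \in \vs{G_1}$, does so exactly when $e \in E_2 \cap F_2$ and $g \in \vs{F_1}$; both conditions match the ground set of $(M_1|F_1)\ls(M_2|F_2)$. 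Applying this through the associative bracketing gives, for any flats $F_i$ of $G_i$,
\[
(M_1 \ls \cdots \ls M_{2s+1})|(F_1\oplus\cdots\oplus F_{2s+1}) = (M_1|F_1) \ls \cdots \ls (M_{2s+1}|F_{2s+1}).
\]

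It then remains to choose the $F_i$. Let $J$ be the set of indices $i$ for which $M_i$ is even-plane, and put $F_i = G_i$ for $i \in J$. For $i \notin J$ we have $\dim(G_i) \le k_0$, and we let $F_i$ be any flat of $G_i$ with $\dim(F_i) = \min\{2,\dim(G_i)\}$. Every matroid of dimension at most $2$ has no $3$-dimensional induced restriction and so is vacuously even-plane; hence $M_i|F_i$ is even-plane for every $i$, and by the displayed identity $M|F$ is the lift-join of the $2s+1$ even-plane matroids $M_i|F_i$, where $F = F_1 \oplus \cdots \oplus F_{2s+1}$. Finally, the codimension of $F$ in $G$ is $\sum_i (\dim(G_i) - \dim(F_i))$; each summand with $i \in J$ is $0$ and each summand with $i \notin J$ is at most $\dim(G_i) \le k_0$, and there are at most $2s+1$ indices outside $J$, so the codimension is at most $(2s+1)k_0$. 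Taking $k = (2s+1)k_0$ finishes the proof.

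Essentially all of the difficulty is already contained in Theorem~\ref{roughclaw} (itself a consequence of the main structure theorem); the only new ingredients here are the bookkeeping and the compatibility of restriction with lift-joins, and neither is a serious obstacle. The single point deserving care is ensuring that the chosen flats $F_i$ really do render every factor even-plane while leaving the lift-join description intact — which is exactly what the product-flat identity guarantees.
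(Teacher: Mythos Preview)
Your proof is correct and follows essentially the same route as the paper: invoke Theorem~\ref{roughclaw}, then pass to a product flat $F = \bigoplus_i F_i$ so that each factor $M_i|F_i$ becomes even-plane, using the restriction/lift-join compatibility (which the paper records as part of Lemma~\ref{lj_parameters}). The only cosmetic difference is that the paper takes $F_i = \varnothing$ for the small factors rather than a flat of dimension $\le 2$, but both choices work for the same reason.
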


It is clear that $\omega(M) \le \omega(M|F) + k$ for every codimension-$k$ flat, so the outcome of this theorem still certifies bounded $\omega$. For each matroid $M = (E,G)$, define $\alpha(M) = \omega(M^c) = \dim(M) - \chi(M^c)$. This is the analogue of the `independence number' of a graph, and is equal to the largest dimension of an induced submatroid of $M$ whose ground set is empty. Another consequence of Theorem~\ref{structure} is a qualitative structure theorem where we bound $\alpha$ instead of $\omega$ in the hypothesis. 

\begin{theorem}\label{flatroughalpha}
	For all $s \ge 1$ there exists $k \ge 2$ such that, for every claw-free matroid $M = (E,G)$ with $\alpha(M) \le s$, there is a flat $F$ of $G$ whose codimension at most $k$, such that $M|F$ is the lift-join of $2s+1$ matroids whose complements are triangle-free. 
\end{theorem}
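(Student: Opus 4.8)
The plan is to run the argument in parallel with the proof of Theorem~\ref{flatroughomega}, with $\alpha$ playing the role of $\omega$ and the complements of triangle-free matroids playing the role of even-plane matroids; complementation is what links the two. First I would record two compatibility facts, each proved by a direct computation from the defining formula $E = E_1 \cup (E_2 + \vs{G_1})$: that $(M_1 \ls M_2)^c = M_1^c \ls M_2^c$, and that $(M|F)^c = M^c|F$ for every flat $F$ of $G$ (the latter being immediate, and also that restriction to a ``coordinate'' flat distributes over $\ls$). Combined with Lemma~\ref{lj_parameters}, the first identity gives $\alpha(M_1 \ls M_2) = \omega(M_1^c \ls M_2^c) = \alpha(M_1) + \alpha(M_2)$, so $\alpha$ is additive over $\ls$; it also shows that the assertion ``$M|F$ is a lift-join of $j$ complements of triangle-free matroids'' is equivalent, under complementation, to ``$M^c|F$ is a lift-join of $j$ triangle-free matroids''.

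Next, apply Theorem~\ref{structure} to write $M = N_1 \ls \dotsb \ls N_m$ with each $N_i$ a PG-sum, an even-plane matroid, or the complement of a triangle-free matroid; by additivity $\sum_i \alpha(N_i) = \alpha(M) \le s$, so at most $s$ of the indices have $\alpha(N_i) \ge 1$. The remaining pieces are harmless: $\alpha(N_i) = 0$ forces $N_i = (G_i,G_i)$, which is the complement of the triangle-free matroid $(\emptyset,G_i)$, and using identities such as $(G,G) \ls (G',G') = (G \oplus G', G \oplus G')$ together with the fact that $(\emptyset,G') \ls T$ is triangle-free whenever $T$ is (so that $(G',G') \ls T^c$, and likewise $T^c \ls (G',G')$, is again the complement of a triangle-free matroid), runs of consecutive full-geometry pieces merge, and a full-geometry piece adjacent to a complement of a triangle-free matroid is absorbed. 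After rewriting we may assume that every piece with $\alpha = 0$ survives only flanked by even-plane or PG-sum pieces. Finally, the $N_i$ that are already complements of triangle-free matroids need no work, and neither do the even-plane $N_i$ with $\alpha(N_i)=1$: in that case $G_i \del E_i$ is a nonempty triangle-free set, so $N_i = (G_i \del E_i)^c$.

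It remains to handle the PG-sums and even-plane matroids $N_i$ with $2 \le \alpha(N_i) \le s$; for each I would produce a flat $F_i$ of $G_i$ of codimension bounded in terms of $s$ such that $N_i|F_i$ is a lift-join of boundedly many complements of triangle-free matroids. Once every such piece has been replaced by such a lift-join, the flat $F$ of $G$ obtained from the $F_i$ has bounded codimension, restriction distributes over the lift-join, every factor of the resulting decomposition of $M|F$ is a complement of a triangle-free matroid (so the surviving full-geometry pieces are now absorbed into their neighbours), and, the empty matroid being both an identity for $\ls$ and the complement of a triangle-free matroid, one pads to reach exactly $2s+1$ factors. (Obtaining precisely the constant $2s+1$, matching Theorem~\ref{flatroughomega}, requires the same delicate bookkeeping used there; a cruder count already yields a lift-join of $O(s)$ complements of triangle-free matroids, which suffices for the qualitative statement.) For a PG-sum $N_i = (A \sqcup B, G_i)$, the bound $\alpha(N_i) \le s$ forces one side, say $B$, to have dimension at most $s$ -- otherwise, since $A$ and $B$ are disjoint they span a flat $A \oplus B$ inside which a ``diagonal'' subgeometry of dimension $\min(\dim A,\dim B)$ avoids $A \cup B$ -- and, since $\omega(G_i \del A) = \operatorname{codim}_{G_i}(A)$ while deleting the points of $B$ lowers $\omega$ by at most $\dim B$, it forces $\operatorname{codim}_{G_i}(A) \le 2s$; restricting to $F_i = \langle A \cup B\rangle$ (of codimension at most $2s$) and using decompositions such as $(A,G_i) = (A,A) \ls (\emptyset,C)$ with $C$ a complement flat of small dimension, plus variants incorporating $B$, together with the lemma that a claw-free matroid of bounded dimension is a lift-join of boundedly many complements of triangle-free matroids, puts $N_i|F_i$ in the required form. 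I would prove that bounded-dimension lemma first, for instance by a further appeal to Theorem~\ref{structure} combined with an induction on dimension.

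The even-plane case is where I expect the real difficulty, and it is the main obstacle. In contrast with the complements of triangle-free matroids that appear in Theorem~\ref{flatroughomega} (which, having bounded $\omega$, automatically have bounded dimension), an even-plane matroid with $\alpha \le s$ need not have bounded dimension -- the affine geometries $(G \del H, G)$ are even-plane, for instance. To deal with it I would invoke the explicit structural description of the even-plane matroids (the class $\cE_3$) from [\ref{bkknp}], using it to write such an $N_i$, after restriction to a flat of bounded codimension, as a lift-join of boundedly many pieces each of which is either of bounded dimension or visibly the complement of a triangle-free matroid, with the bound on the number of pieces coming from $\alpha(N_i) \le s$ via additivity; the bounded-dimension lemma then finishes. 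Everything else -- the complementation identities, additivity of $\alpha$, distributivity of restriction over $\ls$, the handling of the pieces with $\alpha \le 1$, and the final padding -- is routine bookkeeping of the kind already carried out in the proof of Theorem~\ref{flatroughomega}.
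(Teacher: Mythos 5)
Your overall strategy---apply Theorem~\ref{structure}, use additivity of $\alpha$ across $\ls$ (which you correctly derive from $(M_1\ls M_2)^c = M_1^c\ls M_2^c$ and Lemma~\ref{lj_parameters}), handle the factors one at a time, and pad---is the same as the paper's, and the treatment of the $\alpha(N_i)\le 1$ pieces is exactly right: $\alpha(N_i)\le 1$ is literally the condition that $N_i^c$ is triangle-free, so those pieces need no modification at all.

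The genuine gap is where you identify ``the real difficulty.'' You assert that an even-plane matroid with $\alpha\le s$ need not have bounded dimension, and you offer the affine geometries $(G\setminus H,G)$ as evidence. But that example has $\alpha\bigl((G\setminus H,G)\bigr)=\omega\bigl((H,G)\bigr)=\dim(G)-1$, which grows without bound, so it is not a matroid of bounded $\alpha$. In fact the claim is false in general: if $N_i\in\cE_3$ and $\alpha(N_i)\le s$, then $N_i$ has no induced $O_{s+1}$-restriction ($O_{s+1}$ being the trivially even-plane empty matroid of dimension $s+1$), so Theorem~\ref{even_plane_bounded_cn} gives $\chi(N_i)\le s+5$, hence $\dim(N_i)=\chi(N_i)+\alpha(N_i)\le 2s+5$. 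This bounded-dimension observation is precisely what the paper uses; it lets one simply take $F_i=\varnothing$ for the even-plane pieces and never touch the internal structure theory of $\cE_3$. Your proposed machinery---invoking the explicit $\cE_3$ structure theorem of [\ref{bkknp}] to cut even-plane pieces down---is therefore unnecessary and addresses a nonexistent obstacle. Similarly, for the strict PG-sum pieces you do not need to appeal to a ``bounded-dimension lemma'' or any auxiliary decompositions: since $\alpha(N_i)=n_i-\chi(N_i)=n_i-\omega(N_i)$ by Lemma~\ref{pg_sum_chi}, the larger of the two constituent flats already has codimension at most $s$ in $G_i$, and restricting to it gives a matroid with empty (hence triangle-free) complement. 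With those two simplifications your argument collapses to the paper's, and the $2s+1$ padding works as in Theorem~\ref{flatroughomega}.
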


\subsection*{Density}

	We say a matroid $M = (E,G)$ is \emph{full-rank} if $G = \cl(E)$. We prove a theorem that exactly determines the sparsest claw-free matroids, showing that they are exponentially dense. 
	
	\begin{theorem}\label{density}
		Let $M = (E,G)$ be a full-rank, $r$-dimensional claw-free matroid. Then $|E| \ge 2^{\lfloor r/2 \rfloor} + 2^{\lceil r/2 \rceil} - 2$. Equality holds precisely when $E$ is the disjoint union of two flats of $G$ of dimensions $\lfloor r/2 \rfloor$ and $\lceil r/2 \rceil$. 
	\end{theorem}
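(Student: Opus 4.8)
The plan is to derive the bound from the structure theorem (Theorem~\ref{structure}) by an induction on $r$. First I would record how the two relevant parameters behave under the join: directly from the definition of $\ls$ one has $\dim(M_1\ls M_2)=\dim(M_1)+\dim(M_2)$, and, because the cosets $e+\vs{G_1}$ for $e\in E_2$ are pairwise disjoint and disjoint from $E_1\subseteq\vs{G_1}$,
\[
|E(M_1\ls M_2)| \;=\; |E(M_1)| + 2^{\dim(M_1)}\,|E(M_2)|.
\]
One also checks that $\cl(E(M_1\ls M_2))$ equals $\vs{G_1}\oplus\cl(E_2)$ when $E_2\neq\emptyset$ (the cross terms alone span $\vs{G_1}$) and is contained in $\vs{G_1}$ when $E_2=\emptyset$; hence if $M_1\ls M_2$ is full-rank and $\dim(M_2)\geq 1$, then $M_2$ is full-rank and $E_2\neq\emptyset$. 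Using associativity of $\ls$, the fact that dimension-$0$ factors are trivial, and that induced restrictions of claw-free matroids are claw-free, this gives the reduction: every full-rank claw-free $M$ of dimension $r\geq 1$ is either \emph{basic} (a PG-sum, an even-plane matroid, or the complement of a triangle-free matroid) or else $M=M_1\ls M_2$ with $M_1$ claw-free of some dimension $n_1\in\{1,\dots,r-1\}$ and $M_2$ full-rank claw-free of dimension $n_2=r-n_1$ with $E_2\neq\emptyset$.

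For the lower bound I would induct on $r$, the cases $r\leq 2$ being immediate. In the join case the inductive hypothesis gives $|E_2|\geq 2^{\floor{n_2/2}}+2^{\ceil{n_2/2}}-2$, so
\[
|E(M)| \;=\; |E_1| + 2^{n_1}|E_2| \;\geq\; 2^{n_1}\bigl(2^{\floor{n_2/2}}+2^{\ceil{n_2/2}}-2\bigr),
\]
and a short (if slightly tedious) computation with floors and ceilings, using $n_1,n_2\geq 1$, shows the right-hand side is at least $2^{\floor{r/2}}+2^{\ceil{r/2}}-2$, in fact strictly so once $r\geq 4$. For the basic classes: a full-rank PG-sum has $E$ the disjoint union of flats of dimensions $a$ and $b$ with $a+b=r$, so $|E|=2^a+2^b-2$, which is minimized over $a+b=r$ at the balanced split by convexity of $t\mapsto 2^t$; a full-rank matroid whose complement is triangle-free has complement of size at most the maximum size of a cap in $\PG(r-1,2)$ (which is $2^{r-1}$), forcing $|E|\geq 2^{r-1}-1$, already exceeding the target once $r\geq 4$, with the residual small cases handled using the fact that a full-rank claw-free matroid of dimension $3$ has more than three points (a spanning three-element subset of $\PG(2,2)$ is a basis, hence a claw); and the even-plane case I would treat either by invoking the explicit recursive description of $\cE_3$ from [\ref{bkknp}], or by a direct induction slicing a full-rank even-plane matroid along a hyperplane $H$ and using that $M|H$ is again even-plane.

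Since the balanced PG-sum attains the bound, the equality statement amounts to showing nothing else does, and the reduction above already does most of the work: for $r\geq 4$ every nontrivial join and every full-rank complement of a triangle-free matroid has \emph{strictly} more than $2^{\floor{r/2}}+2^{\ceil{r/2}}-2$ points, so an extremal $M$ with $r\geq 4$ must be either a PG-sum -- necessarily with $\{a,b\}=\{\floor{r/2},\ceil{r/2}\}$ -- or an extremal even-plane matroid, while the cases $r\leq 3$ are checked directly. The main obstacle is therefore the even-plane subcase: I need to show that an extremal full-rank even-plane matroid of dimension $r$ is forced to be a disjoint union of two balanced flats. This cannot be read off from a counting estimate and must use the structure of even-plane matroids; pinning down this uniqueness, together with a careful treatment of the low-dimensional configurations, is the delicate heart of the proof.
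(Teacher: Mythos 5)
Your architecture is the same as the paper's: induct on $r$ via Theorem~\ref{structure}, treat the basic classes and the lift-join case separately, and observe that in the join case $M_2$ must be full-rank so that $|E| \ge 2^{\dim M_1}\,|E_2| \ge 2^{\dim M_1} f(\dim M_2) > f(r)$ for $r \ge 4$, where $f(n) := 2^{\floor{n/2}}+2^{\ceil{n/2}}-2$. Your handling of PG-sums (convexity of $t \mapsto 2^t$) and of complements of triangle-free matroids (Bose--Burton gives $|E| \ge 2^{r-1}-1 > f(r)$ for $r \ge 4$) also matches the paper. But you explicitly leave the even-plane case open, calling it the ``delicate heart of the proof,'' and that is a real gap, not a routine detail.

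The paper closes that gap without any recursive description of $\cE_3$ and without slicing along hyperplanes, contrary to your expectation that the even-plane case ``cannot be read off from a counting estimate.'' It shows that a full-rank even-plane matroid of dimension $r \ge 4$ that is not itself a PG-sum has strictly more than $f(r)$ elements, so the extremal even-plane matroids are already covered by the PG-sum case. The key tool is Lemma~\ref{PG_characterisation}: $M$ fails to be a PG-sum iff it has an induced $I_3$, $C_4$, $P_5$, or $K_4$; being claw-free rules out $I_3$, and being even-plane rules out $P_5$, so a non-PG-sum $M$ has an induced $C_4$ or $K_4$. An induced $C_4$ in a plane $P$ forces every nonempty coset of $P$ to contain at least $4$ elements of $E$ (by parity of plane intersections), giving $|E| \ge 4(r-2)$, which beats $f(r)$ for $r \in \{4,5\}$. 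If some triangle $T$ meets $E$ oddly, then each of the $2^{r-2}-1$ planes through $T$ meets $E$ in an even, hence nonzero, number of points outside $T$, yielding $|E| \ge 2^{r-2}$, which beats $f(r)$ for $r \ge 6$; a $K_4$-restriction supplies the extra $+4$ needed to cover $r \in \{4,5\}$ when no $C_4$ is present. This is precisely the uniqueness-among-even-plane step you flagged as unresolved, and it is counting after all, albeit counting guided by Lemma~\ref{PG_characterisation}.

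One further small point: as stated, the equality clause is slightly incomplete at $r = 3$. The matroid $C_4$ (four points of $\PG(2,2)$ summing to zero) is full-rank, claw-free, even-plane, attains $|E| = f(3) = 4$, but is not the disjoint union of a $1$-flat and a $2$-flat. The paper's restatement in the proof section adds $M \cong C_4$ as an explicit exception; your ``cases $r \le 3$ are checked directly'' would, if carried out, surface the same discrepancy.
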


	It is a rare property for $N$ to give rise to an exponential lower bound like the above on the density of full-rank $N$-free matroids; indeed, if $N \not\cong I_t$ is a full-rank $t$-dimensional matroid, then the matroid $I_r$ is a full-rank $N$-free matroid with only linearly many elements. We conjecture that Theorem~\ref{density} will generalise to excluding an arbitrary $I_t$. 
	
	\begin{conjecture}
		Let $t \ge 2$ and $r \ge t$ be integers. If $M = (E,G)$ is a full-rank, $r$-dimensional matroid with no induced $I_t$-restriction, then \[|E| \ge s 2^{\ceil{r/(t-1)}} + (t-1-s) 2^{\floor{r/(t-1)}}- (t-1),\] 
		where $s$ is the remainder of $r$ on division by $t-1$. Equality holds precisely when $E$ is the disjoint union of flats $F_1, \dotsc, F_{t-1}$ whose dimensions sum to $r$, where $|\dim(F_i) - \dim(F_j)| \le 1$ for all $i,j$. 
	\end{conjecture}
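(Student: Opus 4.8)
I would first note that the right-hand side counts the points of a disjoint union $F_1 \sqcup \dots \sqcup F_{t-1}$ of pairwise point-disjoint flats spanning $G$, with $\dim F_i = d_i$ and $\sum_i d_i = r$: then $|E| = \sum_{i=1}^{t-1}(2^{d_i}-1)$, and since $x \mapsto 2^x$ is convex, $\sum_i 2^{d_i}$ over nonnegative integers summing to $r$ is minimised exactly when the $d_i$ pairwise differ by at most one, which gives the stated value. I would then verify that such an $E$ is $I_t$-free: for any flat $Q$ of $G$, the set $E \cap Q = \bigcup_i (F_i \cap Q)$ contains a full line of $E$ as soon as some $\dim(F_i \cap Q) \ge 2$, so an independent subset of $E \cap Q$ meets each $F_i$ in at most one point, hence has at most $t-1$ elements and cannot be a basis of a $t$-dimensional $Q$. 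I would also record the reformulation that $M$ is $I_t$-free if and only if $\sigma(M) \le t-1$ (an induced $I_\ell$-restriction with $\ell \ge t$ restricts to an induced $I_t$-restriction on a $t$-element subset of its basis), and note that the conjectured extremal matroids have $\sigma(M) = t-1$ and $\omega(M) = \ceil{r/(t-1)}$. The content is then the lower bound together with the determination of the equality case.

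For $t = 3$ the statement is Theorem~\ref{density}, and the plan there is to use the structure theorem (Theorem~\ref{structure}). Induct on $r$, and write $f(d) = 2^{\floor{d/2}} + 2^{\ceil{d/2}} - 2$ for the conjectured minimum in dimension $d$ (for all $d \ge 0$). A claw-free $M$ is a lift-join $N_1 \ls \dots \ls N_m$ of PG-sums, even-plane matroids, and complements of triangle-free matroids, and it is full-rank precisely when every $N_i$ is; the definition of $\ls$ gives $|E| = |E_{N_1}| + 2^{\dim N_1}\,|E_{N_2 \ls \dots \ls N_m}|$. If $M$ is a nontrivial lift-join $N_1 \ls N_2$ with $\dim N_1, \dim N_2 \ge 1$, both pieces have dimension less than $r$, so the inductive hypothesis applies to them and, combined with the elementary inequality $f(a) + 2^a f(b) \ge f(a+b)$ — strict whenever $a,b \ge 1$ — yields the bound with strict inequality. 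Otherwise, by Theorem~\ref{structure}, $M$ is itself basic, and one checks by hand that a PG-sum of dimension $r$ has $2^a + 2^b - 2 \ge f(r)$ points with equality iff balanced, while a full-rank even-plane matroid or complement of a triangle-free matroid of dimension $r$ has at least $f(r)$ points (here one uses that a binary cap has at most $2^{r-1}$ points, together with the description of even-plane matroids from [\ref{bkknp}]). This forces an extremal $M$ to be a single balanced PG-sum, giving the $t=3$ statement and its equality case. For general $t$ the same bookkeeping would go through once an analogue of Theorem~\ref{structure} is available: the class of $I_t$-free matroids is closed under lift-joins by Corollary~\ref{claw_free_lift_join}, lift-joins of smaller $I_t$-free pieces are handled exactly as above (the analogue $g_t(a) + 2^a g_t(b) \ge g_t(a+b)$ of the inequality, $g_t$ the balanced-flats quantity), and the expected basic classes are the $\cE_k$ assembled by partial lift-joins, the key quantitative input being that a full-rank member of $\cE_k$ of dimension $d$ has at least $\sum_{i=1}^{t-1}(2^{d_i}-1)$ points for the balanced split of $d$ — which should follow from $\cE_k$ having bounded $\omega$, hence being exponentially dense, for $d$ large, plus a finite check for small $d$ — so that the sparsest full-rank $I_t$-free matroid is again a direct sum of $t-1$ balanced flats.

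The main obstacle, and the reason this is only a conjecture, is precisely this structure theorem for $I_t$-free matroids when $t \ge 4$. One might hope to bypass it by a direct induction on $r$: pick a flat $F \subseteq E$ of maximum dimension $\omega(M)$, choose a complement $F'$ of $F$, and try to prove $|E| \ge (2^{\dim F} - 1) + (\text{the bound for } r - \dim F \text{ and } t-1)$, closing the induction by convexity since ``$\dim F$ followed by a balanced split of $r - \dim F$ into $t-2$ parts'' is a valid split of $r$ into $t-1$ parts. The sticking point is that $M|F'$ need not be $I_{t-1}$-free for an arbitrary maximal flat $F \subseteq E$; choosing $F$ (and $F'$) so that peeling it off genuinely reduces $I_t$-freeness to $I_{t-1}$-freeness at a cost of only $2^{\dim F}-1$ points is the heart of the difficulty, and for $t = 3$ it is exactly what Theorem~\ref{structure} delivers. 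A secondary difficulty, needed for the equality characterisation, is rigidity — showing that any configuration attaining the bound must split off a flat at every stage — which for $t=3$ follows from the strict density inequalities for the non-PG-sum basic classes, and for larger $t$ would follow from the corresponding strict inequalities for the $\cE_k$.
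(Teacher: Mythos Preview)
The statement is a \emph{conjecture}, and the paper does not prove it; the text immediately following it says explicitly that even for $t=3$ the authors know no proof that avoids the full strength of Theorem~\ref{structure}, and poses the weaker exponential-growth version as a further open problem. Your proposal correctly recognises this: you verify that the claimed extremal examples are $I_t$-free and attain the bound, you sketch the $t=3$ argument via Theorem~\ref{structure}, and you identify the missing structure theorem for general $t$ as the obstruction. That diagnosis matches the paper's own assessment, so in that sense your write-up is appropriate for a conjecture rather than a theorem.

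One correction to your $t=3$ sketch: it is not true that $M_1 \ls M_2$ is full-rank precisely when both factors are. If $E_2$ spans $G_2$ then $E_2 + \vs{G_1}$ already spans $G_1 \oplus G_2$, regardless of $E_1$; conversely, as the paper's proof of Theorem~\ref{density} notes, if $E_2$ lies in a hyperplane of $G_2$ then $M$ fails to be full-rank. So only the right-hand factor is forced to be full-rank. This matters because your inductive inequality $f(a) + 2^a f(b) > f(a+b)$ presumes $|E_1| \ge f(d_1)$, which need not hold. The paper sidesteps this by using the cruder bound $|E| \ge 2^{d_1}|E_2| \ge 2^{d_1} f(d_2)$ and the stronger inequality $2^{d_1} f(d_2) > f(d_1 + d_2)$ for $d_1 + d_2 \ge 4$, which does not require anything of $M_1$. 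With that fix, your outline of the $t=3$ case coincides with the paper's proof of Theorem~\ref{density}.
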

	Although this extremal structure is simple, we expect that this conjecture will be difficult to prove. Even for $t = 3$, we do not know a way that does not use the full strength of Theorem~\ref{structure}. The following weaker conjecture might be more amenable to a direct proof. Theorem~\ref{density} implies that $\lambda = \sqrt{2}$ satisfies the conjecture when $t = 3$. 
	
	\begin{conjecture}
		For all $t \ge 3$ there exists $\lambda > 1$ such that for each $r \ge t$, every full-rank, $r$-dimensional matroid $M = (E,G)$ with no induced $I_t$-restriction satisfies $|E| > \lambda^r$. 
	\end{conjecture}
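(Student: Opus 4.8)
The plan is to run everything through the structure theorem (Theorem~\ref{structure}), writing $M = N_1 \ls \cdots \ls N_m$ with each $N_i$ a PG-sum, an even-plane matroid, or the complement of a triangle-free matroid. First I would record an exact size formula for the lift-join: if $M_1 = (E_1,G_1)$ has dimension $d_1$, then $E_2 + \vs{G_1}$ is a union of $|E_2|$ pairwise-disjoint cosets of $\vs{G_1}$, none containing $0$ and all avoiding $E_1 \subseteq \vs{G_1}$, so
\[ |E(M_1 \ls M_2)| = |E_1| + 2^{d_1}\,|E_2|, \]
and hence $|E(M)| = \sum_{i=1}^{m} 2^{d_1+\cdots+d_{i-1}}|E(N_i)|$ with $\sum_i d_i = r$ (using associativity, Lemma~\ref{lsassoc}). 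A short argument with the same formula shows that $M_1 \ls M_2$ is full-rank exactly when $M_2$ is full-rank with $E_2 \neq \emptyset$: if $E_2=\emptyset$ then $E(M_1\ls M_2)=E_1\subseteq G_1$ fails to span, and if $e_2\in E_2$ then $g=e_2+(e_2+g)\in\cl(E(M_1\ls M_2))$ for every $g\in G_1$, so spanning reduces to that of $M_2$. Discarding dimension-$0$ factors, a full-rank $M$ therefore has $N_m$ full-rank of some dimension $d:=d_m\ge 1$, while the remaining factors may be arbitrary (even with empty ground set), so that
\[ |E(M)| \;\ge\; 2^{\,r-d}\,|E(N_m)|. \]

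The heart of the argument is the claim that every \emph{full-rank basic} matroid $N$ of dimension $d$ has $|E(N)|\ge f(d):=2^{\lfloor d/2\rfloor}+2^{\lceil d/2\rceil}-2$. For PG-sums this is immediate: two disjoint flats of dimensions $a,b$ with $a+b=d$ contribute $2^a+2^b-2$ points, minimised by the balanced split. For complements of triangle-free matroids, $E(N)=G\setminus S$ with $S$ a cap of $\PG(d-1,2)$; the line-counting bound (each point of $S$ lies on $2^{d-1}-1$ lines, its secants distinct) gives $|S|\le 2^{d-1}$, and since the only cap of that size is the complement of a hyperplane -- whose complement is not full-rank -- full-rankness forces $|S|\le 2^{d-1}-1$, so $|E(N)|\ge 2^{d-1}\ge f(d)$. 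The real work is the even-plane case, for which I would use the standard fact that $(E,G)$ is even-plane precisely when the indicator of $E$, extended by $0$ at the origin, is a polynomial of degree $\le 2$ over $\bF_2$; thus $E$ is the set of nonzero points where a constant-term-free quadratic takes the value $1$, and the classification of quadratic forms over $\bF_2$ gives $|E|\in\{2^{d-1}\}\cup\{2^{d-1}\pm 2^{d-1-s}\}$, where $2s$ is the rank of the quadratic part. Determining which of these level sets span $\bF_2^d$ (rank-$2$ forms do not; rank-$4$-or-higher hyperbolic forms do), the minimum over full-rank even-plane matroids of dimension $d$ is $1,2,4$ for $d=1,2,3$ and $3\cdot 2^{\,d-3}$ for $d\ge 4$ -- in every case at least $f(d)$, with equality only for $d\le 4$.

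To finish the inequality, combine $|E(M)|\ge 2^{r-d}\,|E(N_m)|\ge 2^{r-d}f(d)$ with the numerical fact that $2^{j}f(d)\ge f(j+d)$ for all $j\ge0$ and $d\ge1$; this follows by induction on $j$ from $2f(e)\ge f(e+1)$, which holds because the increment $f(e+1)-f(e)=2^{\lfloor e/2\rfloor}$ is at most $f(e)$ for $e\ge1$. For the equality statement, tracing when all these inequalities are simultaneously tight forces $d_m=r$ once $r$ is not too small, so that $M=N_m$ is a basic matroid meeting its own density bound; the basic-case equality analysis then identifies $E$ as a disjoint union of two flats of dimensions $\lfloor r/2\rfloor,\lceil r/2\rceil$. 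The finitely many small values of $r$ -- in particular $r=3$, where $\PG(2,2)$ carries extremal even-plane examples -- are handled by direct inspection.

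I expect the even-plane density bound, together with its equality analysis, to be the main obstacle: one must correctly pin down which quadratic level sets are full-rank, verify that the extremal ones are unions of two flats, and reconcile this with the small-dimensional behaviour. The complement-of-triangle-free case in turn rests on the (known) classification of maximum caps in $\PG(d-1,2)$, which should be cited rather than reproved.
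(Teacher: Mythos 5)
There is a fundamental mismatch between the statement you are proving and the tool you reach for. The conjecture quantifies over all $t \ge 3$: for each fixed $t$, one needs an exponential lower bound on full-rank matroids with no induced $I_t$-restriction. Your argument opens by ``running everything through the structure theorem (Theorem~\ref{structure})'', but that theorem decomposes \emph{claw-free} matroids, i.e., those with no induced $I_3$-restriction. When $t > 3$, the hypothesis ``no induced $I_t$-restriction'' is strictly weaker --- such a matroid may have plenty of claws --- so the structure theorem does not apply, and no analogous decomposition is known (the paper explicitly poses this as open; see the discussion around partial lift-joins and $\cE_t$). Everything you write after the first sentence therefore only addresses $t = 3$, and you never return to the general case. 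As a result there is no proof of the conjecture here.

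What you have actually sketched is a proof of Theorem~\ref{density}, the exact density result for claw-free matroids, together with its equality characterization --- note that your proposal even includes an ``equality statement'', which does not appear in the conjecture at all. The paper already observes that Theorem~\ref{density} yields the $t = 3$ case of the conjecture (with, say, $\lambda$ slightly below $\sqrt2$), so that portion of your work reproves something already in the paper. Within that restricted scope your route differs from the paper's in the even-plane case: the paper bounds $|E|$ by elementary plane-counting (an induced $C_4$-restriction gives $|E|\ge 4(r-2)$; otherwise a ``bad'' triangle sits in $2^{r-2}-1$ planes each of even $E$-intersection, yielding $|E|\ge 2^{r-2}$, with small $r$ handled via $P_5/K_4$). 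You instead invoke the degree-$\le 2$ polynomial description of $\cE_3$ and the classification of $\bF_2$-quadratic forms to enumerate the possible $|E|$ and identify which level sets are full-rank. That approach is plausible and perhaps cleaner, but it imports a nontrivial classification and a full-rank analysis of quadric level sets that you have not supplied; in particular the equality case (is the minimizer really a union of two flats, and how does this reconcile with the rank-$4$ quadric of size $3\cdot2^{d-3}$ which matches $f(d)$ only at $d=4$?) would need real care. None of that, however, touches the actual gap: without any structural handle on $I_t$-free matroids for $t > 3$, the conjecture as stated is not proved by this argument.
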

	
\subsection*{Excluding anticlaws}

A \emph{anticlaw} is the complement of a claw. Using our structure theorem for claw-free matroids, it is possible to give a structure theorem for claw-free, anticlaw-free matroids. We say that a matroid $M = (E,G)$ is a \emph{target} if there exist flats $F_0 \subseteq F_1 \subseteq \dotsc \subseteq F_k \subseteq G$ such that $E$ is the union of $F_{i+1} \del F_i$ for all even $i$ (note that flats are allowed to be empty).

	\begin{theorem}\label{anticlaw}
		A matroid $M$ is claw-free and anticlaw-free if and only if $M$ is a target.
	\end{theorem}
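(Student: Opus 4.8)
The plan is to connect targets with lift-joins and then feed this into Theorem~\ref{structure}. Two facts about targets drive everything. First, \emph{targets are closed under induced restriction}: if $M=(E,G)$ has defining flag $F_0\subseteq\dots\subseteq F_k$ and $F$ is a flat of $G$, then $F_0\cap F\subseteq\dots\subseteq F_k\cap F$ is a flag of flats of $F$ exhibiting $M|F$ as a target, since intersecting with $F$ commutes with the union of shells. Second, after refining a defining flag to a complete flag $G_0\subsetneq G_1\subsetneq\dots\subsetneq G_r=G$, a target is exactly a set $\bigcup_{i\in S}(G_i\del G_{i-1})$ for some $S\subseteq\{1,\dots,r\}$; with respect to a basis of $\vs{G}$ adapted to the flag, $E$ is the set of nonzero vectors whose largest active coordinate index lies in $S$. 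Comparing this with the coordinate description of $\ls$ (the ground set of $M_1\ls M_2$ consists of the vectors whose $\vs{G_2}$-component lies in $E_2$, together with $E_1$ inside $\vs{G_1}$) shows that a lift-join of two targets is a target, with level set the concatenation of the two level sets; hence any iterated lift-join of targets is a target. Using the first fact, the ``if'' direction reduces to checking the targets of $\PG(2,2)$: there are eight of them, of all sizes $0,1,\dots,7$, the only size-$3$ one being a line (dependent, not a basis) and the only size-$4$ one being a hyperoval (no three points collinear, hence not the complement of a basis), so none is a claw or an anticlaw.

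For the ``only if'' direction, let $M$ be claw-free and anticlaw-free. By Theorem~\ref{structure} and Lemma~\ref{lsassoc}, write $M=N_1\ls\dots\ls N_m$ with each $N_j$ a PG-sum, an even-plane matroid, or the complement of a triangle-free matroid. Since $(M_1\ls M_2)|G_1=M_1$, $(M_1\ls M_2)|G_2=M_2$, and induced restrictions compose, each $N_j$ is an induced restriction of $M$ and so is anticlaw-free; thus by the second fact above it suffices to show that an anticlaw-free matroid in each of the three basic classes is a target. For a PG-sum $E=F\sqcup F'$: if $F$ has dimension at least $2$ and $F'$ is nonempty, pick a line $L\subseteq F$ and a point $q\in F'$; then $\cl(L\cup\{q\})$ is a plane meeting $E$ in $L$ together with $q$ (a further point of $E$ there would force two disjoint lines, or a line and a whole plane, into one plane, contradicting $F\cap F'=\emptyset$), an anticlaw. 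So an anticlaw-free PG-sum is a flat or a two-element set, each a target. For $M=N^{c}$ with $N$ triangle-free: $E:=N$ is a cap, and since in a cap any three points are independent while the complement of an independent triple in a plane is a line-plus-a-point, $M$ is anticlaw-free if and only if no plane meets $E$ in exactly three points. For distinct $x,y,z\in E$ the plane $\cl\{x,y,z\}$ then meets $E$ in a four-point cap, necessarily the hyperoval $\{x,y,z,x+y+z\}$, so $x+y+z\in E$; thus $E$ is closed under $(x,y,z)\mapsto x+y+z$, hence is empty or a coset of a subspace, hence a target, so $M=E^{c}$ is a target.

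The remaining and hardest case is $M=(E,G)$ even-plane. Here $\mathds{1}_E$ extends to $f\colon\vs{G}\to\bF_2$ with $f(0)=0$ and $\deg f\le 2$ (the sum of $f$ over every $3$-dimensional subspace of $\vs{G}$ must vanish, which is equivalent to this degree bound). Let $B(x,y)=f(x+y)+f(x)+f(y)$ be the polar form, an alternating bilinear form, with radical $R$. One checks that any $3$-variable quadratic that vanishes at $0$, has a one-dimensional polar radical, and is nonzero on that radical, has nonzero locus equal to an anticlaw in $\PG(2,2)$. Using this: if $R$ has codimension at least $4$, take a $4$-dimensional subspace $V$ meeting $R$ trivially; then $f|_V$ is a nondegenerate quadratic, hence nonzero on some $v\in V$, and $f$ restricted to $U=v^{\perp_B}\cap V$ (which is $3$-dimensional, contains $v$, and has polar radical $\langle v\rangle$) yields an anticlaw restriction of $M$ — a contradiction. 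If $R$ has codimension $2$ but $f|_R\neq 0$, pick $r_0\in R$ with $f(r_0)=1$ and $e_1,e_2$ with $B(e_1,e_2)=1$; then $f$ restricted to $\langle e_1,e_2,r_0\rangle$ again gives an anticlaw — a contradiction. Hence either $f$ is linear, so $E$ is empty or the complement of a hyperplane, or $f|_R=0$ and $f$ descends to a nondegenerate quadratic on $\vs{G}/R\cong\bF_2^{2}$; examining the possibilities for that quadratic shows $E$ is a flat, the difference of a hyperplane and a codimension-$2$ flat inside it, or the complement of a codimension-$2$ flat — all targets.

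Assembling: each $N_j$ is a target, so $M=N_1\ls\dots\ls N_m$ is a target, completing the ``only if'' direction. I expect the even-plane case to be the main obstacle: both the reduction of the even-plane condition to a degree-$\le 2$ polynomial and, more delicately, the construction of a $3$-dimensional flat on which $M$ restricts to an anticlaw whenever the polar form is too large or meets its radical nontrivially. The PG-sum and triangle-free-complement cases, the plane enumeration, and the target/lift-join bookkeeping are comparatively routine.
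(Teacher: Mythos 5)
Your argument is correct, but its route diverges from the paper's in a way that is worth spelling out. Both proofs agree on the framework: targets are closed under induced restriction and under lift-join, so the ``if'' direction reduces to a three-dimensional check, and the ``only if'' direction reduces (via Theorem~\ref{structure}) to showing that anticlaw-free members of each basic class are targets. The PG-sum case and the triangle-free-complement case are handled essentially identically; for the latter, the paper simply cites Lemma~\ref{claw_triangle_free} from [\ref{bkknp}] rather than re-deriving that a claw-free, triangle-free matroid is an order-$1$ Bose--Burton geometry, but your proof of that step is the same argument. The genuine difference is the even-plane case. The paper first reduces to the case where $E$ and $G\setminus E$ both contain a triangle (using Lemma~\ref{claw_triangle_free} and its complement), then fixes a triangle $T\subseteq E$, observes that anticlaw-freeness forces $|E\cap P|=6$ for every plane $P\supseteq T$, counts $|E|=3\cdot 2^{\dim M-2}$, and invokes the equality case of the Bose--Burton theorem to conclude $M$ is an order-$2$ Bose--Burton geometry. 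Your approach instead encodes $\mathds{1}_E$ as a quadratic $f$ with $f(0)=0$ and analyses the radical $R$ of its polar form, extracting an anticlaw on a $3$-dimensional subspace whenever $\mathrm{codim}\,R\ge 4$ or $f\!\restriction_R\neq 0$, and reading off the target structure from the quotient quadratic otherwise. This is a clean, algebraic route that avoids needing a triangle in $E$ and gives a complete classification of anticlaw-free even-plane matroids, at the cost of relying on the characterisation of even-plane matroids as those with $\deg f\le 2$. You state that characterisation without proof; it is true, but it is a genuine lemma, not a triviality -- one direction is immediate (a cubic derivative at $0$ along an independent triple is precisely the parity of $|E\cap P|$), but the converse requires showing that vanishing of $\sum_{v\in V}f(v)$ over all $3$-dimensional \emph{subspaces} already forces vanishing over all $3$-dimensional \emph{cosets}. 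This follows by a double-counting argument: the vanishing propagates from $3$-dimensional to $4$-dimensional subspaces (since each nonzero vector of a $4$-dimensional space lies in an odd number of its $3$-dimensional subspaces), and then each $3$-dimensional coset is the difference of a $4$-dimensional and a $3$-dimensional subspace. You should include this, and also a short verification of the claim about $3$-variable quadratics with a one-dimensional radical on which $f$ is nonzero (it is easily checked: $f(u+v)=f(u)+1$ for the radical generator $v$ forces $|E\cap U|=4$, and a collinear zero set would put a second vector into the radical). With those two lemmas filled in, the argument is complete. On balance, the paper's counting argument is shorter and entirely elementary, while yours is more structural and would extend more readily to questions about higher-degree analogues; both are valid.
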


\section{Preliminaries}

\subsection*{Flats}

We call a subgeometry of $G$ a \emph{flat} of $G$. Flats correspond to vector subspaces, and hence a flat $F$ is equivalently characterised by the property that $F \cup \{0\}$ is closed under addition; recall that we write $\vs{F}$ for this set. We call flats of dimension $2$ and $3$ \emph{triangles} and \emph{planes} respectively. (The word `line' may seem more appropriate for a $2$-dimensional flat, but we go with `triangle' in analogy with graph theory.) A maximal proper flat of $G$ is a \emph{hyperplane}. A $d$-dimensional flat has $2^d-1$ elements.

Note that a triangle of $G$ is equivalently a triple $\{x,y,x+y\} \subseteq G$ with $x \ne y$. As well as $\vs{F}$ being closed under addition, it is also clearly true that $x+y \in F$ for all distinct $x,y \in F$; i.e. no triangle of $G$ contains exactly two elements of $F$. Taking complements, we see that if no triangle of $G$ contains exactly one element of $E$, then $G\del E$ is a flat of $G$. Matroids with this property play a special role in extremal theory. For an integer $t \ge 0$, a \emph{Bose-Burton geometry of order $t$} is a matroid $M = (E,G)$ for which $G\del E$ is a flat of dimension $\dim(G)-t$. These matroids are named after the authors of [\ref{bb}], who proved the following geometric analogue of Tur\'an's theorem.

\begin{theorem}\label{bbt}
	Let $M = (E,G)$ be a matroid. If $t \ge 0$ and $E$ contains no $(t+1)$-dimensional flat of $G$, then $|E| \le 2^{\dim(M)}(1-2^{-t})$. If equality holds, then $M$ is an order-$t$ Bose-Burton geometry.
\end{theorem}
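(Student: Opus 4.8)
The plan is to prove Theorem~\ref{bbt} by a standard induction on $\dim(M)=n$, exploiting hyperplanes of $G$ and the fact that a geometry is partitioned in a useful way by a hyperplane and its complementary point-pencil. The base case $n\le t$ is trivial, since then the bound $2^n(1-2^{-t})\ge 2^n-1$ is at least the total number of points of $G$, and equality forces $E$ to miss a single point, which is a flat of dimension $n-t$ exactly when $n=t$ — or more simply one starts the induction at $n=t$ where $E$ can contain no $(t+1)$-dimensional flat vacuously only if... \emph{[I would actually take the clean base case $n=t$: here $|E|\le 2^n-1 = 2^n(1-2^{-n})$, trivially, but $E$ contains no $(t+1)$-flat automatically since $G$ has dimension $t$; the extremal case needs care, so it is cleaner to induct starting from small $n$ and treat $n\le t$ by noting $2^n(1-2^{-t})\ge 2^n - 1 \ge |E|$ with the complement of a maximum $E$ being a point, i.e. a $0=n-t$ dimensional flat when $n=t$].}

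\medskip

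\noindent\textbf{Main argument.} Assume $n>t$ and the result holds for all matroids of smaller dimension. Pick any element $e\in E$ (if $E=\emptyset$ the bound is trivial), and let $H$ range over the $2^{n-1}$ hyperplanes of $G$ not containing $e$; every point of $G\del\{e\}$ pairs with $e$ to determine a unique point on each... more precisely, I would instead fix a hyperplane-pencil: choose a point $p$ of $G$, let $H_1,\dots,H_{2^{n-1}+1}$? No — the clean tool is: the hyperplanes of $G$ through a fixed codimension-$2$ flat, or simply a single hyperplane $H$ together with the projection $\pi\colon G\del H\to H$ sending $x\mapsto x+p$ for the unique point $p$ of... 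Let me state the device I will actually use. Fix any point $p\in G$. The hyperplanes of $G$ split into those containing $p$ and those not; instead, use the three ``parallel classes'': for a hyperplane $H$ with $p\notin H$, $G$ is the disjoint union of $H$, $\{p\}$, and $p+\vs{H}\setminus\{0\}$, and the map $x\mapsto x+p$ is a bijection between the last set and $H$. Then $|E| = |E\cap H| + |E\cap\{p\}| + |E\cap (p+H)|$, and I bound $|E\cap H|$ by induction ($H$ has dimension $n-1$ and $E\cap H$ contains no $(t+1)$-flat of $H$, a fortiori), giving $|E\cap H|\le 2^{n-1}(1-2^{-t})$. The remaining piece $|E\cap\{p\}|+|E\cap(p+H)| \le 2^{n-1}$ trivially. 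Summing gives $|E|\le 2^{n-1}(1-2^{-t}) + 2^{n-1}$, which is \emph{not} the desired bound — so this crude split loses a factor.

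\medskip

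\noindent\textbf{The correct approach.} The honest proof averages over hyperplanes. Each $(t+1)$-dimensional flat is what we forbid, so consider the $t$-dimensional flats: actually the cleanest route is a direct double count. For each hyperplane $H$ of $G$, the induced matroid $M|H$ has no $(t+1)$-flat, so $|E\cap H|\le 2^{n-1}(1-2^{-t})$ by induction. There are $2^n-1$ hyperplanes, and each point of $G$ lies in exactly $2^{n-1}-1$ of them; hence $\sum_H |E\cap H| = (2^{n-1}-1)|E| \le (2^n-1)\cdot 2^{n-1}(1-2^{-t})$, giving $|E|\le \frac{2^n-1}{2^{n-1}-1}\cdot 2^{n-1}(1-2^{-t})$. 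Since $\frac{2^n-1}{2^{n-1}-1} = 2\cdot\frac{2^{n-1}-\tfrac12}{2^{n-1}-1} > 2$, this also slightly overshoots $2^n(1-2^{-t})$ and so does not close the induction either. The resolution, and the step I expect to be the genuine obstacle, is to pick the hyperplane $H$ cleverly rather than averaging: choose $H$ to \emph{maximize} $|E\cap H|$, equivalently to contain as much of $E$ as possible, and argue that the complementary point-pencil $G\del H$ (which has $2^{n-1}$ points) meets $E$ in at most $2^{n-1}\cdot 2^{-t}\cdot$? No. The truly standard proof of Bose--Burton is: let $F$ be a largest flat of $G$ disjoint from $E$ — wait, $E$ may meet every hyperplane.

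\medskip

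\noindent\textbf{Cleanest correct plan.} Induct on $t$. For $t=0$: $E$ contains no $1$-dimensional flat, i.e. no point, so $E=\emptyset$, matching $2^n\cdot 0$, and $G\del E=G$ is a flat of dimension $n-0$. For the inductive step, suppose $E$ has no $(t+1)$-flat. If $E=G$ then $G$ itself is a $(t+1)$-flat (as $n>t$), contradiction, so there is a point $p\notin E$. I want a hyperplane $H$ with $p\notin H$ and with $E\cap H$ still having no $t$-flat — then induction gives $|E\cap H|\le 2^{n-1}(1-2^{-(t-1)})$ and, using the bijection $x\mapsto x+p$ from $G\del(H\cup\{p\})$ onto $H$ together with the observation that for each point $q\in H$ at most one of $q,\,q+p$ lies in $E$ whenever... this pairing argument is exactly where the $(t+1)$-flat hypothesis must be converted into a $t$-flat statement on $H$: if both $q\in E$ and $q+p\in E$ then the line $\{p,q,q+p\}$ has two points in $E$; collecting such $q$ over a would-be $t$-flat of $H$ inside $E$ reconstructs a $(t+1)$-flat of $G$ in $E$, a contradiction. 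Making this precise — choosing $H$ and showing $E\cap H$ has no $t$-flat, then combining the induction bound on $H$ with the pairing bound $|E\cap(G\del H)|\le 2^{n-1}-|E\cap H|$ suitably — is the crux, and also where the equality analysis lives: equality must propagate so that $E\cap H$ is an order-$(t-1)$ Bose--Burton geometry in $H$, and the pairing is perfectly tight, which forces $G\del E$ to be a flat of codimension $t$. I expect the bookkeeping in this pairing/selection step, and correctly threading the equality case through it, to be the main obstacle; the rest is routine.
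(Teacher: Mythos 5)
The paper does not give a proof of Theorem~\ref{bbt}; it quotes it as a known result of Bose and Burton from [\ref{bb}], so there is no internal argument to compare against. Judged on its own, your proposal does not close.

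You correctly diagnose that your first two attempts (a single hyperplane split, and averaging over \emph{all} hyperplanes) each overshoot by a constant factor. The third, ``cleanest correct'' plan contains two genuine errors. First, the reconstruction you describe is wrong: if $F \subseteq E \cap H$ is a $t$-flat with $F + p \subseteq E$, then $\cl(F \cup \{p\})$ does have dimension $t+1$, but it contains the point $p \notin E$, so it is not a $(t+1)$-flat inside $E$ and yields no contradiction. Second, the hyperplane $H$ you want --- one with $p \notin H$ and with $E \cap H$ containing no $t$-flat --- need not exist. Take the extremal example $E = G \setminus K$ with $K$ a flat of dimension $n-t$ (where $n = \dim G$) and $p \in K$. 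For any hyperplane $H$ not containing $p$ we have $K \not\subseteq H$, so $\dim(K \cap H) = n-1-t$ and $E \cap H = H \setminus (K \cap H)$ is again an order-$t$ Bose--Burton geometry inside $H$, which does contain $t$-flats. The hypothesis of your inductive step thus fails on the very configuration you are trying to characterise.

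The repair is a small modification of your averaging attempt rather than a new idea: fix $p \notin E$ (such $p$ exists since $E = G$ would itself contain a $(t+1)$-flat once $n > t$) and average only over the $2^{n-1}$ hyperplanes \emph{not} containing $p$. Each point of $E$ lies on exactly $2^{n-2}$ of them, while $|E\cap H| \le 2^{n-1}(1-2^{-t})$ by induction on $n$, so
\[
2^{n-2}\,|E| \;=\; \sum_{H\not\ni p}|E\cap H| \;\le\; 2^{n-1}\cdot 2^{n-1}(1-2^{-t}),
\]
which is exactly $|E|\le 2^{n}(1-2^{-t})$, with no loss. For equality, every such $H$ must be extremal, so inductively $H\setminus E$ is a flat of codimension $t$ in $H$; using $p\in G\setminus E$ together with these flats one checks that $G\setminus E$ is closed under addition, hence a flat, and its size forces its dimension to be $n-t$.
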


If we know not just that every triangle $T$ satisfies $|T \cap F| \ne 2$, but that $|T \cap F|$ is always odd, then $F$ must be either equal to $G$ or a hyperplane of $G$, since otherwise $F$ is a flat of codimension at least $2$, so its complement contains a triangle.

\subsection*{Lift-joins}

A \emph{coset} of a flat $F$ in a flat $G \supseteq F$ is any proper translate of the subspace $\vs{F}$ of $\vs{G}$, i.e. a set of the form $F = x + \vs{F}$ for some $x \in G \del F$. The set $G \del F$ partitions into cosets of $F$. We do not consider the set $F$ itself a coset. 

For disjoint projective geometries $G_1,G_2$, write $G_1 \oplus G_2$ for the projective geometry $G = (\vs{G_1} \oplus \vs{G_2}) \del \{0\}$, where the second `$\oplus$' denotes the vector space direct sum. Note that $\dim(G) = \dim(G_1) + \dim(G_2)$; we naturally identify each $\vs{G_i}$ with its copy in $\vs{G_1} \oplus \vs{G_2}$, and accordingly think of $G_1$ and $G_2$ as disjoint subgeometries of $G_1 \oplus G_2$. We can thus define the lift-join of matroids $M_1 = (E_1,G_1)$ and $M_2 = (E_2,G_2)$ slightly more formally by \[M_1 \ls M_2 = (E_1 \cup (E_2 + \vs{G_1}),G_1 \oplus G_2).\]  Note that $M_i = (M_1 \ls M_2)|G_i$ for each $i \in \{1,2\}$. A set $Y \subseteq G$ is \emph{mixed} with respect to a matroid $M = (E,G)$ if $Y$ intersects both $E$ and $G\del E$; otherwise it is \emph{unmixed}.  We now prove a lemma that will allow us to recognise lift-joins.

\begin{lemma}\label{rlj}
	Let $M = (E,G)$ be a matroid and $F$ be a flat of $G$. The following are equivalent. 
	\begin{enumerate}
		\item\label{rlj1} $F$ has no mixed cosets with respect to $M$. 
		\item\label{rlj2} $M$ is a lift-join of $M|F$ and another induced submatroid of $M$.
		\item\label{rlj3} $M$ is the lift-join of $M|F$ and $M|J$ for each maximal flat $J$ of $G$ that is disjoint from $F$.
	\end{enumerate}
\end{lemma}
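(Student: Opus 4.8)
The plan is to prove a cycle of implications $(\ref{rlj3}) \Rightarrow (\ref{rlj2}) \Rightarrow (\ref{rlj1}) \Rightarrow (\ref{rlj3})$, since the first of these is immediate from the definitions (any maximal flat $J$ disjoint from $F$ has dimension $\dim(G) - \dim(F)$, and $G = F \oplus J$ as projective geometries, so $M = M|F \ls M|J$ is a genuine instance of a lift-join), and the second is immediate by definition of (\ref{rlj2}). So the real content is $(\ref{rlj2}) \Rightarrow (\ref{rlj1})$ and $(\ref{rlj1}) \Rightarrow (\ref{rlj3})$.

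For $(\ref{rlj2}) \Rightarrow (\ref{rlj1})$: suppose $M = M|F \ls N$ where $N = (E_2, G_2)$ and $G = G_1 \oplus G_2$ with $G_1$ identified with $F$ (up to isomorphism we may take $M|F = (E_1, F)$ with $E = E_1 \cup (E_2 + \vs{F})$). A coset of $F$ in $G$ has the form $C = x + \vs{F}$ for some $x \in G \del F$; writing $x = x_1 + x_2$ with $x_1 \in \vs{F}$, $x_2 \in \vs{G_2} \del \{0\}$, we get $C = x_2 + \vs{F}$, the set of points whose $G_2$-component equals the fixed point $\bar{x}_2 \in G_2$. By the formula for $E$, a point $y \in C$ lies in $E$ if and only if $\bar{x}_2 \in E_2$; that is, $C \subseteq E$ if $\bar x_2 \in E_2$ and $C \cap E = \emptyset$ otherwise. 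Either way $C$ is unmixed, establishing (\ref{rlj1}).

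For $(\ref{rlj1}) \Rightarrow (\ref{rlj3})$: fix a maximal flat $J$ disjoint from $F$; then $\dim(J) = \dim(G) - \dim(F)$ and every point of $G$ lies in exactly one coset of $F$ that meets $J$, so each coset $C$ of $F$ contains a unique point $j(C) \in J$, and the map $C \mapsto j(C)$ is a bijection from the cosets of $F$ onto $J$. We must check that $E = E_F \cup (E_J + \vs{F})$, where $E_F = E \cap F$ and $E_J = E \cap J$; this identifies $M$ with $M|F \ls M|J$ under the isomorphism $G \cong F \oplus J$. Take $y \in E$. If $y \in F$ then $y \in E_F$. Otherwise $y$ lies in the coset $C$ through $j(C) \in J$, and since $C$ is unmixed and contains $y \in E$ we get $C \subseteq E$, in particular $j(C) \in E \cap J = E_J$, and $y \in j(C) + \vs{F} \subseteq E_J + \vs{F}$. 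Conversely, if $y \in E_F$ then $y \in E$; and if $y \in E_J + \vs{F}$, say $y \in j + \vs{F}$ with $j \in E_J$, then either $y = j \in E$, or $y$ lies in the coset $C$ with $j(C) = j \in E$, which is then unmixed with a point in $E$, hence $C \subseteq E$ and $y \in E$. This gives the required equality.

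I do not anticipate a serious obstacle here: the only thing requiring a little care is bookkeeping the identification of cosets of $F$ with points of a complementary maximal flat $J$ (including the degenerate handling of $0$ versus actual points, and the fact that $\vs{F} \cap \vs{J} = \{0\}$ forces $|J| = 2^{\dim G - \dim F} - 1$ so that the cosets of $F$ are exactly indexed by $J$). The conceptual crux is simply recognising that ``no mixed coset'' is precisely the combinatorial shadow of the defining formula $E = E_1 \cup (E_2 + \vs{G_1})$ of a lift-join, once one observes that the $G_2$-component (equivalently, the coset of $F$) of a point determines its membership in $E$ outside of $F$ itself.
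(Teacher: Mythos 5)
Your proof is correct and follows essentially the same route as the paper: $(\ref{rlj1}) \Rightarrow (\ref{rlj3})$ by reading off $E = (E\cap F) \cup ((E\cap J) + \vs{F})$ from the fact that each coset of $F$ meets $J$ exactly once and is unmixed; $(\ref{rlj3}) \Rightarrow (\ref{rlj2})$ trivially; and $(\ref{rlj2}) \Rightarrow (\ref{rlj1})$ by observing that the lift-join formula for $E$ forces every coset of $F$ to be full or empty. The only superficial difference is that you carry out the $G_1 \oplus G_2$ coordinate bookkeeping explicitly, while the paper argues more briefly that two points $x,y$ of a common coset satisfy $x+y \in \vs{F}$, so $x \in (E\cap K)+\vs{F}$ forces $y \in (E\cap K)+\vs{F}$.
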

\begin{proof}
	Suppose that (\ref{rlj1}) holds. Let $J$ be a maximal flat of $G$ that is disjoint from $F$; note that each coset of $F$ intersects $J$ in exactly one element. For each $x \in J$, the fact that the coset $\vs{F}+x$ is unmixed implies that $(\vs{F}+x) \subseteq E$ if and only if $x \in E$, and $(\vs{F}+x) \cap E = \varnothing$ if and only if $x \notin E$. Therefore $E = (E \cap F) \cup \bigcup_{x \in E\cap J}(\vs{F}+x) = (E \cap F) \cup ((E \cap J) + \vs{F})$. It follows that $M = (M|F) \ls (M|J)$. Thus (\ref{rlj1}) implies (\ref{rlj3}).
	
	Clearly (\ref{rlj3}) implies (\ref{rlj2}); suppose that (\ref{rlj2}) holds, so $M$ is the lift-join of $M|F$ and $M|K$ for some flat $K$ of $G$ that is disjoint from $F$. Thus $E = (E \cap F) \cup ((E \cap K)+\vs{F})$. Let $x,y \in G \del F$ belong to the same coset of $F$ in $G$, so $x+y \in F$. If $x \in E$ then $x \in (E \cap K) + \vs{F}$ and so $y \in (E \cap K) + \vs{F} \subseteq E$. Therefore $x \in E$ implies that $y \in E$; it follows that $F$ has no mixed cosets. 
\end{proof}

If $M = (E,G) $ is a matroid and $F$ is a nonempty proper flat of $G$ that has no mixed cosets with respect to $M$, then we call $F$ a \emph{decomposer} of $M$, and say that \emph{$F$ decomposes $M$}. The above lemma shows that $M$ is a lift-join of two smaller matroids if and only if $M$ has a decomposer. 

Note that if $F$ decomposes $M$ and $F'$ decomposes $M|F$, then $F'$ also decomposes $M$, since each coset of $F$ partitions into cosets of $F'$. %The above lemma implies that being a lift join is equivalent to having a decomposer.

\begin{lemma}\label{lsassoc}
	$\ls$ is associative.
\end{lemma}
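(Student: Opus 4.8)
The plan is to reduce the statement to a routine set computation after fixing a single ambient space in which all the relevant geometries live. Associativity of $\ls$ is essentially just associativity of the vector-space direct sum, packaged together with the distributive and associative laws for the Minkowski sum $+$ on subsets of $\vs{G}$.

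First I would set up a canonical model. Given $M_i = (E_i,G_i)$ for $i = 1,2,3$, form the external direct sum $V = \vs{G_1} \oplus \vs{G_2} \oplus \vs{G_3}$ and identify each $\vs{G_i}$ with its image in $V$; these images are pairwise disjoint subspaces, so the $G_i$ become pairwise disjoint flats of the projective geometry $G = V \del \{0\}$. Since the vector-space direct sum is associative, $(\vs{G_1}\oplus\vs{G_2})\oplus\vs{G_3}$ and $\vs{G_1}\oplus(\vs{G_2}\oplus\vs{G_3})$ are both canonically $V$; consequently, choosing the disjoint copies required in the definition of $\ls$ inside this model, the ambient geometries of $(M_1 \ls M_2)\ls M_3$ and $M_1 \ls (M_2 \ls M_3)$ are literally the same geometry $G$, of dimension $\dim(G_1)+\dim(G_2)+\dim(G_3)$, and the flat $G_1\oplus G_2$ appearing on the left coincides with $\cl(G_1\cup G_2)$, while $G_2\oplus G_3$ on the right is $\cl(G_2\cup G_3)$. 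So it remains to check that the two ground sets agree.

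For this I would use three elementary properties of the set-sum: $0 \in \vs{F}$ for every flat $F$ (so $E + \vs{F} \supseteq E$); $+$ distributes over $\cup$; and $+$ is associative, with $\vs{F}+\vs{F'} = \vs{F \oplus F'}$ whenever $F,F'$ are disjoint flats (two subspaces meeting only in $0$ have Minkowski sum equal to their direct sum). Unwinding the definition of $\ls$ twice gives, on the one hand,
\[
(M_1 \ls M_2)\ls M_3 = \bigl(E_1 \cup (E_2 + \vs{G_1}) \cup (E_3 + \vs{G_1 \oplus G_2}),\, G\bigr),
\]
and, on the other hand, using distributivity and then associativity of $+$,
\[
M_1 \ls (M_2 \ls M_3) = \bigl(E_1 \cup (E_2 + \vs{G_1}) \cup (E_3 + \vs{G_2} + \vs{G_1}),\, G\bigr).
\]
Since $\vs{G_2}+\vs{G_1} = \vs{G_1}+\vs{G_2} = \vs{G_1 \oplus G_2}$, the two ground sets are equal, which is what we want.

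I do not expect a real obstacle here; the only point needing genuine care is the first step, namely checking that the copies of $G_1,G_2,G_3$ (and the intermediate sums $G_1\oplus G_2$ and $G_2\oplus G_3$) can be chosen consistently so that both iterated lift-joins sit inside the same $G$. Once that identification is fixed, the ground-set equality is immediate from associativity and distributivity of the Minkowski sum.
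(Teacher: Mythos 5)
Your proof is correct and follows essentially the same route as the paper's: fix the common ambient geometry $G_1\oplus G_2\oplus G_3$, unwind both iterated lift-joins, and observe that distributivity of $+$ over $\cup$ together with $\vs{G_1}+\vs{G_2}=\vs{G_1\oplus G_2}$ makes the two ground sets coincide. The only difference is that you spell out the identification of ambient spaces and the algebraic laws of the Minkowski sum a bit more explicitly, while the paper treats those as tacit.
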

\begin{proof}
	Let $M_i = (E_i,G_i)$ for $i \in \{1,2,3\}$ with the $G_i$ disjoint. Both the matroids $(M_1 \ls M_2) \ls M_3$ and $M_1 \ls (M_2 \ls M_3)$ have ambient space $G = G_1 \oplus G_2 \oplus G_3$. The first has ground set 
	\[
		(E_1 \cup (E_2 + \lvs{G_1})) \cup (E_3 + \lvs{G_1 \oplus G_2}) = E_1 \cup (E_2 + \lvs{G_1}) \cup (E_3 + \lvs{G_1} + \lvs{G_2}).
	\]
	The second has ground set 
	\[
	E_1 \cup ((E_2 \cup (E_3 + \vs{G_2}))+\vs{G_1}) = E_1 \cup (E_2 + \vs{G_1}) \cup (E_3 + \vs{G_1} + \vs{G_2}),
	\]
	giving the lemma. 
\end{proof}

To show that lift-joins preserve the property of being claw-free, we prove something more general. Recall that $M = (E,G)$ is full-rank if $G = \cl(E)$.

\begin{lemma}
 	Let $N$ be a full-rank matroid of dimension at least $3$, containing no four distinct elements that sum to zero. If $M_1$ and $M_2$ are $N$-free matroids, then so is $M_1 \ls M_2$. 
\end{lemma}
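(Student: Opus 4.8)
The plan is to proceed by contradiction: suppose $M_1 \ls M_2$ has an induced restriction $M_1 \ls M_2 | F$ isomorphic to $N$, where $F$ is a flat of $G = G_1 \oplus G_2$ of dimension $\dim(N) \ge 3$. I want to leverage the hypotheses on $N$ — full-rank, dimension $\ge 3$, and no four distinct elements summing to zero — to show that $F$ must in fact be disjoint from one of the two distinguished flats $G_1$, $G_2$ of $G$, or at least that the restriction of $M_1 \ls M_2$ to $F$ can be realised inside a copy of $M_1$ or of $M_2$ (up to the coset-translation structure), contradicting that $M_1$ and $M_2$ are $N$-free.

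First I would set up coordinates: every element of $G$ is $g_1 + g_2$ with $g_i \in \vs{G_i}$, and membership in the ground set $E = E_1 \cup (E_2 + \vs{G_1})$ depends only on the pair $(g_1,g_2)$ via: $g_1 + g_2 \in E$ iff either $g_2 = 0$ and $g_1 \in E_1$, or $g_2 \ne 0$ and $g_2 \in E_2$ (the $\vs{G_1}$-translate making the $g_1$-coordinate irrelevant once $g_2 \ne 0$). The key structural observation is Lemma~\ref{rlj}: $G_1$ is a decomposer, so $G_1$ has no mixed cosets in $M_1 \ls M_2$; more to the point, the restriction of $M_1 \ls M_2$ to any flat $F$ is itself a lift-join along $F \cap G_1$ whenever that intersection is a flat with no mixed cosets in $M|F$ — and it always is, being the restriction of a decomposer. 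So $M|F$ has $F \cap G_1$ as a decomposer (when nonempty and proper in $F$), hence $M|F$ is a nontrivial lift-join. Now I invoke the hypothesis on $N$: I claim that a full-rank matroid of dimension $\ge 3$ with no four distinct elements summing to zero cannot be written as a nontrivial lift-join $N_1 \ls N_2$. Indeed, in $N_1 \ls N_2$ with both $G_i$ of dimension $\ge 1$: if $\dim(G_1) \ge 1$ and $\dim(G_2) \ge 2$, pick distinct $y, y' \in G_2$ with $y + y' \in G_2$ — if $y, y' \in E_2$ then $x + y, x + y' \in E$ for any $x \in \vs{G_1}$... I instead use that full-rank forces $E_1$ to span $G_1$ and $E_2 + \vs{G_1}$ together with $E_1$ to span $G$; then choosing $x_1, x_1' \in E_1 \cap G_1$ distinct with $x_1 + x_1' \in G_1$, and noting $x_1 + (x_2 + x_1'')$-type sums, I produce four distinct elements of $E$ summing to $0$. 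The cleanest version: if $F \cap G_1$ is a nonempty proper decomposer of $M|F$ and $M|F \cong N$ is full-rank, then $N$ has a decomposer, so $N = N_1 \ls N_2$ nontrivially with both factors full-rank of dimension $\ge 1$; and since $\dim N \ge 3$ one factor has dimension $\ge 2$, so (using full-rank to get enough ground-set elements) we can find a triangle $\{a, b, a+b\}$ inside the ground set of that factor together with a fourth ground-set element of the other factor, and translate to get four distinct elements of the ground set of $N$ summing to zero — contradiction.

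So $F \cap G_1$ is either empty, or all of $F$. If $F \cap G_1 = F$ then $F \subseteq G_1$ and $M|F = M_1|F$, contradicting that $M_1$ is $N$-free. If $F \cap G_1 = \varnothing$, then each coset of $G_1$ in $G$ meets $F$ in at most one point, and the projection $\pi\colon G \del G_1 \to G_2$ sending $g_1 + g_2 \mapsto g_2$ is injective on $F$; moreover for $g_1 + g_2 \in F$ (so $g_2 \ne 0$), we have $g_1 + g_2 \in E$ iff $g_2 \in E_2$. Thus $\pi$ is an isomorphism from the flat $F$ onto a flat $F'$ of $G_2$ (it's a linear injection on the subspace $\vs{F}$, as $\vs{F} \cap \vs{G_1} = \{0\}$) carrying $E \cap F$ to $E_2 \cap F'$, so $M|F \cong M_2|F'$, contradicting that $M_2$ is $N$-free. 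The remaining case is $F \cap G_1$ nonempty and proper in $F$, which we excluded above. This completes the argument; applying it with $N = I_3$ (which is full-rank, $3$-dimensional, and whose four nonzero... has only three elements, so vacuously no four summing to zero) recovers that $\ls$ preserves claw-freeness.

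The main obstacle I anticipate is the claim that a full-rank matroid of dimension $\ge 3$ with no four distinct ground-set elements summing to zero admits no nontrivial lift-join decomposition — this is where the three hypotheses on $N$ must all be used, and I must be careful that "full-rank" gives me enough ground-set elements in each factor to actually produce a triangle (a priori a factor could be full-rank with a very sparse ground set, but dimension-$\ge 1$ full-rank already forces at least one ground-set element, and a little more care with the spanning condition across the lift-join — each coset of $G_1$ contributes its full image — yields the needed triangle-plus-point configuration). I should double-check the edge case where both factors have dimension exactly... no, dimension $\ge 3$ for $N$ forces one factor to have dimension $\ge 2$, and a full-rank matroid of dimension $\ge 2$ contains two distinct ground-set elements whose sum is a third point of the geometry, though that third point need not be in the ground set — so the four-summing-to-zero configuration really needs the translate $x + \{a, b, a+b\}$ with $x$ a ground-set element of the other factor, giving $\{a, b, a+b, x\} \cup \dots$; I'd reorganize this as: take $a, b \in E_{\text{big}}$ distinct, take $x \in E_{\text{small}}$, then $\{a, x\}$... the honest construction is $\{a, b, x, a+b+x\}$ where $a + b + x$ lies in $E$ because its $G_2$-component is $a+b \in \vs{G_2}$ nonzero and equals... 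I'll need the big factor's ground set to be closed enough, or rather I pick $a, b$ in the big factor with $a + b$ also in the ground set, which full-rank plus dimension $\ge 2$ does not immediately give — so the correct route is via a triangle entirely inside one factor's ground set, which requires that factor to contain a triangle in its ground set; since it is full-rank of dimension $\ge 2$, if its ground set contains no triangle it is a cap, but a full-rank cap of dimension $\ge 2$ still has at least $\dim$ elements and... ultimately the four elements summing to zero come from $\{a + x, b + x, a + y, b + y\}$ with $a, b$ distinct in the big ground set, $x, y$ distinct in the small ground set — these four are distinct, lie in $E$ (as $a + x$ has nonzero $G_2$-part when $x \in G_2 \setminus 0$, wait this needs the small factor to be $G_2$), and sum to zero. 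That is the configuration to nail down, and getting the bookkeeping of which factor plays which role exactly right is the one genuinely fiddly point.
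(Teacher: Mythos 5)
Your proposal follows essentially the same route as the paper's proof: $F \not\subseteq G_1$ since $M_1$ is $N$-free; the $F \cap G_1 = \varnothing$ case is handled by the projection isomorphism onto a flat of $G_2$; and in the remaining cases the unmixed-coset structure of $G_1$ yields four distinct elements of $E \cap F$ summing to zero. Where the paper produces those four elements directly by case-splitting on $\dim(F \cap G_1)$, you reorganize the step into an intermediate lemma: a full-rank matroid of dimension at least $3$ whose ground set has no four distinct elements summing to zero admits no decomposer. That abstraction is correct and cleanly isolates the role of the hypothesis on $N$; however, your sketch of this intermediate lemma has genuine gaps that you should repair before it is a proof.

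First, you assert that if $N = N_1 \ls N_2$ is nontrivial and $N$ is full-rank, then both $N_1$ and $N_2$ are full-rank. This is false for $N_1$, which may well have empty ground set; only $N_2$ is forced to be full-rank, since (when $E_2 \ne \varnothing$) one has $\vs{\cl(E)} = \vs{G_1} \oplus \vs{\cl(E_2)}$. In particular your attempt to find a triangle inside a factor's ground set is not available. Second, in your configuration $\{a+x,\,b+x,\,a+y,\,b+y\}$ you take $x, y$ from a factor's ground set, but they do not need to lie in any ground set: take $a, b \in E_2$ distinct, $x = 0$, and $y \in G_1$ arbitrary nonzero. Each of the four elements then has $G_2$-component in $\{a,b\} \subseteq E_2$, hence lies in $\vs{G_1} + E_2 \subseteq E$, and the four are distinct and sum to zero. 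This works whenever $\dim(G_2) \ge 2$, which together with $N_2$ full-rank gives $|E_2| \ge 2$. The residual case $\dim(G_2) = 1$ still needs its own construction: there $E_2 = \{y_0\}$ is a single point, $\dim(G_1) \ge 2$, and $\vs{G_1} + y_0 \subseteq E$ has at least four elements, so $\{y_0,\,y_0+u,\,y_0+v,\,y_0+u+v\}$ works for distinct $u, v \in G_1$. Note that the relevant distinction is the \emph{role} of the two factors in the noncommutative operation $\ls$, not their size, which is where your ``big/small'' bookkeeping went astray. These two constructions correspond exactly, after unwinding the decomposer language, to the paper's cases $\lvert F \cap G_1 \rvert = 1$ and $\dim(F \cap G_1) \ge 2$. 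Once these points are patched, the proposal is a correct proof.
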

\begin{proof}
	Let $M_i = (E_i,G_i)$ for $i \in \{1,2\}$ and let $M = M_1 \ls M_2 = (E,G)$. By Lemma~\ref{rlj}, no coset of $G_1$ is mixed with respect to $M$. Let $F$ be a flat of $G$ for which $M|F \cong N$. Since $M_1$ is $N$-free, we have $F \not\subseteq G_1$; since $\cl(E \cap F)$ has dimension $\dim(F) > \dim(F \cap G_1)$, we have $(E \cap F)  \del G_1 \ne \varnothing$; let $x \in (E \cap F) \del G_1$. 
	
	If $\dim(F \cap G_1) \ge 2$ then let $v,w \in F \cap G_1$ be distinct. Since $\vs{G_1}+x$ is unmixed and intersects $E$, we have \[E \supset \vs{G_1} + x \supset \{x,x+w,x+v,x+v+w\}.\] These four distinct elements all belong to $E \cap F$; this is a contradiction, since they sum to zero. Thus $\dim(F \cap G_1) \le 1$. 
	
	If $|F \cap G_1| = 1$ then let $w$ be its element; since $\dim(E \cap F) \ge 3$ there is some $y \in (F \cap E) \del \cl(\{x,w\})$. Now $y \in E \del G_1$ and so $\vs{G_1}+ y\subseteq E$; it follows that $\{x,x+w\} \subseteq E$ and $\{y,y+w\} \subseteq E$. But $x,x+w,y,y+w$ are distinct elements of $F \cap E$ with sum zero, again a contradiction. 
	
	If $F \cap G_1 = \varnothing$ then, for each element $z$ of $F$, the coset $\vs{G_1}+z$ intersects $F$ in exactly one element $z'$ and moreover, the map $\psi \colon z \mapsto z'$ is a linear injection from $F$ to $G_2$. Since $G_1$ has no mixed cosets, we have $\psi(z) \in E$ if and only if $z \in E$, and so $M|F$ and $M|\psi(F)$ are isomorphic. The latter is an induced restriction of the $N$-free matroid $M|G_2$, giving a contradiction to $M|F \cong N$.  
\end{proof}

For $t \ge 3$, the matroid $I_t$ satisfies the hypotheses of the above lemma. This gives the following. 

\begin{corollary}\label{claw_free_lift_join}
	If $t \ge 3$ is an integer then the class of $I_t$-free matroids is closed under lift-joins. 
\end{corollary}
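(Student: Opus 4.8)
The plan is to obtain this as an immediate consequence of the (unnamed) lemma proved just above, applied with $N = I_t$. All that needs checking is that $I_t$ satisfies the three hypotheses of that lemma: that it is full-rank, that it has dimension at least $3$, and that it contains no four distinct elements summing to zero.

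Recall that $I_t$ is, by definition, the matroid $(B,G)$ in which $B$ is a basis of a $t$-dimensional projective geometry $G$. Since $B$ spans $G$ we have $\cl(B) = G$, so $I_t$ is full-rank; and $\dim(I_t) = t \ge 3$ by hypothesis. These dispose of the first two requirements.

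For the third, note that the ground set of $I_t$ is exactly $B$, and $\vs{B}$ is a linearly independent subset of $\vs{G}$. When $t = 3$ the set $B$ has only three elements, so the condition ``no four distinct elements summing to zero'' holds vacuously; when $t \ge 4$, any four distinct elements of $B$ are linearly independent, and hence their sum is a nonzero vector, so in particular it is not $0$. Thus $I_t$ contains no four distinct elements summing to zero.

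Having verified all three hypotheses, the lemma gives that whenever $M_1$ and $M_2$ are $I_t$-free, so is $M_1 \ls M_2$; that is, the class of $I_t$-free matroids is closed under lift-joins. I do not expect any genuine obstacle here: the combinatorial substance lives entirely in the preceding lemma, and the corollary is purely a matter of observing that $I_t$ meets its hypotheses — the only mildly delicate point being the last condition, which is nonetheless immediate from the linear independence of a basis.
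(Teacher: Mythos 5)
Your proof is correct and takes the same route as the paper, which simply asserts that $I_t$ satisfies the hypotheses of the preceding lemma; you have merely spelled out the verification, and the key point about a basis containing no four elements with zero sum is handled correctly (vacuous when $t=3$, and a consequence of linear independence when $t \ge 4$).
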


The next two lemmas establish some more properties of lift-joins. 

\begin{lemma}\label{lift_join_iso}
	If $F$ is a decomposer of a matroid $M = (E,G)$, and $K,K'$ are flats of $G$ disjoint from $F$ for which $\cl(F \cup K) = \cl(F \cup K')$, then there is an isomorphism $\psi$ from $M|K$ to $M|K'$ for which each $x \in K$ satisfies $x + \psi(x) \in \vs{F}$. 
\end{lemma}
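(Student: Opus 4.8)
The plan is to write down $\psi$ explicitly as the map sending $x \in K$ to the unique point of $K'$ lying in the coset $\vs{F}+x$, and then to verify in turn that this map is well-defined, that it is an isomorphism of projective geometries, and that it respects membership in $E$.

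First I would record the linear algebra. Set $H = \cl(F\cup K) = \cl(F \cup K')$. Since $K$ is disjoint from $F$, the subspaces $\vs{F}$ and $\vs{K}$ of $\vs{H}$ meet trivially and span $\vs{H}$, so $\vs{H} = \vs{F} \oplus \vs{K}$; the same argument applied to $K'$ gives $\vs{H} = \vs{F} \oplus \vs{K'}$, and in particular $\dim(K) = \dim(K') = \dim(H) - \dim(F)$. It follows that each of the $2^{\dim(K')} - 1$ proper cosets of $F$ in $H$ meets $K'$ in exactly one point. For $x \in K$ we have $x \notin F$, so $\vs{F}+x$ is such a coset; define $\psi(x)$ to be the unique element of $K' \cap (\vs{F}+x)$. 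Then $\psi(x) \neq 0$, so $\psi(x) \in K'$, and $x + \psi(x) \in \vs{F}$, which is the required property.

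Next I would identify $\psi$ with the restriction to $K$ of the linear projection $\pi \colon \vs{H} \to \vs{K'}$ with kernel $\vs{F}$ coming from $\vs{H} = \vs{F} \oplus \vs{K'}$: for $x \in K$ we have $x + \pi(x) \in \vs{F}$ and $\pi(x) \in \vs{K'}$, so $\pi(x)$ is the point of $K' \cap (\vs F + x)$, i.e. $\pi(x) = \psi(x)$. Since $\pi$ is linear with $\ker(\pi) \cap \vs{K} = \vs{F} \cap \vs{K} = \{0\}$ and $\dim(K) = \dim(K')$, the restriction $\pi|_{\vs K}$ is a linear isomorphism onto $\vs{K'}$, so $\psi$ is an isomorphism of projective geometries from $K$ to $K'$. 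Finally I invoke the decomposer hypothesis: $F$ has no mixed cosets with respect to $M$, so $\vs{F}+x$ is unmixed for each $x \in K$; since both $x$ and $\psi(x)$ lie in $\vs{F}+x$, we get $x \in E$ if and only if $\vs{F}+x \subseteq E$ if and only if $\psi(x) \in E$. Hence $\psi$ is an isomorphism from $M|K$ to $M|K'$.

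I do not expect a genuine obstacle here: the only point that needs care is the dimension bookkeeping showing that $K$ and $K'$ are simultaneously complements of $F$ inside $H$, since that is exactly what makes $\psi$ well-defined and bijective. Once that is in hand, linearity of $\psi$ follows from its description as a projection, and the preservation of $E$ is immediate from the definition of a decomposer.
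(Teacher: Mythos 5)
Your proof is correct and takes essentially the same approach as the paper: define $\psi(x)$ as the unique point of $K'$ in the coset $\vs{F}+x$, then use the decomposer property to transfer membership in $E$. The only cosmetic difference is that you get linearity and bijectivity at once by recognising $\psi$ as the restriction to $\vs{K}$ of the linear projection $\vs{H}\to\vs{K'}$ along $\vs{F}$, whereas the paper checks $\psi(x)+\psi(y)=\psi(x+y)$ and surjectivity directly; both verifications are equally fine.
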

\begin{proof}
	Since $K$ and $K'$ are each disjoint from $F$ while $\cl(F \cup K) = \cl(F \cup K')$, we have $\dim(K) = \dim(K')$, and each coset of $F$ in $\cl(F \cup K)$ meets $K$ and $K'$ in a single element each. For each $x \in K$, let $\psi(x)$ be the unique element of $(\vs{F}+x) \cap K'$. Since $F$ is a decomposer we have $\psi(x) \in E$ if and only if $x \in E$. For each $x
	 \in K'$ the coset $\vs{F}+x'$ intersects $K$ in some point $x$ for which $x' = \psi(x)$, so $\psi$ is surjective and thus bijective. Finally, for distinct $x,y \in K$ the elements $\psi(x)+\psi(y)$ and $\psi(x+y)$ are both in $(\vs{F}+(x+y)) \cap K'$, so are equal. Thus $\psi$ is an isomorphism.
\end{proof}

\begin{lemma}\label{lj_parameters}
	If $M \cong M_1\ls M_2$, where $M_i = (E_i,G_i)$ then 
	\begin{itemize}
		\item $M^c \cong M_1^c \ls M_2^c$,
		\item $M|\cl(F_1 \cup F_2) = (M_1|F_1) \ls (M_2|F_2)$ for all flats $F_1,F_2$ of $G_1,G_2$,
		\item $\omega(M) = \omega(M_1) + \omega(M_2)$, and
		\item $\chi(M) = \chi(M_1) + \chi(M_2)$.
	\end{itemize}
\end{lemma}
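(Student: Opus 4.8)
The plan is to verify each of the four assertions directly from the definition $M_1 \ls M_2 = (E_1 \cup (E_2 + \vs{G_1}), G_1 \oplus G_2)$, where $E = E_1 \cup (E_2 + \vs{G_1})$ and $G = G_1 \oplus G_2$. Throughout I will write $n_i = \dim(G_i)$, so $\dim(M) = n_1 + n_2$.

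First I would handle the complement. We need $G \del E = (G_1 \del E_1) \cup ((G_2 \del E_2) + \vs{G_1})$. Partition $G$ using the cosets of $G_1$: the flat $G_1$ itself contributes $G_1 \del E = G_1 \del E_1$, and for each nonzero $z \in \vs{G_2}$ the coset $z + \vs{G_1}$ lies entirely inside $E$ or entirely outside $E$ according to whether $z \in E_2$ or not (this is exactly the unmixedness recorded in Lemma~\ref{rlj}, applied to the decomposer $G_1$). Hence the cosets missing $E$ are precisely those indexed by $G_2 \del E_2$, giving $G \del E = (G_1 \del E_1) \cup ((G_2 \del E_2) + \vs{G_1})$, which is exactly the ground set of $M_1^c \ls M_2^c$ inside the same ambient space $G_1 \oplus G_2$.

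Next, for the restriction statement, set $F = \cl(F_1 \cup F_2)$, so $\vs{F} = \vs{F_1} \oplus \vs{F_2}$ and $F$ is a flat of $G$ containing $F_1, F_2$ as disjoint flats with $\cl$ equal to $F$; thus $M|F$ is a lift-join candidate with ambient space $F = F_1 \oplus F_2$. We compute $E \cap F = (E_1 \cup (E_2 + \vs{G_1})) \cap (\vs{F_1} \oplus \vs{F_2} \del\{0\})$. Since $E_1 \subseteq G_1$ and $\vs{F_1} \oplus \vs{F_2}$ meets $\vs{G_1}$ in $\vs{F_1}$, the first part contributes $E_1 \cap F_1$. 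For the second part, an element $e + v$ with $e \in E_2$, $v \in \vs{G_1}$ lies in $\vs{F_1}\oplus\vs{F_2}$ iff $e \in \vs{F_2}$ and $v \in \vs{F_1}$, i.e. iff $e \in E_2 \cap F_2$ and $v \in \vs{F_1}$; this contributes $(E_2 \cap F_2) + \vs{F_1}$. So $E \cap F = (E_1 \cap F_1) \cup ((E_2 \cap F_2) + \vs{F_1})$, which is the ground set of $(M_1|F_1) \ls (M_2|F_2)$.

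For $\omega$: a subgeometry of $G$ contained in $E$ of dimension $\omega(M)$ is, after applying Lemma~\ref{lift_join_iso} with $F = G_1$ (to move it, up to isomorphism of induced restrictions, off the irrelevant coset representatives), controlled by the restriction formula just proved — if $H$ is a flat with $H \subseteq E$, then $H' := \cl((H \cap G_1)\cup \psi(\text{projection}))$ shows $H$ decomposes into flats of $E_1$ and $E_2$ via a decomposer argument, giving $\dim H \le \omega(M_1) + \omega(M_2)$; conversely, if $H_i \subseteq E_i$ are flats with $\dim H_i = \omega(M_i)$, then by the restriction formula $M|\cl(H_1 \cup H_2)$ has ground set $H_1 \cup (H_2 + \vs{H_1}) = \cl(H_1 \cup H_2) \del \{0\}$, a full subgeometry inside $E$ of dimension $\omega(M_1)+\omega(M_2)$. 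Finally $\chi$: since $\chi(M) = \dim(M) - \omega(M^c) = (n_1 + n_2) - \omega(M_1^c \ls M_2^c)$ by the first two bullets, and $\omega(M_1^c \ls M_2^c) = \omega(M_1^c) + \omega(M_2^c)$ by the third, we get $\chi(M) = (n_1 - \omega(M_1^c)) + (n_2 - \omega(M_2^c)) = \chi(M_1) + \chi(M_2)$.

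The main obstacle is the upper bound $\omega(M) \le \omega(M_1) + \omega(M_2)$: one must argue that an arbitrary subgeometry $H$ of $G$ lying inside $E$ cannot exploit the coset structure to beat the naive bound. The clean way is to observe that $G_1$ is a decomposer of $M$, hence also, after noting $H$ is a flat contained in $E$, that $G_1 \cap \cl(H)$ is a flat contained in $E_1 = E \cap G_1$ whose dimension is at most $\omega(M_1)$, while the image of $H$ under the linear projection $G \del G_1 \to G_2$ along cosets of $G_1$ lands (on the part of $H$ off $G_1$) inside a flat contained in $E_2$ of the complementary dimension, bounded by $\omega(M_2)$; combining via $\dim H \le \dim(H\cap G_1) + \dim(\text{projection of } H) $ — valid since $G_1$ is a flat — closes the gap. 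I would present this projection/decomposer bookkeeping carefully, as it is the one place where the argument is not a pure set-theoretic identity.
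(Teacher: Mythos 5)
Your proposal is correct and takes essentially the same approach as the paper: direct set computations for the complement and restriction parts, the explicit construction $\cl(K_1 \cup K_2)$ for the lower bound on $\omega$, a decomposition into the $G_1$-part and a $G_2$-part for the upper bound, and formal deduction of the $\chi$ identity from the other bullets. The only noticeable difference is in the $\omega$ upper bound: the paper picks a maximal flat $K$ of the given flat $F$ disjoint from $G_1$ and invokes Lemma~\ref{lift_join_iso} to carry $K$ isomorphically into $G_2$, whereas you apply the linear projection $\vs{G} \to \vs{G_2}$ with kernel $\vs{G_1}$ directly and conclude by rank--nullity. These are two formulations of the same idea, and your version is a legitimate alternative to the paper's.
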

\begin{proof}
	Let $M = (E,G)$, where $G = G_1 \oplus G_2$ and $E = E_1 \cup (\vs{G_1} + E_2)$. To see the first part, note that 
	\begin{align*}
		E^c &= G \del (E_1 \cup (\vs{G_1} + E_2)) \\
			&= (G_1 \del E_1) \cup (\vs{G_1} + G_2) \del (\vs{G_1}+ E_2)\\
			&= (G_1 \del E_1) \cup (\vs{G_1} + (G_2 \del E_2)),
	\end{align*}
	where we use the fact that every $z \in \vs{G_1} + G_2$ is uniquely expressible in the form $z = x+y$ for $x \in \vs{G_1}$ and $y \in G_2$. This gives $M^c = M_1^c \ls M_2^c$. 
	
	%The second part follows from the fact that $\cl(F_1 \cup F_2) = F_1 \cup (\vs{F_1} + F_2)$. 
	
	For the second part, let $F = \cl{(F_1 \cup F_2)}$. We have 
	\begin{align*}
		E \cap F &= ((E_1 \cup E_2) \cap F) \cup ((F_1 + (E_2 \cap F_2)) \cap F)\\
				 &= (E_1 \cap F_1) \cup (E_2 \cap F_2) \cup (F_1 + (E_2 \cap F_2)) \\
				 &= (E_1 \cap F_1) \cup (\vs{F_1} + (E_2 \cap F_2)),
	\end{align*}
	from which it follows that $M|F = (M_1|F_1) \ls (M_2|F_2)$.
		
	 For each $i \in \{1,2\}$ let $\omega_i = \omega(M_i)$ and let $K_i \subseteq E_i$ be a flat of dimension $\omega_i$. Then $K_1 \cup (\vs{K_1} + K_2) = \cl(K_1 \cup K_2)$ is a flat of $G$ contained in $E$ by the previous part, giving $\omega(M) \ge \omega_1+\omega_2$. 
	
	Now let $F$ be a flat of $G$ for which $F \subseteq E$. Let $K$ be a maximal flat of $F$ that is disjoint from $G_1$; note that $\dim(F) = \dim(K) + \dim(F \cap G_1) \le \dim(K) + \omega_1$. Since $G = G_1 \oplus G_2$, the flat $\cl(G_1 \cup K)$ intersects $G_2$ in a flat $K'$ for which $\cl(G_1 \cup K) = \cl(G_1 \cup K')$. By Lemma~\ref{lift_join_iso} we have $M|K \cong M|K' = M_2|K'$, and since $K \subseteq E$ this implies that $K' \subseteq E_2$ and so $\dim(K') \le \omega_2$. Thus $\dim(F) \le \omega_1+\omega_2$, and the third part follows. 
	Finally, let $n = \dim(G)$ and $n_i = \dim(G_i)$ for each $i$. We have
	\[
		\chi(M) = n - \omega(M^c) 
		= (n_1+n_2) - \omega(M_1^c) - \omega(M_2^c) 
		= \chi(M_1) + \chi(M_2)
	\] 
	giving the last part.
\end{proof}

Two nice special cases of lift-joins are when either $M_1$ or $M_2$ has dimension $1$, or equivalently when $M$ has a decomposer that is either a single element or a hyperplane. These cases correspond to more elementary constructions that preserve being claw-free. Since they are important in our proof, we discuss them here. Let $M = (E,G)$.

If $H$ is a hyperplane of $G$, it is easy to see that $H$ decomposes $M$ if and only if $G \del H$ is either contained in $E$ or disjoint from $E$. In the latter case we have $\cl(E) \subseteq H$, so $M$ is not full-rank; conversely, if $M$ is not full-rank, then any hyperplane containing $\cl(E)$ is a decomposer.

If an element $a$ of $G$ satisfies $a + E = E$, then $a \notin E$ and $\{a\}$ is a decomposer of $M$. If $a$ satisfies $a + (E \cup \{0\}) = E \cup \{0\}$, then $a \in E$ and $\{a\}$ is a decomposer of $M$. In fact, it is easy to see that $\{a\}$ is a decomposer of $M$ if and only if one of these two statements holds. If $a + E = E$, then by Lemma~\ref{lift_join_iso}, all restrictions of $M$ to hyperplanes not containing $a$ are isomorphic; for each hyperplane $H$ of $G$ with $a \notin H$, we say that $M$ is a \emph{doubling} of $M|H$. So every matroid that is a doubling has a decomposer.

We now prove the fact asserted in the introduction about the interaction of the parameters $\omega$ and $\sigma$ with partial lift-joins. This lemma will not be needed in the rest of the paper. Recall that a partial lift-join of $M_1 = (E_1,G_1)$ and $M_2 = (E_2,G_2)$ is a matroid $M = M_1 \ls_{F_1,F_2} M_2 =  (E,G_1 \oplus G_2)$, where $E = (E_1 \cup E_2) \cup (F_1 + (E_2 \cap F_2))$ for some flats $F_1,F_2$ of $G_1,G_2$ respectively. Recall also that $\sigma(M)$ is the largest $\ell$ for which $M$ has an induced $I_\ell$-restriction. 

\begin{lemma}\label{partial_lj}
	If $M$ is a partial lift-join of matroids $M_1$ and $M_2$, then 
	\begin{itemize}
		\item $\omega(M) \le \omega(M_1) + \omega(M_2)$ and
		%\item $\sigma(M) \le 2(\sigma(M_1) + \sigma(M_2))$. 
		\item $\sigma(M) \le \max(3, 2(\sigma(M_1)+\sigma(M_2)))$
	\end{itemize}
\end{lemma}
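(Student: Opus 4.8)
Write $M = M_1\ls_{F_1,F_2}M_2 = (E,G)$ with $G = G_1\oplus G_2$. Since $E_2\cap F_2\subseteq E_2$, I would first rewrite the defining equation as $E = (E_1\cup E_2)\cup(\vs{F_1}+(E_2\cap F_2))$ with $\vs{F_1}$ an honest subspace, and record the routine facts that $E\cap G_i = E_i$ (so $M|G_i = M_i$) and that every mixed point of $E$ lies in $\cl(F_1\cup F_2)$. The overall strategy is: bound $\omega$ by a single projection, and bound $\sigma$ by isolating the mixed part of an extremal induced $I_\ell$ inside a \emph{genuine} lift-join, where Corollary~\ref{claw_free_lift_join} can be used.

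For $\omega$: let $F\subseteq E$ be a flat of $G$ and let $\pi\colon\vs{G}\to\vs{G_2}$ be the projection along $\vs{G_1}$. Then $F\cap G_1$ is a flat of $G_1$ contained in $E\cap G_1 = E_1$, so $\dim(F\cap G_1)\le\omega(M_1)$, while $\pi(F)$ is a flat of $G_2$ with $\dim F = \dim(F\cap G_1)+\dim\pi(F)$. For a point $x$ of $F$ outside $G_1$ the description of $E$ forces $\pi(x)\in E_2$: if $x\in E_2$ then $\pi(x) = x$, and otherwise $x = f+e$ with $f\in\vs{F_1}$ and $e\in E_2\cap F_2$, so $\pi(x) = e$. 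Hence $\pi(F)\subseteq E_2$, giving $\dim\pi(F)\le\omega(M_2)$ and therefore $\dim F\le\omega(M_1)+\omega(M_2)$.

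For $\sigma$: two observations carry the argument. First, because $F_1,F_2$ are flats, $\cl(F_1\cup F_2) = F_1\oplus F_2$, and a direct computation of $E\cap(F_1\oplus F_2)$ shows $M|\cl(F_1\cup F_2) = (M_1|F_1)\ls(M_2|F_2)$, a full lift-join. Second, $\sigma(N_1\ls N_2)\le\max(2,\sigma(N_1),\sigma(N_2))$ for all matroids $N_1,N_2$: if $\ell := \sigma(N_1\ls N_2)\ge 3$ then $N_1\ls N_2$ has an induced $I_\ell$-restriction, so by Corollary~\ref{claw_free_lift_join} not both of $N_1,N_2$ are $I_\ell$-free, i.e. $\sigma(N_i)\ge\ell$ for some $i$; and $\ell\le 2$ is trivial. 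Now pick a flat $F$ with $M|F\cong I_\ell$, $\ell = \sigma(M)$, and let $B = E\cap F$, a basis of $F$; partition $B = B_1\cup B_2\cup B_3$ with $B_i = B\cap G_i$ for $i\in\{1,2\}$ and $B_3 = B\del(G_1\cup G_2)$. Since $B$ is independent, for any $B'\subseteq B$ we have $E\cap\cl(B') = B\cap\cl(B') = B'$, so $M|\cl(B')\cong I_{|B'|}$. Applying this with $B' = B_i$ and using $\cl(B_i)\subseteq G_i$, $E\cap G_i = E_i$, gives $|B_i|\le\sigma(M_i)$; applying it with $B' = B_3$ and using $\cl(B_3)\subseteq F_1\oplus F_2$ exhibits $I_{|B_3|}$ as an induced restriction of $(M_1|F_1)\ls(M_2|F_2)$, so $|B_3|\le\max(2,\sigma(M_1|F_1),\sigma(M_2|F_2))\le\max(2,\sigma(M_1),\sigma(M_2))$ (as $M_i|F_i$ is an induced restriction of $M_i$). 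With $T = \sigma(M_1)+\sigma(M_2)$ this yields $\sigma(M) = |B_1|+|B_2|+|B_3|\le T+\max(2,T)$, which is at most $\max(3,2T)$ — checking $T = 0$, $T = 1$, $T\ge 2$ separately.

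\textbf{Where the difficulty lies.} No single step is hard; the one genuine idea is to pass to $\cl(F_1\cup F_2)$, where the partial lift-join becomes an ordinary lift-join so that Corollary~\ref{claw_free_lift_join} applies, and then to split the basis of the extremal $I_\ell$ into its $G_1$-part, its $G_2$-part, and its mixed part, the mixed part being precisely what sits in $\cl(F_1\cup F_2)$. The factor $2$ and the constant $3$ in the statement are exactly the cost of this three-way split: $|B_1|+|B_2|$ contributes $T$ and the mixed block $|B_3|$ contributes a further $\max(2,T)$. The remaining pieces — the identity $M|\cl(F_1\cup F_2) = (M_1|F_1)\ls(M_2|F_2)$, the fact that $E\cap G_i = E_i$, and the closing arithmetic — are routine verifications.
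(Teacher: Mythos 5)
Your proof is correct, and both parts take a somewhat different route from the paper, so a brief comparison is in order.

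For the $\omega$ bound, the paper simply observes that $E\subseteq E_1\cup(\vs{G_1}+E_2)$, i.e.\ the ground set of the partial lift-join sits inside that of the full lift-join $M_1\ls M_2$, and then invokes Lemma~\ref{lj_parameters}. Your projection argument is more self-contained but establishes the same thing directly; the paper's version buys brevity by reusing machinery already developed for genuine lift-joins.

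For the $\sigma$ bound, both proofs hinge on the identity $M|\cl(F_1\cup F_2)=(M_1|F_1)\ls(M_2|F_2)$ and on Corollary~\ref{claw_free_lift_join}, but the decomposition of the extremal independent set differs. The paper partitions $J=E\cap K$ into $J_1=J\cap\cl(F_1\cup F_2)$ and $J_2=J\setminus\cl(F_1\cup F_2)$, and then splits $J_2$ by $G_1$ and $G_2$. You instead split $B$ directly into $B\cap G_1$, $B\cap G_2$, and $B\setminus(G_1\cup G_2)$, and argue the third piece lands inside $F_1\oplus F_2$ via the defining equation for $E$. These are genuinely different three-way partitions (a point can lie in both $F_1$ and $G_1$, for instance), but they yield the identical final estimate $\sigma(M)\le\max(2,\sigma(M_1|F_1),\sigma(M_2|F_2))+\sigma(M_1)+\sigma(M_2)$. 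One advantage of your partition: it routes entirely through the clean facts $E\cap G_i=E_i$ and $B_3\subseteq F_1\oplus F_2$, whereas the paper asserts in passing that ``$F\cap\cl(J_2)=\varnothing$'' because $F$ is a flat and $J$ is independent — a claim that is not true in general (take $J=\{e_1,e_2,e_3\}$ and $F=\cl\{e_1,e_2+e_3\}$). The paper's argument survives anyway because the fact it actually uses, $M_i|\cl(J_2\cap G_i)=M|\cl(J_2\cap G_i)$, follows from $E\cap G_i=E_i$ without that claim; your proof avoids the misstep altogether.
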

\begin{proof}
	Let $M = (E,G)$ and $M_i = (E_i,G_i)$ for each $i$, where $G = G_1 \oplus G_2$ and $E = (E_1 \cup E_2) \cup (F_1 + (E_2 \cap F_2))$. Let $E' = E_1 \cup (\vs{G_1} + E_2)$, so $(E',G) = M_1 \ls M_2$. It is clear that $E \subseteq E'$, from which it follows that $\omega(M) \le \omega(M_1 \ls M_2) = \omega(M_1) + \omega(M_2)$ by Lemma~\ref{lj_parameters}.	
		
	Let $t = \sigma(M)$ and consider an induced $I_t$-restriction $M|K$ of $M$. Let $J = E \cap K$. Let $F = \cl(F_1 \cup F_2)$ and $(J_1,J_2) = (J \cap F, J \del F)$. From Lemma~\ref{lj_parameters} it follows that $M|F = (M_1|F_1) \ls (M_2|F_2)$. Now $M|\cl(J_1)$ is an induced $I_{|J_1|}$-restriction of $M|F$, and so $|J_1| \le \sigma(M|F)$. We now claim that $\sigma(M | F) \le \max{(2, \sigma(M_1 | F_1), \sigma(M_2 | F_2))}$. Let $s = 
	\sigma(M|F)$. If $s > \max{( \sigma(M_1 | F_1), \sigma(M_2 | F_2))}$ then $M_1 | F_1$ and $M_2 | F_2$ are $I_s$-free but $M|F$ is not; since $M|F = (M_1|F_1) \ls (M_2|F_2)$, Corollary~\ref{claw_free_lift_join} gives a contradiction unless $s \le 2$; this implies the claimed bound.
		
	The matroid $M|\cl(J_2)$ is an induced $I_{|J_2|}$-restriction of $M$. Since $F$ is a flat and $J$ is an independent set, we also have $F \cap \cl(J_2) = \varnothing$. Therefore $J_2 \subseteq E \del F \subseteq E_1 \cup E_2 \subseteq G_1 \cup G_2$. For each $i \in \{1,2\}$, we have $\cl(J_2 \cap G_i) \cap F_i = \varnothing$, so $M_i|\cl(J_2 \cap G_i) = M|\cl(J_2 \cap G_i) \cong I_{|J_2 \cap G_i|}$. Therefore $|J_2 \cap G_i| \le \sigma(M_i)$ for each $i \in \{1,2\}$, and so $|J_2| = |J_2 \cap (G_1 \cup G_2)| \le \sigma(M_1) + \sigma(M_2)$.  Combining the above bounds, we get 
	\[\sigma(M) = |J| \le  \max{(2, \sigma(M_1|F_1), \sigma(M_2|F_2))}  + \sigma(M_1) + \sigma(M_2),\]
	which implies that $\sigma(M) \le \max(3, 2(\sigma(M_1)+\sigma(M_2)))$ as required.
\end{proof}

\subsection*{PG-sums}

Recall that a matroid $M = (E,G)$ is a PG-sum if $E$ is the disjoint union of at most two flats of $G$. We say that $M$ is a \emph{strict PG-sum} if $M$ is full-rank and its ground set is the union of exactly two disjoint nonempty flats $F_1,F_2$. If this is the case, then $G \del E = F_1 + F_2$, and in fact $|F_1+F_2| = |F_1||F_2|$; i.e. every $x \in G \del E$ is uniquely expressible as $x = x_1 + x_2$ where $x_1 \in F_1$ and $x_2 \in F_2$. 
The next lemma shows that PG-sums are `perfect'.

\begin{lemma}\label{pg_sum_chi}
	If $M$ is a PG-sum then $\chi(M) = \omega(M)$. 
\end{lemma}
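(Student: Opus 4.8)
The plan is to handle the case where $M=(E,G)$ is a strict PG-sum, since the general case reduces to this: if $M$ is a PG-sum that is not full-rank we may pass to a flat spanning $E$ without changing $\omega$ or $\chi$ (both are unaffected by embedding in a larger ambient geometry, since $\chi(M) = \dim(G) - \omega(M^c)$ and enlarging $G$ by a dimension also enlarges $\dim(M^c)$'s relevant piece — more carefully, if $G' \supseteq \cl(E)$ is a hyperplane then $M|G'$ is a doubling-free restriction and $\chi(M|G') = \chi(M)$); and if $E$ is a single flat (or empty) then $M$ is a Bose--Burton geometry with $\chi(M) = \omega(M) = \dim(E)$ directly. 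So assume $E = F_1 \sqcup F_2$ with $F_1,F_2$ nonempty disjoint flats and $G = \cl(E)$, and set $d_i = \dim(F_i)$.

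\emph{Lower bound.} I would first show $\omega(M) \ge \max(d_1,d_2)$ trivially since $F_1,F_2 \subseteq E$ are flats. Then I claim $\omega(M) = \max(d_1,d_2)$: any flat $K \subseteq E$ meets $F_1$ and $F_2$ in flats $K\cap F_1$, $K \cap F_2$, and I must rule out $K$ being large by using the strictness fact that $G \del E = F_1 + F_2$ with unique representation. If $x \in K \del F_1$ and $y \in K \del F_2$ are such that $x \in F_2$, $y \in F_1$ (using $K \subseteq E = F_1 \cup F_2$), then $x + y$ lies in $K$ (as $K$ is a flat) but also $x+y \in F_1 + F_2 = G \del E$, contradicting $K \subseteq E$ — unless one of $K \cap F_1$, $K\cap F_2$ is empty, forcing $K \subseteq F_1$ or $K \subseteq F_2$. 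Hence $\omega(M) = \max(d_1,d_2)$.

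\emph{Upper bound on $\chi$.} Now I compute $\chi(M) = \dim(G) - \omega(M^c)$ with $\dim(G) = d_1 + d_2$. By the first part of Lemma~\ref{lj_parameters} (or directly), $M^c = (G\del E, G)$ has $G \del E = F_1 + F_2$, and I want $\omega(M^c) = \min(d_1,d_2)$: indeed, picking a flat of dimension $\min(d_1,d_2)$ inside the smaller $F_i$ and translating it into $F_1 + F_2$ gives a flat of that dimension inside $G \del E$ (a coset of $F_i$ lying in $F_1 + F_2$ when the other $F_j$ has dimension $\ge \dim F_i$), while conversely any flat $L \subseteq F_1 + F_2$ projects injectively to $G_2$-side and to $F_1$-side under the unique decomposition, bounding $\dim L \le \min(d_1,d_2)$. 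This yields $\chi(M) = (d_1+d_2) - \min(d_1,d_2) = \max(d_1,d_2) = \omega(M)$, as desired.

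The main obstacle I anticipate is the bookkeeping for $\omega(M^c)$: making the uniqueness-of-representation statement ($|F_1 + F_2| = |F_1||F_2|$, every element of $G\del E$ written uniquely as $x_1 + x_2$) do real work to show both that a large flat exists inside $F_1 + F_2$ and that none larger than $\min(d_1,d_2)$ can. The cleanest route is probably to realize $M$ itself as a lift-join or partial lift-join of two Bose--Burton-type pieces and invoke Lemma~\ref{lj_parameters}'s formulas $\omega(M) = \omega(M_1) + \omega(M_2)$ and $\chi(M) = \chi(M_1) + \chi(M_2)$ together with the trivial identity $\chi = \omega$ for single-flat matroids; but since a strict PG-sum need not decompose as a lift-join in an obvious way, I expect to argue the two inequalities $\chi(M) \le \omega(M)$ and $\chi(M) \ge \omega(M)$ separately, the latter being the general easy bound quoted in the introduction.
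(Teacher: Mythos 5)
Your reduction to the strict case, your computation of $\omega(M) = \max(d_1,d_2)$ (a flat $K \subseteq F_1 \cup F_2$ must lie entirely in one $F_i$, since otherwise it contains $x+y \in F_1 + F_2 = G\setminus E$), and your upper bound $\omega(M^c) \le \min(d_1,d_2)$ via the two coordinate projections are all sound. The crux — and the only thing the paper actually needs beyond the general bound $\chi \ge \omega$ — is the lower bound $\omega(M^c) \ge \min(d_1,d_2)$, i.e.\ exhibiting a $\min(d_1,d_2)$-dimensional flat inside $F_1 + F_2$. That is where your argument has a real gap.

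You propose to take the smaller flat, say $F_1$ with $d_1 \le d_2$, and ``translate it into $F_1 + F_2$,'' which you gloss as ``a coset of $F_i$ lying in $F_1 + F_2$.'' But a coset of $F_1$ is not a flat of $G$: for a fixed $z \in F_2$ and distinct $x,y \in F_1$, we have $(z+x) + (z+y) = x+y$, which lies in $\vs{F_1}$ and not in $z + \vs{F_1}$. So the translate $z + F_1$ has the right cardinality and sits inside $G \setminus E$, but its closure escapes $G \setminus E$, and it certifies nothing about $\omega(M^c)$. The construction that works — and is what the paper does — is the \emph{graph}, not the translate: fix a $d_1$-dimensional flat $F_1' \subseteq F_2$, choose a linear isomorphism $\psi \colon \vs{F_1} \to \vs{F_1'}$, and set $K = \{x + \psi(x) \colon x \in F_1\}$. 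Then $K \cup \{0\}$ is the image of $\vs{F_1}$ under the linear injection $x \mapsto x + \psi(x)$ (injective because $\vs{F_1} \cap \vs{F_2} = \{0\}$), so $K$ is a genuine $d_1$-dimensional flat, and each $x + \psi(x)$ lies in $F_1 + F_2$. With this replacement, your outline closes: $\chi(M) = (d_1+d_2) - \omega(M^c) \le d_2 = \omega(M)$, and $\chi \ge \omega$ is general. Your instinct that this was the ``main obstacle'' was right; the fix is to use an isomorphism's diagonal rather than a translation.
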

\begin{proof}
	It suffices to consider the case where $M = (E,G)$ is a strict PG-sum, since otherwise either $M$ is not full-rank and we can pass to the restriction $(E,\cl(E))$, or $E = G$ and the conclusion is obvious. Let $F_1,F_2$ be disjoint flats of $G$ whose union is $E$, and let $n_i = \dim(F_i)$ and $n = \dim(M)$, so $n = n_1 + n_2$. Assume that $n_1 \le n_2$; we clearly have $\omega(M) = n_2$. 
	
	Let $F_1' \subseteq F_2$ be an $n_1$-dimensional flat and let $\psi$ be an isomorphism from $M|F_1$ to $M|F_1'$. Let $K = \{x+\psi(x)\colon x \in F_1\}$. Note that $K \subseteq F_1 + F_2 = G \del E$. Since $F_1$ and $F_1'$ are flats, for distinct $x,y \in K$ we have $x+y \in K$. Finally, since $|F_1||F_2| = |F_1+F_2|$, we have $|K| = |F_1|$ so $\dim(K) = n_1 = n-n_2$. It follows that $\chi(M) \le n_2 = \omega(M)$, and the result follows. 
\end{proof}

The class of PG-sums is clearly closed under taking induced restrictions; it follows from this that PG-sums are claw-free. In fact, we can easily characterize the class by forbidding four particular three-dimensional induced restrictions. Define $P_5$ and $K_4$ to be the unique three-dimensional matroids on $5$ and $6$ elements respectively (the latter's name comes from its association with the complete graph on four vertices). Let $C_4$ denote the four-element three-dimensional matroid whose ground set sums to zero. 

\begin{lemma}\label{PG_characterisation}
$M$ is a PG-sum if and only if it is $(I_3,C_4,P_5,K_4)$-free. 
\end{lemma}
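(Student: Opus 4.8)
The plan is to prove the forward direction directly (PG-sums are closed under induced restriction, and one checks by inspection that $I_3$, $C_4$, $P_5$, $K_4$ are not PG-sums), and to prove the reverse direction by a structural argument. For the reverse direction, suppose $M = (E,G)$ is $(I_3,C_4,P_5,K_4)$-free; we may assume $M$ is full-rank, since otherwise we pass to $(E,\cl(E))$ and the class is preserved. We want to show $E$ is the disjoint union of two flats. Since $M$ is claw-free (being $I_3$-free), it has access to all the earlier lift-join machinery, but the cleanest route is probably to directly analyze the local structure: first show that the three-dimensional induced restrictions of $M$ are extremely limited, then bootstrap to global structure.

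The key observation is that the only three-dimensional matroids on at least $4$ elements that avoid $\{I_3,C_4,P_5,K_4\}$ are those whose ground set is already a disjoint union of two flats. The three-dimensional matroids have ground sets of size $0,\dots,7$; $I_3$ kills the independent $3$-set, and among the $4$-element ones $C_4$ is the affine plane while the other (a triangle plus a point) is a PG-sum; $P_5$ (size $5$) and $K_4$ (size $6$) are forbidden, leaving size $5,6$ with no induced restriction of the forbidden types — but any $5$-element subset of a plane contains $P_5$ unless... actually one must check that every $\geq 5$-element subset of a plane not equal to the whole plane minus at most a point contains one of $P_5,K_4$, while size $7$ (the whole plane) and size $6 = $ whole plane minus a point are PG-sums, as is the plane minus a line (size $4$, a triangle plus a point). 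So every plane $P$ of $G$ has $E \cap P$ a disjoint union of two flats of $P$, and moreover of a very restricted shape. The main obstacle will be the global argument: translating "$E \cap P$ is a disjoint union of two flats for every plane $P$" into "$E$ is a disjoint union of two flats of $G$."

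For that global step I would argue by induction on $\dim(G)$, using the hyperplane and single-element decomposer language developed just before the lemma. If $M$ has a decomposer $F$, then by Lemma~\ref{rlj} $M$ is a lift-join $(M|F)\ls(M|J)$; both factors are $(I_3,C_4,P_5,K_4)$-free (as induced restrictions), hence PG-sums by induction, and then Lemma~\ref{lj_parameters}'s second bullet gives that the lift-join of two PG-sums is a PG-sum (a short direct check: $F_1 \cup (\vs{F_1}+F_1')$ type unions are again flats, so the lift-join of two strict PG-sums is built from at most two flats). So it suffices to show a full-rank $(I_3,C_4,P_5,K_4)$-free matroid that is not a single flat or the complement of one has a decomposer. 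For this I would pick any hyperplane $H$; if $G\setminus H$ is unmixed we are done, so assume every hyperplane is mixed. Then pick an element $a \in E$ (if $E \neq \varnothing, G$); using the plane analysis, show $a + e \in E \cup \{0\}$ for all $e \in E$ — i.e. that $E \cup \{0\}$ is closed under addition, making $E$ a single flat, a contradiction — OR the obstruction to this is itself localized in a plane, and the forbidden list rules it out. Concretely: for $a,e \in E$ distinct, the plane $P = \cl(\{a,e,f\})$ for suitable $f$ has $E\cap P$ a disjoint union of two flats containing $a,e$; chasing which flats these can be forces either $a+e \in E$, or produces a $C_4$/$P_5$/$K_4$. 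The hard part is organizing this plane-chasing so that it actually pins down that $E$ (or $G \setminus E$) is closed under addition; I expect this to require a careful case split on $\omega(M)$ and whether $E$ or its complement is smaller, and it is the step where the precise list $\{I_3,C_4,P_5,K_4\}$ (rather than a shorter list) is essential.
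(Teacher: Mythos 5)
The reverse direction of your proposal rests on a claim that is false and cannot be repaired: lift-joins do \emph{not} preserve the property of being a PG-sum, even in dimension $3$. Take $M_1 = (\{a,b\},\{a,b,a+b\})$, whose ground set is the disjoint union of the singleton flats $\{a\}$ and $\{b\}$, and $M_2 = (\{c\},\{c\})$. Both are PG-sums and both are trivially $(I_3,C_4,P_5,K_4)$-free (they have dimension at most $2$). The ground set of $M_1 \ls M_2$ is $\{a,b\} \cup \bigl(\{0,a,b,a+b\}+\{c\}\bigr) = \{a,b,c,a+c,b+c,a+b+c\}$, the plane minus the single point $a+b$, which is $K_4$ and is not a PG-sum (two disjoint triangles cannot coexist in $\PG(2,2)$). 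Taking $E_1 = \{a\}$ instead produces $P_5$. The ``short direct check'' you sketch silently replaces $\vs{G_1}$ with $\vs{F_1}$: the lift-join translates $E_2$ by the entire ambient space $\vs{G_1}$, not by one of the two flats inside $E_1$, and that is exactly where the argument breaks. The same counterexample shows the class of $(I_3,C_4,P_5,K_4)$-free matroids is not closed under lift-joins, so the decomposer-based induction is unsound in principle. The other branch of your plan --- globalizing the plane-by-plane structure to show $E$ or its complement is closed under addition --- is explicitly left as ``I expect this to require\dots'' and is never carried out, so no case of the reverse direction is actually proved.

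The paper's proof is much shorter and avoids decomposers and induction on dimension. After discarding the trivial cases, let $F$ be a largest flat of $G$ contained in $E$. If $|F|=1$, two further points of $E \del F$ together with $F$ span a plane that meets $E$ in a set of size $3$ or $4$ with no triangle, giving $I_3$ or $C_4$; hence $|F| \geq 3$. Next, for $v_1 \in E \del F$ and $v_2 \in F$ one must have $v_1 + v_2 \notin E$: otherwise maximality of $F$ yields $v_3 \in F$ with $v_1 + v_3 \notin E$, and then $\cl(\{v_1,v_2,v_3\})$ meets $E$ in $5$ or $6$ points, giving $P_5$ or $K_4$. Finally, for $v_1,v_2 \in E \del F$, picking any $v_3 \in F$ forces $v_1+v_2 \in E$ (else $M|\cl(\{v_1,v_2,v_3\})$ is $I_3$ or $C_4$), and the previous step then forces $v_1 + v_2 \in E \del F$. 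So $E \del F$ is a flat and $M$ is a PG-sum. This localizes every obstruction to a single plane at each step, which is precisely the globalization your proposal was missing.
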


\begin{proof}
The forwards direction follows from the fact that the class of PG-sums is closed under taking induced restrictions, and that the matroids $I_3$, $C_4$, $P_5$ or $K_4$ are not PG-sums. 

Conversely, suppose that $M = (E,G)$ is $(I_3,C_4,P_5,K_4)$-free. Let $F$ be a largest flat of $G$ that is contained in $E$. We may assume that $3 \le |E| < |G|$ and that $F \ne E$, as otherwise $M$ is a PG-sum. 

Suppose first that $|F|=1$. Let $v_1 \in F$ and $v_2, v_3 \in E \del F$. By maximality, $E$ contains none of the elements $v_1+v_2, v_1+v_3, v_2+v_3$. But then $M |\cl(\{v_1, v_2, v_3\})$ is isomorphic to either $I_3$ or $C_4$, a contradiction. So $|F| \geq 3$. Let $v_1 \in E \del F$. Then for any $v_2 \in F$, we must have $v_1 + v_2 \notin E$; otherwise, by maximality there exists $v_3 \in F$ such that $v_1 + v_3 \notin E$, but then $M | \cl(\{v_1, v_2, v_3\})$ has either $5$ or $6$ elements, so is either an induced $P_5$ or $K_4$-restriction. Now, let $v_1, v_2 \in E \del F$. We claim that $v_1 + v_2 \in E \del  F$, which will imply that $E \del F$ is a flat of $G$ and hence that $M$ is a PG-sum. Suppose not, and let $v_3 \in F$. Then $M | \cl( \{v_1, v_2, v_3\} )$ is isomorphic to either $I_3$ or $C_4$, a contradiction. 
\end{proof}

\subsection*{Even-plane matroids}

Write $\cE_3$ for the class of even-plane matroids (the matroids $(E,G)$ where $|E \cap P|$ is even for every plane $P$ of $G$). The material we need on even-plane matroids comprises a pair of results from [\ref{bkknp}], which considers their internal structure in detail, and a straightforward lemma. The first is easy to state.

\begin{theorem}[{[\ref{bkknp}], Theorem~1.2}]\label{even_plane_bounded_cn}
	If $M,N \in \cE_3$ and $M$ has no $N$-restriction, then $\chi(M) \leq \dim(N)+4$. 
\end{theorem}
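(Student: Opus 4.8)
The plan is to reformulate everything in terms of quadratic forms. The $t=3$ case of the classification of the classes $\cE_t$ in [\ref{bkknp}] (equivalently, a short Reed--Muller computation) identifies the even-plane matroids with the ``quadratic'' matroids: $M=(E,G)\in\cE_3$ if and only if $E=\{v\in G\colon q(v)=1\}$ for some quadratic form $q$ on the vector space $\vs G$ with $q(0)=0$, and for a fixed ambient space $q$ is determined by $E$. Thus $N=(E',G')\in\cE_3$ is an induced restriction of $M$ if and only if there is a subspace $V\le\vs G$ and a linear isomorphism $V\to\vs{G'}$ carrying $q|_V$ to $q'$; equivalently, iff $q'$ is isomorphic to the restriction of $q$ to some subspace. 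One also checks that $\omega(M^c)$ is exactly the largest dimension of a totally singular subspace of $(\vs G,q)$, so that $\chi(M)$ is the codimension of a largest totally singular subspace of $q$ --- a quantity equal to $m$ or $m+1$, where $2m$ is the rank of the alternating bilinear form associated to $q$. In these terms the theorem reads: if $q,q'$ are quadratic forms over $\bF_2$ vanishing at $0$, and $q'$ is not isomorphic to any restriction of $q$, then a largest totally singular subspace of $q$ has codimension at most $(\dim N)+4$.

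I would prove this through a self-strengthening ``universality'' lemma: there is an absolute constant $c$ such that if a largest totally singular subspace of $q$ has codimension exceeding that of $q'$ by at least $c$, then $q'$ is isomorphic to a restriction of $q$. Since the corresponding codimension for $q'$ is trivially at most $\dim N$, this already gives $\chi(M)\le\dim(N)+c-1$ whenever $M$ is $N$-free, and a careful accounting should show that one can take $c=5$. For the lemma I would use the classification of quadratic forms over $\bF_2$ together with Witt's extension theorem. Write $q$, up to isomorphism, as an orthogonal sum of a linear form on the radical of its bilinear form with a non-degenerate even-dimensional form, the latter being a sum of hyperbolic planes $\mathbb H$ together with at most one anisotropic plane $\mathbb A$; a large totally-singular codimension (with a small radical) forces a large hyperbolic summand $\mathbb H^{j}$. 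Since $\mathbb H^{2}\cong\mathbb A\perp\mathbb A$, that summand already contains an anisotropic plane, and more generally (via Witt's theorem) realises, as the restriction to a subspace, every non-degenerate quadratic form of dimension bounded in terms of $j$; a bounded number of further dimensions then accommodates the radical/linear parts of $q'$ and of $q$. Counting how many dimensions of $q$ are consumed in embedding an arbitrary $q'$ yields the constant $c$, hence the ``$+4$''.

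The step I expect to be the main obstacle is precisely this bookkeeping: relating $\chi(M)$ to the Witt index tightly enough to control the ``$m$ versus $m+1$'' ambiguity and the contribution of the radical, and then determining the exact number of dimensions of $q$ needed to host an arbitrary $q'$ of given dimension, so as to obtain the sharp constant $4$ rather than merely some $O(1)$. The qualitative statement --- that a quadratic form of large Witt index is universal for small quadratic forms --- is robust and follows quickly from the $\bF_2$ classification; it is in extracting the precise bound $(\dim N)+4$ that care, and the detailed even-plane structure of [\ref{bkknp}], are genuinely needed.
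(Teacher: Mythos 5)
This statement is quoted by the paper as Theorem~1.2 of the reference [bkknp]; the present paper gives no proof of it, so there is no internal argument for me to compare your proposal against.

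On its own merits, your reformulation is correct. The class $\cE_3$ is exactly the class of matroids $(E,G)$ for which $\mathbf{1}_{E}$ (extended by $0$ at the origin) is a quadratic form on $\vs{G}$: one checks that a function $f$ with $f(0)=0$ having even sum on every $3$-dimensional \emph{linear} subspace automatically has even sum on every $3$-dimensional \emph{affine} subspace (sum the $3$-subspace condition over the seven $3$-flats of a $4$-flat through a fixed point), so the usual Reed--Muller characterization of degree-$\le 2$ functions applies. Your identifications of induced restrictions with restrictions of $q$ to subspaces, and of $\chi(M)$ with the codimension of a maximal totally singular subspace, are likewise right. The universality heuristic --- a large totally singular codimension with small radical forces a large hyperbolic summand, and $H^{j}$ hosts every quadratic form of dimension at most $j$ --- is sound and would give $\chi(M)\le \dim(N)+O(1)$. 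So the approach is viable.

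What you have, though, is a plan rather than a proof: the proposal explicitly defers the hard part, namely the bookkeeping that pins the constant at exactly $4$. That accounting is genuinely delicate. The radical $R$ of the bilinear form contributes to $\dim(M)$ but, when $q|_{R}\equiv 0$, contributes nothing to $\chi(M)$, and when $q|_{R}\not\equiv 0$ the Arf invariant of the nondegenerate part ceases to be an isomorphism invariant, so the case analysis on $q'$ has more branches than the ``$m$ versus $m+1$'' remark suggests. You would also need to be careful that the lemma is stated in terms of $\chi(N)$ but the theorem is in terms of $\dim(N)$, and the gap between these two can itself be as large as $\dim(N)$, which affects how the constants propagate. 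None of this is a fatal flaw, but as written the argument stops exactly where the content of the theorem begins. Since the present paper does not prove the theorem, I cannot tell you whether [bkknp] takes the quadratic-form route you sketch or something closer to their iterative construction of the classes $\cE_t$; if you want to turn this into a proof, you will need to either carry out the embedding bookkeeping in full or consult [bkknp] directly.
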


The second result we need from [\ref{bkknp}] requires a definition. We say that a matroid $M = (E,G)$ is a \emph{semidoubling} of a matroid $N$ if there is a hyperplane $H$ of $G$, a hyperplane $H'$ of $H$, and an element $a$ of $G \del (H \cup E)$ for which $M|H = N$, while $E = (E \cap H) \cup (a + ((H \del E) \Delta H'))$. 
This condition is equivalent to the statement that for each $x \in H$, we have $a + x \in E$ if and only if either $x \in H' \cap E$ or $x \in (H\del H')\del E$. 

The significance of semidoublings is that they preserve the property of being even-plane. 

\begin{theorem}[{[\ref{bkknp}, Corollary 3.4]}]\label{even_plane_doubling_semidoubling}
	The class of even-plane matroids is closed under doublings and under semidoublings.
\end{theorem}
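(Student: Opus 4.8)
The statement to prove is Theorem~\ref{even_plane_doubling_semidoubling}: that the class $\cE_3$ of even-plane matroids is closed under doublings and under semidoublings. Even though this is quoted from [\ref{bkknp}], here is the approach I would take for a self-contained argument. The key observation is that membership in $\cE_3$ is a purely local condition: $M = (E,G) \in \cE_3$ if and only if $|E \cap P|$ is even for every plane $P$ of $G$. Equivalently, thinking of $E$ as a function $\mathds{1}_E \colon \vs{G} \del \{0\} \to \bF_2$, the condition says $\sum_{x \in P} \mathds{1}_E(x) = 0$ for every $3$-dimensional subspace. So I would first set up this linear-algebraic reformulation and then verify that the new planes created by a doubling or semidoubling all satisfy the parity condition, given that the old ones do.

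For the doubling case: suppose $M = (E,G)$ is a doubling of $N = M|H$, so there is a hyperplane $H$ of $G$, an element $a \in G \del H$, and $a + E = E$; thus $E = (E \cap H) \cup (a + (E \cap H))$ and $G \del H = a + \vs{H}$. Let $P$ be any plane of $G$. If $P \subseteq H$, then $|E \cap P| = |(E\cap H) \cap P|$ is even since $N \in \cE_3$. Otherwise $P \cap H$ is a triangle $T$ of $H$, and $P \del H$ is a coset of $T$ inside $P$, of the form $a' + \vs{T}$ for some $a' \in P \del H$; writing $a' = a + h$ with $h \in H$, one checks $P \del H = a + (h + T)$ where $h + T$ is either a triangle of $H$ (if $h \in T \cup\{0\}$, suitably interpreted) or a coset of $T$ in $H$ — in either case $h + T \subseteq \cl(T \cup\{h\})$, a plane of $H$. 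Then $|E \cap P| = |E \cap T| + |E \cap (P \del H)| = |E \cap T| + |(E \cap H) \cap (h+T)|$, and since $a + E = E$ restricted to $H$ is just... actually more directly: $|E \cap (P \del H)| = |(a + (E \cap H)) \cap (a + (h + T))| = |(E\cap H) \cap (h+T)|$. Both $T$ and $h + T$ sit inside the plane $Q = \cl(T \cup \{h\})$ of $H$, and $Q = T \cup (h + T)$ when $h \notin \cl(T)$, so $|(E\cap H)\cap T| + |(E\cap H)\cap(h+T)| = |(E \cap H) \cap Q|$, which is even because $N \in \cE_3$. (The degenerate case $h \in \cl(T)$, i.e. $P \del H$ projects back into $T$, is even easier.) Hence $|E \cap P|$ is even.

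For the semidoubling case: here $M = (E,G)$ with $M|H = N \in \cE_3$, a hyperplane $H' \subseteq H$, and $a \in G \del (H \cup E)$, with $E = (E \cap H) \cup (a + ((H\del E) \triangle H'))$. I would again take an arbitrary plane $P$; if $P \subseteq H$ we are done by $N \in \cE_3$, so assume $P \not\subseteq H$, so $P \cap H = T$ is a triangle and $P \del H = a + (h + T)$ for a suitable $h \in H$. Using the semidoubling formula, the number of elements of $E$ in the coset is $|\{x \in h + T : x \in H' \cap E \text{ or } x \in (H \del H')\del E\}|$. Expanding the symmetric difference, this count equals $|(h+T) \cap E| + |(h+T)\cap H'| - 2|(h+T)\cap H' \cap E|$ modulo the usual inclusion–exclusion for $(A \cap E) \cup (A^c \cap E^c)$ where $A = H'$ — more cleanly, the count is $|(h+T)\cap H'| - |(h+T) \cap E| + 2|(h+T)\cap E \cap H'|$ or similar, so modulo $2$ it is $|(h+T)\cap H'| + |(h+T)\cap E|$. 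Now $|(h+T)\cap E| + |T \cap E| \equiv |Q \cap E| \equiv 0$ as before (with $Q = \cl(T\cup\{h\})$ a plane of $H$, using $N \in \cE_3$), and $|(h+T)\cap H'|$ is the size of the intersection of a triangle-or-coset with the hyperplane $H'$ of $H$, which is either $0$, $1$, $3$, or (if $h + T \subseteq H'$) $3$ — the parity here needs care: the intersection of a $2$-flat's coset with a hyperplane has size $0$ or $2$, and the intersection of a triangle with a hyperplane has size $1$ or $3$. So the parity contributions from $|T \cap E|$, $|(h+T)\cap H'|$ and the coset-$E$ count must be tracked together; I would organize this by cases according to whether $h \in \cl(T)$, whether $T \subseteq H'$, and whether $h + T \subseteq H'$, checking in each case that the total is even.

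The main obstacle is precisely this last bookkeeping in the semidoubling case: the symmetric difference $(H\del E)\triangle H'$ mixes the parity information coming from $E$ with parity information coming from the hyperplane $H'$, and one has to confirm these combine correctly across all the positional cases of how the plane $P$ meets $H$, $H'$, and the coset $a + \vs{H}$. Everything else — the reformulation as a parity/linear condition, and the doubling case — is routine. An alternative, slicker route would be to exhibit $\mathds{1}_E$ directly as a Boolean function whose "plane-sums" vanish, using that the doubling operation corresponds to tensoring with a fixed $2$-dimensional pattern and the semidoubling to adding a rank-one correction supported off $H$; if that algebra can be made clean it would dispatch both cases at once, and I would try that first before resorting to the case analysis.
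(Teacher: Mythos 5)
This result is not proved in the paper; it is quoted verbatim from [\ref{bkknp}, Corollary 3.4], so there is no in-text argument to compare against. That said, your direct parity-checking approach on planes, split by whether the plane lies in $H$ or meets $G\setminus H$, is the natural one.

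Your doubling case is essentially correct: for a plane $P \not\subseteq H$ with $T = P\cap H$ and $P\setminus H = a+(h+\vs{T})$, the identity $E\cap(a+S)=a+(E\cap S)$ gives $|E\cap(P\setminus H)| = |E\cap(h+\vs{T})|$, and then $|E\cap T|+|E\cap(h+\vs{T})|$ equals either $2|E\cap T|$ (when $h\in\vs{T}$) or $|E\cap Q|$ for the plane $Q=\cl(T\cup\{h\})\subseteq H$, both even.

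The semidoubling case is where there is a genuine gap. Writing the semidoubling rule additively, for nonzero $x\in\vs{H}$ one has $\mathds{1}_E(a+x)\equiv 1+\mathds{1}_E(x)+\mathds{1}_{H'}(x)\pmod 2$, while the exceptional element $a=a+0$ satisfies $\mathds{1}_E(a)=0$ directly and does \emph{not} fit that formula. Two things go wrong in your intermediate expression. First, you sum over the three-element set $h+T$ rather than the four-element coset $h+\vs{T}$, so the element $a+h$ of $P\setminus H$ is dropped. Second, the constant term $1$ in the formula above contributes: summing over all four elements when $h\notin\vs{T}$ gives $4\equiv 0$, but when $h\in\vs{T}$ (say $h=0$) only three elements use the formula while the fourth gives $\mathds{1}_E(a)=0$, yielding a net $3\equiv 1$. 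Your claimed reduction ``modulo $2$ it is $|(h+T)\cap H'|+|(h+T)\cap E|$'' is missing exactly this $+1$. The computation does close, but only because of the precise parities of flat–hyperplane intersections: when $h\notin\vs{T}$, $|H'\cap(h+\vs{T})|\in\{0,2,4\}$ is even and the $4$ cancels; when $h\in\vs{T}$, $|H'\cap T|\in\{1,3\}$ is odd, which exactly absorbs the extra $1$ so that $|E\cap P|\equiv 1+|H'\cap T|\equiv 0$. You flag that ``the parity here needs care'' and defer the case analysis, so this is an acknowledged incompleteness rather than a wrong claim of proof, but the missing ingredient is specifically this interaction between the constant offset, the special element $a$, and the odd/even parity of $T\cap H'$ versus $(h+\vs{T})\cap H'$; without isolating it, the bookkeeping will not close.
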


Finally, the following lemma is immediate.

\begin{lemma}\label{sym_diff_even_plane}
If $M=(E,G) \in \cE_3$ and $H$ is a hyperplane of $G$, then $(E \Delta (G \del H), G) \in \cE_3$.
\end{lemma}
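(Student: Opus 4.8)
The plan is a direct parity computation, plane by plane. Fix a plane $P$ of $G$ and set $E' = E \Delta (G\del H)$, so that $E' \cap P = (E \cap P) \Delta ((G\del H) \cap P)$. The key observation is that $|(G\del H) \cap P|$ is always even: since $H$ is a hyperplane, either $P \subseteq H$, in which case $(G\del H) \cap P = \varnothing$, or $P \cap H$ is a hyperplane of $P$ (a triangle), in which case $|(G\del H) \cap P| = |P| - |P \cap H| = 7 - 3 = 4$. Applying the identity $|A \Delta B| = |A| + |B| - 2|A \cap B|$ with $A = E \cap P$ and $B = (G\del H) \cap P$, together with the hypothesis that $|E \cap P|$ is even because $M \in \cE_3$, shows that $|E' \cap P|$ is even. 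As $P$ was an arbitrary plane, $(E',G) \in \cE_3$.

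A slightly more structural phrasing of the same argument, which could be used instead: the class $\cE_3$ is closed under taking symmetric differences of ground sets over a fixed $G$, since $|(E_1 \Delta E_2) \cap P| = |E_1 \cap P| + |E_2 \cap P| - 2|E_1 \cap E_2 \cap P|$ for every plane $P$; and the matroid $(G\del H, G)$ itself lies in $\cE_3$ by the parity count above, so $(E \Delta (G\del H), G) \in \cE_3$ immediately. Either way, there is no genuine obstacle; the only point requiring a moment's care is the dichotomy that a hyperplane of $G$ meets each plane $P$ either in all of $P$ or in a flat of codimension one in $P$, which is what makes $|(G\del H)\cap P|$ even.
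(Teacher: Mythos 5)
Your proof is correct and takes essentially the same approach as the paper's: both argue plane-by-plane that $|(G\setminus H)\cap P|$ is always $0$ or $4$ and hence that $|(E\,\Delta\,(G\setminus H))\cap P|$ has the same parity as $|E\cap P|$. The paper phrases the parity step via a disjoint decomposition of $(E\,\Delta\,(G\setminus H))\cap P$ rather than the identity $|A\,\Delta\,B| = |A|+|B|-2|A\cap B|$, but this is a cosmetic difference.
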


\begin{proof}
Let $P$ be a plane of $G$. Note that $|P \cap (G \del H)|$ is either $0$ or $4$, and that $|E \cap P|$ is even. Hence $|E \Delta (G \del H) \cap P| = |E \cap (P \cap H)|+|(P \cap (G \del H)) \del E| \equiv |E \cap (P \cap H)| + |P \cap (G \del H) \cap E| = |E \cap P|\pmod{2}$.
\end{proof}

\subsection*{Restricted Triangles}

Finally, we state and prove a lemma that gives a global structure in a matroid for which certain types of triangle are forbidden. We use this result remarkably often to find decomposers. 

\begin{lemma}[Coset Lemma]\label{PQR}
	Let $(P, Q, R)$ be a partition of a binary projective geometry $G$ for which no triangle $T$ of $G$ satisfies $|T \cap P| \ge 1$ and $|T \cap R| = 1$. 
	Then
	\begin{itemize}
		\item $\cl(P) \subseteq P \cup Q$, and
		\item All cosets of $\cl(P)$ in $G$ are contained in $Q$ or $R$.
	\end{itemize}
	Furthermore, if $(R_1,R_2)$ is a partition of $R$ and $G$ has no triangle that intersects $P$, $R_1$ and $R_2$, then all cosets of $\cl(P)$ in $G$ are contained in $Q$, $R_1$ or $R_2$. 
\end{lemma}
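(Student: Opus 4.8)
The plan is to first establish the claim $\cl(P) \subseteq P \cup Q$, then the statement that every coset of $\cl(P)$ lies entirely in $Q$ or in $R$, and finally refine the latter using the extra hypothesis about $(R_1,R_2)$. Throughout, the hypothesis ``no triangle $T$ has $|T \cap P| \ge 1$ and $|T \cap R| = 1$'' should be read in its contrapositive form: if $x \in P$ and $z \in R$ and $x,z$ lie together in a triangle $\{x,z,x+z\}$, then the third point $x+z$ cannot be in $P$ or $R$ (as that would force $|T\cap R|=1$ when $x+z\notin R$, or a triangle with two $R$-points but we still need $|T\cap P|\ge1$; more carefully $x+z\notin Q$ would put it in $P\cup R$, and in either subcase we contradict the hypothesis since $|T\cap P|\ge1$ always holds and $|T\cap R|$ is then $1$ or $2$ — and $|T\cap R|=2$ still needs checking). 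Concretely: for $x\in P$, $z\in R$, the point $x+z$ must lie in $Q$.

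\textbf{Step 1 (the flat $\cl(P)$).} I would show $\cl(P) \subseteq P \cup Q$ by showing $P \cup Q \cup \{0\}$ is closed under addition, equivalently that $(P\cup Q)$ contains $a+b$ for all distinct $a,b \in P\cup Q$; since $\cl(P)$ is the smallest flat containing $P$ this suffices. The only case needing the hypothesis is when at least one of $a,b$, say $a$, is in $P$ and $a+b \in R$: then $\{a, a+b, b\}$ is a triangle with $|T\cap P|\ge 1$ (it contains $a$) and $|T\cap R| = 1$ (it contains $a+b$, and $b\in P\cup Q$ so $b\notin R$), contradicting the hypothesis. Hence $a+b\notin R$, i.e. $a+b\in P\cup Q$. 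The subcase $a,b$ both in $Q$ with $a+b\in P$ is fine since $P\subseteq P\cup Q$, and the subcase $a,b$ both in $Q$ with $a+b\in R$ does not immediately contradict anything — but I should observe this cannot arise once we know more, or handle it directly: actually if $a,b\in Q$ and $a+b\in R$, consider any $p\in P$ (if $P=\varnothing$ the whole lemma is trivial as $\cl(P)=\varnothing$); hmm, this needs care, so the cleaner route is to argue that $\cl(P)$, being generated by $P$, can be built up by successively adding elements of $P$, and maintain the invariant using only triangles meeting $P$. I would formalize this by taking a basis of $\cl(P)$ consisting of elements of $P$ and showing every element of $\cl(P)$ is a sum of such basis elements, hence by induction on the number of terms lies in $P\cup Q$, where each inductive step adds a single element of $P$ and invokes the hypothesis as above.

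\textbf{Step 2 (cosets).} Let $F = \cl(P)$. I want: every coset $y + \vs{F}$ ($y\in G\del F$) is contained in $Q$ or in $R$. Suppose a coset meets both $Q$ and $R$; since $\vs{F}+\{0\}$ is closed and the coset is connected by lines through points of $F$, there exist $q, z$ in the same coset with $q\in Q$, $z\in R$, and $q+z \in F = \cl(P)$. If $q+z\in P$, then $\{q+z, z, q\}$ is a triangle meeting $P$ (in $q+z$) with $|T\cap R|=1$ (namely $z$; note $q\in Q$), contradicting the hypothesis directly. If $q+z \in \cl(P)\del P \subseteq Q$ (using Step 1), write $q+z = p_1 + \dots + p_m$ with $p_i\in P$; then I would argue inductively that one can ``slide'' $z$ along: pick $p_1 \in P$, so $p_1 + z$ lies in $P\cup Q$ or $R$ — and in fact the hypothesis, applied to the triangle $\{p_1, z, p_1+z\}$, forces $p_1 + z \notin P$ (else the triangle has $|T\cap P|\ge1$, automatically, and $|T\cap R|=1$ from $z$ — wait that's exactly forbidden when $p_1+z\notin R$; if $p_1+z\in R$ then $|T\cap R|=2$, not $1$, so not directly forbidden). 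So $p_1+z$ is in $Q$ or in $R$. If it is in $R$, replace $z$ by $p_1 + z$ and note $(q+z) + p_1 = p_2 + \dots + p_m$, reducing $m$; if it is in $Q$, then $\{p_1, z, p_1+z\}$ has $|T\cap P|=1$, $|T\cap R|=1$, forbidden — contradiction. Iterating, we reach $m=0$, i.e. $q+z\in \vs{\varnothing}$... this is getting delicate, so the main obstacle is making this ``sliding'' induction airtight. The clean statement to prove by induction on $k$ is: if $z\in R$ and $f\in \vs{F}$ is a sum of $k$ elements of $P$, then $z + f \in R$. Base case $k=0$ is trivial ($f=0$); inductive step writes $f = p + f'$ with $p\in P$, $f'$ a sum of $k-1$ elements, so $z+f' \in R$ by induction, and then $\{p, z+f', p + z + f'\}$ is a triangle meeting $P$; since it meets $R$ in $z+f'$, if $p+z+f' \notin R$ it meets $R$ in exactly one element, contradiction — so $z + f = p + z + f' \in R$. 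This also shows: if $q\in Q$ and the coset of $q$ meets $R$ at some $z$, then $q = z + (q+z)$ with $q + z\in \vs{F}$... but I only know $q+z\in\vs F$, not that it is a sum of elements of $P$ — however $\vs F$ \emph{is} spanned by $P$, so $q+z$ is a sum of elements of $P$ (over $\bF_2$, with possibly a $0$), and then the induction gives $q = z + (q+z) \in R$, contradicting $q\in Q$. Hence no coset meets both $Q$ and $R$; combined with Step 1 (cosets miss $P$ entirely, since $F=\cl(P)\supseteq P$ and cosets are disjoint from $F$) this gives the second bullet.

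\textbf{Step 3 (the refinement).} Now assume additionally $(R_1,R_2)$ partitions $R$ and no triangle meets all of $P$, $R_1$, $R_2$. By Step 2 each coset of $F$ lies in $Q$ or $R$; I must show the cosets in $R$ actually lie in $R_1$ or in $R_2$. Suppose a coset $C\subseteq R$ meets both $R_1$ and $R_2$, say $z_1\in C\cap R_1$, $z_2\in C\cap R_2$, so $z_1 + z_2 \in \vs{F}\del\{0\}$, hence $z_1+z_2$ is a nonempty sum of elements of $P$: write $z_1 + z_2 = p_1 + \dots + p_m$, $m\ge1$, $p_i\in P$. By the induction from Step 2 applied to $z_2\in R$ and partial sums, every $z_2 + (p_1 + \dots + p_j)$ lies in $R$; choosing $j$ maximal so that $z_2' := z_2 + p_1 + \dots + p_{j}$ lies in $R_2$ and $z_2' + p_{j+1}$ lies in $R_1$ (this $j$ exists with $0\le j\le m-1$ since the sequence starts at $z_2\in R_2$ and ends at $z_1\in R_1$), set $p = p_{j+1}\in P$; then $\{p, z_2', p + z_2'\}$ is a triangle with $p\in P$, $z_2'\in R_2$, $p+z_2'\in R_1$, meeting all three of $P$, $R_1$, $R_2$, contradicting the new hypothesis. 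Hence every coset of $F$ in $R$ is contained in $R_1$ or in $R_2$, and together with the cosets in $Q$ this proves the final statement.

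\textbf{Main obstacle.} The delicate point is Step 2's sliding induction and its careful bookkeeping of which of $P\cup Q$ or $R$ the intermediate sums land in; once the clean inductive claim ``$z\in R$ and $f$ a sum of elements of $P$ imply $z+f\in R$'' is isolated and proved, both Step 2 and Step 3 follow quickly, so I would spend the bulk of the write-up stating and proving that claim precisely.
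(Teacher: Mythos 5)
Your proof is correct and takes essentially the same route as the paper: the paper writes the key induction via the sets $kP'$ (sums of $k$ elements of $P\cup\{0\}$) and shows $P'+(P'\cup Q)\subseteq P'\cup Q$, which is the same ``add one element of $P$ at a time'' argument you isolate as the claim that $z+f\in R$ whenever $z\in R$ and $f$ is a sum of elements of $P$. The only notable difference is cosmetic — the paper applies the same inductive form directly to $Q$-cosets and to $R_1$-cosets rather than first stating your $R$-preservation claim — and you correctly abandon your initial attempt to prove closure of $P\cup Q\cup\{0\}$ under addition, which does not follow from the hypothesis.
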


\begin{proof}
Let $0X = \{0\}$ for all $X \subseteq \vs{G}$, and let $kX = X+ (k-1)X$ for all $k \ge 1$. Note that $\vs{\cl(X)} = \cup_{k \ge 1}(kX)$. Let $P' = P \cup \{0\}$. The triangle condition given implies that $P' + (P' \cup Q) \subseteq P' \cup Q$. An easy inductive argument gives that $kP' \subseteq P' \cup Q$ for all $k \ge 1$; thus $\vs{\cl(P)} = \cup_{k\ge 1}(kP')\subseteq P' \cup Q$ and so $\cl(P) \subseteq P \cup Q$ as required. 
	
Let $A$ be a coset of $\cl(P)$; note that $A \subseteq G \del \cl(P) \subseteq Q \cup R$. If $A$ contains a vector $w \in Q$, then a similar inductive argument gives that $w + kP' \subseteq P' \cup Q$ for all $k \ge 0$ and so $A = \vs{\cl(P)} + w = \cup_{k \ge 0}(w+kP') \subseteq Q$. Otherwise $A \subseteq R$, as required. 

Finally, if $(R_1,R_2)$ is a partition of $R$ as in the hypothesis, then for each coset $A \subseteq R$ of $\cl(P)$, we have $(A \cap R_1) + P' \subseteq A \cap (G \del R_2) = A \cap R_1$. If $A$ contains some $u \in R_1$, then induction gives $u + kP' \subseteq R_1$ for all $k \ge 0$; it follows that $A = \cup_{k \ge 0} (u + kP') \subseteq R_1$. So each coset of $\cl(P)$ that is contained in $R$ is contained in either $R_1$ or $R_2$, as required.
\end{proof}

\subsection*{Targets} 
Recall that $M = (E,G)$ is a target if there are distinct flats $F_0 \subset \dotsc \subset F_k$ of $G$ for which $E$ is the union of $F_{i+1} \del F_{i}$ over all even $i < k$. We show that targets are closed under some basic properties. 

\begin{lemma}\label{targetsnice}
	The class of targets is closed under taking induced restrictions, complementations, and lift-joins. 
\end{lemma}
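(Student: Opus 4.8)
### Proof proposal for Lemma~\ref{targetsnice}

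The plan is to handle the three closure properties in turn, in each case directly manipulating the defining chain $F_0 \subset \dotsc \subset F_k$ of flats.

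\textbf{Induced restrictions.} Given a target $M = (E,G)$ witnessed by a chain $F_0 \subset \dotsc \subset F_k$, and a flat $F$ of $G$, I would set $F_i' = F_i \cap F$. Each $F_i'$ is a flat of $F$ and the $F_i'$ form a (weakly) increasing chain; one checks directly that $(F_{i+1} \del F_i) \cap F = F_{i+1}' \del F_i'$, so $E \cap F$ is the union of $F_{i+1}' \del F_i'$ over even $i$. The chain $(F_i')$ may have repetitions, but one can delete duplicate consecutive flats in adjacent pairs (removing $F_j' = F_{j+1}'$ contributes the empty set to the union when the pair is ``on'', and deleting both members of such a pair preserves the parity structure of the remaining terms) to obtain a strictly increasing chain witnessing that $M|F$ is a target. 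This bookkeeping — collapsing the chain while preserving which ``bands'' are in $E$ — is the one genuinely fiddly point, but it is elementary.

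\textbf{Complementation.} If $E$ is the union of $F_{i+1}\del F_i$ over even $i$, then $G\del E$ is the union of $F_{i+1}\del F_i$ over odd $i$, together with $G \del F_k$ if $k$ is even and $F_0$ if $F_0 \ne \varnothing$. To package this as a target chain, prepend $\varnothing$ and $F_0$ to the chain (if $F_0$ is nonempty) and append $G$ (if $F_k \ne G$), then the ``on'' bands shift by one; after these adjustments $M^c$ is a target. Again the only work is a careful case analysis of the endpoints, according to the parities of $0$ and $k$ and whether $F_0 = \varnothing$ and $F_k = G$.

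\textbf{Lift-joins.} Let $M_1 = (E_1,G_1)$ and $M_2 = (E_2,G_2)$ be targets with chains $F_0^{(1)} \subset \dotsc \subset F_a^{(1)}$ in $G_1$ and $F_0^{(2)} \subset \dotsc \subset F_b^{(2)}$ in $G_2$, and recall $M_1 \ls M_2 = (E_1 \cup (\vs{G_1} + E_2), G_1 \oplus G_2)$. I would first arrange, by prepending $\varnothing$ or appending the whole space where necessary, that the first chain \emph{ends} with $G_1$ and that the parity is set up so $G_1 \del F_{a-1}^{(1)}$ is an ``on'' band. Then consider the chain in $G = G_1 \oplus G_2$ given by $F_0^{(1)} \subset \dotsc \subset F_a^{(1)} = G_1 \subset \cl(G_1 \cup F_0^{(2)}) \subset \dotsc \subset \cl(G_1 \cup F_b^{(2)})$. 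Since $\cl(G_1 \cup F_j^{(2)}) \del \cl(G_1 \cup F_{j-1}^{(2)}) = \vs{G_1} + (F_j^{(2)} \del F_{j-1}^{(2)})$, the ``on'' bands of this combined chain below $G_1$ reconstruct $E_1$, and the ``on'' bands above $G_1$ reconstruct $\vs{G_1} + E_2$ — provided the parities line up, which is exactly what the initial normalisation of the first chain guarantees (the band $G_1 \del F_{a-1}^{(1)}$ being ``on'' forces the band $\cl(G_1 \cup F_0^{(2)}) \del G_1 = \vs{G_1}+F_0^{(2)}$ to be ``off'', matching $F_0^{(2)}$ being excluded from the target structure of $M_2$, and so on up the chain). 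Hence $M_1 \ls M_2$ is a target.

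\textbf{Main obstacle.} None of the three parts is deep; the work is entirely in the chain normalisation — allowing empty flats, collapsing repeated entries, and tracking the parity of which band is ``on'' through prepending/appending operations. The lift-join case is the most delicate because one must normalise the first chain so that its top band has the right parity before splicing the second chain on top; getting this parity matching right (so that $F_0^{(2)}$, which is \emph{not} part of $M_2$'s ``on'' region, lands in an ``off'' band of the combined chain) is the crux. I would likely streamline all three arguments by first proving a small normal-form observation: every target has a witnessing chain with $F_0 = \varnothing$ and $F_k = G$, with the convention fixed that $F_1 \del F_0$ is ``off'' (equivalently, recording a single bit for whether $\varnothing$-to-$F_1$ is on or off), after which the endpoint case analysis in the complementation and lift-join parts becomes uniform.
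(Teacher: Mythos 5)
Your proposal is correct and takes essentially the same route as the paper: the paper first observes that repeated flats may be allowed in the witnessing chain (collapsing consecutive duplicates recovers a strict chain), which handles induced restrictions, then proves complementation by padding the chain with $\varnothing$ and $G$ to shift the ``on'' bands, and proves closure under lift-joins by appending a repeat of the top flat of the first chain so that its length is odd before splicing on the $G_1$-lifted second chain — exactly your parity normalisation. The only small caveat is that your complementation sketch (``prepend $\varnothing$, append $G$'') as literally stated needs the parity adjustment you allude to (the paper resolves it by dropping or duplicating the top flat according to the parity of $k$), but you explicitly flag that endpoint case analysis, so this is not a gap.
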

\begin{proof}
	Note that if $F_0 \subseteq \dotsc \subseteq F_k$ are flats of $G$, not necessarily distinct, for which $E$ is the union of $F_{i+1} \del F_{i}$ over all even $i < k$, then we can remove consecutive pairs of equal flats from the sequence to obtain such a sequence where the flats are distinct, certifying that $M = (E,G)$ is a target. Thus we can allow the $F_i$ in the definition to be equal. It follows from this fact that targets are closed under taking induced restrictions and under complementation (consider the sequence $(\varnothing,F_0, \dotsc, F_k,G)$ if $k$ is odd and $(\varnothing,F_0,\dotsc, F_{k-1},G)$ if $k$ is even).
	
	Finally, suppose that $M_1 = (E_1,G_1)$ and $M_2 = (E_2,G_2)$ are targets; let $F_0 \subseteq \dotsc \subseteq F_s$ be flats of $G_1$ and $K_0 \subseteq \dotsc \subseteq K_t$ be flats of $G_2$ certifying this. By possibly appending a copy of $F_s$ to the end of the first sequence, we may assume that $s$ is odd. Let $K_i' = G_1 \oplus K_i$ for each $i \in \{1, \dotsc, t\}$, so $G_1 \subseteq K_0'$ and $K_0',\dotsc,K_t'$ is a nested sequence of flats of $G_1 \oplus G_2$. If $M = (E,G_1 \oplus G_2) = M_1 \ls M_2$, then 
	\begin{align*}
		E &= E_1 \cup (\vs{G_1} + E_2)\\
		& = \cup_{i}(F_{i+1} \del F_i) \cup \left(\vs{G_1} + (\cup_j(K_{j+1} \del K_j))\right)\\
		&= \cup_{i}(F_{i+1} \del F_i) \cup \left(\cup_j(K_{j+1}' \del K_j')\right),
	\end{align*}
	where the unions are taken over all even $i$ and $j$ with $i < s$ and $j < t$. Now, since $s$ is odd while $F_s \subseteq G_1 \subseteq K_1'$, the sequence $(F_0,F_1,\dotsc,F_s,K_0',\dotsc,K_t')$ certifies that $M$ is a target, as required.
\end{proof}

\section{Large Decomposers}

Before proceeding, we restate Theorem~\ref{structure} in terms that will be more convenient. The equivalence follows from Corollary~\ref{claw_free_lift_join} and Lemma~\ref{rlj}, as well as the obvious fact that the three basic classes are claw-free. 

\begin{theorem}\label{main}
	If $M = (E,G)$ is a claw-free matroid, then either
	\begin{itemize}
		\item $M$ is even-plane,
		\item $M^c$ is triangle-free,
		\item $M$ is a strict PG-sum, or
		\item $M$ has a decomposer.
	\end{itemize}
\end{theorem}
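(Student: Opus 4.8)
The strategy is to consider a claw-free matroid $M = (E,G)$ that is none of the first three types and to produce a decomposer. The central tool will be the Coset Lemma~\ref{PQR}: to apply it, we need a partition of $G$ into parts $(P,Q,R)$ that forbids certain triangles, and the forbidden-triangle condition should be extracted from claw-freeness. The natural choice is to look at $3$-dimensional induced restrictions. A plane $P$ with $|E \cap P|$ odd is the ``bad'' configuration (since $M$ is not even-plane, such a plane exists), and inside such a plane the claw-free condition severely restricts how $E$ meets it: up to complementation within the plane, $E \cap P$ is one of $P_5$, $K_4$, $C_4$ (minus a point), or a triangle; in particular, since a claw $I_3$ is forbidden, and $P_5$, $K_4$ are the only other $3$-dimensional claw-free matroids with an odd number of elements besides the single point and the triangle, we get a short list. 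The key local fact to nail down first is: \emph{for every plane $P$ of $G$, $|E \cap P| \in \{0,1,2,3,4,5,6,7\}$ and the odd cases are exactly the point, the triangle, $P_5$ and $K_4$} (as established in the proof of Lemma~\ref{PG_characterisation}, claw-freeness forbids $I_3$, and one checks the even-element $3$-dimensional matroids are all of the remaining ones). From here one defines a candidate flat to decompose along.

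The main work is choosing the right partition. I would take a triangle $T$ of $G$ with $|E \cap T|$ as large as possible subject to $|E\cap T|$ being odd, or more precisely work with a maximal flat $F$ that is ``$E$-odd'' in the sense that $|E \cap H|$ is odd for every plane $H \supseteq $ something — but the cleaner route is: since $M$ is not even-plane, pick a plane $P_0$ with $|E\cap P_0|$ odd; since the odd-intersection condition in a plane forces $E \cap P_0$ to contain or be contained in a hyperplane (triangle) of $P_0$, one isolates a triangle $T_0$ and then grows it. Concretely, let $F$ be a maximal flat such that $|E \cap F|$ is odd and $E\cap F$ has the ``Bose--Burton-like'' local structure (either $\vs{F} \del \vs{F'}$ for a hyperplane $F'$ of $F$, or a single point, or a triangle). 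I would then set $P = E \cap F$ (or its complement in $F$, whichever is a flat), $R = $ the part of $G$ that is ``$E$-consistent'' with extending $F$, and $Q$ the rest, and verify the triangle hypothesis of Lemma~\ref{PQR} using claw-freeness restricted to planes meeting $F$ in a triangle. The Coset Lemma then yields that $\cl(P)$ has all its cosets inside $Q$ or $R$; if $\cl(P)$ is a nonempty proper flat, it is a decomposer and we are done.

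The hard part — and where the real case analysis lives — will be handling the situation where the Coset Lemma produces only the trivial flats ($\cl(P) = \varnothing$ or $\cl(P) = G$), i.e.\ where no nontrivial decomposer falls out directly. In that regime one must argue that $M$ is forced into one of the three basic classes: if $\cl(P)$ is everything, the forbidden-triangle condition propagated globally should say $|T\cap R|\ne 1$ for \emph{every} triangle, which (by the remark after Theorem~\ref{bbt}, on flats meeting every triangle in an odd number of points) forces $R$ to be $G$ or a hyperplane, pinning $E$ down to a triangle-free complement or a near-PG-sum; and the degenerate cases where the maximal odd flat $F$ is just a point or a triangle need separate treatment, leading either to a single-element or hyperplane decomposer (the ``doubling'' cases discussed after Lemma~\ref{lj_parameters}) or, when even those fail, to $M^c$ being triangle-free or $M$ being a strict PG-sum via Lemma~\ref{PG_characterisation}. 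I expect the bookkeeping of which of the finitely many plane-types occur, and ruling out $K_4$- and $P_5$-type planes in the extremal configuration, to be the most delicate step; the Coset Lemma does the heavy lifting once the partition is set up correctly, but setting it up so that \emph{every} claw-free matroid is covered is the crux.
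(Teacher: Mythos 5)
Your proposal misses the central organizing idea of the paper's argument, and as sketched it would not yield a proof.

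The paper proves Theorem~\ref{main} by induction on dimension, and the engine of the induction is Theorem~\ref{hyperplane_decomposer}: if $M|H$ has a decomposer $F$ for some hyperplane $H$ of $G$, then $M$ itself falls into one of the four outcomes. Proving this extension lemma occupies Sections 3 and 4 and splits into four cases depending on whether the minimal decomposer of $M|H$ is a hyperplane of $H$ or a single point, and whether its special coset lies inside or outside $E$; each case gets its own application of the Coset Lemma to a partition of $F$ defined in terms of an auxiliary element $a \in G \setminus H$ and the sets $X_0 = H \setminus (a+E)$, $X_1 = (a+E) \cap H$ (and, in the point-decomposer cases, additional sets $F_0,F_1$ built from a second auxiliary element $b$). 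In the final proof of Theorem~\ref{main}, a minimal counterexample thus has \emph{no} hyperplane admitting a decomposer; combined with Lemma~\ref{basecase_shortcut} this forces $\dim(M) = 5$, and the theorem is finished by a direct analysis in that small case.

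Your plan instead is to pick a single plane $P_0$ of odd size, ``grow'' some flat $F$, and apply the Coset Lemma once to a global partition of $G$. There are two concrete problems. First, the partitions in the paper's applications of Lemma~\ref{PQR} are not of the form ``$E \cap F$ versus its complement versus the rest'': they are tailored to a pair $a,b$ of reference elements and the four-way interaction of $E$, $X_0/X_1$, and $F_0/F_1$; without a pre-identified decomposer of a hyperplane (i.e.\ without the inductive hypothesis) you have no natural candidate for $b$, and the triangle condition needed by Lemma~\ref{PQR} does not follow from claw-freeness for the coarse partition you describe. Second, you never say what to do when $\cl(P)$ is trivial: your sentence ``the forbidden-triangle condition propagated globally should say $|T \cap R| \ne 1$ for every triangle'' is not what Lemma~\ref{PQR} gives (it gives a statement about cosets of $\cl(P)$, not about all triangles), and in any case this step is precisely where the paper must pass to a five-dimensional base case and argue by hand — a step that your sketch does not anticipate. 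The ``bookkeeping of plane-types'' you flag as the delicate step is not where the difficulty lives; the difficulty is the absence of any mechanism to pass from local structure (inside a plane) to a genuine nonempty proper flat of $G$ with no mixed cosets, and the paper's mechanism for this — extending a decomposer from a hyperplane — is entirely absent from your proposal.
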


We will prove Theorem~\ref{main} by induction on the dimension of $M$, and we are thus interested in when a decomposer $F$ of some induced submatroid $M|H$ of $M$ extends to a decomposer of $M = (E,G)$ itself. This section and the next address a few special cases of this that turn out to be all we need: namely, where $H$ is a hyperplane of $G$, and the decomposer $F$ is minimal and has dimension either $1$ or $\dim(H)-1$. 

In what follows, we will frequently be obtaining a contradiction by finding a claw. Abusing terminology slightly, we say that a set $X$ is itself a \emph{claw} in a matroid $M = (E,G)$ if $X$ is a three-element linearly independent set with $X = E \cap \cl(X)$. Having such an $X$ is equivalent to having an induced $I_3$-restriction. To check that $\{x,y,z\}$ is a claw in this sense, it suffices to verify that $x,y,z \in E$ and $x+y,y+z,x+z,x+y+z \notin E \cup \{0\}$. To keep our proofs somewhat concise, we will usually just assert that various three-element sets are claws without writing the required checks explicitly; we have endeavoured to ensure that when we do this, the check is easy to perform with information recently established. 

In many lemmas to come, we will consider a partition $(X_0,X_1)$ of a hyperplane $H$ defined by $X_1 = (a+E) \cap H$ and $X_0 = H \del X_1$ for some element $a$ of $G \del H$. This partition can be defined alternatively by its property that for all $x \in H$, we have $x \in X_1$ if and only if $x+a \in E$. 

We will first consider the case in which some hyperplane $H$ has a minimal decomposer $F$ which is itself a hyperplane of $H$. This implies that the coset $H \del F$ is either contained in $E$ or disjoint from $E$; the next two lemmas deal with these subcases. 

\begin{lemma}\label{minimal_hyperplane_case_solid}
Let $M = (E,G)$ be a claw-free matroid, let $H$ be a hyperplane of $G$ and let $F$ be a hyperplane of $H$. If $H \del F \subseteq E$ and $M|F$ has no decomposer, then either
\begin{itemize}
	\item $M^c$ is triangle-free, or 
	\item $M$ has a decomposer $F'$ containing $F$. 
\end{itemize}
\end{lemma}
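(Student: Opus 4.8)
The plan is to analyze the structure of $M$ relative to the coset $H \setminus F$, which lies entirely in $E$. Since $F$ is a hyperplane of $H$, which is itself a hyperplane of $G$, the flat $F$ has codimension $2$ in $G$, so $G \setminus F$ partitions into three cosets of $F$: the coset $H \setminus F \subseteq E$, and two further cosets $A_1, A_2$ lying in $G \setminus H$ that partition $G \setminus H$. Pick $a \in A_1$; then $A_2 = (H \setminus F) + a$ and $A_1 = F + a$. Define the partition $(X_0, X_1)$ of $H$ by $X_1 = (a + E) \cap H$ and $X_0 = H \setminus X_1$, so that for $x \in H$ we have $x + a \in E$ iff $x \in X_1$. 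Because $H \setminus F \subseteq E$, the structure of $E$ on $A_2 = (H\setminus F)+a$ is governed by whether $(H \setminus F) \subseteq X_1$ or not, and on $A_1 = F + a$ by $X_1 \cap F$. The goal is to show that either $E^c$ meets no triangle in exactly the way needed for $E^c$ to be triangle-free (recall: this happens exactly when $E^c$ contains no triangle of $G$), or else to produce a decomposer $F'$ of $M$ with $F \subseteq F'$; the natural candidates for $F'$ are $\cl(F \cup \{a\})$ for suitable $a$, or $H$ itself, or some hyperplane of $G$ containing $F$.

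**Key steps in order.**

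First I would use claw-freeness on planes through $H \setminus F$: since every element of $H \setminus F$ is in $E$, any plane $P$ containing two points $u, v \in H \setminus F$ (hence also $u + v \in F$) together with one point $w$ outside $H$ forces a constraint — the set $\{w, w+u, w+v, w+u+v\}$ is a coset-like configuration and cannot be a claw, so we learn that $E^c \cap P$ avoids certain patterns. Concretely, I expect this to force the partition $(X_0, X_1)$ of $H$ to be "rigid" on $H \setminus F$: either $H \setminus F \subseteq X_1$ or $H \setminus F \subseteq X_0$, and moreover to constrain $X_1 \cap F$ relative to the subspace structure. The Coset Lemma (Lemma~\ref{PQR}) is the main tool here: I would apply it with an appropriate partition $(P,Q,R)$ of $G$ (or of $H$, or of a hyperplane) chosen so that the forbidden-triangle hypothesis is exactly the no-claw condition restricted to the relevant planes, extracting that $\cl$ of some piece behaves well and its cosets are unmixed. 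Second, having constrained $X_1 \cap F$, I would invoke the hypothesis that $M|F$ has no decomposer: this rules out the possibility that $X_1 \cap F$ is a subspace or hyperplane of $F$ or is empty or all of $F$ (since any of those would give a decomposer of $M|F$, either a single element or a hyperplane, via the criteria discussed after Lemma~\ref{lj_parameters}). So $M|F$ being "decomposer-free" pins $X_1 \cap F$ into a position incompatible with everything except the two desired outcomes. Third, I would split into the cases $H \setminus F \subseteq X_1$ versus $H \setminus F \subseteq X_0$ (together with subcases on $A_1$), and in each verify directly that either $\cl(F \cup \{a\})$ or $H$ is a decomposer of $M$ (check no mixed cosets, using that the nontrivial cosets are $F+a$ and $(H\setminus F)+a$ and possibly $H\setminus F$, whose mixedness we have just controlled), or else that $G \setminus E$ contains no triangle — the latter following because every triangle either lies in $H$, or meets $H$ in one point and $G\setminus H$ in two, and in all configurations the established constraints force at least one point of the triangle into $E$.

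**The main obstacle.**

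The hard part will be the bookkeeping in the second and third steps: correctly translating "$M|F$ has no decomposer" into the precise algebraic obstruction on $X_1 \cap F$, and then checking that this obstruction, combined with the claw-free constraints coming from planes straddling $H$, leaves exactly the two advertised outcomes and nothing else. In particular, I anticipate a delicate subcase where $H \setminus F \subseteq X_1$ but $X_1 \cap F$ is some "generic" non-subspace set; here one must either find a claw using three points spread across $F$, $H \setminus F$, and $G \setminus H$ (so that the plane they span has exactly three points in $E$), or else argue that $E^c$ is triangle-free by exhausting triangle types. Getting the right element $a$ (the right coset $A_1$ versus $A_2$) to define $F' = \cl(F \cup \{a\})$ is also a potential trap, since the two cosets of $H$ outside $F$ play asymmetric roles — one is contained in $E$-adjacent-to-$H\setminus F$ and the other is $F+a$. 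I would expect the cleanest route is to first dispense with the case $G \setminus E = \varnothing$ or $\cl(E) \neq G$ (where $H$ or a hyperplane containing $\cl(E)$ is trivially a decomposer), then reduce to $M$ full-rank with $E^c$ nonempty and run the Coset Lemma once, carefully.
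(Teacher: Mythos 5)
Your high-level picture is close to the paper's — define $X_0,X_1$ via an outside element $a$, apply the Coset Lemma, and play off the fact that $M|F$ has no decomposer — but there are two substantive gaps in the plan that would have to be repaired before the proof could be carried through.

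First, the ``rigidity'' step is incorrect as stated. You claim that claw-freeness forces $H \del F$ to lie entirely in $X_0$ or entirely in $X_1$. It does not: for $v\in A\cap X_0$ and $w\in A\cap X_1$ (with $A=H\del F$), the plane $\cl(\{a,v,w\})$ already contains $v,w,a+w\in E$, and whether it contains more of $E$ depends on $v+w\in F$ and $a+v+w$, which your constraints have not pinned down. So no claw is forced. The paper instead makes a different dichotomy: $F$ is not a decomposer of $M$, so $F$ has a mixed coset $B$, and $a$ is fixed in $B\del E$. If $A\cap X_0=\varnothing$ then $F\cup B$ already decomposes $M$; otherwise one merely fixes \emph{a single} $v\in A\cap X_0$ and proceeds. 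No claim that $A$ is monochromatic is needed or true. Relatedly, your choice of $a$ is under-specified: the argument depends on $a$ coming from a mixed coset of $F$ and on $a\notin E$; picking $a\in A_1$ arbitrarily does not guarantee either, and the conclusion $X_1\cap F\neq\varnothing$ you tacitly rely on comes precisely from $B$ being mixed.

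Second, the endgame is too optimistic. You propose to show directly that $E^c$ contains no triangle ``by exhausting triangle types.'' What the established constraints (after the Coset Lemma is applied to the partition of $F$ given by $P=(F\del E)\cap X_0$, $Q=(F\cap E)\cap X_0$, $R=F\cap X_1$, with the decomposer-free hypothesis forcing $P=\varnothing$, i.e.\ $F\cap X_0\subseteq E$) actually yield is the weaker fact that \emph{every triangle through $a$ meets $E$}. One then has to convert this into triangle-freeness of $M^c$ by a short extra step: if $T_0\subseteq G\del E$ is a triangle, then $a\notin T_0$, and for each $x\in T_0$ the triangle $\{a,x,a+x\}$ forces $a+x\in E$, so $T_0+a$ is a claw. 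This is a genuinely different move from exhausting configurations, and I don't see a clean way to argue directly that every triangle of $G$ meets $E$ using only the constraints you have. Finally, a small but important precision: the Coset Lemma must be applied inside $F$, not inside $H$ or $G$ — the point is to conclude $F\cap X_0\subseteq E$ — and it is this conclusion, not ``$X_1\cap F$ is a subspace,'' that the decomposer-free hypothesis on $M|F$ delivers.
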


\begin{proof}
	Let $A = H \del F$. Suppose for a contradiction that $M^c$ has a triangle, and that no flat $F'$ containing $F$ is a decomposer of $M$. 
	Thus $F$ does not decompose $M$, so $F$ has a mixed coset $B$. Fix an element $a \in B \del E$, and let $X_1 = (a+E) \cap H$ and $X_0 = H \del X_1$. Note that $X_1 \cap F \neq \varnothing$ since $B$ is mixed. 
	
	We may assume that $A \cap X_0 \neq \varnothing$, as otherwise $A + a \subseteq E$, which implies that the coset $G \del (F\cup B) = A \cup (A + a)$ of $F \cup B$ is contained in $E$, and so $F \cup B$ decomposes $M$. Let $v \in A \cap X_0$, so $a+v \notin E$. 
	
	\begin{claim}Let $T \subseteq H$ be a triangle. Then 
	\begin{itemize}
		\item if $T \subseteq F$ and $|T \cap X_0| = 2$ then $T \subseteq E$, and 
		\item $T$ does not intersect all three of $X_0\del E,X_1\del E$ and $X_1 \cap E$. 
	\end{itemize}
	%If $T$ is a triangle in $F$ for which $|T \cap X_0| = 2$, then $T \subseteq E$. Moreover,  $F$ contains no triangle $T'$ that intersects $X_0\del E$, $X_1\del E$ and $X_1 \cap E$. 
	
	%If $\{v_1, v_2, v_3\}$ is a triangle in $F$ such that $v_1, v_2 \in X_0$ and $v_3 \in X_1$, then $v_1, v_2, v_3 \in E$.
	%Moreover, there is no triangle $\{v_1, v_2, v_3\}$ for which $v_1 \in X_0 \del E$ and $\{v_2,v_3\} \subseteq  X_1 \del E$.
	
	%Moreover, there are no triangles $\{v_1, v_2, v_3\}$, $v_1 \in X_0$, $v_2, v_3 \in X_1$ such that $v_1, v_2 \notin E$ and $v_3 \in E$. 
	\end{claim}
	
	\begin{subproof}[Subproof]
	For the first part, let $T = \{v_1,v_2,v_3\}$ with $T \cap X_0 = \{v_1,v_2\}$. Note that $\{v\} \cup (T + v)\subseteq A \subseteq E$. If $v_1 \notin E$, then $\{v+v_1,a+v+v_1,v\}$ is a claw if $v+v_1 \in X_1$ and $\{v+v_2,v+v_3,a+v_3\}$ is a claw if $v+v_1 \in X_0$; either case gives a contradiction, so $v_1 \in E$. Symmetrically we have $v_2 \in E$. If $v_3 \notin E$, then $\{v,v+v_3,a+v_3\}$ is a claw if $v + v_3 \in X_0$, and $\{v+v_1,v_1+v_2,a+v+v_3\}$ is a claw if $v+v_3 \in X_1$. Thus $v_3 \in E$.  
	%\[E \cap \cl(\{a,v,v_1\}) = \{v,v+v_1\} \cup (E \cap \{a+v+v_1\})\]
	%and 
	%\[E \cap \cl(\{v_1,v+v_3,a+v_3\}) = \{v+v_2,v+v_3,a+v_3\} \cup (E \cap \{a+v+v_1\}).\]
	%Depending on whether $a+v+v_1$ is in $E$, one of these planes gives a claw in $M$. Thus $v_1 \in E$, and, symmetrically, $v_2 \in E$. 
	%If $v_3 \notin E$ then \[E \cap \cl(\{v,v+v_3,a+v_3\})  = \{v,v+v_3,a+v_3\} \cup (E \cap \{a+v+v_3\})\] and \[E \cap \cl(\{v_3,a+v,a+v_1\}) = \{v+v_1,v+v_2\} \cup (E \cap \{a+v+v_3\}).\]
	%Similarly, one of these two planes gives a claw in $M$. Thus $v_3 \in E$ as required. 
	
	For the second part, let $T = \{v_1, v_2, v_3\} \subseteq H$ be a triangle for which $v_1 \in X_0 \del E$, $v_2 \in X_1 \del E$ and $v_3 \in X_1  \cap E$. 
	Then $\{a+v_1,a+v_3,v_3\}$ is claw; thus, there are no such triangles. 
	\end{subproof}
	
	We may now apply Lemma~\ref{PQR} to conclude the following.
	
	\begin{claim}
	$F \cap X_0 \subseteq E$.
	\end{claim}
	
	\begin{subproof}[Subproof]
	Suppose not, so that $(F \del E) \cap X_0 \neq \varnothing$. Let $(P,Q,R) = ((F\del E) \cap X_0,(F \cap E) \cap X_0,F \cap X_1)$. The choice of $a$ implies that $R \ne \varnothing$. This is a partition of $F$, and by the first part of the previous claim, no triangle $T$ of $F$ satisfies $|T \cap P| \ge 1$ and $|T \cap R| = 1$, as such a triangle contains exactly two elements of $X_0$ but is not contained in $E$. Moreover, if $(R_1,R_2) = ((F\del E) \cap X_1,(F \cap E) \cap X_1)$, then no triangle of $F$ intersects $P,R_1$ and $R_2$ by the second part of the claim.
	
	 Thus, we can apply Lemma~\ref{PQR} to the flat $F$ with the given partition, so the flat $F' = \cl(P)$ satisfies $F' \subseteq P \cup Q \subseteq X_0$, and every coset of $F'$ in $F$ is contained in either $Q,R_1$ or $R_2$; none of these sets is mixed with respect to $M$, so it follows that either $F' = \varnothing$, $F' = F$, or $F'$ is a decomposer of $M|F$. The last case contradicts hypothesis. The fact that $R$ is nonempty and $F' \subseteq P \cup Q = F \del R$ implies that $F' \ne F$, and so $F' = \varnothing$. This yields $P = \varnothing$, giving the claim. 
	\end{subproof}
	
	This claim, together with the fact that $H \del F \subseteq E$, implies that each triangle of $G$ containing $a$ must intersect $E$. Recall that by assumption, $M^c$ has a triangle $T_0$. As just observed, we have $a \notin T_0$. For each $x \in T_0$ we therefore have $x + a \in E$, as otherwise $\{a,x+a,x\}$ does not intersect $E$. It follows that $T_0 + a \subseteq E$, so $E \cap \cl(T_0 \cup \{a\}) = T_0 + a$ and $T_0 + a$ is thus a claw, giving a contradiction.
	\end{proof}

Next, we consider the case in which the coset of $H \del F$ is disjoint from $E$. Although the statement is not self-complementary, the first part of its proof is surprisingly similar to the proof of the previous case. 

\begin{lemma}\label{minimal_hyperplane_case_empty}
	Let $M = (E,G)$ be a claw-free matroid, let $H$ be a hyperplane of $G$ and let $F$ be a hyperplane of $H$ with $|F| >1$. If $(H \del F) \cap E = \varnothing$ and $M|F$ has no decomposer, then either 
	\begin{itemize}
		\item $M$ and $M|F$ are both strict PG-sums, or
		\item $M$ has a decomposer $F'$ containing $F$. 
	\end{itemize}
\end{lemma}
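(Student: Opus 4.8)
The plan is to mirror the structure of the proof of Lemma~\ref{minimal_hyperplane_case_solid}, exploiting the complementary nature of the hypothesis. We are given that $F$ is a hyperplane of $H$ with $|F| > 1$, that $(H \del F) \cap E = \varnothing$, and that $M|F$ has no decomposer. Suppose for contradiction that $M$ has no decomposer $F'$ containing $F$. In particular $F$ itself does not decompose $M$, so $F$ has a mixed coset. As in the previous proof, I would fix $a$ to be an element of $G \del H$ lying in a mixed coset of $F$, chosen so that $a \in E$ (this is the complementary choice to before — the coset is mixed, so if every element of it lying outside $H$ were in $E^c$ we could perhaps argue otherwise; one needs to be a bit careful about which element to pick, and the correct setup is likely $a \in E$ with $a$ in a mixed coset of $F$). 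Then set $X_1 = (a + E) \cap H$ and $X_0 = H \del X_1$, so that $x \in X_1$ iff $x + a \in E$, and note $X_1 \cap F \ne \varnothing$ and $(F \del E) \cap \text{(coset)} \ne \varnothing$ reflect the mixedness.

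First I would establish a claim, analogous to the Claim in the previous proof, restricting which triangles of $F$ can occur: using $H \del F \cap E = \varnothing$ (so every element of $A = H \del F$ is in $E^c$), pick $v \in A$; then for a triangle $T \subseteq F$, if $|T \cap X_1| = 2$ and $T \not\subseteq E^c$ (i.e. $T$ meets $E$), one builds a claw from $v$, the translate $T + v \subseteq A$, and $a$; similarly no triangle of $F$ can meet all three of $X_1 \cap E$, $X_0 \cap E$ and $X_0 \del E$ (or the appropriate complementary list). The exact combinatorics of which translated triples form claws will need to be checked carefully — this bookkeeping, rather than any conceptual difficulty, is where the proof is most delicate, exactly as flagged in the remark preceding the lemma that the proof "is surprisingly similar" to the previous one.

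Next I would apply the Coset Lemma (Lemma~\ref{PQR}) to $F$ with a partition $(P, Q, R)$ where $P$ is the set of elements of $F$ causing trouble (something like $(F \cap E) \cap X_1$, the "mixed-inducing" part), $Q$ its complement within one side, and $R$ the other side, together with a refinement $(R_1, R_2)$ of $R$; the triangle claim above verifies the hypotheses of Lemma~\ref{PQR}. This produces a flat $F' = \cl(P)$ with $F' \subseteq P \cup Q$ and all cosets of $F'$ in $F$ unmixed, hence $F'$ is $\varnothing$, $F$, or a decomposer of $M|F$. The decomposer case contradicts the hypothesis on $M|F$; the case $F' = F$ is excluded because $R \ne \varnothing$ and $F' \subseteq F \del R$; so $F' = \varnothing$ and $P = \varnothing$. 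This tells us that $F \cap X_1 \subseteq E^c$ (or the complementary statement), i.e. $F$ is "clean" with respect to $a$.

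Finally, from $P = \varnothing$ together with $(H \del F) \cap E = \varnothing$, I would deduce that $F \cap X_1 = \varnothing$, i.e. $a + F$ (and indeed the whole coset $a + \vs{F}$, once one also checks $a \notin E$ versus $a \in E$ consistently) lies entirely in $E^c$ or entirely in $E$. The conclusion to extract is that $M$ decomposes unless $M$ is forced into a very rigid shape; one then shows that rigid shape is exactly a strict PG-sum, with $M|F$ also a strict PG-sum. Concretely: $E \subseteq H$ (since the coset $H \del F$ is empty of $E$ and one shows $E \cap (G \del H)$ forces either a decomposer or $E \cap (G \del H) = \varnothing$), so $\cl(E) \subseteq H$ contradicts... — no: rather, one uses that $E$ lives in $H$, that the hyperplane $F$ of $H$ meets $E$ in a set whose complement in $H$ is a flat (namely $(H \del F)$ union something), and that the absence of a decomposer of $M|F$ combined with claw-freeness forces $M|F = (E \cap F, F)$ to be a strict PG-sum via Lemma~\ref{PG_characterisation} or directly; then $M$, being $E \cap F$ sitting inside $G$ with $H \del F$ empty, is seen to have ground set a disjoint union of two flats, one of them being a flat of $G$ containing the "gap". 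The main obstacle I anticipate is not the Coset Lemma application but pinning down this last step: correctly identifying the two flats whose disjoint union is $E$, and verifying fullness of rank, so that "strict PG-sum" holds on the nose for both $M$ and $M|F$; getting the parity/containment relationships among $F$, $H$, $a + \vs{F}$, and $E$ exactly right is fiddly, and the choice of $a$ (in $E$ or in $E^c$) must be made so that this final extraction goes through.
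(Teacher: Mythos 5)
Your proposal correctly mirrors the first half of the paper's proof: the choice $a \in B \cap E$ for a mixed coset $B$, the definition of $X_0, X_1$, the observation that $A \cap X_1 \ne \varnothing$ (else $F \cup B$ decomposes), a triangle-restriction claim, and an application of the Coset Lemma yielding $F \cap X_1 \subseteq E$. This matches the paper up to Claim~\ref{no_empty_X_1}.

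The genuine gap is in the second half. You acknowledge the final step — extracting the strict PG-sum structure — as "the main obstacle" and offer only a tentative sketch, but that sketch contains errors that would not survive being written out. In particular, you suggest deducing $F \cap X_1 = \varnothing$ and then $E \subseteq H$, but both are false: $a \in E \del H$ by construction, and the correct conclusion of the Coset-Lemma step is $F \cap X_1 \subseteq E$ (not that $F \cap X_1$ is empty). You notice the inconsistency yourself mid-paragraph ("so $\cl(E) \subseteq H$ contradicts\dots --- no:"), but the revised hand-wave that follows does not supply a workable plan. The paper explicitly flags at this point that it "now diverges from the techniques in Lemma~\ref{minimal_hyperplane_case_solid}": it proves two further substantive claims (\ref{intermediate} and \ref{twoflats}) that together show $F \cap X_1$ and $F \cap E \cap X_0$ are flats, and the key move in Claim~\ref{intermediate} --- showing that the bad configuration $u \in X_1 \cap E$, $v + u \in X_1$ would force the singleton $\{u\}$ to decompose $M|F$, contradicting the hypothesis --- is not something your proposal anticipates. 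Without this machinery, there is no route from "$F \cap X_1 \subseteq E$" to the explicit identity $E = F_1 \cup F_2 \cup (a + \vs{F_1}) \cup ((a+v) + \vs{F_2})$ exhibiting $E$ as a union of two disjoint flats, and hence no proof that $M$ and $M|F$ are strict PG-sums. So the proposal is a reasonable plan for the first half but does not contain the idea needed to finish.
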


	\begin{proof}
	Let $A = H \del F$, so $A \cap E$ is empty. Suppose that neither outcome holds. Thus, $F$ is not a decomposer, so has a mixed coset $B$. Fix an element $a \in B \cap E$, and let $X_1 = (a+E) \cap H$ and $X_0 = H \del X_1$. That $B$ is mixed implies that $X_0 \cap F \neq \varnothing$. If $A \subseteq X_0$, then the set $A \cup (A + \{a\}) = G \del (F \cup B)$ is disjoint from $E$ and so $F \cup B$ is a decomposer of $M$; thus $A \cap X_1 \ne \varnothing$. Let $v \in A \cap X_1$, so $a+v \in E$. We claim the following.
	
	\begin{claim}\label{YYX}
	Let $T \subseteq H$ be a triangle. Then 
	\begin{enumerate}[(i)]
		\item\label{yyxi} if $T \subseteq F$ and $|T \cap X_1| = 2$, then $T \subseteq E$, and 
		\item\label{yyxii} $T$ does not intersect all three of $X_1\del E,X_0\del E$ and $X_0 \cap E$. 
	\end{enumerate}
	\end{claim}
	
	\begin{subproof}[Subproof]
	For (\ref{yyxi}), suppose that $T \subseteq F$ and $|T \cap X_1| = 2$; let $\{v_1,v_2\} = T \cap X_1$ and $\{v_3\} = T \cap X_0$. Note that $v + \vs{T}$ is disjoint from $E$. Now
	\[\cl(\{a,v,v_3\}) \cap E = \{a,a+v\} \cup (E \cap \{v_3,a+v+v_3\}),\]
	and
	\[
	\cl(\{v_3,a+v,a+v_1\}) \cap E = \{a+v_1,a+v_2,a+v\} \cup (E \cap \{v_3,a+v+v_3\}),
	\]
	which implies that $\{v_3,a+v+v_3\} \subseteq E$, as otherwise one of these planes gives a claw in $M$. We have 
	\[
	\cl(\{a+v,v_1,a+v_2\}) \cap E = \{a+v,a+v_2\} \cup (\{v_1,a+v+v_1\} \cap E)
	\]
	and 
	\[
	\cl(\{a,v,v_1\}) \cap E = \{a,a+v_1,a+v\} \cup (\{v_1,a+v+v_1\} \cap E),
	\]
	so, similarly, $\{v_1,a+v+v_1\} \subseteq E$. Thus $v_1 \in E$ and, symmetrically, $v_2 \in E$; therefore $T \subseteq E$. 
	
	For the second part, note that if $T = \{v_1,v_2,v_3\}$ is a triangle with $v_1 \in X_1\del E$. $v_2 \in X_0\del E$ and $v_3 \in X_0 \cap E$, then $\cl(T \cup \{a\}) \cap E = \{a,a+v_1,v_3\}$ and so $M$ has a claw. Thus, there are no such triangles. 
	\end{subproof}
	
	We may now apply Lemma~\ref{PQR} to conclude the following.
	
	\begin{claim}\label{no_empty_X_1}
	$F \cap X_1 \subseteq E$.
	\end{claim}
	
	\begin{subproof}[Subproof]
	Suppose not, so that $(F \del E) \cap X_1 \neq \varnothing$. Define a partition of $F$ by $(P,Q,R) = ((F\del E) \cap X_1,F \cap E \cap X_1,F \cap X_0)$ and let $(R_1,R_2) = ((F\del E) \cap X_0,F\cap E \cap X_0)$. Recall that $R \ne \varnothing$ by the choice of $a$. 
	
	The previous claim gives that no triangle $T$ of $F$ satisfies $|T \cap R| = 1$ and $|T \cap P| \ge 1$, and that no triangle of $F$ intersects all three of $P,R_1,R_2$; thus, Lemma~\ref{PQR} implies that the flat $F' = \cl(P)$ satisfies $F' \subseteq P \cup Q$, while each coset of $F$ is contained in $Q,R_1$ or $R_2$. As before, this implies that $F'$ has no mixed cosets in $F$, so either $F' = \varnothing,F' = F$, or $F'$ is a decomposer of $M|F$; the last case contradicts the hypothesis, and we have $F' \ne F$ because $R \ne \varnothing$; it follows that $F' = \varnothing$ and so $P = \varnothing$, which gives the claim.
	\end{subproof}
	
	We now diverge from the techniques in Lemma~\ref{minimal_hyperplane_case_solid}. 
	The previous claim gives that the sets $X_0 \cap E, X_0 \del E$ and $X_1 \cap E$ induce a partition of $F$. We now show that this partition naturally gives rise to a partition of $A = v + \vs{F}$. % We first need to eliminate the case where $\dim(F) = 1$. If this occurs then, since $F \ne \varnothing$, we have $\dim(M) = 3$ and so $F = \{x\}$ for some $x$; now $E = \{a,a+v\} \cup (E \cap \{x,a+v+x\})$ and it follows easily that $M$ is either a claw or a PG-sum, giving a contradiction. Therefore $\dim(M) \ge 4$ and so $\dim(F) \ge 2$. 

	\begin{claim}\label{intermediate}For all $u \in F$, 
	\begin{itemize}
		\item If $u \in X_0 \cap E$, then $v+u \in X_1$.
		\item If $u \in X_0 \del E$, then $v+u \in X_0$.
		\item If $u \in X_1 \cap E$, then $v+u \in X_0$.
	\end{itemize}
	\end{claim}
	
	\begin{subproof}[Subproof]
	The first two are immediate; if $u \in X_0 \cap E$ and $v+u \in X_0$, then $\{a,u,a+v\}$ is claw. Moreover, if $u \in X_0 \del E$ and $v+u \in X_1$, then $\{a,a+u+v,a+v\}$ is a claw. 
	
	Finally, suppose for a contradiction that $u \in X_1 \cap E$ but $v+u \in X_1$. Then we claim that $\{u\}$ is a decomposer of $M|F$; since $\{u\} \ne F$ and all cosets of $\{u\}$ are pairs of the form $\{w,w+u\}$, it suffices to show that there does not exist $w \in F \del \{u\}$ for which $w \notin E$ and $u+w \in E$. Consider such a $w$. By ~\ref{no_empty_X_1}, we have $w \in X_0$. If $u+w \in X_1$, then this is a contradiction to \ref{YYX}(\ref{yyxi}), so $u+w \in X_0$. The first two statements of the current claim imply that $v+u+w \in X_1$, and $v+w \in X_0$, but then $\{a,a+v+u+w,a+v+u\}$ is a claw. Thus, $\{u\}$ is a decomposer of $M|F$, contradicting the hypothesis.
	\end{subproof}
	
	\begin{claim}\label{twoflats}
	$F \cap X_1$ and $(F \cap E) \cap X_0$ are flats.
	\end{claim}
	
	\begin{subproof}[Subproof]
	Suppose there is a triangle $T$ of $F$ such that $|T \cap X_1| = 2$. Let $\{v_1,v_2\} = T \cap X_1$ and $\{v_3\} = T \cap X_0$. Then \ref{YYX} gives $T \subseteq E$, and \ref{intermediate} gives $\{v+v_1, v+v_2\} \subseteq X_0$ and $v+v_3 \in X_1$. But this implies that $\{a+v,a+v_1,v_2\}$ is a claw. So there are no such triangles. This implies that $F \cap X_1$ is a flat in $F$. 
	
	Now, suppose that $F$ has a triangle $T$ for which $|T \cap (E \cap X_0)| = 2$. Let $\{v_1,v_2\} = T \cap E \cap X_0$ and $\{v_3\} = T \del \{v_1,v_2\}$. If $v_3 \in E \del  X_0$ then $\{a,v_1,v_2\}$ is a claw. If $v_3 \in X_1$, then $v_3 \in E$, and \ref{intermediate} gives $\{v+v_1,v+v_2\} \subseteq X_1$ and $v + v_3 \in X_0$. Now $\{a,v_1,a+v+v_2\}$ is a claw. It follows that there are no such triangles, so $F \cap E \cap X_0$ is a flat. 	
	\end{subproof}
	
	Let $F_1 = F \cap X_1$ and $F_2 = F \cap E \cap X_2$; we know from the above and ~\ref{no_empty_X_1} that $F_1,F_2$ are flats with $F_1 \subseteq E$. Since $E \cap A = \varnothing$, we have $E \cap H = E \cap F = F_1 \cup F_2$. Moreover, 
	\begin{align*}
	E \del H &= \{a\} \cup (a + X_1) \\ 
	&= \{a\} \cup (a + (X_1 \cap F)) \cup (a + (\{v\} \cup (v + X_0 \cap F \cap E)))\\
	&= (a + \vs{F_1}) \cup ((a + v) + \vs{F_2}),
	\end{align*}
	where the second line uses \ref{intermediate}. It follows that 
	\[E = (E \cap H) \cup (E \del H) =  F_1 \cup F_2 \cup (a + \vs{F_1}) \cup ((a + v) + \vs{F_2}),\] which is the  union of the disjoint flats $\cl(F_1 \cup \{a\})$ and $\cl(F_2 \cup \{a+v\})$, neither of which is contained in $F$. It follows that $M$ and $M|F$ are PG-sums. 
	
	By hypothesis, $\dim(F) > 1$. If $M|F$ is not a strict PG-sum then either $F \subseteq E$, or $E \cap F$ is contained in a hyperplane of $F$. In either case, some hyperplane decomposes $M|F$, a contradiction. So $M|F$ is a strict PG-sum; i.e. $\cl(E \cap F) = F$. But the flat $\{a,v,a+v\}$ is disjoint from $F$, so $\cl(E)$ contains $\cl(F \cup \{a,a+v\}) = G$, and thus $\cl(E) = G$. Since $F \not\subseteq E$ we have $E \ne G$ and so $M$ is therefore also a strict PG-sum. 
\end{proof}

\section{Small Decomposers}
We now handle the cases where $M$ has a hyperplane $H$ for which $M|H$ has a one-element decomposer. These are harder. We first consider the case where this decomposer is contained in $E$. 

\begin{lemma}\label{point_decomposer_solid}
	Let $M = (E,G)$ be a claw-free matroid. If $H$ is a hyperplane of $G$, and $\{b\} \subseteq E$ is a decomposer of $M|H$, then either
	\begin{itemize}
		\item $M^c$ is triangle-free, or
		\item $M$ has a decomposer containing $b$.
	\end{itemize}
\end{lemma}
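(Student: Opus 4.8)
The plan is to assume, for contradiction, that $M^c$ has a triangle $T_0$ and that $M$ has no decomposer containing $b$, and then produce a claw in $M$. Since $b \in H$, the hyperplane $H$ is not a decomposer of $M$, so the coset $G \setminus H$ is mixed. Because $b \in E$ and $\{b\}$ decomposes $M|H$, the set $(E \cap H) \cup \{0\}$ is closed under adding $b$; hence $\{b\}$ can fail to decompose $M$ only through a mixed $\{b\}$-coset inside $G \setminus H$. Fixing such a coset, I can choose $a \in (G \setminus H) \setminus E$ with $a + b \in E$. As in the earlier lemmas, put $X_1 = (a+E) \cap H$ and $X_0 = H \setminus X_1$; then $b \in X_1 \cap E$, and $H$ is partitioned into the four sets $A_1 = X_0 \cap E$, $A_2 = X_0 \setminus E$, $A_3 = X_1 \cap E$, $A_4 = X_1 \setminus E$. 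The target becomes to show the hypotheses force $A_2 = \varnothing$, since $A_2 = \varnothing$ says precisely that every triangle of $G$ through $a$ meets $E$.

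Next I would run through the planes of $G$ spanned by $a$ together with two points of $H$ (and, where needed, planes through $a+b$ or $b$), recording each local configuration whose occurrence yields a claw. I expect the resulting constraints to include: no line of $H$ lies in $A_4$; no line of $H$ meets $A_1$ twice and $A_4$ once; no line meets each of $A_1,A_2,A_3$; no line meets each of $A_2,A_3,A_4$; the sets $A_2$ and $A_4$ are closed under adding $b$, so that $H \setminus E$ is the disjoint union of the two $b$-invariant sets $A_2,A_4$; and ``product'' relations such as $A_2 + A_3 \subseteq A_3$. Each statement is verified by listing the at most seven points of the relevant plane, reading off their $E$-membership from the $A_i$-data, and exhibiting an independent triple contained in $E$ whenever the forbidden pattern holds — routine but numerous.

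With these constraints in hand, I would feed a suitable partition of $H$ (morally $(A_2,\ A_1\cup A_3,\ A_4)$, or a refinement of it dictated by which of the configurations above can actually occur) into the Coset Lemma, Lemma~\ref{PQR}, possibly using its refined form with a further splitting $R = R_1 \cup R_2$. Assuming $A_2 \neq \varnothing$, this produces a flat $F' = \cl(A_2)$ whose cosets in $H$ each lie in a single unmixed set ($E \cap H$, or a subset of $H \setminus E$), so $F'$ has no mixed coset inside $H$ and is a decomposer of $M|H$; moreover $b \in F'$, since $A_2$ is closed under adding $b$, whence $u + (u+b) = b \in \cl(A_2)$ for any $u \in A_2$. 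It then remains to upgrade $F'$ to a decomposer of $M$ itself, by checking that the cosets of $F'$ lying in $G \setminus H$ are also unmixed; this should follow by tracking how $z \mapsto a + z$ carries $F'$-cosets of $H$ to $F'$-cosets of $G \setminus H$, using the $A_i$-structure together with $a \notin E$ and $a+b \in E$. That contradicts the assumption, so $A_2 = \varnothing$. I expect this paragraph — pinning down a partition for which the Coset Lemma hypothesis genuinely holds, and then lifting the decomposer from $M|H$ to $M$ — to be the main obstacle, since the whole content of the lemma is that a one-element decomposer of $M|H$ need not extend to $M$, so the constraints above must be exploited rather delicately.

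Finally, in the remaining case $A_2 = \varnothing$, every triangle of $G$ through $a$ meets $E$. As $T_0 \cap E = \varnothing$ we get $a \notin T_0$, and for each $t \in T_0$ the triangle $\{a,t,a+t\}$ forces $a + t \in E$ (because $a,t \notin E$). Hence $T_0 + a \subseteq E$ and $E \cap \cl(T_0 \cup \{a\}) = T_0 + a$; the three points of $T_0 + a$ are linearly independent, so they form a basis of the plane $\cl(T_0 \cup \{a\})$, i.e.\ $T_0 + a$ is a claw in $M$. This is the final contradiction, exactly as in the endgame of Lemma~\ref{minimal_hyperplane_case_solid}.
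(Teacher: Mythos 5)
Your overall strategy is the right one and matches the paper's in broad outline: fix $a\in(G\setminus H)\setminus E$ with $a+b\in E$, define $X_0,X_1$, aim to run the Coset Lemma to either extract a decomposer of $M$ or conclude that every triangle through $a$ meets $E$, and then finish via $T_0+a$ (that endgame is correct and is exactly the paper's). The asserted $b$-invariance of $A_2$ and $A_4$ is also correct, though it needs a short claw check (e.g.\ if $v\in A_2$ but $a+b+v\in E$ then $\{b,a+b,a+b+v\}$ is a claw).

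The genuine gap is the partition you feed into the Coset Lemma. With $(P,Q,R)=(A_2,A_1\cup A_3,A_4)$ the hypothesis of Lemma~\ref{PQR} can fail: take a triangle $\{v_1,v_2,v_3\}$ with $v_1,v_2\in A_2$ and $v_3\in A_4$ (so $|T\cap P|\ge1$ and $|T\cap R|=1$). Tracing all fifteen points of $\cl(\{a,b,v_1,v_2\})$ using only $a\notin E$, $b,a+b\in E$, the $A_i$ data and $b$-invariance, the set of $E$-elements in that $4$-flat is exactly $\{b,a+b,a+v_3,a+b+v_3\}$, and no three of these form a claw. So claw-freeness does not rule this configuration out, and the Coset Lemma is not applicable to your partition. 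Relatedly, several of the listed constraints (``$A_2+A_3\subseteq A_3$'' in particular) do not follow from the claw checks you describe.

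The missing idea is the one the paper's proof supplies: restrict to a hyperplane $F$ of $H$ with $b\notin F$, and introduce the further invariant $F_j=\{v\in F:\ |\{a+v,a+b+v\}\cap E|\equiv j\}$; one first shows $F_1\subseteq E$, and then the partition $(P,Q,R)$ with $Q=X_1\cap F_0\cap E$, $R=X_0\cap F_0\cap E^c$, $P=F\setminus(Q\cup R)$ does satisfy the Coset Lemma hypothesis (the problematic triple above lands as $|T\cap R|=2$ there, so is not a violation). Working inside $F$ also guarantees $\cl(\{a,b\}\cup P)\ne G$, which you need when lifting the decomposer from $H$ to $M$. Without the $F_j$ refinement and the restriction to $F$, the ``main obstacle'' you flag is not resolved, so the proof as proposed does not go through.
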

\begin{proof}
	Let $E^c = G\del E$. Suppose that $M$ has no decomposer; thus, $\{b\}$ has a mixed coset $B$ in $M$; let $a \in B \cap E^c$. Note that $a \notin H$ since $B \not\subseteq H$; let $X_1 = (a+E) \cap H$ and $X_0 = H \del X_1$. By construction we have $b \in X_1$. Fix a hyperplane $F$ of $H$ for which $b \notin F$. Let $F_j = \{v \in F \colon |\{a+v, a+b+v\} \cap E| \equiv j \pmod{2}\}$ for each $j \in \{0,1\}$.
	
	We may assume that $F_1 \subseteq E$; indeed, if there exists $v \in F_1 \cap E^c$, then $\{b, a+b, a+v\}$ gives a claw provided $v \in X_1$, and $\{b, a+b, a+b+v\}$ gives a claw if $v \in X_0$. We now argue that we can apply Lemma~\ref{PQR} to a certain partition of $F$; the proof is unilluminating. 
	
	\begin{claim}
		The sets $Q = X_1 \cap F_0 \cap E$, $R = X_0 \cap F_0 \cap E^c$ and $P = F \del (Q \cup R)$ satisfy the hypotheses of Lemma~\ref{PQR} in the flat $F$. 
	\end{claim}
	
	\begin{subproof}[Subproof]
		Since $F_1 \subseteq E$, the four sets \[(P_1,P_2,P_3,P_4)= (X_0 \cap F_1 \cap E,X_0 \cap F_0 \cap E, X_1 \cap F_1 \cap E, X_1 \cap F_0 \cap E^c)\] form a partition of $P$. Suppose that the claim fails; then $F$ has a triangle $\{v_1, v_2, v_3\}$ for which $v_1 \in P$, $v_2 \in P \cup Q$ and $v_3 \in R$. The fact that $v_3 \in R$ implies that $v_3,v_3+a,v_3+b,v_3+a+b \notin E$. 
		
		Suppose first that $v_2 \in Q$, which gives $v_2,b+v_2,a+v_2,a+b+v_2 \in E$. If $v_1 \in P_1 \cup P_2 = X_0 \cap E$, then $\{v_1, v_2, a+v_2\}$ is a claw and if $v_1 \in F_1 \cap X_1 \cap E$, then $\{b+v_1,b+v_2,a+b+v_2\}$ is a claw. If $v_1 \in F_1 \cap X_0 \cap E^c$, then $\{a+b+v_1,v_2,a+v_2\}$ is a claw. Thus $v_2 \in P$. 
		
		If $v_1,v_2 \in P$ then there exist $i,j$ so that $v_1 \in P_i$ and $v_2 \in P_j$; we may assume by symmetry that $i \le j$. A careful check shows that
		\begin{itemize}
			\item $\{b,a+b+v_1,a+b+v_2\}$ is a claw if $(i,j) = (1,1)$,
			\item $\{v_2,b+v_1,a+b\}$ is a claw if $i \in \{1,2\}$ and $j = 2$,
			\item $\{v_1,v_2,a+v_2\}$ is a claw if $i \in \{1,2\}$ and $j=3$,
			\item $\{b,a+v_1,a+v_2\}$ is a claw if $(i,j) = (3,3)$, 
			\item $\{a+b,a+v_2,b+v_1\}$ is a claw if $(i,j) = (1,4)$,
			\item $\{a+b,v_1,a+v_2\}$ is a claw if $j = 4$ and $i \in \{2,3\}$,
			\item $\{a+b,a+b+v_1,a+b+v_2\}$ is a claw if $(i,j) = (4,4)$,
		\end{itemize} 
		giving a contradiction in all cases. 
		%Suppose first that $v_1 \in (F_0 \del E) \cap X_1$. Then if $v_2 \in (F_2 \del E) \cap X_1$ then $M \rvert \cl(\{ a+b, a+b+v_1, a+b+v_2\})$ is a claw, if $v_2 \in (F_2 \cap E) \cap X_0$ then $M \rvert \cl(\{ b+v_2, a+b, a+v_1\})$ is a claw, if $v_2 \in (F_1 \cap E) \cap X_0$ then $M \rvert \cl(\{ b+v_2, a+b, a+v_1\})$ is a claw, and if $v_2 \in (F_1 \cap E) \cap X_1$ then $M \rvert \cl(\{v_2, a+b, a+v_1\})$ is a claw. So we may assume $v_1, v_2 \notin (F_2 \del E) \cap X_1$. Now, suppose that $v_1 \in (F_2 \cap E) \cap X_0 $. Then note that $v_2, b+v_2 \in E$, and at least one of $v_2, b+v_2$ belongs to $X_0$. If $v_2 \in E \cap X_0$, then $M\rvert \cl(\{v_1, b+v_2, a+b\})$ is a claw, and if $b+v_2 \in E \cap X_0$, then $M \rvert \cl(\{v_2, b+v_1, a+b\})$ is a claw. So we may assume $v_1, v_2 \notin (F_2 \cap E) \cap X_0$. Hence $v_1, v_2 \in F_1 \cap E$. But if $v_1 \in X_0$ and $v_2 \in X_1$, $M \rvert \cl(\{v_1, v_2, a+v_2\})$ is a claw. So we may assume that $v_1, v_2 \in (F_1 \cap E) \cap X_0$, or $v_1, v_2 \in (F_1 \cap E) \cap X_1$. If $v_1, v_2 \in (F_1 \cap E) \cap X_0$, $M \rvert \cl(\{b, a+b+v_1, a+b+v_2 \})$ is a claw. If $v_1, v_2 \in (F_1 \cap E) \cap X_1$, then $M \rvert \cl(\{b, a+v_1, a+v_2\})$ is a claw. Hence there are no triangles $\{v_1, v_2, v_3\}$, $v_1 \in P$, $v_2 \in P$ and $v_3 \in R$. 
	\end{subproof}
	It follows from Lemma~\ref{PQR} that every coset of $\cl(P)$ is contained in $Q$ or in $R$, and that $\cl(P) \subseteq P \cup Q$. 
	
	Suppose that $R \ne \varnothing$, so $\cl(P) \ne F$. We argue that the flat $D = \cl(\{a,b\} \cup P)$ is a decomposer in $M$; let $A$ be a coset of $D$ in $G$. Now $A$ has the form $A_0 + \{0,a,b,a+b\}$ for some coset $A_0$ of $\cl(P)$ in $F$; we either have $A_0 \subseteq Q$ or $A_0 \subseteq R$. If $A_0 \subseteq Q$ then $A = A_0 + \{0,a,b,a+b\} \subseteq Q + \{0,a,b,a+b\} \subseteq E$ by definition of $Q$. If $A_0 \subseteq R$ then $A = A_0 + \{0,a,b,a+b\} \subseteq R + \{0,a,b,a+b\} \subseteq E^c$ by the definition of $R$. In either case, $A$ is not mixed in $M$. Furthermore, since $\cl(P) \ne F$ we have $\dim(\cl(P)) < \dim(G)-2$ and so $\dim(D) \le \dim(\cl(P)) + 2 < \dim(G)$. Thus $D$ is a decomposer of $G$ containing $b$, giving a contradiction.
	
	So $R = \varnothing$; we now argue that $M^c$ is triangle-free. Indeed, the fact that $F = P \cup Q$ implies that for each $v \in F$, each triangle in the plane $\cl(\{v,a,b\})$ contains an element of $E$. Every triangle of $G$ containing $a$ is contained in such a plane, so each triangle of $G$ containing $a$ contains an element of $E$. Thus if $T \subseteq E^c$ is a triangle, then $a \notin T$, and for each $x \in T$ we must have $x + a \in E$. This implies that $T + \{a\}$ is a claw in $M$. So $M^c$ is triangle-free. 
\end{proof}

We now handle the complementary case where $M|H$ has a one-element decomposer that is not contained in $E$. This case turns out to be extremely intricate, and very different from the previous case, even though the matroids involved ostensibly only differ in a single element. We first need a lemma that recognises even-plane matroids. 

\begin{lemma}\label{one_point_lemma}
	Let $M = (E,G)$ be a claw-free matroid, let $H$ be a hyperplane of $G$, and let $b \in H\del E$ be such that, for all $u \in G \del \{b\}$, we have $|E \cap \{u,u+b\}| = 1$ if and only if $u \notin H$. Then, for every hyperplane $F$ of $H$ not containing $b$, either
	\begin{itemize}
		\item $M|F$ is even-plane, or
		\item $M|F$ is a Bose-Burton geometry. 
	\end{itemize}
\end{lemma}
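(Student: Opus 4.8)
The plan is to fix the hyperplane $F$ of $H$ with $b \notin F$, choose an element $a \in G \del H$, and induct on $\dim(G)$. The statement is vacuous when $\dim(F) \le 2$ (then $F$ has no planes), so assume $\dim(F) \ge 3$. As in the preceding lemmas, put $X_1 = (a+E) \cap H$ and $X_0 = H \del X_1$. The first step is to unpack the hypothesis: it is equivalent to the assertions that $E \cap H$ is invariant under translation by $b$, that each $\{b\}$-coset of $G \del H$ meets $E$ in exactly one point, and consequently that $u \in X_1 \iff u+b \in X_0$ for all $u \in H \del \{b\}$. In particular $F$ is a transversal of the $\{b\}$-cosets of $H$, so $M|F$ determines $E \cap H$, and $M|F$, being an induced restriction of $M$, is claw-free.

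Suppose for contradiction that $M|F$ is neither even-plane nor a Bose-Burton geometry. Since $F \del E$ is not a flat there is a triangle $T_0 = \{p,q,r\}$ of $F$ with $p \in E$ and $q,r \notin E$; since $M|F$ is not even-plane there is a plane $P_0$ of $F$ with $|E \cap P_0|$ odd, and a short argument using claw-freeness of $M|F$ shows that either $P_0 \subseteq E$, or $P_0$ itself already contains a triangle meeting $E$ in exactly one point. The crucial observation is that for any plane $P$ of $F$ the flat $V = \cl(P \cup \{a,b\})$ has dimension $5$, and $M|V$ again satisfies the hypotheses of this lemma, with ambient geometry $V$, hyperplane $V \cap H = \cl(P \cup \{b\})$, distinguished point $b$, and with $P$ playing the role of ``$F$''. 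So when $\dim(G) > 5$ the inductive hypothesis yields that $M|P$ is even-plane or Bose-Burton, i.e.\ $|E \cap P|$ is even or $P \del E$ is a flat. Applied to $P_0$ this is already a contradiction unless $P_0 \subseteq E$; so (when $\dim(G)>5$) we may assume every plane of $F$ with odd trace is contained in $E$, whence $\omega(M|F) \ge 3$. A second application of the same device, inside a flat of bounded dimension containing $T_0$, an odd plane $P_0 \subseteq E$, $a$ and $b$, forces $M|\cl(P_0 \cup T_0)$ to be even-plane or Bose-Burton; but that matroid is neither --- $P_0$ is a plane of it with odd trace, and the triangle $T_0$ certifies that its complement is not a flat --- a contradiction. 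This leaves only the finitely many small values of $\dim(G)$, which must be handled directly.

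I expect the main obstacle to be exactly these base cases. There one must run a claim-by-claim argument in the style of Lemmas~\ref{minimal_hyperplane_case_solid}--\ref{point_decomposer_solid}: for each way the seed triangle $T_0$ and the partition of $F$ into the four sets $F \cap X_1 \cap E$, $F \cap X_1 \del E$, $F \cap X_0 \cap E$, $F \cap X_0 \del E$ can interact, one locates an explicit plane of $G$ --- built from $a$, $a+b$ or $b$ --- whose $E$-trace is a claw, or else applies the Coset Lemma to a well-chosen partition of $F$ to extract the even-plane structure directly. There is no single decisive trick here; the real work is organising the sub-cases and choosing the claws so that each membership check is immediate from information just established, as in Lemma~\ref{point_decomposer_solid}. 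A secondary subtlety --- and the reason the Bose-Burton alternative is genuinely needed --- is that in the sparse regime this propagation does not force every plane of $F$ to be even, but instead collapses $F \del E$ to a flat; one must recognise and peel off that outcome rather than always deriving a contradiction.
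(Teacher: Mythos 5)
Your inductive scheme is internally coherent but does not actually reduce the work. The key observation --- that for any plane $P$ of $F$, the $5$-dimensional flat $\cl(P\cup\{a,b\})$ inherits the hypotheses of the lemma (with $\cl(P\cup\{b\})$ as hyperplane and $P$ in the role of $F$) --- is correct, and a double application of the inductive hypothesis does produce a contradiction once $\dim(G) > 8$. The trouble is that the first application exists solely to supply the plane-level statement ``for every plane $P$ of $F$, either $|E \cap P|$ is even or $P \subseteq E$'', and that statement is exactly the conclusion of the lemma in the smallest non-vacuous base case $\dim(G)=5$. Your induction therefore repackages the lemma rather than shrinking it: the whole substance lives in the base cases $\dim(G) \le 8$, and for those you offer only a plan (``a claim-by-claim argument in the style of the earlier lemmas''), not a proof. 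In fact the paper uses no induction at all; it establishes the plane-level claim directly, uniformly in the ambient dimension, and then derives the Bose-Burton alternative from it.

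The tool you are missing is the paper's normalization claim: for any flat $K$ of $H$ avoiding $b$ and any disjoint independent sets $I_0, I_1 \subseteq K$, there is an isomorphic copy $K'$ of $K$, obtained by replacing each basis element $x$ by whichever of $x,x+b$ lies in the desired $X_i$, such that the resulting map $\psi$ satisfies $\psi(I_0) \subseteq X_0$ and $\psi(I_1)\subseteq X_1$; the hypothesis on $b$ guarantees that $\psi$ is an isomorphism of induced restrictions. This is what collapses your proposed four-way partition of $F$ --- which would otherwise produce a combinatorial explosion of sub-cases --- down to the three clean cases $|P \cap E| \in \{1,3,5\}$, each dispatched by one or two explicit claws. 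Your last paragraph also mislocates the source of the Bose-Burton alternative: the paper does not use the Coset Lemma here. Rather, once the plane-level claim is in hand and $M|F$ is assumed not even-plane, $E$ must contain a plane of $F$; taking $K$ a maximal flat of $F$ inside $E$, a parity argument shows each $K$-coset of $F$ meets $E$ in exactly $|K|$ points, and Theorem~\ref{bbt} then identifies $M|F$ as a Bose-Burton geometry.
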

\begin{proof}
	
	Let $F$ be a hyperplane of $H$ not containing $b$, and suppose that $M|F$ is not even-plane. Let $a \in G \del H$; we have $|E \cap \{a,a+b\}| = 1$ by hypothesis; we may assume that $a \in E$ and $a+b \notin E$. 
	
	Let $X_1 = (a+H) \cap X_1$ and $X_0 = H \del X_1$.  The hypotheses imply that if $v \in H \del \{b\}$, then $\{v,b+v\} \cap E$ has even size, and $\{v,b+v\} \cap X_0$ has odd size.  The next claim essentially states that given a flat $K$ of $H$, there is another flat $K'$ where $M|K = M|K'$, and the elements of $K'$ have some desired intersction with $X_0$ and $X_1$.
	
	\begin{claim}\label{twiddle}
		If $K$ is a flat of $H$ with $b \notin K$, and $I_0,I_1$ are disjoint subsets of $K$ for which $I_0\cup I_1$ is linearly independent, then there is a flat $K'$ of $H$, not containing $b$, and an isomorphism $\psi$ from $M|K$ to $M|K'$ for which $\psi(I_0) \subseteq X_0$ and $\psi(I_1) \subseteq X_1$. 
	\end{claim}
	\begin{subproof}[Subproof]
		By replacing $I_0,I_1$ with supersets if necessary, we may assume that $I_0 \cup I_1$ is a basis for $K$. For each $i \in \{0,1\}$ and $x \in I_i$, let $\psi(x)$ be the unique element of $\{x,x+b\} \cap X_i$, and extend $\psi$ linearly to all $x \in K$. Let $K' = \psi(K)$. The linear independence of $I_0 \cup I_1 \cup \{b\}$ implies that $\psi$ is injective. Moreover, it is clear that $\psi(v) \in \{v,v+b\}$ for all $v \in K$ and so, since $|E \cap \{v,v+b\}|$ is even, $\psi$ is an isomorphism from $M|K$ to $M|K'$ that has the required property by construction.
	\end{subproof}

	\begin{claim}\label{evenplanes}
		If $P$ is a plane of $H$ not containing $b$, then either $P \subseteq E$, or $|P \cap E|$ is even. 
	\end{claim}
	
	\begin{subproof}[Subproof]
		Let $P$ be a counterexample. Suppose first that $|P \cap E| = 5$, so $P \cap E = \{v_1,v_2,v_3,v_1+v_2,v_1+v_3\}$ for some linearly independent $v_1,v_2,v_3$; by \ref{twiddle} with $(I_0,I_1) = (\{v_1,v_2,v_3\},\varnothing)$, we may assume that $v_1,v_2,v_3 \subseteq X_0$. Now 
		\[
		E \cap \cl(\{a,v_2,v_3\}) = \{a,v_2,v_3\} \cup (E \cap \{a+v_2+v_3\}),
		\]
		which implies that $a+v_2+v_3 \in E$, and 
		\[
		E \cap \cl(\{a,v_1,a+v_2+v_3\}) = \{a,v_1,a+v_2+v_3\} \cup (E \cap \{a+v_1+v_2+v_3\}),
		\]
		which gives $a+v_1+v_2+v_3 \in E$. But now $v_1+v_2,v_1+v_3$, and $a+v_1+v_2+v_3$ give a claw in $M$.
		
		Suppose now that $|P \cap E| = 3$. Since $M|P$ is not a claw, we must have $E \cap P = \{v_1,v_2,v_1+v_2\}$ for some $v_1,v_2$; let $v_3 \in P\del E$. By \ref{twiddle} we may assume that $v_1,v_3 \in X_0$ and $v_2 \in X_1$. This implies that $b+v_2 \in X_0$ and $b+v_1,b+v_3 \in X_1$. Now 
		\[
		E \cap \cl(\{a,v_1,v_3\}) = \{a,v_1\} \cup (E \cap \{a+v_1+v_3\}), 
		\]
		which implies that $a+v_1+v_3 \notin E$, and 
		\[
		E \cap \cl(\{a,b+v_2,v_3\}) = \{a,b+v_2\} \cap (E \cap \{a+b+v_2+v_3\})
		\]
		giving $a+b+v_2+v_3 \notin E$; thus $a+v_2+v_3 \in E$. Now the set $\{v_1+v_2,a+v_2,a+v_2+v_3\}$ is a claw.
		
		Finally, suppose that $|P \cap E| = 1$; by \ref{twiddle} we may assume that $P = \cl(\{v_1,v_2,v_3\})$ where $\{v_1\} = P \cap E$ and $v_1 \in X_0$ while $v_2,v_3 \in X_1$. Then $\{v_1+v_2,v_1+v_3,v_2+v_3\} \subseteq X_1$, as otherwise one of $\{a,v_1,a+v_2\}$, $\{a,v_1,a+v_3\}$ or $\{a,a+v_2,a+v_3\}$ is  a claw. Hence $v_1+v_2+v_3 \in X_1$, as otherwise $\{a,v_1,a+v_2+v_3\}$ is a claw. But now the triple $\{a+v_1+v_2,a+v_1+v_3,a+v_1+v_2+v_3\}$ is a claw, completing the contradiction. 
	\end{subproof}
	
	By the above claim and the assumption that $M|F$ is not even-plane, we can conclude that $E$ contains a plane of $F$. Let $K$ be a largest flat of $F$ for which $K \subseteq E$; by the above, $\dim(K) \ge 3$. If $K = F$ then $M|F$ is a Bose-Burton geometry, as required. Otherwise, let $v \in F \del E$. Let $K_1 = \{(v+E) \cap K\}$ and $K_0 = K \del K_1$. If some triangle $T$ of $K$ has even intersection with $K_1$, then the plane $\cl(T \cup \{v\})$ contains an odd number of elements of $E$ and contains the nonelement $v$ of $E$, contradicting \ref{evenplanes}. Thus, every triangle of $K$ has odd intersection with $K_1$; it follows that either $K_1$ is a hyperplane of $K$, or $K_1 = K$. 
	
	If $K_1$ is a hyperplane of $K$, then since $\dim(K) \ge 3$, there is some triangle $T \subseteq K_1$ and some $w \in K_0$. Now $T + w \subseteq K_0$ and so $E \cap \cl(T \cup \{v+w\}) = T$, so $|E \cap \cl(T \cup \{v+w\})| = 3$, contradicting \ref{evenplanes}. So $K_1 = K$, and thus $K + v \subseteq E$. This argument applies for every $v \in F\del E$, and by the maximality of $K$, every coset of $K$ contains such a $v$. Therefore every coset $A$ of $K$ satisfies $|A \cap E| = |A|-1 = |K|$. It follows that $|E \cap F| = 2^{\dim(F)}(1-2^{-\dim(K)})$. Since $E \cap F$ contains no flat of dimension larger than $\dim(K)$, Theorem~\ref{bbt} implies that $M|F$ is a Bose-Burton geometry, as required. 
\end{proof}

We can now deal with the case where $M|H$ has a one-element decomposer disjoint from $M$. 

\begin{lemma}\label{point_decomposer_empty}
	Let $M = (E,G)$ be a claw-free matroid and let $H$ be a hyperplane of $G$. If $\{b\} \subseteq H\del E$ is a decomposer of $M|H$, then either 
	\begin{itemize} 
		\item $M^c$ is triangle-free, 
		\item $M$ is even-plane, or
		\item $M$ has a decomposer.
	\end{itemize}
\end{lemma}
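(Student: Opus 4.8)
The plan is to assume $M$ has no decomposer and deduce that $M^c$ is triangle-free or $M$ is even-plane. Since $\{b\}$ decomposes $M|H$ and $b\notin E$, we have $b+(E\cap H)=E\cap H$; equivalently, every coset $\{v,v+b\}$ of $\{b\}$ contained in $H$ is unmixed with respect to $M$. As $\{b\}$ is then not a decomposer of $M$, some coset $\{u,u+b\}$ is mixed, and by the previous sentence $u\notin H$; fix such an $a$ and assume $a\in E$ and $a+b\notin E$. Following the earlier lemmas, fix a hyperplane $F$ of $H$ with $b\notin F$, so that $H=F\cup(F+b)\cup\{b\}$ and $M|H$ is a doubling of $M|F$, and put $X_1=(a+E)\cap H$, $X_0=H\setminus X_1$ (so $v\in X_1$ iff $a+v\in E$, for $v\in H$).

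\textbf{Step 1: reduce to the hypothesis of Lemma~\ref{one_point_lemma}.} That hypothesis says exactly that every coset of $\{b\}$ outside $H$ is mixed (together with the already-noted fact that every coset inside $H$ is unmixed). So it suffices to prove that if some coset $\{a',a'+b\}$ with $a'\notin H$ is unmixed, then $M$ has a decomposer. I would prove this by applying the Coset Lemma (Lemma~\ref{PQR}) inside $F$: the planes $\cl(\{v,a,b\})$ for $v\in F$, and those through $a'$, supply the triangle hypotheses needed, in the spirit of the subproofs of Lemmas~\ref{point_decomposer_solid} and~\ref{minimal_hyperplane_case_empty}, and the flat $\cl(P)$ produced combines with $\{a,b,a'\}$ to give a decomposer of $M$ unless it is trivial, in which case the leftover partition classes are unmixed and again yield a decomposer or force every coset outside $H$ to be mixed. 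I expect this to be one of the two main obstacles: the plane-by-plane analysis is delicate, since configurations with $|E\cap P|=5$ are themselves claw-free and slip past naive witnesses.

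\textbf{Step 2: invoke Lemma~\ref{one_point_lemma} and split.} Once every coset of $\{b\}$ outside $H$ is mixed, we get $X_1=X_0+b$, and Lemma~\ref{one_point_lemma} gives that $M|F$ is even-plane or a Bose--Burton geometry for every hyperplane $F$ of $H$ avoiding $b$; by Lemma~\ref{lift_join_iso} (with the decomposer $\{b\}$ of $M|H$) these $M|F$ are pairwise isomorphic, so a single case holds throughout. As $M|H$ is a doubling of $M|F$, in the first case $M|H$ is even-plane by Theorem~\ref{even_plane_doubling_semidoubling}, while in the second a short direct computation shows $M|H$ is a Bose--Burton geometry of the same order, hence even-plane whenever that order is at most $2$. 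So there are two cases: $M|H$ is even-plane, or $E^{c}\cap H$ is a flat of $H$ of codimension at least $3$.

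\textbf{Step 3: conclude in each case.} If $M|H$ is even-plane, I would show that $M$ is a semidoubling of $M|H$ (via $a+b\in G\setminus(H\cup E)$ and the partition $(X_0,X_1)$), so that $M$ is even-plane by Theorem~\ref{even_plane_doubling_semidoubling}; concretely this reduces to showing that $(H\setminus E)\triangle X_0$ is a hyperplane of $H$, and any failure of this is excluded by claw-freeness via the Coset Lemma, or else produces a decomposer. If instead $E^{c}\cap H$ is a flat of codimension at least $3$ in $H$, then (outside low-dimensional degenerate cases, where one checks directly that $M^c$ is triangle-free) this flat has dimension at least $3$, so $M^c$ contains a triangle and $M$ is not even-plane; it then remains to produce a decomposer, which I would do by exploiting the rigidity that claw-freeness of $M$ imposes on the transversal $E\setminus H=a+X_1$ of the cosets of $\{b\}$ outside $H$ — no plane meeting $H$ in a triangle may induce an $I_3$ — to locate a flat through $b$ with no mixed coset. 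This last sub-case, together with Step 1, is where I expect the real difficulty to lie.
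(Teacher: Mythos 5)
Your plan departs from the paper's in a way that creates a genuine gap, concentrated at Step 1.

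You want to first establish, using only the Coset Lemma and claw-finding, that every coset of $\{b\}$ outside $H$ is mixed (or else produce a decomposer), and only then invoke Lemma~\ref{one_point_lemma} globally. The paper does not do this and, as far as I can see, cannot: it applies Lemma~\ref{one_point_lemma} only to the restricted subspaces $\cl(K \cup \{a,b\})$ for flats $K \subseteq H_1$, where the hypothesis holds automatically (this is exactly Claim \emph{epbb}), and it only concludes $H_0 = \varnothing$ (equivalently $F' = F = F_1$) at the very end of the proof, after the even-plane machinery has already been brought to bear. The obstruction to your Step 1 is concrete. After the Coset Lemma you obtain a flat $\cl(P')$ whose cosets in $F$ lie in $Q$, $R_1$, or $R_2$, where $Q = H_1 \cap E$. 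A coset $A \subseteq Q$ has $A \cup (A+b) \subseteq E$ but, since $A \subseteq H_1$, exactly one of $a+v, a+b+v$ lies in $E$ for each $v\in A$; hence the corresponding coset of $\cl(P' \cup \{a,b\})$ is mixed, and your candidate decomposer fails. The paper rules out $Q$-cosets not by finding a claw but by finding a plane with exactly five elements of $E$ inside the $\cE_3$ restriction guaranteed by Claims \emph{epbb} and \emph{mpbb} — an argument that is unavailable before Lemma~\ref{one_point_lemma} has been applied. Your mention of ``planes through $a'$'' does not supply this, and the element $a'$ gives you no leverage against $Q$-cosets. So the reduction you propose in Step 1 is circular: it needs exactly the even-plane conclusions you were hoping to postpone to Step 2.

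There is also a minor but real error in Step 3's Bose--Burton case: you describe the sought decomposer as ``a flat through $b$ with no mixed coset,'' but in the paper's argument (Claim \emph{mpbb}) the decomposer is $F'' = P' \cap X_1$, which lies inside the hyperplane $F$ of $H$ and in particular does not contain $b$. Finally, note that your Step 1 as stated ignores the possibility that $M^c$ is triangle-free while a coset outside $H$ is unmixed; the correct reduction must allow that outcome as well. The upshot is that the paper's ordering — set up the $(P,Q,R_1,R_2)$ partition, apply Lemma~\ref{one_point_lemma} locally via Claim \emph{epbb}, rule out the Bose--Burton case for $\cl(P')$ via Claim \emph{mpbb}, then kill the $Q$-cosets and force $F'=F$ — seems essential, and your attempt to decouple the Coset Lemma step from the even-plane step does not go through.
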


\begin{proof}
	Suppose that $M^c$ has a triangle and $M$ has no decomposer, so $\{b\}$ has a coset $\{a,a+b\}$ in $M$ where $\{a,a+b\} \cap E = \{a\}$. We argue throughout a series of claims that $M$ is even-plane. Since $\{b\}$ is a decomposer of $H$, we have $a \notin H$; as usual, let $X_1 = (a+H) \cap E$ and $X_0 = H \del X_1$. Note that $b \in X_0$. 
	Let $E^c = G\del E$ and define a partition $(H_0,H_1)$ of $H \del \{b\}$ by \[H_i = \{v \in H \del \{b\} \colon |\{a+v,a+b+v\} \cap E| \equiv i \pmod{2}\}.\]

	If $H  \del \{b\} \subseteq E$ then recall that $E^c$ contains a triangle $T$ of $G$. This $T$ intersects $H$ and so $b \in T$, which implies that $a \notin T$, giving $\cl(T \cup \{a\}) \cap E = \{a\}+T$, which yields a claw in $M$. This is a contradiction, so $\{b\}$ has a coset that is disjoint from $E$.

	If $v \in H_0 \cap X_0 \cap E$ then $\{a,v,b+v\}$ is a claw. If $v \in H_0 \cap X_1 \cap E^c$ then $\{a,a+b+v,a+v\}$ is a claw; it follows that $H_0 \subseteq E \Delta X_0$, and so for every $v \in H_0$ the set $v + \{0,a,b,a+b\}$ is unmixed in $M$. 
	
	%We say that a coset $\{v_1, v_2\}$ of $\{b\}$ in $H$ is \emph{alternating} if $|\{v_1, v_2\} \cap X_0| = 1$, and \emph{non-alternating} otherwise. 
	
	\begin{claim}
		$H_1 \not\subseteq E$. 
	\end{claim}
	\begin{subproof}
		Suppose that $H_1 \subseteq E$. Then the sets \[(P,Q,R) = (H_1 \cap E,H_0 \cap X_1 \cap E,H_0 \cap X_0 \cap E^c)\] partition $H$. Let $F$ be a hyperplane of $H$ not containing $b$, and let $(P',Q',R') = (P \cap F,Q \cap F,R \cap F)$; we just saw that $H\del E$ contains a coset of $\{b\}$ which implies that $R$ contains such a coset, so $R' \ne \varnothing$. We show that the partition $(P',Q',R')$ of $F$ satisfies the hypotheses of Lemma~\ref{PQR}. If it does not, then $F$ has a triangle $\{v_1,v_2,v_3\}$ with $v_1 \in P,v_2 \in P \cup Q$ and $v_3 \in R$. If $v_2 \in Q$ and $v_1 \in X_0$, then $\{a+v_2,a+b+v_2,a+b+v_1\}$ is a claw. If $v_2 \in Q$ and $v_1 \in X_1$, then $\{a+v_2,a+b+v_2,a+v_1\}$ is a claw. If $v_2 \in P$ then $\{a,i_1b + v_1,i_2b + v_2\}$ is a claw, where $i_1$ and $i_2$ are the binary scalars for which $v_1 \in X_{i_1}$ and $v_2 \in X_{i_2}$. So Lemma~\ref{PQR} applies to $(P',Q',R')$ in $F$, giving $\cl(P') \subseteq P' \cup Q'$, while every coset of $\cl(P')$ in $F$ is contained in either $Q'$ or $R'$. 
		
		We now argue that $D = \cl(\{a,b\} \cup P')$ is a decomposer of $M$. Clearly $D \ne \varnothing$, and since $\cl(P') \subseteq (P' \cup Q')$ and $R' \ne \varnothing$, we have $\cl(P') \ne F$ which implies that $\dim(\cl(P')) < \dim(F)$ and $\dim(D) < \dim(M)$. Consider a coset $A$ of $D$; now $A = A_0 + \{0,a,b,a+b\}$ for some coset $A_0$ of $\cl(P')$ in $F$. If $A_0 \subseteq Q'$ then $A = A_0 + \{0,a,b,a+b\} \subseteq Q + \{0,a,b,a+b\} \subseteq E$ by the definition of $Q$, and if $A_0 \subseteq R'$ we have $A \subseteq E^c$ by the definition of $R$. Therefore $D$ has no mixed cosets in $G$, and is thus a decomposer of $M$; this is a contradiction. 
	\end{subproof}

	\begin{claim}\label{epbb}
		For each flat $K$ of $H$ with $K \subseteq H_1$, either $M|K \in \cE_3$ or $M|K$ is a Bose-Burton geometry. 
	\end{claim}
	\begin{subproof}
		Let $G' = \cl(K \cup \{a,b\})$ and $H' = G' \cap H$. Let $M' = M|G'$; we apply Lemma~\ref{one_point_lemma} to $b$ and $H'$ in $G'$. Let $v \in G' \del \{b\}$. If $v \in H'$ then $|\{v,b+v\} \cap E| \in \{0,2\}$ since no coset of $\{b\}$ in $H$ is mixed. If $v \in \{a,a+b\}$ then $|\{v,b+v\} \cap E| = 1$ and, if $v \in (G' \del H')  \del \{a,a+b\}$, then the plane $\cl(\{a,b,v\})$ intersects $K$ in an element $w$ for which $v \in \{w+a,w+a+b\}$; since $w \in H_1$ this implies that $|\{v,b+v\} \cap E| = 1$. Thus, for all $v \in G' \del \{b\}$, we have $|\{v,b+v\} \cap E| = 1$ if and only if $v \notin H$, and the claim follows from Lemma~\ref{one_point_lemma}. 
	\end{subproof}

	Define a partition $(P,Q,R_1,R_2)$ of $H \del \{b\}$ by  
	\[(P,Q,R_1,R_2) = (H_1 \cap E^c,H_1 \cap E,H_0 \cap X_0 \cap E^c,H_0 \cap X_1 \cap E).\]
	Note that since $H_1 \not\subseteq E$ we have $P \ne \varnothing$.

	\begin{claim}\label{PQRclaim}
		$G$ has no triangle $T$ with $|T \cap P| \ge 1$ and $|T \cap (R_1 \cup R_2)| = 1$, and there is no triangle of $G$ that intersects $P,R_1$ and $R_2$. 
	\end{claim}
	\begin{subproof}
		Let $R = R_1 \cup R_2$. The definition of the $H_i$ and the fact that $\{b\}$ is a decomposer imply that each set $W \in \{P,Q,R_1,R_2,R_1 \cup R_2\}$ satisfies $W= \{b\} + W$.
		
		To see the first part, let $\{v_1,v_2,v_3\}$ be a triangle of $G$ with $v_1 \in P$, $v_2 \in P \cup Q$ and $v_3 \in R$. For each $i \in \{1,2\}$, the set $\{v_i,v_i+b\}$ intersects $X_0$ in exactly one element $w_i$, and we have $w_1 + w_2 \in \{v_1+v_2,v_1+v_2 + b\} \subseteq R$, so $\{w_1,w_2,w_3=w_1 + w_2\}$ is a triangle with $w_1 \in P \cap X_0, w_2 \in (P \cup Q) \cap X_0$ and $w_3 \in R$. 
		
		Suppose that $w_2 \in Q$. If $w_3 \in R_1$ then $\{a,a+b+w_1,w_2\}$ is a claw. If $w_3 \in R_2$ then $\{a+w_3,a+b+w_3,a+b+w_2\}$ is a claw. Suppose now that $w_2 \in P$. If $w_3 \in R_1$ then $\{a,a+b+w_1,a+b+w_2\}$ is a claw. If $w_3 \in R_2$ then $\{a+w_3,a+b+w_3,a+b+w_2\}$ is a claw, completing the contradiction. 
		
		For the second part, consider a triangle $\{v_1,v_2,v_3\}$ of $G$ where $v_1 \in P$, $v_2 \in R_1$ and $v_3 \in R_2$. Let $\{w_1\} = \{v_1,v_1+b\} \cap X_0$; let $w_2 = v_2$ and $w_3 = w_1 + v_2$; since $w_i \in \{v_i,v_i+b\}$, the set $\{w_1,w_2,w_3\}$ is a triangle with $w_1 \in P \cap X_0$ while $w_2 \in R_1$ and $w_3 \in R_2$. Now $\{a+b+w_1,a+w_3,b+w_3\}$ is a claw. 
	\end{subproof}
	
	Let $w \in P$ and let $z$ be the element of $\{w,b+w\} \cap X_1$, noting that $z \in P$. Let $F$ be a hyperplane of $H$ containing $z$ but not $b$. Let $(P',Q',R_1',R_2')$ be the partition of $F$ induced by $(P,Q,R_1,R_2)$. By Lemma~\ref{PQR} and \ref{PQRclaim}, we have $\cl(P') \subseteq P' \cup Q' = F_1$, and every coset of $\cl(P')$ in $F$ is contained in $Q',R_1'$ or $R_2'$. Let $F' = \cl(P')$. %Moreover, \ref{epbb} implies that $M|\cl(P')$ is a Bose-Burton geometry. %Since $z \in P' \cap X_1$, the sets $P' \cap X_1$ and $F'$ are nonempty. 

	\begin{claim}\label{mpbb}
		$M|F'$ is not a Bose-Burton geometry. %If $K$ is a flat of $F$ with $\cl(P') \subseteq K \subseteq F_1$, then $M|K$ is not a Bose-Burton geometry. 
	\end{claim}
	\begin{subproof}
		Suppose that $M|F'$ is a Bose-Burton geometry. Let $F'' = P' \cap X_1$. This set is nonempty because $z \in F''$. We will show that $F''$ is a decomposer of $M$; we first argue that it is a flat. Indeed, if $x,y \in F''$ are distinct then, since $x,y \in E^c$, the fact that $M|F'$ is a Bose-Burton geometry gives $x+y \in E^c$, and since $F'$ is a flat we have $x+y \in F'$, so $x+y \in F' \cap E^c$, but this gives $x+y \in X_1$, as otherwise $\{a,a+x,a+y\}$ is a claw. Therefore $x+y \in F' \cap E^c \cap X_1 = F''$, and thus $F''$ is a nonempty flat. 
		
		Let $A$ be a coset of $F''$ in $F$; we now show that $A$ is contained in $E$ or $E^c$, is contained in $X_i$ for some $i$, and is contained in $F_j$ for some $j$. To see this, we consider two cases:
		\begin{itemize}
			\item If $A \not\subseteq F'$ then $A$ is contained in some coset of $F'$ in $F$ and is thus contained in $R_1,R_2$ or $Q$; in the first two cases the conclusion is clear, in the last case we have $A \subseteq E$ and $A \subseteq F_1$, and if $x_0 \in X_0 \cap A$ and $x_1 \in X_1 \cap A$ then $x_0 + x_1 \in F'' \subseteq X_1 \cap E^c$ and it follows that $\{a,x_0,b+x_1\}$ is a claw; thus $A \subseteq X_0$ or $A \subseteq X_1$. 
			\item If $A \subseteq F'$ then $A \subseteq F_1$, and since $M|F'$ is a Bose-Burton geometry while $F'' \subseteq E^c \cap F'$, we clearly have $A \subseteq E$ or $A \subseteq E^c$. If $A \subseteq E^c$ then $A \subseteq P'$; since $A$ is disjoint from $F'' = P' \cap X_1$ we have $A \subseteq X_0$. If $A \subseteq E$ and $A$ intersects $X_0$ in $x_0$ and $X_1$ in $x_1$, then $\{a,x_0,b+x_1\}$ is a claw; thus $A \subseteq X_0$ or $A \subseteq X_1$. 
		\end{itemize}
		Finally, we show that $F''$ is a decomposer in $M$. For this, we need to show that every coset of $F''$ in $M$ is contained in $E$ or in $E^c$; let $B = \vs{F''} + u$ be such a coset, where $u \in G \del F''$. Since $\cl(F \cup \{a,b\}) = G$ we have $u = v + h$ for some $v \in F$ and $h \in \{0, a,b,a+b\}$. Therefore $B = (\vs{F''} + v) + h$. 
		%where $u \in G \del K'$ changed to where $u \in G \del F''$ -  what is K'?
		
		If $v \notin F''$ then $\vs{F''} + v$ is a coset $A$ of $F''$ in $F$, so $A$ is contained in either $E$ or $E^c$, is contained in $X_i$ for some $i \in \{0,1\}$, and is contained in $F_i$ for some $i \in \{0,1\}$. The fact that $H \cap E = (H \cap E) + b$, together with the definition of the $X_i$ and $F_i$, thus imply that each of the sets $A,A+a,A+b,A+(a+b)$ is contained in either $E$ or $E^c$. But $B$ is one of these sets, so $B \subseteq E$ or $B \subseteq E^c$. 
		
		If $v \in F''$ then $B = \vs{F''} + h$ for some $h \in \{a,b,a+b\}$. Recall that $F'' \subseteq F_1 \cap X_1 \cap E^c$. If $h = a$ then the fact that $F'' \subseteq X_1$ and $a \in E$ gives $B \subseteq E$. If $h = b$ then the fact that $\{b\}$ is a decomposer of $M|H$ and $b \notin H$ gives $B \subseteq E^c$. If $h = a+b$ then, since $F'' \subseteq F_1 \cap X_1$ and $a+b \notin E$, we have $B \subseteq E^c$. Therefore $F''$ is a decomposer of $M$, giving the needed contradiction.
	\end{subproof} 
	
	Let $A$ be a coset of $\cl(P')$ in $F$, so $A$ is contained in $Q,R_1$ or $R_2$. If $A \subseteq Q$ then $A \subseteq F_1 \cap E$ and so the flat $\cl(P') \cup A$ is contained in $F_1$. By \ref{epbb} it follows that $M|(\cl(P') \cup A)$ is either a Bose-Burton geometry or is in $\cE_3$. In the first case, $M|\cl(P')$ is a Bose-Burton geometry, contradicting \ref{mpbb}. In the second case, \ref{mpbb} gives that there is a triangle $T \subseteq \cl(P')$ for which $|T \cap E| = 1$, but then the fact that $A \subseteq E$ yields $|\cl(T \cup \{u\}) \cap E| = 5$ for every $u \in A$, contradicting $M|(\cl(P') \cup A) \in \cE_3$. Therefore every coset of $\cl(P')$ in $F$ is contained in $R_1$ or $R_2$. 
	
	\begin{claim}
		$F' = F = F_1$. 
	\end{claim}
	\begin{subproof}
		Since $F' \subseteq F_1 \subseteq F$, it suffices to show that $F' = F$; suppose not, so $\dim(F') \le \dim(G) - 3$, and therefore $\cl(F' \cup \{a,b\}) \ne G$. We show that $\cl(F' \cup \{a,b\})$ is a decomposer of $M$. Let $C$ be a coset of $\cl(F' \cup \{a,b\})$, so $C$ has the form $C = \vs{F'} + \{0,a,b,a+b\} + u$ for some $u \notin \cl(F' \cup \{a,b\})$; by replacing $u$ by $u+a$, $u+b$ or $u+a+b$ if necessary, we may assume that $u \in F$, so $C = (\vs{F'} + u) + \{0,a,b,a+b\}$. The coset $\vs{F'} + u$ of $F'$ is contained in $R_1$ or in $R_2$ by the above observations, so $\vs{F'} + u$ is contained in $E$ or $E^c$, and it follows from that fact that $\{b\}$ is a decomposer and the definition of the $X_i$ and $F_i$ that for all $x \in \vs{F'} + u$, we have $x + \{0,a,b,a+b\} \subseteq E$ for all $x \in E$, and $x + \{0,a,b,a+b\} \subseteq E^c$ for all $x \in E^c$. Therefore the coset $C$ is contained in either $E$ or $E^c$. Since $F'$ is a proper nonempty flat of $G$, $\cl(F' \cup \{a,b\})$ is thus a decomposer of $M$, giving a contradiction. 
	\end{subproof}

	%the following should not be necessary, but just keeping in case it is (I changed the ordering of things so many times, so just in case)
	%$F \del E$ is a flat, and since there is no triangle $\{v_1, v_2, v_3\}$ such that $v_1, v_2 \in (F \del E) \cap X_1$ and $v_3 \in (F \del E) \cap X_0$, as otherwise $M \rvert \cl(\{a, a+v_1, a+v_2\})$ is a claw, we have that $F' = (F \del E) \cap X_1$ forms a flat. We now claim that all cosets of $F'$ in $F$ are either contained in $X_0 \del E$, $X_1 \cap E$ or $X_0 \cap E$; for any coset $A$, we must have $A \subseteq E$ or $A \cap E = \varnothing$ since $M \rvert F$ is a Bose-Burton geometry, and if $A \cap E = \varnothing$, we have $A \subseteq X_0 \del E$ since $F'$ is a flat. If $A \subseteq E$ and contained $v_1 \in X_1$ and $v_2 \in X_0$, then $M \rvert \cl(\{a, b+v_1, v_2\})$ is a claw. As in Case 2.2.1., it follows that $F'$ decomposes the whole matroid $M$. Therefore we may assume that $M \vert F \in \cE_3$.
	
	%Now, let $F_{odd} = (X_0 \cap E) \cup (X_1 \del E)$ and $F_{even} = (X_0 \del E) \cup (X_1 \cap E)$. 
		By \ref{epbb} and \ref{mpbb}, we have $M|F = M|F' \in \cE_3$. Since $M|H$ is the doubling of $M|F$, it follows that $M|H \in \cE_3$ also. The fact that $F = F_1$ implies that $H \del \{b\} = H_1$ and so $X_0 + b = X_1$. Let $J_i = (H  \del \{b\}) \cap (E \Delta X_i)$ for each $i \in \{0,1\}$, so $(J_0,J_1)$ is a partition of $H \del \{b\}$ for which $J_0 + b = J_1$. 
	\begin{claim}
		If $T$ is a triangle of $H$ with $b \notin T$, then $|T \cap J_0|$ is even.
	\end{claim}
	\begin{subproof}[Subproof]
		Let $T = \{v_1,v_2,v_3\}$ be a triangle of $H$ with $b \notin T$ for which $T \cap J_0$ is odd and as large as possible. If $v_2,v_3 \notin J_0$ then $\{v_1,v_2+b,v_3+b\}$ is a triangle contained in $J_0$, contradicting maximality. Thus $T \subseteq J_0$. 
		
		If $v_1 \in X_0\del E$ then $\{a,b + v_2 + (1-i_2)a,b+v_3 + (1-i_3)a\}$ is a claw, where $i_2,i_3$ are the binary scalars for which $v_2 \in X_{i_2}$ and $v_3 \in X_{i_3}$. Therefore $T \subseteq X_1 \cap E$. 
		
		Since $M|H \in \cE_3$, it contains no $F_7$-restriction, and since by \ref{mpbb} it is not a Bose-Burton geometry, Theorem~\ref{bbt} gives that $|E \cap H| < \tfrac{3}{4} \cdot 2^{\dim(H)}$. The triangle $T$ is contained in exactly $2^{\dim(H)-2}-1$ planes of $H$; since $3 + 3(2^{\dim(H)-2}-1) > |E \cap H|$, a majority argument gives that there is a plane $W$ of $H$ containing $T$ for which $|(W \del T) \cap E| < 3$; thus $|W \cap E| = |T| + |(W \del T) \cap E| < 6$; since $M|H \in \cE_3$ this gives $|W \cap E| \le 4$, and so $W \cap E$ contains exactly one element $w$ outside $T$, and $W = \cl(T \cup \{w\})$. Every element of $W \cap E^c$ lies in a triangle of $W$ containing exactly one element of $E$, so $b \notin W$. The three-element set $T+w$ intersects either $X_0$ or $X_1$ in two elements; say they are $v_1 + w$ and $v_2 + w$. If $v_1+w,v_2 +w \in X_0$ then $\{a,a+b+v_2+w,b+v_3\}$ is a claw. If $v_1+w,v_2+w \in X_1$ then $\{a,a+w+v_1,b+v_3\}$ is a claw. 
	\end{subproof} 

	The above claim implies that each triangle of $F$ has even intersection with $J_0$, so $F \cap J_1$ is either equal to $F$, or is a hyperplane of $F$. The definitions of $J_0$ and $J_1$ imply that if $v \in F$, then  $\{v,v+a\} \cap E$ has even size if and only if $v \in F \cap J_0$. Let $M' = (E', \cl(F \cup \{a\}))$ be the doubling of $M|F$ by $a$ if $F \cap J_1 = F$, or  the semidoubling of $M|F$ by $a$ with respect to the hyperplane $F \cap J_1$ if $F \cap J_1$ is a hyperplane of $F$; since $M|F \in \cE_3$, the matroid $M'$ is even-plane by Theorem~\ref{even_plane_doubling_semidoubling}. Since $E \cap \cl(F \cup \{a\}) = E' \Delta (\cl(F \cup \{a\}) \del F)$, by Lemma~\ref{sym_diff_even_plane} it follows that $M | \cl(F \cup \{a\}) \in \cE_3$. 
	
	Moreover, if $x \in F$ then $\{x,b+x\} \cap E$ has even size, and if $x \in \vs{F}+a$ then $\{x,b+x\} \cap E = \{(x+a) + a, (x+a) + a+b \} \cap E$, which has odd size because $x+a \in F = F_1$. Since $b \notin \cl(F \cup \{a\})$ and $F$ is a hyperplane of $\cl(F \cup \{a\})$, it follows that $M$ is the semidoubling of $M|\cl(F \cup \{a\})$ by $b$ with respect to the hyperplane $F$. Thus $M \in \cE_3$, as required. 
\end{proof}

\section{General decomposers}

We now combine the results in the previous two sections to completely describe the claw-free matroids having a hyperplane that admits a decomposer. 

\begin{theorem}\label{hyperplane_decomposer}
	Let $M = (E,G)$ be a claw-free matroid and let $H$ be a hyperplane of $G$ for which $M|H$ has a decomposer. Then either
	\begin{itemize}
		\item $M$ is even-plane,
		\item $M$ is a PG-sum,
		\item $M^c$ is triangle-free, or 
		\item $M$ has a decomposer. 
	\end{itemize}
\end{theorem}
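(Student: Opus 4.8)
The plan is to reduce Theorem~\ref{hyperplane_decomposer} to the four lemmas already proved in the "Large decomposers" and "Small decomposers" sections, by taking a decomposer of $M|H$ of a particularly convenient form.

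First I would handle the degenerate cases. If $M$ is not full-rank, then any hyperplane containing $\cl(E)$ is a decomposer of $M$, and we are done. So assume $M = (E,G)$ is full-rank; in particular $E \ne \varnothing$ and $E \ne G$. Next, since $M|H$ has a decomposer, the set of decomposers of $M|H$ is nonempty; among all decomposers of $M|H$, choose one, call it $F$, of minimum dimension. By the remark after Lemma~\ref{rlj} (``if $F$ decomposes $M$ and $F'$ decomposes $M|F$, then $F'$ decomposes $M$''), minimality of $\dim F$ forces $M|F$ to have \emph{no} decomposer. This is exactly the hypothesis shared by all four of Lemmas~\ref{minimal_hyperplane_case_solid}, \ref{minimal_hyperplane_case_empty}, \ref{point_decomposer_solid}, and \ref{point_decomposer_empty}. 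So the real content is: show that a minimum-dimension decomposer $F$ of $M|H$ can be taken to have dimension either $1$ or $\dim(H)-1$, and to be either contained in $E$ or disjoint from $E$ in the appropriate sense.

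The key structural step is therefore the following dichotomy on $\dim F$. On one hand, if $\dim F = 1$, write $F = \{b\}$; then $\{b\}$ is a one-element decomposer of $M|H$. If $b \in E$ we apply Lemma~\ref{point_decomposer_solid} to conclude $M^c$ is triangle-free or $M$ has a decomposer; if $b \notin E$ we apply Lemma~\ref{point_decomposer_empty} to conclude $M^c$ is triangle-free, $M$ is even-plane, or $M$ has a decomposer. Either way the conclusion of Theorem~\ref{hyperplane_decomposer} holds. On the other hand, if $\dim F \ge 2$, I claim we may pass to a decomposer of $M|H$ that is a hyperplane of $H$. The point is that a decomposer $F$ of $M|H$ with $\dim F \ge 2$ has all cosets unmixed, so $M|H$ is a lift-join $(M|F) \ls (M|J)$ by Lemma~\ref{rlj}; since $M|F$ has no decomposer and $\dim F \ge 2$, but $F$ is not a hyperplane of $H$ in general, I want to ``grow'' $F$. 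Concretely: if $F$ is not already a hyperplane of $H$, let $F^+$ be a hyperplane of $H$ containing $F$. Because $F$ has no mixed coset in $H$ and each coset of $F^+$ in $H$ is a union of cosets of $F$ in $H$, the hyperplane $F^+$ of $H$ also has no mixed coset, i.e. $F^+$ is a decomposer of $M|H$ which is a hyperplane of $H$. Now $H \del F^+$ is a single coset of $F^+$, hence unmixed, so either $H \del F^+ \subseteq E$ or $(H\del F^+)\cap E = \varnothing$. If additionally $M|F^+$ has a decomposer, we replace and shrink again; formally, among hyperplanes of $H$ that are decomposers of $M|H$, pick one $F^+$ with $M|F^+$ having no decomposer if possible, and otherwise iterate the shrink-then-regrow process --- each shrink strictly decreases dimension, so it terminates, and we either land on a hyperplane-of-$H$ decomposer $F^+$ with $M|F^+$ decomposer-free (where $\dim F^+ = \dim(H)-1 \ge 1$, and if $\dim F^+ = 1$ we are in the previous paragraph's case, else Lemmas~\ref{minimal_hyperplane_case_solid} and \ref{minimal_hyperplane_case_empty} apply according to whether $H\del F^+ \subseteq E$ or not) or we land on a $1$-dimensional decomposer. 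In the hyperplane case, Lemma~\ref{minimal_hyperplane_case_solid} gives that $M^c$ is triangle-free or $M$ has a decomposer; Lemma~\ref{minimal_hyperplane_case_empty} gives that $M$ is a strict PG-sum (in particular a PG-sum) or $M$ has a decomposer, except that Lemma~\ref{minimal_hyperplane_case_empty} requires $|F^+| > 1$, which holds as $\dim F^+ \ge 2$ in that branch. In every branch, the conclusion of Theorem~\ref{hyperplane_decomposer} follows.

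The main obstacle I anticipate is making the reduction to ``$\dim F \in \{1, \dim(H)-1\}$ \emph{and} $M|F$ has no decomposer'' fully rigorous and free of circularity: growing $F$ to a hyperplane $F^+$ of $H$ preserves being a decomposer of $M|H$, but may destroy the ``$M|F$ has no decomposer'' property, while shrinking $F$ to restore that property may take us below dimension $\dim(H)-1$. The clean way to organize this is a two-parameter induction or a careful extremal choice: take $F$ to be a decomposer of $M|H$ that is \emph{either} a hyperplane of $H$ \emph{or} has $M|F$ decomposer-free, chosen to be of minimum dimension subject to that; then argue that if $F$ is not a hyperplane of $H$ it must be decomposer-free (so a smallest decomposer works and has dimension $\ge 2$ only if... ) --- actually the simplest route, which I would take, is: let $F$ be \emph{any} minimum-dimension decomposer of $M|H$ (automatically $M|F$ decomposer-free); if $\dim F = 1$ invoke Lemmas~\ref{point_decomposer_solid}/\ref{point_decomposer_empty}; if $\dim F \ge 2$, note $F$ being a minimum decomposer with $\dim F \ge 2$ does not immediately give a hyperplane, so instead separately run the argument with $F^+$ a maximal (hyperplane-of-$H$) decomposer and take a minimum decomposer $F'$ of $M|F^+$: then $F'$ decomposes $M|H$, so by minimality $\dim F' \ge \dim F \ge 2$, and $M|F'$ has no decomposer, and $F'$ is a decomposer of $M|F^+$; now either $F' = F^+$ (so $F^+$ is a hyperplane of $H$ with no decomposer of $M|F^+=M|F'$, apply Lemmas~\ref{minimal_hyperplane_case_solid}/\ref{minimal_hyperplane_case_empty}) or $F' \subsetneq F^+$, and we recurse on the pair $(F', F^+)$ in place of $(F', H)$ --- but $F^+$ is a hyperplane of $H$, so this is genuine induction on $\dim$. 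I expect to need to state this recursion as an auxiliary induction on $\dim(H)$, with Lemmas~\ref{minimal_hyperplane_case_solid}--\ref{point_decomposer_empty} as the base/step cases, and to double-check the hypothesis $|F| > 1$ in Lemma~\ref{minimal_hyperplane_case_empty} is always met (it is, since that lemma is only invoked when the relevant decomposer has dimension $\ge 2$).
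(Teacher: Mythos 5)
Your reduction to the $\dim F = 1$ and hyperplane cases is incorrect, and the error is in the ``growing'' step. You assert that if $F$ is a decomposer of $M|H$ and $F^+$ is any hyperplane of $H$ with $F \subseteq F^+ \subsetneq H$, then $F^+$ is also a decomposer of $M|H$, ``because each coset of $F^+$ in $H$ is a union of cosets of $F$ in $H$.'' But a union of unmixed sets is not unmixed: the single coset $H \del F^+$ of $F^+$ is a union of several cosets of $F$, some of which may be contained in $E$ and some disjoint from $E$. For instance, if $\dim H = 4$ and $F$ is a triangle, then $F$ has three cosets $A_1, A_2, A_3$ in $H$; take $A_1, A_3 \subseteq E$ and $A_2 \cap E = \varnothing$, so $F$ decomposes $M|H$, and let $F^+ = F \cup A_1$ -- then $H \del F^+ = A_2 \cup A_3$ is mixed, so $F^+$ is not a decomposer. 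The direction that does hold (and is the remark after Lemma~\ref{rlj}) is that shrinking a decomposer preserves the property; growing does not. Your subsequent shrink-then-regrow recursion does not repair this, because it presupposes that the regrown hyperplane is a decomposer.

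As a result, your proof omits the genuinely hard case $2 \le \dim F \le \dim(H)-2$, which is where most of the paper's proof of Theorem~\ref{hyperplane_decomposer} lives. The paper handles this case directly: fixing a minimal decomposer $F$ of $M|H$ with $\dim F \ge 2$ and a mixed coset $B$ of $F$ in $G$, it classifies each coset $A$ of $F$ in $H$ as full or vacant and as good or bad (according to whether $A$ and $A+B$ have the same status), then uses Lemmas~\ref{minimal_hyperplane_case_solid} and~\ref{minimal_hyperplane_case_empty} locally inside the flats $F \cup A \cup B \cup (A+B)$ to constrain the pattern of bad cosets, and finally applies the Coset Lemma (Lemma~\ref{PQR}) to the good/bad partition to either build a decomposer of $M$ or conclude that $M^c$ is triangle-free. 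None of this is captured by a pure reduction to the $\dim F \in \{1, \dim(H)-1\}$ cases.
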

\begin{proof}
%The proof goes by induction on $\dim{(M)}$. We may assume that $M$ is full-rank, as otherwise $M$ has a lift-decomposition, $M$ contains an $F_7$-restriction, and $M^c$ contains a triangle. 

%If $\dim(M)=3,4$, then the result is trivial. So suppose that $\dim(M) \geq 5$. Then by Lemma ~\ref{base_technical}, we know that there exists a hyperplane $H \subseteq G$ so that $N \coloneqq M \vert H$ contains an odd-sized three-dimensional restriction, and $\bar{N}$ contains a triangle. By the inductive hypothesis, $N$ either has a lift-decomposition, or is a PG-sum. We may assume that it has a lift-decomposition, as otherwise Lemma ~\ref{hyperplane_PG_sum} implies that $M$ is a PG-sum. 

Suppose that none of the outcomes hold. Let $F \subseteq H$ be a minimal decomposer of $M|H$. The minimality of $F$ implies 
\begin{claim}\label{Fmin} $M|F$ has no decomposer. \end{claim}
If $F$ is a hyperplane of $H$ or $|F| = 1$, then one of Lemmas ~\ref{minimal_hyperplane_case_solid}, ~\ref{minimal_hyperplane_case_empty},~\ref{point_decomposer_solid} or~\ref{point_decomposer_empty} yields a contradiction.
Therefore $\dim(F) \geq 2$, and $F$ has more than one coset in $H$. Since $F$ does not decompose $M$, it has a mixed coset $B$ in $G$.  %Fix $a \in B \cap E$ and, as usual, let $X_i = \{v \in H\colon |\{v+a\} \cap E| = i\}$ for $i \in \{0,1\}$. 

We say that a set $A \subseteq G$ is \emph{vacant} if $A \cap E = \varnothing$, and \emph{full} if $A \subseteq E$. The fact that $F$ decomposes $M|H$ implies that every coset of $F$ in $H$ is either vacant or full. Note that if $A$ and $A'$ are distinct cosets of $F$ in some flat $F^+$ containing $F$, then $A+A'$ is also a coset of $F$ in $F^+$. 

Call a coset $A$ of $F$ in $M \vert H$ \emph{good} if the cosets $A$ and $A+B$ are either both vacant or both full, and say $A$ is $\emph{bad}$ otherwise. %Equivalently, a vacant coset is good if it is contained in $X_0$, and a full coset is good if it is contained in $X_1$. 

%if either $A$ is empty and $A+a$ is not or $A$ $A \cap X_1 \neq \varnothing$, or $A$ is full and $A \cap X_0 \neq \varnothing$, and \emph{good} otherwise. 

\begin{claim}
	$F$ has a bad coset in $H$. 
\end{claim}
\begin{subproof}[Subproof]
	Suppose not; we argue that the flat $F \cup B$ decomposes $M$. Indeed, if $A$ is a coset of $F \cup B$ then $A = A' \cup (A' + B)$ for some coset $A'$ of $F$ in $H$; since $A'$ is good this implies that $A$ is vacant or full. Thus $F \cup B$ decomposes $M$, a contradiction. 
\end{subproof}

\begin{claim}\label{badstructure}
	Let $A$ be a bad coset of $F$ in $H$ and let $F_A = F \cup A \cup B \cup (A+B)$. 
	\begin{itemize}
		\item If $A$ is vacant, then $M|F_A$ and $M|F$ are strict PG-sums, and 
		\item if $A$ is full, then $(M|F_A)^c$ is triangle-free. 
	\end{itemize}
\end{claim}
\begin{subproof}
	Note that $F_A$ is a flat of $G$ such that $F \cup A$ is a hyperplane of $F_A$, and $F$ is a hyperplane of $F \cup A$. Let $F^+$ be a flat of $F_A$ that contains $F$. It is easy to see that $F^+$ has a coset containing $B$ or $A \cup (B+A)$. But $B$ is mixed, and the fact that $A$ is bad implies that $A \cup (B+A)$ is mixed. So $F^+$ is not a decomposer of $M|F_A$, and thus no decomposer of $M|F_A$ contains $F$. By \ref{Fmin}, we can apply Lemma~\ref{minimal_hyperplane_case_solid} (if $A$ is full) or Lemma~\ref{minimal_hyperplane_case_empty} (if $A$ is vacant) to obtain the desired conclusion.
\end{subproof}

\begin{claim}\label{bothfull}
	If $A_1,A_2$ are distinct full cosets of $F$ in $H$ and $A_1 + A_2$ is vacant, then $A_1$ and $A_2$ are good. 
\end{claim}
\begin{subproof}
	Suppose otherwise; we may assume that $A_1$ is bad, so there exists $u_1 \in (A_1 + B)\del E$. Let $A_3 = A_1 + A_2$ and $F_i = F \cup A_i \cup B \cup (A_i + B)$ for each $i \in \{1,2,3\}$. By \ref{badstructure}, the matroid $(M|F_1)^c$ is triangle-free. 
	Let $a \in B\del E$ and let $v_1 = a + u_1 \in A_1$. Since $\dim(F) > 1$ and $M|F$ has no decomposer, there exists $x \in F\del E$. Since $(M|F_1)^c$ is triangle-free we have $x+a \in E$ and $x + a+v_1 \in E$. 
	
	Let $v_3 \in A_3$. If both $a+v_3$ and $a+x+v_3$ are both nonelements of $E$, then $\{v_1+v_3,x+v_1+v_3,a+x+v_1\}$ is a claw, and if both are elements of $E$, then $\{a+v_3,a+x+v_3,a+x\}$ is a claw. Thus exactly one is an element of $E$; by possibly replacing $v_3$ by $v_3 + x$, we may assume that $a+v_3 \in E$ and $a+x+v_3 \notin E$.
	If $a+x+v_1+v_3 \in E$, then $\{a+x+v_1+v_3,a+x+v_1,a+v_1+v_3\}$ is a claw, so $a+x+v_1+v_3 \notin E$. 
	
	Since $A_3$ is vacant and $a+v_3 \in E$, the set $A_3$ is bad. By \ref{badstructure} the matroid $M|F_3$ is a PG-sum. Thus $E \cap F_3$ is the disjoint union of two flats $K_1,K_2$. Since $a+x,a+v_3 \in E \cap F_3$ and $(a+x) + (a+v_3) = x+v_3 \in A_3$ is not, one of these two flats (say $K_1$) contains $a+x$, and the other (say $K_2$) contains $a+v_3$. Since $K_1,K_2$ are disjoint flats with union $E \cap F_3$, we have $(K_1 + K_2) \cap E = \varnothing$.
	
	If $K_1 \cap F = \varnothing$, then $F \cap E = F \cap K_2$ which is a flat of $F$. But since $\dim(F) \ge 2$, it follows that either $F \cap E = \varnothing$, in which case every $1$-dimensional flat of $F$ decomposes $M|F$, or $F \cap E$ is a nonempty flat of $F$, in which case every singleton in this flat decomposes $F$. Either case contradicts hypothesis, so it follows that $K_1 \ne \{a+x\}$ and therefore $K_1 \cap F$ contains an element $w$. 
	
	So $w \in E$ and, since $K_1$ is a flat, we have $w+a+x \in E$. Moreover, we have $a+w+v_3 = w + (a+v_3) \in K_1+K_2$, so $a+w+v_3 \notin E$. Now using the fact that $a+x+v_1+v_3 \notin E$ as observed earlier, the set $\{a+x+w,x+w+v_1,w+v_1+v_3\}$ is a claw. 
\end{subproof}

\begin{claim}\label{bothvacant}
	Let $A_1,A_2$ be distinct vacant cosets of $F$ in $H$. Then $A_1$ or $A_2$ is good. Moreover, if $A_1 + A_2$ is full, then $A_1$ and $A_2$ are both good. 
\end{claim}
\begin{subproof}
	Let $a \in B\cap E$. Let $I$ be the set of $i \in \{1,2\}$ for which $A_i$ is bad. It suffices to show that if $|I| \ge 1$, then $|I| = 1$ and $A_1+A_2$ is vacant; let $A_3 = A_1 + A_2$, and suppose that $|I| \ge 1$; we may assume that $1 \in I$. For each $i \in I$, let $F_i = F \cup B \cup A_i \cup (A_i+B)$. By \ref{badstructure} the matroids $M|F_i$ and $M|F$ are strict PG-sums. Let $K_i,L_i$ be the summands of $M|F_i$, where $a \in K_i$; since $M|F$ is strict, the sets $K_i \cap F$ and $L_i \cap F$ are nonempty.
	
	Specialising to $i = 1$, let $x \in (K_1 + L_1) \cap F $; thus, $x \notin E$, and, since $x+ a \in K_1 + L_1$, we have $x+a \notin E$.

	% Moreover, if $I = \{1,2\}$ then $K_1 \cap (F \cup B) = K_2 \cap (F \cup B)$ while $L_1 \cap (F \cup B) = L_2 \cap (F \cup B)$, since $M|(F \cup B)$ is a PG-sum and the $K_i$ both contain the element $a$. 
		
	Again, consider a general $i \in I$. Since $A_i$ is bad, the set $A_i + B = A_i + \{a\}$ contains an element $v_i+a$ of $E$, where $v_i \in A_i \subseteq G\del E$. Since $a \in K_i$ and $v_i \notin K_i$, we have $v_i +a \notin K_i$, so $v_i + a \in L_i$. The element $u = a+v_i+x$ satisfies $u + a  \notin E$ and $u+v_i+a \notin E$, so $u \notin K_i \cup L_i$, giving $a+v_i+x \notin E$.
	
	Assume that $A_3$ is full, and let $v_3 \in A_3$. If $a+v_3$ and $a+v_3 + x$ are both not in $E$, then $\{v_3,a+v_3,a\}$ is a claw; by possibly replacing $v_3$ with $v_3 +x $ we may assume that $v_3 + a \in E$. Note that $v_1+v_3$ and $x+v_1+v_3$ are in the vacant set $A_2$; if $a+v_1+v_3 \in E$ then $\{x+v_3,a+v_3,a+v_1+v_3\}$ is a claw, so $a+v_1+v_3 \notin E$. 
	
	Let $w \in F \cap L_1$; we have $a+w \in K_1 + L_1$ so $a+w \notin E$.  Since $a+v_1 \in L_1$ we also have $a+w+v_1 \in E$. Now $w +v_3 \in A_3 \subseteq E$ while $w+v_1+v_3 \in A_2 \subseteq G\del E$; it follows that $\{w+v_3,a+v_3,a+w+v_1\}$ is a claw. This contradiction shows that $A_3$ is vacant. 
	
	Now assume that $|I| = 2$. Recall that $w \in F \cap L_1$ and $a+w \notin E$; it follows that $w \in L_2$ also, as $w \in L_2 \cup K_2$, and $w \in K_2$ would imply that $w+a \in K_2 \subseteq E$. For each $i \in \{1,2\}$ we have shown that $a+v_i \in L_i$, so $a + w + v_i \in L_i $, giving $a + w + v_i \in E$. 
	
	We have $a+w+v_1+v_2 \in E$, as otherwise $\{a,a+v_1,a+w+v_2\}$ is a claw. But now $\{a+w+v_1,a+w+v_2,a+w+v_1+v_2\}$ is a claw, giving a contradiction. Thus $|I| = 1$ as required. 
\end{subproof}

%Let $(A_1,A_2,A_3)$ be a triple of cosets of $F$ in $H$. Call the triple \emph{triangular} if $0 \in A_1 + A_2 + A_3$.
%\begin{claim}\label{intermediate_case_lemma}. Let $A_1$ and $A_2$ be distinct cosets of $F$ in $H$. Then
%	\begin{enumerate}	
%		\item\label{trip1}
%	If $A_1,A_2$ are full and $A_1+A_2$ is vacant, then $A_1,A_2$ are good.
%		\item\label{trip2}
%		If $A_1,A_2$ are vacant and $A_1+A_2$ is full, then $A_1,A_2$ are good.
%		\item\label{trip3}
%		If $A_1$ and $A_2$ are vacant, then $A_1$ or $A_2$ is good. 
%	\end{enumerate}
%\end{claim}

\begin{claim}
	$F$ has no bad vacant coset in $H$. 
\end{claim}
\begin{subproof}
	Let $A$ be such a coset. We first argue that $A$ is the only bad coset; indeed, if $A'$ is another bad coset then $(A,A')$ contradicts \ref{bothvacant} if $A'$ is vacant, the pair $(A',A+A')$ contradicts ~\ref{bothfull} if $A'$ and $A+A'$ are both full, and the pair $(A,A+A')$ contradicts \ref{bothvacant} if $A'$ is full and $A+A'$ is vacant. Thus $A$ is the only bad coset. 
	
	We now argue that the flat $F' = \cl(F \cup A \cup B) = F \cup A \cup B \cup (A+B)$ decomposes $M$. Since $F' \ne \varnothing$ while $\dim(F') = \dim(F) +2 < \dim(G)$, it suffices to show that $F'$ has no mixed cosets in $G$. Let $C$ be a coset of $F'$; we have 
	\[C = C_0 \cup (A+C_0) \cup (B+C_0) \cup (A+B+C_0)\]
	for some coset $C_0$ of $F$ in $H$. If exactly one of $C_0$ and $A+C_0$ is full, then either $(A,C_0)$ or $(A,A+C_0)$ contradicts \ref{bothvacant}. Thus $C_0$ and $A+C_0$ are either both empty or both full. Since they are both good, this implies that $C$ is either empty or full. So $F'$ is a decomposer of $M$, contrary to assumption. 
\end{subproof}

Thus, all bad cosets are full. 

\begin{claim}\label{existsvacant}
	$F$ has a vacant coset in $H$.
\end{claim}
\begin{subproof}
	Suppose not; we show that $M^c$ is triangle-free. Let $T$ be a triangle in $M^c$. Since $T \cap H$ is nonempty while $H \del F \subseteq E$, we have $T \cap F \ne \varnothing$. If $T \subseteq F \cup B$, let $F' = F \cup B \cup A \cup (A+B)$ for some bad coset $A$. Otherwise, let $F' = \cl(F \cup B \cup T)$. In either case, we have $T \subseteq F'$, and $F' = F \cup B \cup A \cup (A+B)$ for some coset $A$. 
	
	If $A$ is good, then by construction we have $T \not\subseteq F \cup B$, so $T$ intersects $A \cup (A+B)$. But $A$ is a full good coset, so $A \cup (A+B) \subseteq E$, contradicting the fact that $T \subseteq G\del E$. If $A$ is bad, then \ref{badstructure} implies that $(M|F')^c$ is triangle-free. This contradicts $T \subseteq F'$. 
\end{subproof}
Let $K$ be a maximal flat of $H$ that is disjoint from $F$, so each coset $A$ of $F$ in $H$ intersects $K$ in a unique element $v_A$, with $v_A \in E$ if and only if $A$ is full, while $A = F + v_A$. Let $P \subseteq K$ be the set of all $v_A$ for which $A$ is bad, let $Q$ be the set of all $v_A$ for which $A$ is good and full, and $R$ be the set of all $v_A$ for which $A$ is good and vacant. Now \ref{bothfull} implies that there is no triangle $T$ of $K$ with $|T \cap R| = 1$ and $|T \cap P| \ge 1$; by Lemma~\ref{PQR}, each coset of $\cl(P)$ in $K$ is contained in $Q$ or $R$. We have $R \ne \varnothing$ by \ref{existsvacant}, so $\cl(P) \ne K$. 

We now argue that the flat $F' = \cl(F \cup \{a\} \cup P)$ decomposes $M$. Clearly $F' \ne \varnothing$, and the fact that $\cl(P) \ne K$ implies that $F' \ne G$. Consider a coset $C$ of $F'$. We have $C = \vs{F'} + u = \vs{(F \cup (F + B))} + \vs{\cl(P)} + u$ for some $u \in G \del F'$; since $K$ contains a flat that is maximally disjoint from $F'$, we can take $u \in K  \del \cl(P)$.

The set $C_0 = \vs{\cl(P)} + u$ is a coset of $\cl(P)$ in $K$, so is contained in either $Q$ or $R$. Therefore $C \subseteq \vs{F \cup (F + B)} + Z$ for some $Z \in \{Q,R\}$. Now 

\[\vs{F \cup (F+B)} + Z = \bigcup_{z \in Z}\left((\vs{F} + z) \cup ((F + B) + z)\right).\]
 If $Z = Q$ then each coset $\vs{F} + z$ and $(F + B) + z$ is full by definition of $Q$, so $C$ is full. Similarly, if $Z = R$ then $\vs{F} + z$ and $(F+B)+z$ are empty for all $z$, so $C$ is empty. Thus $F'$ is a decomposer of $M$, yielding a final contradiction.
\end{proof}

At this point, we can easily reduce Theorem~\ref{main} to a finite computation, showing that it suffices to verify the result in dimension at most $8$. Although actually performing this check computationally would likely be impossible, we include the argument here for interest. 

\begin{theorem}\label{computation_reduce}
	If Theorem~\ref{main} holds for all matroids of dimension at most $8$, then it holds in general. 
\end{theorem}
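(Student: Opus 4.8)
The plan is to argue by induction on $\dim(M)$, with the truth of Theorem~\ref{main} in all dimensions at most $8$ as the base case. For the inductive step, suppose $M = (E,G)$ is a claw-free counterexample of dimension $n \ge 9$; thus $M$ is not even-plane, $M^c$ is not triangle-free, $M$ is not a strict PG-sum, and $M$ has no decomposer. The last of these forces $M$ to be full-rank, since otherwise any hyperplane containing $\cl(E)$ would be a decomposer. Each hyperplane $H$ of $G$ gives an induced restriction $M|H$ of dimension $n-1 \ge 8$, so the inductive hypothesis tells us that every such $M|H$ is even-plane, has triangle-free complement, is a strict PG-sum, or has a decomposer.

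First I would dispose of the case where $M|H$ has a decomposer for some hyperplane $H$. Then Theorem~\ref{hyperplane_decomposer} says that $M$ is even-plane, a PG-sum, has triangle-free complement, or has a decomposer; the first, third, and fourth contradict the choice of $M$. In the remaining subcase $M$ is a full-rank PG-sum, so $E$ is a disjoint union of two flats $F_1, F_2$: if both are nonempty then $M$ is a strict PG-sum, while if (say) $F_2 = \varnothing$ then $E = F_1$ is a flat, which being full-rank forces $E = G$, whence $M^c = (\varnothing, G)$ is triangle-free (the possibility $E = \varnothing$ does not arise, as $M$ is full-rank of positive dimension). Each subcase contradicts the choice of $M$, so henceforth no $M|H$ has a decomposer, and every $M|H$ is even-plane, has triangle-free complement, or is a strict PG-sum.

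Since $n \ge 4$, every plane of $G$ lies in a hyperplane, so if every $M|H$ is even-plane then so is $M$; likewise, every triangle of $G$ lies in a hyperplane, so if every $M|H$ has triangle-free complement then so does $M$. Both outcomes contradict the choice of $M$, so there is a hyperplane $H_1$ with $M|H_1$ not even-plane and a hyperplane $H_2$ with $(M|H_2)^c$ not triangle-free; by the previous paragraph $M|H_1$ has triangle-free complement or is a strict PG-sum, and $M|H_2$ is even-plane or is a strict PG-sum. Ruling out this final configuration is the main obstacle, and I expect it to be handled by arguments in the spirit of Sections~3 and~4, split according to whether some hyperplane restriction is a strict PG-sum. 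When some $M|H$ is a strict PG-sum with $E \cap H = F_1 \sqcup F_2$ (disjoint flats, every element of $F_1 + F_2$ lying outside $E$), one fixes $a \in G \setminus H$ and uses claw-freeness: for $x_1 \in F_1$ and $x_2 \in F_2$ the plane $\cl(\{x_1, x_2, a\})$ contains the non-element $x_1 + x_2$ and the elements $x_1, x_2$, so exactly one of $a, a+x_1, a+x_2, a+x_1+x_2$ lying in $E$ would produce a claw; pushing this rigidity across all cosets of $H$ should yield either a claw in $M$ or a decomposer of $M$. When no $M|H$ is a strict PG-sum, every $M|H$ is even-plane or has triangle-free complement, with $M|H_1$ the latter and $M|H_2$ the former, and one works with the codimension-$2$ flat $K = H_1 \cap H_2$, whose restriction is simultaneously even-plane and triangle-free-complement, together with the third hyperplane through $K$, again aiming to extract a claw or a decomposer. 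Only this last step does not follow immediately from results already established; the difficulty is in organising its case analysis cleanly.
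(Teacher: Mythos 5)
Your proposal starts on the right track — induction on dimension, reduce to finding a hyperplane $H$ for which $M|H$ has a decomposer, then invoke Theorem~\ref{hyperplane_decomposer} — but it diverges from the correct argument midway and, as you acknowledge, does not finish. You attempt to analyse all hyperplanes at once, noting only that distinct hyperplanes may fail distinct certifying properties (some $H_1$ with $M|H_1$ not even-plane, some $H_2$ with $(M|H_2)^c$ not triangle-free, etc.), and you are then stuck with a multi-hyperplane configuration that would require a long new case analysis to rule out. This is a genuine gap, not just untidiness: nothing in the paper's Sections~3 and~4 is set up to handle that configuration directly.

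The missing idea is that each of the three ``bad'' properties of $M$ has a \emph{small certificate}, and that all three certificates can be packed into a single hyperplane. Concretely: a triangle $T \subseteq G\del E$ (a flat of dimension $2$) certifies $M^c$ is not triangle-free; a plane $Q$ with $|E \cap Q|$ odd (dimension $3$) certifies $M \notin \cE_3$; and — this is the step you did not use — since $M$ is claw-free but not a PG-sum, Lemma~\ref{PG_characterisation} provides a plane $P$ with $M|P$ isomorphic to one of $C_4$, $P_5$, $K_4$ (dimension $3$), certifying $M$ is not a PG-sum. The flat $\cl(T \cup P \cup Q)$ has dimension at most $2+3+3 = 8$, so since $\dim(M) \ge 9$ it lies in a hyperplane $H$. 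Then $M|H$ simultaneously fails all three basic-class properties, is not a counterexample by minimality, hence has a decomposer, and Theorem~\ref{hyperplane_decomposer} gives a decomposer of $M$, contradiction. This single consolidation step replaces your entire final paragraph and is exactly where the bound $8 = 2 + 3 + 3$ comes from; Lemma~\ref{PG_characterisation} is the tool that turns ``not a PG-sum'' into a $3$-dimensional certificate, and without it the proof does not close.
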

\begin{proof}
	Let $M = (E,G)$ be a minimal counterexample to the theorem; we may assume that $\dim(M) \ge 9$. Since $M^c$ is not triangle-free, there is a triangle $T \subseteq G\del E$. Since $M$ is not a PG-sum, there is a plane $P$ for $G$ for which $M|P$ is not a PG-sum by Lemma~\ref{PG_characterisation}. Since $M \notin E_3$, there is a plane $Q$ of $G$ for which $|E \cap Q|$ is odd. 
	
	Note that $T \cup P \cup Q$ is contained in a flat $F$ of dimension at most $2+3+3 = 8$. Let $H$ be a hyperplane of $G$ containing $F$. The existence of $T,P$ and $Q$ certifies that $(M|H)^c$ is not triangle-free, and that $M|H$ is not even-plane or a PG-sum; since $M|H$ is not a counterexample, it follows that $M|H$ has a decomposer $F$. Theorem~\ref{hyperplane_decomposer} thus implies that $M$ has a decomposer, contrary to assumption. 
\end{proof}

\section{The main theorem}

In this section, we prove Theorem~\ref{main}. The strategy is an adaptation of the proof of Theorem~\ref{computation_reduce}, where we reduce the size of the base case from $8$ to something more manageable. The $8$ appears in the argument above because, naively, if a matroid is not in one of our three basic classes, then it contains a certificate of this fact in dimension at most $8$. Our argument below is essentially reducing the size of such a certificate in the important cases. 	

Recall that $P_5$ is the three-dimensional matroid with five elements. This matroid is of particular interest at this point since its presence in a matroid $M$ certifies that $M$ is neither a PG-sum nor even-plane. 

\begin{lemma}\label{basecase_shortcut}
	Let $M = (E,G)$ be a claw-free matroid and let $H$ be a hyperplane of $G$ for which $M|H$ is a strict PG-sum such that $M|H \notin \cE_3$ and $(M|H)^c$ is not triangle-free. Then either 
	\begin{itemize}
		\item $M$ has an induced $P_5$-restriction,
		\item $M$ is a strict PG-sum, or
		\item $M$ has a decomposer.
		%\item $M$ has a hyperplane $H'$ such that $M|H' \notin \cE_3$ and $(M|H')^c$ is not triangle-free, while $M|H'$ is not a PG-sum.
	\end{itemize}
\end{lemma}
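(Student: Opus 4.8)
The plan is to argue by contradiction: assume $M$ has no induced $P_5$-restriction, is not a strict PG-sum, and has no decomposer. First some bookkeeping. Since any hyperplane containing $\cl(E)$ would decompose $M$, the matroid $M$ is full-rank. Write $E\cap H=F_1\cup F_2$, where $F_1,F_2$ are the disjoint nonempty flats coming from the strict PG-sum structure of $M\vert H$; then $\cl(F_1\cup F_2)=H$, and every element of $H\del E=F_1+F_2$ is uniquely $f_1+f_2$ with $f_i\in F_i$. Using this unique expression, one checks that $(M\vert H)^c$ can contain a triangle only if $\dim F_1\ge 2$ and $\dim F_2\ge 2$, and that $M\vert H\notin\cE_3$ then forces the larger of $F_1,F_2$ --- say $F_2$ --- to have dimension at least $3$ (if both had dimension $2$ then $\dim H=4$ and every plane of $H$ would meet both $F_i$ and hence have even intersection with $E$). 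In particular $\dim H\ge 5$.

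Since $M$ has no decomposer, $H$ is not a decomposer, so $G\del H$ is mixed; fix $a\in(G\del H)\del E$ and set $X_1=(a+E)\cap H$ and $X_0=H\del X_1$, so that $E=F_1\cup F_2\cup(a+X_1)$. A short computation of the type used in the discussion of strict PG-sums shows that $M$ is a strict PG-sum precisely when $X_1$ is a coset in $H$ of $F_1$ or of $F_2$; for instance, if $X_1=s+\vs{F_2}$ then $E$ is the disjoint union of the spanning flats $F_1$ and $\cl(F_2\cup\{a+s\})$. So the goal is to prove that, under claw-freeness, $P_5$-freeness, and the absence of a decomposer, $X_1$ is necessarily such a coset --- contradicting the hypothesis that $M$ is not a strict PG-sum --- while ruling out every other possibility for $X_1$ by exhibiting a claw, an induced $P_5$, or a decomposer (each of which is also a contradiction).

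The heart of the argument is to pin down $X_1$. For any triangle $T\subseteq F_i$, the plane $\cl(T\cup\{a\})$ has ground set $T\cup\{a\}\cup(a+T)$ and so meets $E$ in $|E\cap T|+|T\cap X_1|=3+|T\cap X_1|$ points; hence $P_5$-freeness gives $|T\cap X_1|\ne 2$, so $X_1\cap F_i$ is a flat of $F_i$ for each $i\in\{1,2\}$. Next, if $\{w,w',w+w'\}$ is a triangle contained in $F_1+F_2$ with all three points in $X_1$, then $\{a+w,a+w',a+w+w'\}$ is a claw (the plane it spans is $\{a,w,w',w+w',a+w,a+w',a+w+w'\}$, which meets $E$ in exactly those three points); so $X_1\cap(F_1+F_2)$ contains no triangle of $F_1+F_2$. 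Comparing a point $p\in X_1\cap F_1$ with a point $q\in X_1\cap F_2$ through the plane $\cl(\{p,q,a\})$ and invoking $P_5$-freeness forces $p+q\notin X_1$; iterating this together with the three families of claw conditions (classified by whether $2$, $1$, or $0$ of a claw's points lie in $H$) should reduce matters to the case where one of $X_1\cap F_1$, $X_1\cap F_2$ is empty and the other is a single point. At that stage one chooses a suitable hyperplane $F$ of $H$ and a partition $(P,Q,R)$ of $F$, with $P$ built from $X_1\cap F$ and with $Q,R$ recording the $E$-versus-$(G\del E)$ and $X_0$-versus-$X_1$ alternatives, in such a way that the claw conditions become exactly the forbidden-triangle hypotheses of the Coset Lemma; applying Lemma~\ref{PQR} produces a flat $\cl(P)$ all of whose cosets are unmixed in the relevant way, and then $\cl(P\cup\{a\})$ is a nonempty proper flat with no mixed cosets, i.e.\ a decomposer of $M$ --- a contradiction --- unless $\cl(P)$ is empty or all of $F$, in which case one checks directly that $X_1$ is a coset of $F_1$ or of $F_2$, the desired contradiction.

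I expect the last case analysis to be the main obstacle, and in particular the treatment of $X_1\cap(F_1+F_2)$: the set $F_1+F_2$ has almost no flat structure, so there the only leverage comes from the fact that $X_1$ contains no triangle of $F_1+F_2$ together with the ``$1$ point in $H$'' and ``$0$ points in $H$'' claw conditions, and it will take care to assemble these into a single partition to which the Coset Lemma applies and then to match its degenerate outputs precisely against the two strict-PG-sum coset shapes, leaving no residual cases. Keeping straight the four-way interaction between membership in $E$ versus $G\del E$ and membership in $X_0$ versus $X_1$ is what will make the written-out proof lengthy.
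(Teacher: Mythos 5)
Your setup agrees with the paper (the bookkeeping on $\dim F_1,\dim F_2$ and the observation that $M$ must be full-rank are both correct, and your observation that $X_1$ being a coset of $\vs{F_1}$ or $\vs{F_2}$ forces $M$ to be a strict PG-sum is also correct once one notes that at most one of the two summands of a strict PG-sum can extend past a hyperplane through the ambient space). However, you then make the opposite choice of base point: you take $a\in(G\setminus H)\setminus E$, whereas the paper takes $a\in E\setminus H$. This is not a cosmetic difference. With $a\in E$ the plane $\cl(T\cup\{a\})$ over a triangle $T\subseteq K_i$ already contains $a$, so $P_5$-freeness gives $|T\cap X_1|\neq 1$, i.e.\ $X_0\cap K_i$ is a flat; combined with a claw argument it yields the additive rule $(X_i\cap K_0)+(X_j\cap K_1)\subseteq X_{1+i+j}$. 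With your $a\notin E$ you instead get $X_1\cap F_i$ is a flat and the rule $(X_i\cap K_0)+(X_j\cap K_1)\subseteq X_{i+j}$, a genuinely different system of constraints. The paper's proof never uses the Coset Lemma in this lemma at all: after the sumset rule it proves a sharp ``polarise'' claim (each $K_i\cap X_0$ is all of $K_i$ or a singleton), then shows both must be singletons, and finishes by exhibiting a $P_5$ directly from a triangle in $K_1\cap X_1$ and a point of $K_0\cap X_1$.

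The gap in your proposal is exactly where you flag the difficulty. You assert that iterating the claw and $P_5$ conditions ``should reduce matters to the case where one of $X_1\cap F_1$, $X_1\cap F_2$ is empty and the other is a single point,'' but you supply no argument for this reduction, and it is not an easy consequence of the constraints you have listed: $X_1\cap F_i$ being a flat a priori allows any dimension, and your sumset relations on $F_1+F_2$ do not obviously force it down to dimension at most one. The analogue of the paper's polarise claim is the crux, and it requires the full additive sumset law plus a careful claw construction (the paper's claw $\{z_1+v,a+y+z_2,a+v\}$); you would need to find and verify the corresponding construction for your choice of $a$. The subsequent Coset Lemma step is also only asserted: you say the claw conditions ``become exactly the forbidden-triangle hypotheses'' of Lemma~\ref{PQR}, but you have not exhibited the partition $(P,Q,R)$ or checked the hypotheses, and moreover the paper shows that this particular lemma can be closed out with a direct $P_5$ rather than a decomposer argument, suggesting the Coset Lemma may be the wrong tool here (or at least an unnecessarily heavy one). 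In short, the skeleton of your approach is plausible but the load-bearing middle step is missing, and it is not clear that the constraints coming from $a\notin E$ are strong enough to carry it without further work.
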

\begin{proof}
	Suppose that none of the conclusions hold. Let $K_0,K_1$ be the disjoint nonempty flats of $H$ whose disjoint union is $E \cap H$. If one of the $K_i$ has dimension $1$, then since $M|H$ is strict, the other is a hyperplane of $H$, which meets every triangle of $H$; this contradicts the hypothesis that $H\del E$ contains a triangle. Therefore each $K_i$ has dimension at least $2$. If they both have dimension $2$ then $M|H \in \cE_3$; thus $K_0$ or $K_1$ has dimension at least $3$, and therefore $\dim(H) \ge 5$ and $\dim(G) \ge 6$.
	
	Since $M$ has no decomposer, there exists $a \in E \del H$. Let $X_1 = (a+E) \cap H$ and $X_0 = H \del X_1$. Think of the indices of $X_0$ and $X_1$ as belonging to $\bZ_2$.
	
	%Below, For every triangle $T$ of $G$, changed to $H$
	Since $M$ has no $P_5$-restriction, no plane $P$ of $G$ contains precisely five elements of $E$. For every triangle $T$ of $H$, the plane $\cl(T \cup \{a\})$ contains exactly $1 + |T \cap E| + |T \cap X_1|$ elements of $E$, so we have
	\begin{claim}\label{restrict_triangles}
		$|T\cap E| + |T \cap X_1| \ne 4$ for every triangle $T$ of $H$.
	\end{claim}
	
	For any triangle $T$ of $K_i$, it follows that $|T \cap X_1| \ne 1$; therefore
	\begin{claim}\label{subspaces}
		For each $i \in \{0,1\}$ the set $X_0 \cap K_i$ is a flat of $K_i$. 
	\end{claim}
	This also imposes structure on the elements of $K_0 + K_1$. 
	
	\begin{claim}\label{sumsets}
		For each $i,j \in \bZ_2$ we have $(X_i \cap K_0) + (X_j \cap K_1) \subseteq X_{1+i+j}$. 
	\end{claim}
	\begin{subproof}
		Let $x \in K_0 \cap X_i$ and $y \in K_1 \cap X_j$ and $x+y \in X_{\ell}$, where $\ell \in \bZ_2$. If $i = j = 0$ then, since $\{a,x,y\}$ is not a claw, we have $a+x+y \in E$ and so $\ell=1$. Otherwise the triangle $T = \{x,y,x+y\}$ satisfies $|T \cap E| = 2$ and $1 \le |T \cap X_1| \le 3$, so since $|T \cap E| + |T \cap X_1| \ne 4$ we have $|T \cap X_1|$ odd. It follows that $i+j+\ell = 1$ in $\bZ_2$ and so $\ell = 1+ i +j$, giving the claim.
	\end{subproof}
	\begin{claim}\label{polarise}
		For each $i \in \{0,1\}$,  either $K_i \subseteq X_0$ or $|K_i \cap X_0| = 1$.
	\end{claim}
	\begin{subproof}
		Suppose first that $K_i \subseteq X_1$. If $K_{1-i} \subseteq X_0$ then by \ref{sumsets} we have $K_0 + K_1 \subseteq X_0$, and it follows that $E$ is the union of the disjoint flats $\cl(K_i \cup \{a\})$ and $K_{1-i}$, so is a PG-sum, contrary to assumption. If there is some $v \in K_{1-i} \cap X_1$, then let $T = \{u_1,u_2,u_3\}$ be a triangle of $K_i$. We have $T + v \subseteq X_1$ by \ref{sumsets}, so now the triangle $T' = \{u_2,u_1+v,u_3+v\}$ satisfies $|T' \cap E| = 1$ and $T'\subseteq X_1$, contradicting ~\ref{restrict_triangles}. Therefore $X_0 \cap K_i$ is nonempty for both $i$. 
		
		If the conclusion fails, then there exist $z_1,z_2 \in K_i \cap X_0$ and $v \in K_i \cap X_1$; since $K_{i-1}\not\subseteq X_1$ there also exists $y \in K_{1-i}\cap X_0$. Since $X_0 \cap K_i$ is a flat we have $z_1 + z_2 \in X_0$ and $v+z_1+z_2 \in X_1$. By \ref{sumsets} we have $y+z_2 \in X_1$ and $y + v+z_1+z_2 \in X_0$. Now $\{z_1+v,a+y+z_2,a+v\}$ is a claw, a contradiction.
	\end{subproof}
	\begin{claim}
		$|K_i \cap X_0| = 1$ for both $i$.%Neither $K_0$ or $K_1$ is contained in $X_0$ or $X_1$. 
	\end{claim}
	\begin{subproof}
		If not, then there is some $i \in \{0,1\}$ for which $K_i \subseteq X_0$ by ~\ref{polarise}. %Let $w \in K_{1-i} \cap K_1$.
		If $K_{1-i} \subseteq X_0$, then \ref{sumsets} implies that $K_0 + K_1 \subseteq X_1$. Since the dimensions of $K_0$ and $K_1$ are at least $2$ and sum to at least $5$, we may assume by symmetry that $\dim(K_1) \ge 3$. Let $\{x_1,x_2\} \subseteq K_0$ and $\{y_1,y_2,y_3\} \subseteq K_{1}$ be linearly independent sets. Using $K_0 + K_1 \subseteq X_1$ together with $K_0 + K_1 \subseteq G\del E$, the set \[\{a+x_1+y_1+y_3,a+x_2+y_2+y_3,a+x_1+x_2+y_1+y_2+y_3\}\] is a claw, giving a contradiction.  
		
		Otherwise, the set $K_{1-i} \cap X_0$ has size $1$ by \ref{polarise}; let $w$ be its element. Now \ref{sumsets} gives $X_1 = (K_{1-i} \del \{w\}) \cup (K_i + w)$ and so 
		\begin{align*}
		E &= K_i \cup (K_i + a+w)  \cup K_{1-i} \cup (K_{1-i} + a+w)
		\end{align*}
		which implies that $E + a+w = E$, and so $\{a+w\}$ decomposes $M$, a contradiction. 
	\end{subproof}
	
	Assume now by symmetry that $\dim(K_1) \ge 3$; thus $K_1 \cap X_1$ contains a triangle $T = \{x,y,x+y\}$. Let $u \in K_0 \cap X_1$; now $u + T \subseteq X_1$ by \ref{sumsets}. Therefore the plane $\cl(\{a,x,y+w\})$ contains exactly five elements of $E$, giving a contradiction.
\end{proof}

We now restate and prove Theorem~\ref{main}.
\begin{theorem}
	If $M$ is a claw-free matroid, then either
	\begin{itemize}
		\item $M$ is even-plane, 
		\item $M^c$ is triangle-free,
		\item $M$ is a strict PG-sum, or
		\item $M$ has a decomposer.
	\end{itemize}
\end{theorem}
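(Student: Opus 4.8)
The plan is to argue by contradiction, following and sharpening the reduction in Theorem~\ref{computation_reduce}. Suppose $M=(E,G)$ is a claw-free counterexample of minimum dimension, so $M$ has no decomposer and is neither even-plane, a strict PG-sum, nor of triangle-free complement; the goal is to force $\dim(M)\le 5$, leaving only boundedly many cases to settle by hand. First I would record three certificates of $M$'s badness. Since $M$ is not even-plane there is a plane $Q$ of $G$ with $|E\cap Q|$ odd; since $M^c$ is not triangle-free there is a triangle $T\subseteq G\del E$; and since $M$ has no decomposer and is not a strict PG-sum it is not a PG-sum at all (a non-strict PG-sum with $\varnothing\ne E\ne G$ is not full-rank and so has a decomposing hyperplane through $\cl(E)$, while $(\varnothing,G)$ is even-plane and $(G,G)$ has triangle-free complement), so Lemma~\ref{PG_characterisation} together with claw-freeness yields an induced restriction of $M$ isomorphic to $C_4$, $P_5$ or $K_4$. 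I note for later use that, by the same dichotomy, every PG-sum satisfies the conclusion of the theorem.

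The first step upgrades the last certificate to an honest induced $P_5$-restriction. Since $\dim(\cl(Q\cup T))\le\dim(Q)+\dim(T)=5$, if $\dim(M)\ge 6$ I may choose a hyperplane $H$ of $G$ with $Q\cup T\subseteq H$. Then $M|H$ is claw-free of dimension $\dim(M)-1$, is not even-plane (it contains $Q$), and has non-triangle-free complement (it contains $T$); by minimality of $M$ it is therefore a strict PG-sum or has a decomposer. If $M|H$ has a decomposer, Theorem~\ref{hyperplane_decomposer} forces $M$ to be even-plane, a PG-sum, of triangle-free complement, or to have a decomposer --- a contradiction in each case. If $M|H$ is a strict PG-sum, then it is a strict PG-sum outside $\cE_3$ with non-triangle-free complement, so Lemma~\ref{basecase_shortcut} applies and shows that $M$ has an induced $P_5$-restriction, is a strict PG-sum, or has a decomposer; only the first is consistent with $M$ being a counterexample. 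Hence $M$ has an induced $P_5$-restriction $M|P'$ on some plane $P'$ of $G$.

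The second step repeats this with $P'$ in place of $Q$. Again $\dim(\cl(P'\cup T))\le 5$, so for $\dim(M)\ge 6$ I may choose a hyperplane $H'$ with $P'\cup T\subseteq H'$. Now $M|H'$ is claw-free of dimension $\dim(M)-1$, contains an induced $P_5$-restriction --- hence is neither even-plane nor, by Lemma~\ref{PG_characterisation}, a PG-sum --- and has non-triangle-free complement; by minimality of $M$ the only remaining possibility is that $M|H'$ has a decomposer. Theorem~\ref{hyperplane_decomposer} then forces one of the four forbidden outcomes for $M$, the final contradiction. Hence $\dim(M)\le 5$, and the theorem follows once the bounded-dimension cases are verified.

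I expect the real difficulty to lie in those residual cases $\dim(M)\le 5$, together with the two deep inputs already proved, Theorem~\ref{hyperplane_decomposer} and Lemma~\ref{basecase_shortcut}; the reduction itself is short, its only delicate point being to keep the certificate flats $\cl(Q\cup T)$ and $\cl(P'\cup T)$ of dimension at most $\dim(M)-1$ so that they sit inside a proper hyperplane. One could try to shave the base case down by arranging $Q$, $T$ and the eventual $P_5$-flat to share points --- for instance a bad plane $Q$ already contains a triangle disjoint from $E$ unless $E\cap Q$ is a triangle or $Q\subseteq E$ --- but a certificate that $M^c$ is not triangle-free cannot live inside a three-dimensional flat together with a $P_5$-restriction, so some genuinely low-dimensional checking seems unavoidable, and pinning that down is the last piece of work.
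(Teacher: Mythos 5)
Your reduction to $\dim(M)\le 5$ reproduces the paper's own argument essentially verbatim: a minimum-dimension counterexample has an odd plane $Q$ and a vacant triangle $T$; for $\dim(M)\ge 6$ a hyperplane through $\cl(Q\cup T)$ combined with Theorem~\ref{hyperplane_decomposer} and minimality forces that restriction to be a strict PG-sum, Lemma~\ref{basecase_shortcut} then upgrades the certificate to an induced $P_5$-restriction, and repeating with $P'$ in place of $Q$ kills $\dim(M)\ge 6$. That part is correct and is exactly what the paper does.

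The gap is in what you dismiss as the ``bounded-dimension cases.'' In the paper these are not a verification afterthought --- they are the bulk of the argument, and they require several further ideas you have not supplied. First, the paper records (\ref{tiny_decomposer}) that every $3$-dimensional odd-sized claw-free matroid has a one-element decomposer; this disposes of $\dim(M)=3$ directly, and also of $\dim(M)=4$, because then the odd plane $P$ is itself a hyperplane and $M|P$ would have a decomposer, contradicting the standing observation that no hyperplane restriction of $M$ has one. Second, in the remaining case $\dim(M)=5$ the paper makes a crucial extremal choice that you skip: it chooses $P$ and $T$ so that $|P\cap T|$ is as large as possible, and uses this to prove $P\cap T=\varnothing$ (via a short case analysis of $4$-dimensional strict PG-sums) and then $P\subseteq E$. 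Third, with $G=\cl(P\cup T)$, it analyses the shifts $A_u=P\cap(u+E)$ for $u\in T$, proves $A_{u_1}\Delta A_{u_2}\Delta A_{u_3}=P$ with each $A_u$ a triangle or all of $P$, and derives a decomposer or a claw in every subcase. None of this is a finite check one can realistically wave away --- the paper explicitly remarks that even the cruder dimension-$8$ reduction (Theorem~\ref{computation_reduce}) is not computationally feasible. So your proposal is a faithful restart of the paper's proof, but it stops precisely where the real work begins, and the maximality device for the pair $(P,T)$, which drives the whole dimension-$5$ analysis, does not appear in your plan at all.
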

\begin{proof}
	Let $M = (E,G)$ be a counterexample of smallest possible dimension. Clearly $\dim(M) \ge 3$, since otherwise $M \in \cE_3$. It is easy to check the following:
	\begin{claim}\label{tiny_decomposer}
		Every $3$-dimensional, odd-sized claw-free matroid has a one-element decomposer. 
	\end{claim}
	This gives $\dim(M) \ge 4$. Also observe that for every hyperplane $H$ of $G$, the matroid $M|H$ has no decomposer, as otherwise we obtain a contradiction from Theorem~\ref{hyperplane_decomposer}. 
	
	Since $M \notin \cE_3$, there is a plane $P$ of $G$ for which $|P \cap E|$ is odd, and since $M^c$ is not triangle-free, there is a triangle $T \subseteq G \del E$. Choose $P$ and $T$ so that their intersection is as large as possible. 
	
	\begin{claim}
		$\dim(M) = 5$.
	\end{claim}
	\begin{subproof}
		Suppose not, so either $\dim(M) = 4$ or $\dim(M) \ge 6$. If $\dim(M) = 4$, then $P$ is a hyperplane of $G$, and \ref{tiny_decomposer} implies that $M|P$ has a decomposer, giving a contradiction.
		
		Therefore $\dim(M) \ge 6$. Let $H$ be a hyperplane of $G$ containing $P \cup T$. By construction the matroid $M|H$ is not even-plane, while $(M|H)^c$ contains a triangle; since $M|H$ has no decomposer but is not a counterexample, it is a strict PG-sum. Since $M$ is not a strict PG-sum and has no decomposer, Lemma~\ref{basecase_shortcut} implies that $M$ has a $P_5$-restriction $M|P'$. Let $H'$ be a hyperplane of $G$ containing $T$ and $P'$. Now the existence of $P'$ certifies that $M|H' \notin \cE_3$ and $M|H'$ is not a PG-sum, and $T$ certifies that $(M|H')^c$ is not triangle-free. Since $M|H'$ has no decomposer, this contradicts the minimality in the choice of $M$. 	
	\end{subproof}
	
	\begin{claim}
		$P \subseteq E$. 
	\end{claim}
	\begin{subproof}
		We first argue that $P \cap T$ is empty. If $P \cap T \ne \varnothing$ then $\dim(\cl(P \cup T)) \le 4$; let $H$ be a hyperplane containing $P \cup T$. Since $M|H$ is not a counterexample, but $M|H \notin \cE_3$ while $T \subseteq H \del E$ and $M|H$ has no decomposer, we conclude that $M|H$ is a strict PG-sum. 
		
		However, for each $4$-dimensional strict PG-sum, the ground set is either the disjoint union of a point and a hyperplane, or of two triangles. If $M|H$ has the former structure then every triangle of $H$ intersects the hyperplane, contradicting the existence of $T$. If $M|H$ has the latter structure, then every plane of $H$ has odd intersection with each of the two triangles so has even intersection with $E$; this contradicts the existence of $P$. 
		
		Therefore $P \cap T$ is empty. We now argue that $P \subseteq E$. If not, let $v \in P\del E$. If $u+v \notin E$ for some $u \in T$, then $T' = \{u,v,u+v\}$ is a triangle contained in $G\del E$ that intersects $P$ in more elements than $T$ does; this contradicts the choice of $T$ and $P$. Thus $v+T \subseteq E$. But this implies that $\cl(T \cup \{v\}) \cap E = v+T$ and so $M$ has a claw, a contradiction. Thus $P \subseteq E$. 
	\end{subproof}
	
	Since $T \subseteq G \del E$ and $P \subseteq E$, we have $T \cap P = \varnothing$ and so the fact that $\dim(G) = 5$ implies that $G = \cl(P \cup T)$.  Let $T = \{u_1,u_2,u_3\}$ and for each $u \in T$, let $A_u = P \cap (u+E)$. 
	
	\begin{claim}
		 $A_{u_1} \Delta A_{u_2} \Delta A_{u_3} = P$, and each of $A_{u_1},A_{u_2},A_{u_3}$ is either a triangle of $P$ or equal to $P$.
	\end{claim}
	\begin{subproof}
		If the first conclusion fails, then there is some $v \in P$ for which $|\{u \in T\colon v \in A_u\}|$ is even. Then the plane $Q = \cl(\{v\} \cup T)$ has odd intersection with $E$ and intersects $T$, contradicting the choice of $P$ and $T$.
		
		To see the second conclusion, it suffices to show that each triangle $T'$ of $P$ has odd intersection with $A_u$; indeed, if $|T' \cap A_u|$ is even, then the plane $P' = \cl(T' \cup \{u\})$ has odd intersection with $E$ and also intersects $T$; this contradicts the choice of $P$ and $T$.
	\end{subproof}
	
	If at least two of the sets $A_u$ are equal to $P$ (say the first two), then $A_{u_3} = P \Delta P \Delta P = P$ and so $E =  G \del T$ which implies that $T$ decomposes $M$, a contradiction. 
	
	If exactly one of the $A_u$ (say $A_{u_1}$) is equal to $P$, then $A_{u_2}$ and $A_{u_3}$ are triangles with symmetric difference $P \Delta P = \varnothing$, so $A_{u_2} = A_{u_3} = T'$ for some triangle $T'$ of $P$. It follows that \[E = (\vs{u_1} + P) \cup (\{u_2\} + T') \cup (\{u_3\} + T') = (\vs{u_1} + P) \cup (\{u_2,u_3\} + T').\] Since $u_1+[u_1] = [u_1]$ and $u_1 + \{u_2,u_3\} = \{u_2,u_3\}$, this implies that $u_1 + E = E$, and therefore $\{u_1\}$ decomposes $M$, a contradiction. 
	
	Finally, if all three $A_u$ are triangles, then since they have symmetric difference $P$, it is easy to see that they are exactly the three triangles $T_1,T_2,T_3$ through some element $z$ of $P$. Let $\{y_1,y_2,y_3\}$ be a triangle of $P$ not containing $z$ for which $y_i \in A_{u_i}$ for each $i \in \{1,2,3\}$, so by construction, if $x \in P$ and $i \in \{1,2,3\}$, then $u_i + x \in E$ if and only if $x \in \{z,y_i,z+y_i\}$. Using this, we see that $\{z+y_2,u_1+z+y_1,u_3+z\}$ is a claw.		 
\end{proof}

As discussed earlier, the above theorem, together with an inductive argument, implies Theorem~\ref{structure}. 

\section{Corollaries}

We now now prove the corollaries of Theorem~\ref{structure} that were discussed in the introduction. They are all easy consequences, even if writing down their proofs takes a little work. 

\subsection*{$\chi$-boundedness}

Recall that a class $\cM$ of matroids is $\chi$-bounded by a function $f$ if $\chi(M) \le f(\omega(M))$ for all $M \in \cM$, and that $\cM$ is $\chi$-bounded if it is $\chi$-bounded by some $f$. It was shown in [\ref{bkknp}] that the class $\cE_3$ is not $\chi$-bounded, and therefore neither is the class of claw-free matroids. In this section we show that for every $N \in \cE_3$, the class of claw-free, $N$-free matroids is $\chi$-bounded by some function $f$ that grows exponentially. 

Call a function $f\colon \bZ_{\ge 0} \to \bZ_{\ge 0}$ \emph{superadditive} if $f(x+y) \ge f(x)+f(y)$ for all $x,y > 0$. (It is technically convenient not to insist that $f(0) = 0$.)  

\begin{lemma}\label{lj_boundedness}
	Let $\cM$ be a class of matroids and let $\cM'$ be its closure under lift-joins. If $\cM$ is $\chi$-bounded, then so is $\cM'$. Moreover, if $\cM$ is $\chi$-bounded by a superadditive function $f$, then $\cM'$ is $\chi$-bounded by $f$. 
\end{lemma}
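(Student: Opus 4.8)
The plan is to reduce everything to the additivity of $\omega$ and $\chi$ under $\ls$. By Lemma~\ref{lsassoc} the operation $\ls$ is associative, so every $M \in \cM'$ can be written as $M = M_1 \ls M_2 \ls \dotsb \ls M_k$ for some $k \ge 1$ and some $M_1,\dotsc,M_k \in \cM$. Applying Lemma~\ref{lj_parameters} inductively on $k$ then gives $\omega(M) = \sum_{i=1}^k \omega(M_i)$ and $\chi(M) = \sum_{i=1}^k \chi(M_i)$. This is the only structural input needed; the rest is bookkeeping around the fact that the superadditivity hypothesis only controls $f(x+y)$ for $x,y>0$.

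The key observation handling that issue is that a factor $M_i = (E_i,G_i)$ with $\omega(M_i) = 0$ has $E_i = \varnothing$ (a nonempty ground set contains a point, which is a $1$-dimensional flat), and hence $M_i^c$ has ground set all of $G_i$, so $\omega(M_i^c) = \dim(M_i)$ and $\chi(M_i) = \dim(M_i) - \omega(M_i^c) = 0$. Thus deleting the factors $M_i$ with $\omega(M_i)=0$ from the list changes neither $\sum \omega(M_i)$ nor $\sum \chi(M_i)$. Now suppose $\cM$ is $\chi$-bounded by a superadditive $f$. If every factor has $\omega(M_i)=0$ then $\chi(M) = 0 \le f(0) = f(\omega(M))$; otherwise let $N_1,\dotsc,N_m$ ($m\ge 1$) be the surviving factors, each with $\omega(N_j)\ge 1$, and compute
\[
\chi(M) = \sum_{j=1}^m \chi(N_j) \le \sum_{j=1}^m f(\omega(N_j)) \le f\!\left(\sum_{j=1}^m \omega(N_j)\right) = f(\omega(M)),
\]
where the middle step is an easy induction on $m$ from superadditivity, valid because each $\omega(N_j)>0$. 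This proves the ``moreover'' clause, and it is the main (mild) obstacle — everything hinges on disposing of the possibly-nonzero value $f(0)$ on the empty factors, which the observation above does cleanly.

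For the first statement, I would deduce it from the second by replacing an arbitrary bounding function $f$ with a superadditive majorant: set $g(0) = f(0)$ and, for $x \ge 1$, let $g(x)$ be the (finite) maximum of $\sum_{i=1}^{\ell} f(a_i)$ over all decompositions $x = a_1 + \dotsb + a_\ell$ with $\ell\ge 1$ and each $a_i \ge 1$. Taking $\ell = 1$ shows $g \ge f$, and concatenating optimal decompositions of $x$ and $y$ shows $g(x+y) \ge g(x)+g(y)$ for $x,y>0$, so $g$ is superadditive. Then $\cM$ is $\chi$-bounded by the superadditive function $g$, and the ``moreover'' clause already proved shows $\cM'$ is $\chi$-bounded by $g$, hence $\chi$-bounded.
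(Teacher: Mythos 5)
Your proof is correct and takes essentially the same approach as the paper: both construct a superadditive majorant of the bounding function and then exploit the additivity of $\omega$ and $\chi$ under lift-joins from Lemma~\ref{lj_parameters}. The main (cosmetic) difference is that you decompose $M$ fully into factors from $\cM$ and sum, whereas the paper peels off one lift-join at a time via a minimal-dimension counterexample. One point in your favour: your observation that $\omega(M_i)=0$ forces $E_i=\varnothing$ and hence $\chi(M_i)=0$ is needed to make the final inequality chain valid when some factor has empty ground set, since superadditivity is only assumed for positive arguments; the paper's proof applies the superadditivity step without explicitly addressing this case.
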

\begin{proof}
	Let $g$ be a function $\chi$-bounding $\cM$. Define $g'$ by $g'(k) = g(k)$ for $k \le 1$, and $g'(k) = \max(g(k),\max_{1\le i <k}(g'(i) + g'(k-i)))$ for all $k > 1$. By construction we have $g'(k) \ge g(k)$ for all $k \ge 1$, and $g'$ is superadditive. Since $g \le g'$, the class $\cM$ is $\chi$-bounded by $g'$. Note also that if $g$ is superadditive, then an inductive argument implies that $g = g'$. 
	
	To show the lemma, it therefore suffices to argue that $\cM'$ is $\chi$-bounded by $g'$. Suppose not, and let $M \in \cM'$ have minimal dimension with $\chi(M) > g'(\omega(M))$. If $M \in \cM$ we have a contradiction. Otherwise, $M$ is the lift-join of two matroids $M_1,M_2 \in \cM'$ of smaller dimension, and now Lemma~\ref{lj_parameters} gives
	\begin{align*}
		\chi(M) &= \chi(M_1) + \chi(M_2) \\
		&\le g'(\omega(M_1)) + g'(\omega(M_2))\\
		&\le g'(\omega(M_1) + \omega(M_2)) = g'(\omega(M)),
	\end{align*}
	a contradiction.
\end{proof}

We now need to argue that the class of complements of triangle-free matroids is $\chi$-bounded. If $M^c$ is triangle-free then $\chi(M)$ is equal to $\dim(M)$ or $\dim(M)-1$, so  this essentially amounts to showing that $\dim(M)$ is bounded by a function of $\omega(M)$. This is a special case of a Ramsey theorem for projective geometries; the following is a consequence of Corollary~2 of [\ref{gr}], rephrased in our language. 

\begin{theorem}\label{ramsey}
	For all $s,t \ge 0$ there is an integer $n(s,t)$ such that every $n$-dimensional matroid $M$ satisfies $\omega(M) \ge s$ or $\omega(M^c) \ge t$. 
\end{theorem}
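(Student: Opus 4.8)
The plan is to recognise this as a reformulation of a known Ramsey theorem for finite-dimensional vector spaces over $\bF_2$ --- precisely, the $2$-colour, monochromatic-subspace case, which is Corollary~2 of [\ref{gr}] --- and to carry out the translation into our language. The statement is trivial when $s = 0$ or $t = 0$, since every matroid satisfies $\omega(M) \ge 0$ and $\omega(M^c) \ge 0$ (the empty set is a flat of dimension $0$); so I would assume $s,t \ge 1$ and put $\ell = \max(s,t)$. It also suffices to find $n(s,t)$ for which the conclusion holds whenever $\dim(M) = n(s,t)$: if $\dim(M)$ is larger, restrict to a flat $F$ of dimension $n(s,t)$ and use $\omega(M) \ge \omega(M|F)$ and $\omega(M^c) \ge \omega((M|F)^c)$, since $(M|F)^c = M^c|F$.

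First I would take $n(s,t)$ to be the least $N$ such that every $2$-colouring of the points of $\PG(N-1,2)$ --- equivalently, of the nonzero vectors of $\bF_2^N$ --- admits an $\ell$-dimensional subspace all of whose points receive the same colour; the existence of such an $N$ is exactly the content of [\ref{gr}]. Then, given $M = (E,G)$ of this dimension, I would identify $\vs{G}$ with $\bF_2^N$ so that flats of $G$ correspond to subspaces of $\vs{G}$ and dimensions match, regard the partition $(E, G\del E)$ as a $2$-colouring of the points of $G$, and take a monochromatic flat $F$ of dimension $\ell$. If $F \subseteq E$ then $\omega(M) \ge \dim(F) = \ell \ge s$; if $F \subseteq G \del E$ then $F$ is a flat contained in the ground set of $M^c$, so $\omega(M^c) \ge \ell \ge t$. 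Either way the conclusion holds.

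If one preferred a self-contained argument, the monochromatic-subspace statement itself can be proved by induction on the target dimension $\ell$ (and, to make the induction run, on the number of colours): the cases $\ell \le 1$ are immediate, and the step from $\ell$ to $\ell+1$ is a Hales--Jewett/Graham--Rothschild-style amalgamation, locating inside a sufficiently high-dimensional geometry a subgeometry whose colouring is controlled by a colouring of a bounded-dimensional flat, and then applying the inductive hypothesis both to that flat and to the subgeometry. \textbf{The hard part} is exactly this inductive step: one needs the subflats produced to be genuine \emph{linear} subspaces rather than merely affine or combinatorial ones, which forces the whole amalgamation to be carried out projectively (equivalently, routed through the subspace form of the Graham--Rothschild theorem), together with the bookkeeping needed to control the ``spread'' of the nested flats produced at each stage. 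Since [\ref{gr}] already delivers the statement required, the deduction in the first two paragraphs is the route I would actually follow.
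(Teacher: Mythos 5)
Your proposal matches the paper's approach exactly: the paper offers no proof at all, simply stating the theorem with the remark that it is ``a consequence of Corollary~2 of [\ref{gr}], rephrased in our language.'' You cite the same result and carry out the (short but nonzero) translation that the paper leaves implicit --- identifying the points of $G$ with nonzero vectors of $\bF_2^n$, regarding $(E, G \del E)$ as a $2$-colouring, invoking the monochromatic-subspace form of Graham--Rothschild to get a flat of dimension $\ell = \max(s,t)$ in one colour class, and concluding via $\omega(M) \ge \ell \ge s$ or $\omega(M^c) \ge \ell \ge t$. The monotonicity remark (restricting to a flat of the threshold dimension) and the observation that $s = 0$ or $t = 0$ is trivial are both correct and worth making. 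Your final paragraph sketching a self-contained Hales--Jewett-style induction is not needed and you correctly identify that you would not pursue it here; it is fine as a parenthetical aside.
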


Unfortunately, the techniques in [\ref{gr}] give an enormous value for $n$, even when $t = 2$, which is the special case we will need. In order to obtain a somewhat reasonable $\chi$-bounding function we use the following theorem of Sanders [\ref{sanders}] to derive a better bound for $n(s,2)$.

\begin{theorem}[{[\ref{sanders}, Theorem 4.1]}]\label{sandersthm}
	Let $G \cong \PG(n-1,2)$, let $X \subseteq \vs{G}$, and let $\alpha = |X|/2^n$. If $\alpha \le \tfrac{1}{2}$, then $X+X$ contains a subspace of $\vs{G}$ of dimension $n -\left\lceil n/{\log_2\left(\frac{2-2\alpha}{1-2\alpha}\right)}\right\rceil$.
\end{theorem}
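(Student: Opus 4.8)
My plan is to recover this via Fourier analysis on the group $\vs{G} \cong \bF_2^n$; the statement itself is quoted from [\ref{sanders}], so I would ultimately defer to that paper for the sharp constant, but the shape of the argument is as follows. Normalise the transform so that $\widehat{1_X}(\xi) = 2^{-n}\sum_{x}1_X(x)(-1)^{\langle \xi, x\rangle}$ for $\xi \in \vs{G}$; then $\widehat{1_X}(0) = \alpha$, each $\widehat{1_X}(\xi)$ is real (the characters of $\bF_2^n$ are $\pm 1$-valued), and Parseval gives $\sum_{\xi}\widehat{1_X}(\xi)^2 = \alpha$. The object controlling the sumset is the representation count $r(y) = |\{(x_1,x_2) \in X \times X : x_1 + x_2 = y\}| = 2^n\sum_{\xi}\widehat{1_X}(\xi)^2(-1)^{\langle \xi, y\rangle}$, and $y \in X+X$ precisely when $r(y) > 0$.

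First I would note why the crude bounds fail: estimating $|r(y) - 2^n\alpha^2| \le 2^n\sum_{\xi \neq 0}\widehat{1_X}(\xi)^2 = 2^n(\alpha - \alpha^2)$ only yields $r(y)>0$ when $\alpha > \tfrac12$, which is exactly the range the theorem does \emph{not} address; and the classical Bogolyubov trick -- let $\Gamma = \{\xi : |\widehat{1_X}(\xi)| \ge \rho\}$, so $|\Gamma| \le \alpha\rho^{-2}$ by Parseval, and check that on $\Gamma^{\perp}$ a fourfold convolution of $1_X$ is strictly positive -- only puts a subspace inside $X+X+X+X$. So the plan is a density-increment: if some $\xi \neq 0$ has $|\widehat{1_X}(\xi)|$ substantial, then one coset of the hyperplane $\xi^{\perp}$ carries $X$ with density $\alpha + |\widehat{1_X}(\xi)|$; pass to that hyperplane (codimension up by one), replace $X$ by its restriction, recentred into the hyperplane (harmless, since over $\bF_2$ translating a set inside a coset does not change its sumset), and repeat. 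The density strictly increases, and the instant it passes $\tfrac12$ the current subspace $W$ lies inside $X+X$, because a subset of $W$ of density exceeding $\tfrac12$ meets each of its own translates; then $\operatorname{codim}(W)$ is exactly the number of passes performed. For the stages where no large Fourier coefficient is available, one instead uses Kneser's theorem: the stabiliser $H$ of $X+X$ is a subspace contained in $X+X$, and quotienting by $H$ leaves a set of density at least $\alpha$ whose sumset has trivial stabiliser, which caps $\dim(\vs{G}/H)$ in terms of $\alpha$.

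The genuinely delicate step is the bookkeeping that turns ``bounded number of passes'' into the precise bound $\lceil n/\log_2(\tfrac{2-2\alpha}{1-2\alpha})\rceil$: one must fix, at each stage, the threshold separating ``large'' from ``small'' Fourier coefficients, track the product of density-gain factors accumulated before the density reaches $\tfrac12$, and verify that this forces a codimension-$d$ subspace to appear as soon as $(1-2\alpha)(2^{n/d}-1) \le 1$, with no loss in the logarithm. Extracting the exact constant $\log_2(\tfrac{2-2\alpha}{1-2\alpha})$, rather than a weaker expression, is where the sharp form of Kneser's inequality (or an optimised compression or dyadic argument) enters, and this is precisely what [\ref{sanders}, Theorem~4.1] supplies; I would quote it as a black box.
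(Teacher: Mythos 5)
The paper states this result as a direct quotation of Sanders [Theorem~4.1] and offers no proof of its own, so there is no internal argument to compare your sketch against; your decision to defer to that reference for the sharp constant is exactly what the paper does. Your Fourier-analytic outline (density increment via a large nonzero coefficient, passing to a hyperplane, stopping once density exceeds $\tfrac12$) is a fair caricature of the flavour of argument involved, though I would not vouch for the claim that Kneser's theorem is the ingredient that delivers the exact constant $\log_2\bigl(\tfrac{2-2\alpha}{1-2\alpha}\bigr)$ -- that attribution should be checked against Sanders' actual proof before being asserted -- but since you explicitly treat the theorem as a black box, this does not affect the correctness of the citation.
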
 

\begin{corollary}\label{off_diagonal}
	Let $s \ge 2$ be an integer. If $M = (E,G)$ is a matroid for which $M^c$ is triangle-free and $\dim(M) \ge 2^s(s+1)$, then $\omega(M) \ge s$. 
\end{corollary}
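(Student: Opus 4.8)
The plan is to write $Y = G \setminus E$, so that $M^c = (Y,G)$ being triangle-free means $Y$ contains no triangle of $G$, and to exhibit a vector subspace of $\vs{G}$ of dimension at least $s$ whose nonzero points all lie outside $Y$ — such a subspace is a flat contained in $E$, witnessing $\omega(M) \ge s$. Two elementary facts about a triangle-free $Y$ in an $n$-dimensional geometry do the work. First, $|Y| \le 2^{n-1}$: for any fixed $y_0 \in Y$, the map $x \mapsto x + y_0$ sends $Y \setminus \{y_0\}$ injectively into $\vs{G} \setminus Y$, since $x \in Y \setminus \{y_0\}$ and $x + y_0 \in Y$ would make $\{x,y_0,x+y_0\}$ a triangle inside $Y$; counting gives $|Y| - 1 \le 2^n - |Y|$. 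Second, and crucially, $Y \cap (Y+Y) = \varnothing$: a nonzero $z = x + y$ with $x,y \in Y$ forces $x \ne y$, whence $\{x,y,z\}$ is a triangle in $Y$. Hence any vector subspace of $\vs{G}$ contained in $Y + Y$ meets $Y$ only in $0$, so its nonzero points lie in $E$.

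With these in hand I would split on $|Y|$. If $|Y| \le 2^{n-s+1}-2$, then $|E| = 2^n - 1 - |Y| > 2^n(1 - 2^{-(s-1)})$, so Theorem~\ref{bbt} (applied with $t = s-1$) gives an $s$-dimensional flat inside $E$ and we are done. Otherwise $|Y| \ge 2^{n-s+1}-1$, so $\alpha := 2^{-n}|Y|$ lies in $[2^{-s},\tfrac12]$ (the lower bound uses $n \ge s$, the upper uses $|Y| \le 2^{n-1}$). Assume first that $\alpha < \tfrac12$. Apply Theorem~\ref{sandersthm} with $X = Y$: $Y + Y$ contains a vector subspace $U$ of $\vs{G}$ of dimension $n - \lceil n/L \rceil$, where $L = \log_2\!\big(\tfrac{2-2\alpha}{1-2\alpha}\big)$; by the second fact above $U$ certifies $\omega(M) \ge \dim U$, and it remains only to check $\dim U \ge s$.

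Suppose instead $\dim U \le s-1$. Then $\lceil n/L\rceil \ge n-s+1 > n-s$, hence $n/L > n-s$ and $L < \tfrac{n}{n-s}$. Since $n \ge 2^s(s+1)$ we have $n - s \ge 2^s(s+1) - s \ge 2^s s$, so $\tfrac{n}{n-s} = 1 + \tfrac{s}{n-s} \le 1 + 2^{-s}$, and therefore $\tfrac{2-2\alpha}{1-2\alpha} < 2^{\,1+2^{-s}} \le 2 + 2^{1-s}$, using $2^x \le 1+x$ on $[0,1]$. As $1-2\alpha > 0$, cross-multiplying is legitimate and yields $\alpha(2 + 2^{2-s}) < 2^{1-s}$, i.e. $\alpha < 2^{-s}/(1+2^{1-s}) < 2^{-s}$ — contradicting $\alpha \ge 2^{-s}$. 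Hence $\dim U \ge s$ and $\omega(M) \ge s$.

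The argument has no real conceptual obstacle; the friction is all in the estimates of the previous paragraph and in the extremal case $\alpha = \tfrac12$ (where $L$ is formally infinite and Theorem~\ref{sandersthm} must be read with care). I would dispatch that case by applying Theorem~\ref{sandersthm} instead to a subset $Y_0 \subseteq Y$ with $|Y_0| = 2^{n-2}$: it is still triangle-free with $Y_0 + Y_0 \subseteq Y+Y$ disjoint from $Y$, and with $\alpha_0 = \tfrac14$ one gets a subspace of $\vs{G}$ inside $E$ of dimension $n - \lceil n/\log_2 3\rceil \ge s$, since $\log_2 3 > \tfrac32$ and $n \ge 2^s(s+1) \ge 3s$ for $s \ge 2$. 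This completes the proof, and incidentally yields $n(s,2) \le 2^s(s+1)$ in Theorem~\ref{ramsey}.
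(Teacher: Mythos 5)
Your proof is correct, and its central idea is the same as the paper's: apply Sanders' theorem (Theorem~\ref{sandersthm}) to a triangle-free subset of $E^c$, and use the fact that the sumset of a triangle-free set avoids it, so the resulting subspace lies entirely inside $E$ and witnesses $\omega(M) \ge s$. The paper arranges the bookkeeping slightly differently — it takes a subset of $E^c$ of size exactly $2^{n-s}$ so that $\alpha = 2^{-s}$ is fixed and the bound on $L$ is a direct lower estimate, which avoids both your initial case split on $|Y|$ and the $\alpha = \tfrac12$ edge case — but the estimates are of the same flavour and yield the same bound $n(s,2) \le 2^s(s+1)$.
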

\begin{proof}
	Let $\dim(M) = n$. Suppose that $\omega(M) < s$; Theorem~\ref{bbt} implies that $|E| \le 2^n-2^{n-s}$. If equality holds then $M$ is an order-$s$ Bose-Burton geometry so $G\del E$ is a flat of dimension $n - s \ge 2$ and thus contains a triangle, contrary to hypothesis. Therefore $|E| \le 2^n-2^{n-s}-1$. 
	
	Let $E^c = G\del E$. We have $|E^c| = 2^n-1-|E| \ge 2^{n-s}$. Let $X$ be a $2^{n-s}$-element subset of $E^c$; we have $\alpha = |X|/2^n = 2^{-s} \le \tfrac{1}{2}$. Now 
	\[\log_2\left(\tfrac{2-2\alpha}{1-2\alpha}\right) = 1 + \tfrac{1}{\ln(2)} \ln(1+ \tfrac{1}{1/\alpha-2}) = 1 + \tfrac{1}{\ln(2)}\ln(1+ \tfrac{1}{2^s-2})\]
	Using $\ln(1+x) \ge x - \tfrac{1}{2}x^2$ and the fact that $\tfrac{1}{2^s-2} - \tfrac{1}{2(2^s-2)^2} \ge 2^{-s}$ for $s \ge 2$, this gives $\log_2\left(\tfrac{2-2\alpha}{1-2\alpha}\right) \ge 1+\tfrac{1}{\ln(2)}2^{-s}$. By Theorem~\ref{sandersthm}, the set $X+X$ thus contains a subspace $F$ of dimension $n - \lceil n/ (1+\tfrac{1}{2^s \ln 2}) \rceil$. Using $\lceil x \rceil \le x+1$ we get
	\begin{align*}
	\dim(F) \ge n - \lceil n/ (1+\tfrac{1}{2^s \ln 2}) \rceil \ge n-\tfrac{n}{2^s \ln 2 + 1} -1 \ge \tfrac{n}{2^s} - 1 \ge s,
	\end{align*}
	where we use $2^s \ln 2 +1 \le 2^s$. But now $F \subseteq E^c + E^c$, so since $E^c$ contains no triangle, we have $F \subseteq E$. This contradicts the fact that $\omega(M) < s$. 
\end{proof}

Rephrased in the language of Theorem~\ref{ramsey}, this result states that $n(s,2) \le (s+1)2^s$. It would be interesting to know whether a subexponential bound is possible; certainly it is for the analogous problem in graph theory. We now have enough to prove Theorem~\ref{chi_bounded_simple}. We restate it here with an explicit $f$ that grows exponentially. 

\begin{theorem}\label{chi_bounded_main}
	Let $N \in \cE_3$. The class of $N$-free, claw-free matroids is $\chi$-bounded by the function 
	$f(k) = (k+2)2^{k+1} + k(\dim(N)+4)$.
\end{theorem}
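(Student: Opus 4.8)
The plan is to reduce to the three basic classes using the structure theorem and then glue the bounds together with superadditivity. By Theorem~\ref{structure} (and the fact, visible from Lemma~\ref{rlj} and the inductive argument behind Theorem~\ref{structure}, that each piece arising in a lift-join decomposition of $M$ is an induced restriction $M|F$), every $N$-free, claw-free matroid lies in the closure under lift-joins of the class $\cM$ of all $N$-free matroids that are PG-sums, even-plane, or complements of triangle-free matroids; here we use that induced restrictions of $N$-free matroids are $N$-free, and that claw-freeness passes to induced restrictions. By Lemma~\ref{lj_boundedness} it therefore suffices to check that $f$ is superadditive and that $\cM$ is $\chi$-bounded by $f$.

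First I would verify superadditivity. Write $f=h+\ell$ with $h(k)=(k+2)2^{k+1}$ and $\ell(k)=k(\dim(N)+4)$. The function $\ell$ is additive, hence superadditive; and for $x,y\ge 1$ we have $2^{x+y+1}\ge 2\cdot 2^{\max(x,y)+1}$ and $x+y+2\ge\max(x,y)+2$, so $h(x+y)\ge 2h(x)$ and $h(x+y)\ge 2h(y)$, giving $h(x+y)\ge h(x)+h(y)$. Thus $f=h+\ell$ is superadditive.

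Next I would check that $f$ $\chi$-bounds each of the three families making up $\cM$. For an $N$-free PG-sum $M$, Lemma~\ref{pg_sum_chi} gives $\chi(M)=\omega(M)\le f(\omega(M))$. For an $N$-free even-plane matroid $M$: if $\omega(M)=0$ then $E$ is empty and $\chi(M)=0\le f(0)$, while if $\omega(M)\ge 1$ then $N\in\cE_3$ lets us apply Theorem~\ref{even_plane_bounded_cn} to get $\chi(M)\le\dim(N)+4\le f(1)\le f(\omega(M))$. For an $N$-free claw-free matroid $M$ with $M^c$ triangle-free, put $k=\omega(M)$; if $k=0$ then $E$ is empty, so $M^c$ is the whole geometry and triangle-freeness forces $\dim(M)\le 1$, whence $\chi(M)\le 1\le f(0)$; if $k\ge 1$, apply Corollary~\ref{off_diagonal} with $s=k+1\ge 2$: since $\omega(M)=k<s$ we obtain $\dim(M)<2^{s}(s+1)=(k+2)2^{k+1}$, so $\chi(M)\le\dim(M)<(k+2)2^{k+1}\le f(k)$.

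Combining these, $\cM$ is $\chi$-bounded by the superadditive function $f$, so by Lemma~\ref{lj_boundedness} its closure under lift-joins is $\chi$-bounded by $f$, and this closure contains every $N$-free, claw-free matroid. There is no serious obstacle here; the step that most warrants care is the clean treatment of degenerate pieces, namely those with empty ground set (equivalently $\omega=0$): these appear both in the case analysis above and implicitly in the inductive step of Lemma~\ref{lj_boundedness}, and in every instance they satisfy $\chi=0$, so they cause no difficulty, but they should be dispatched explicitly. All other numerical constants in $f$ are comfortably slack.
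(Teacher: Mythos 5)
Your proof is correct and follows essentially the same route as the paper: reduce via Theorem~\ref{structure} (noting the basic pieces are induced restrictions, hence $N$-free) and Lemma~\ref{lj_boundedness} to checking superadditivity of $f$ and $\chi$-boundedness of the three basic classes, then invoke Lemma~\ref{pg_sum_chi}, Theorem~\ref{even_plane_bounded_cn}, and Corollary~\ref{off_diagonal}. Your explicit treatment of the degenerate $\omega=0$ cases is in fact slightly more careful than the paper's own writeup, which asserts $\dim(N)+4 \le f(\omega(M))$ without observing that this fails at $\omega(M)=0$ (where $f(0)=4$) and the bound only holds there because $\chi(M)=0$.
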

\begin{proof}
	Let $\cM_1$ denote the class of $N$-free matroids in $\cE_3$, let $\cM_2$ denote the class of matroids whose complement is triangle-free, and let $\cM_3$ denote the class of PG-sums. Let $\cM' = \cM_1 \cup \cM_2 \cup \cM_3$.  By Theorem~\ref{structure}, every claw-free matroid $M$ is obtained via lift-joins from `basic' matroids in $\cE_3$, $\cM_2$ or $\cM_3$, and since all the basic matroids are induced restrictions of $M$, if $M$ is $N$-free, then so are all the basic matroids. Thus $M$ lies in the closure under lift-joins of $\cM'$. 
	
	The function $h(k)= (k+2)2^{k+1}$ is superadditive since 
	\[(x+y+2)2^{x+y+1} \ge (x+y+4)2^{x+y} \ge (x+2)2^{x+1} + (y+2)2^{y+1}\] for all $x,y \ge 1$. Therefore $f$ is superadditive as it is the sum of two superadditive functions. By Lemma~\ref{lj_boundedness} it is thus enough to show that $\cM'$ is $\chi$-bounded by $f$. Indeed, if $M \in \cM_1$ then $\chi(M) \le \dim(N) + 4 \le f(\omega(M))$ by Theorem~\ref{even_plane_bounded_cn}. If $M \in \cM_2$ with $\omega(M) = s$, then $\chi(M) \le \dim(M) < 2^{s+1}(s+2) \le f(s)$ by Corollary~\ref{off_diagonal}. Finally, if $M \in \cM_3$ then $\chi(M) = \omega(M) \le f(\omega(M))$ by Lemma~\ref{pg_sum_chi}; the theorem follows. 
\end{proof}

This theorem is enough to characterise exactly which down-closed classes of claw-free matroids are $\chi$-bounded.

\begin{corollary}
	If $\cM$ is a class of claw-free matroids that is closed under taking induced restrictions, then $\cM$ is $\chi$-bounded if and only if $\cE_3 \not\subseteq \cM$. 
\end{corollary}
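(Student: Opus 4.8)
The plan is to prove the two implications separately, each as a short consequence of results already in hand; there is no genuinely new content here.

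For the forward direction I would argue by contraposition: suppose $\cE_3 \subseteq \cM$. By Theorem~\ref{e3_unbounded}, for every $k \ge 0$ there is an even-plane matroid $M_k$ with $\omega(M_k) \le 2$ and $\chi(M_k) \ge k$. Each such $M_k$ lies in $\cE_3 \subseteq \cM$, so given any candidate bounding function $f$ we may pick $k > f(2)$ and obtain $\chi(M_k) \ge k > f(2) \ge f(\omega(M_k))$. Hence no $f$ bounds $\cM$, i.e. $\cM$ is not $\chi$-bounded.

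For the backward direction, suppose $\cE_3 \not\subseteq \cM$, and fix some $N \in \cE_3$ with $N \notin \cM$. Since $\cM$ is closed under taking induced restrictions, no member of $\cM$ can have an induced restriction isomorphic to $N$; that is, every $M \in \cM$ is $N$-free. As every $M \in \cM$ is also claw-free by hypothesis, $\cM$ is a subclass of the class of $N$-free, claw-free matroids, which is $\chi$-bounded by Theorem~\ref{chi_bounded_main}. A subclass of a $\chi$-bounded class is $\chi$-bounded by the same function, so $\cM$ is $\chi$-bounded, completing the proof.

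There is no real obstacle in this argument; the only step worth stating carefully is the conversion of "$N \notin \cM$" into "every member of $\cM$ is $N$-free", which is precisely the contrapositive of the definition of induced restriction together with closure of $\cM$ under induced restrictions.
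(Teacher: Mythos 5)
Your proof is correct and follows essentially the same approach as the paper: the forward direction unwinds Theorem~\ref{e3_unbounded} directly, and the backward direction converts $N \notin \cM$ into $N$-freeness via closure under induced restrictions and then applies Theorem~\ref{chi_bounded_main}. The paper's version is just a terser statement of the same argument.
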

\begin{proof}
	If $\cE_3 \subseteq \cM$, then Theorem~\ref{e3_unbounded} implies that $\cM$ is not $\chi$-bounded. Otherwise, there is some $N \in \cE_3$ with $N \notin \cM$, so all matroids in $\cM$ are $N$-free. Theorem~\ref{chi_bounded_main} now gives the result.
\end{proof}

\subsection*{Rough structure} 
We now restate and prove Theorems~\ref{roughclaw}, ~\ref{flatroughomega} and ~\ref{flatroughalpha}. 

\begin{theorem}
	For all $s \ge 1$, there exists $k \ge 2$ such that, if $M$ is a claw-free matroid with $\omega(M) \le s$, then $M$ is a lift-join of matroids $M_1, \dotsc, M_{2s+1}$, such that each $M_i$ either has dimension at most $k$, or is even-plane. 
\end{theorem}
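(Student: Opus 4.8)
The plan is to apply the structure theorem, bound the dimensions of the basic pieces that are not even-plane, and then reduce the number of pieces to exactly $2s+1$ by a counting argument. First, using Theorem~\ref{main}, Lemma~\ref{rlj} and the associativity of $\ls$ (Lemma~\ref{lsassoc}), I would decompose $M = N_1 \ls N_2 \ls \cdots \ls N_r$ with each $N_i$ even-plane, a strict PG-sum, or the complement of a triangle-free matroid: apply Theorem~\ref{main} to $M$; if $M$ falls into one of the first three cases of Theorem~\ref{main} we are done, and otherwise $M$ has a decomposer $F$, so by Lemma~\ref{rlj} $M = (M|F) \ls (M|J)$ for a maximal flat $J$ disjoint from $F$, both of strictly smaller dimension, and we recurse and recombine using associativity. (The recursion terminates at matroids of dimension at most $2$, which are vacuously even-plane; so is every matroid with empty ground set.) By Lemma~\ref{lj_parameters} and associativity, $\omega(N_1) + \cdots + \omega(N_r) = \omega(M) \le s$.

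Next I would bound $\dim(N_i)$ for every $N_i$ that is not even-plane. If the complement of $N_i$ is triangle-free, then since $\omega(N_i) \le s < s+1$ and $s+1 \ge 2$, the contrapositive of Corollary~\ref{off_diagonal}, applied with parameter $s+1$, gives $\dim(N_i) < 2^{s+1}(s+2)$. If $N_i$ is a strict PG-sum with summand flats $F', F''$, then $\dim(N_i) = \dim(F') + \dim(F'')$, and every flat contained in the disjoint union $F' \cup F''$ lies entirely in $F'$ or in $F''$ (otherwise it contains points $x \in F'$, $y \in F''$ with $x+y \in F' + F'' = G \del (F' \cup F'')$), so $\omega(N_i) = \max(\dim F', \dim F'')$ and hence $\dim(N_i) \le 2\omega(N_i) \le 2s$. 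Setting $k := \max\{2^{s+1}(s+2),\, 2s,\, 2\}$, which is at least $2$ and depends only on $s$, every $N_i$ that is not even-plane has dimension at most $k$.

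It remains to reduce the number of pieces to $2s+1$. Call $N_i$ \emph{empty} if its ground set is empty; an empty piece has the form $(\varnothing, G_i)$, is vacuously even-plane, and satisfies $\omega(N_i) = 0$, whereas every non-empty piece has $\omega(N_i) \ge 1$. Two adjacent empty pieces merge, as $(\varnothing, G_i) \ls (\varnothing, G_{i+1}) = (\varnothing, G_i \oplus G_{i+1})$; after performing all such merges no two empty pieces are adjacent. Since the $\omega(N_i)$ sum to at most $s$, there are at most $s$ non-empty pieces, and these split the sequence into at most $s+1$ maximal runs of consecutive empty pieces, each now a single empty piece. Thus $M$ is a lift-join of at most $s + (s+1) = 2s+1$ matroids, each even-plane or of dimension at most $k$. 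If there are fewer than $2s+1$, append copies of the dimension-$0$ matroid $(\varnothing, \PG(-1,2))$, which is vacuously even-plane, has dimension $0 \le k$, and is an identity for $\ls$; this yields exactly $2s+1$ pieces.

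The content is almost entirely the structure theorem together with bookkeeping on the additivity of $\omega$ under lift-joins; the one substantive external input is the off-diagonal Ramsey bound of Corollary~\ref{off_diagonal}, which is what forces the complements of triangle-free pieces to have bounded dimension. The one point requiring care is that one must invoke the \emph{strict} PG-sum alternative of Theorem~\ref{main}, not a general PG-sum: a general PG-sum such as a single point inside a high-dimensional projective geometry has unbounded dimension, $\omega = 1$, and is not even-plane, whereas a strict PG-sum is full-rank and so has dimension at most $2\omega$.
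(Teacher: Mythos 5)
Your proposal is correct and follows essentially the same route as the paper: apply Theorem~\ref{main} iteratively to decompose $M$ into basic lift-join factors, bound the dimensions of strict PG-sum factors by $2s$ and of complement-of-triangle-free factors by $2^{s+1}(s+2)$ via Corollary~\ref{off_diagonal}, and then control the number of factors by noting that at most $s$ of them can have nonempty ground set (since $\omega$ is additive under $\ls$) and merging or padding with empty matroids to reach exactly $2s+1$. The only cosmetic difference is that the paper achieves the count of $2s+1$ by choosing $t$ minimal subject to $t \ge 2s+1$ and arguing that two consecutive empty factors would contradict minimality, whereas you merge adjacent empty factors directly and then pad; these are equivalent.
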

\begin{proof}
	Let $k = k(s)= 2^{s+1}(s+2)$ for each $s$. Recall that Lemma~\ref{lsassoc} shows that $\ls$ is associative. By Theorem~\ref{structure}, there is some $t$ for which there are matroids $M_1,\dotsc,M_t$ with $M = \ls_{i=1}^t M_i$, for which each $M_i$ is either an even-plane matroid, the complement of a triangle-free matroid, or a strict PG-sum. By including dimension-zero matroids, we may assume that $t \ge 2s+1$; choose $t$ so that if $t > 2s+1$, then $t$ is as small as possible. 
	
	% below the upperbound for dimension of PG-sums with \omega \le s was 2(s-1), I temporarily changed it to 2s.
	Each $M_i$ that is not even-plane is either a strict PG-sum, or the complement of a triangle-free matroid. It is clear that a strict PG-sum with $\omega \le s$ has dimension at most $2s$. If $M_i^c$ is triangle-free and $\omega(M_i) \le s$, then by Corollary~\ref{off_diagonal} we have $\dim(M_i) < 2^{s+1}(s+2)$; in either case, $\dim(M_i) \le k$. 
	
	It remains to argue that $t = 2s+1$; suppose not. Using $\omega(N \ls N') = \omega(N) + \omega(N')$, it is routine to show by induction that $\omega(M) \ge c$, where $c$ is the number of $M_i$ that have a nonempty ground set. Since $\omega(M) \le s$, this implies that $c \le s < \tfrac{1}{2}(t-1)$. Therefore there are two consecutive matroids $M_i,M_{i+1}$ that are both empty. But if this is the case, then one could replace $M_i,M_{i+1}$ with the empty (and thus even-plane) matroid $M_i \ls M_{i+1}$ in the sequence to shorten its length, contradicting the minimality of $t$. 
\end{proof}

\begin{theorem}
	For all $s \ge 1$ there exists $k \ge 2$ such that, for every claw-free matroid $M = (E,G)$ with $\omega(M) \le s$, there is a flat $F$ of $G$ whose codimension at most $k$, such that $M|F$ is the lift-join of $2s+1$ even-plane matroids. 
\end{theorem}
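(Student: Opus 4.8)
The plan is to obtain this as an essentially immediate corollary of Theorem~\ref{roughclaw} (just proved): with $k_0 = k_0(s) \ge 2$ the constant it provides, every claw-free $M$ with $\omega(M) \le s$ is isomorphic to a lift-join $M_1 \ls \dotsb \ls M_{2s+1}$, where each $M_i = (E_i,G_i)$ is either even-plane or satisfies $\dim(M_i) \le k_0$. I will take $k = (2s+1)k_0$, which is at least $2$.

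First I would fix such a decomposition and, for each $i$, pick a flat $F_i$ of $G_i$: let $F_i = G_i$ if $M_i$ is even-plane, and let $F_i = \varnothing$ (the $0$-dimensional flat) otherwise. In both cases $M_i | F_i$ is even-plane: when $F_i = G_i$ this is the hypothesis, and the empty matroid is vacuously even-plane, since every plane of it meets its (empty) ground set in $0$, an even number, of points.

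Next I would set $F' = \cl(F_1 \cup \dotsb \cup F_{2s+1})$, which is a flat of $G' = G_1 \oplus \dotsb \oplus G_{2s+1}$. Iterating the second bullet of Lemma~\ref{lj_parameters}, using associativity of $\ls$ (Lemma~\ref{lsassoc}) and induction on the number of factors, gives
\[(M_1 \ls \dotsb \ls M_{2s+1})\,|\,F' = (M_1|F_1) \ls \dotsb \ls (M_{2s+1}|F_{2s+1}),\]
which is a lift-join of $2s+1$ even-plane matroids. Pulling $F'$ back along the isomorphism to $M$ produces a flat $F$ of $G$ with $M|F$ equal (up to isomorphism) to this lift-join. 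It remains to bound the codimension: since the $G_i$ sit in a vector-space direct sum, $\dim(F') = \sum_i \dim(F_i)$, so the codimension of $F$ in $G$ equals $\sum_i(\dim(G_i) - \dim(F_i))$. This contributes $0$ from each even-plane factor and at most $k_0$ from each of the remaining at-most-$(2s+1)$ factors, so the codimension is at most $(2s+1)k_0 = k$.

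There is no real obstacle here; the only points needing minor care are that $\varnothing$ is a legitimate flat and that the empty matroid counts as even-plane, and that the two-factor identity in Lemma~\ref{lj_parameters} propagates to arbitrarily many factors via associativity — all entirely routine.
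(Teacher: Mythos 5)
Your proof is correct and follows essentially the same route as the paper: take the decomposition from the rough structure theorem, replace each non-even-plane factor by the empty flat, apply the second bullet of Lemma~\ref{lj_parameters} iteratively, and bound the codimension by $(2s+1)k_0$. The paper's write-up is nearly line-for-line the same, so there is nothing further to add.
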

\begin{proof}
	Let $s \ge 1$. Let $k'$ be the constant depending on $s$ given by the previous theorem, and let $k = (2s+1)k'$. 
	By the previous theorem we have $M = \ls_{i=1}^{2s+1} M_i$ where each $M_i = (E_i,G_i)$ either has dimension at most $k'$ or is even-plane. For each $1 \le i \le 2s+1$, let $F_i = G_i$ if $M_i$ is even-plane, and let $F_i = \varnothing$ otherwise. Let $F = \cl(\cup_i F_i)$. By the second part of Lemma~\ref{lj_parameters} the matroid $M|F$ is the lift-joint of $2s+1$ even-plane matroids $M_i|F_i$, and since $\dim(F) = \sum_{i=1}^{2s+1}\dim(F_i) \ge \sum_{i=1}^{2s+1} (\dim(G_i) - k') \ge \dim(M) - k$, the result follows. 
\end{proof}

\begin{theorem}
For all $s \ge 1$ there exists $k \ge 2$ such that, for every claw-free matroid $M = (E,G)$ with $\alpha(M) \le s$, there is a flat $F$ of $G$ whose codimension at most $k$, such that $M|F$ is the lift-join of $2s+1$ matroids whose complements are triangle-free. 
\end{theorem}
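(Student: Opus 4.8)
The plan is to imitate the proof of Theorem~\ref{flatroughomega}: decompose $M$ by lift-joins into basic pieces via Theorem~\ref{structure}, replace each piece by a bounded-codimension induced restriction that is the complement of a triangle-free matroid, and reassemble using the second part of Lemma~\ref{lj_parameters}. The only genuinely new issue, compared with Theorem~\ref{flatroughomega}, is that the three basic classes are not self-complementary, so the even-plane pieces and the strict-PG-sum pieces have to be cut down before they become complements of triangle-free matroids.

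First I would write $M = \ls_{i=1}^{t} M_i$ with each $M_i = (E_i,G_i)$ even-plane, the complement of a triangle-free matroid, or a strict PG-sum, exactly as in the proof of Theorem~\ref{roughclaw}; Lemma~\ref{lj_parameters} then gives $\sum_{i=1}^{t}\alpha(M_i) = \alpha(M) \le s$. For each $i$ I would produce a flat $F_i$ of $G_i$ with $M_i|F_i$ the complement of a triangle-free matroid and $\dim G_i - \dim F_i$ bounded by a function of $s$ alone. Then $F = \cl(F_1 \cup \dotsb \cup F_t)$ is a flat with $M|F = \ls_{i=1}^{t}(M_i|F_i)$ a lift-join of complements of triangle-free matroids, of codimension $\sum_i(\dim G_i - \dim F_i)$, which is bounded as required. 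To get exactly $2s+1$ pieces I would pad with zero-dimensional matroids and, as in the proof of Theorem~\ref{roughclaw}, repeatedly collapse two consecutive pieces of full ground set, using that $(G',G')\ls(G'',G'') = (G'\oplus G'',G'\oplus G'')$ again has full ground set; this is possible whenever there are more than $2s+1$ pieces, since at most $s$ of the $M_i|F_i$ fail to have full ground set (a piece of full ground set has $\alpha = 0$, and $\sum_i\alpha(M_i|F_i) = \alpha(M|F) \le \alpha(M) \le s$).

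Choosing the $F_i$: if $M_i$ is already the complement of a triangle-free matroid, take $F_i = G_i$. If $M_i$ is a strict PG-sum, say $E_i = K_1 \cup K_2$ with $K_1,K_2$ disjoint nonempty flats and $\dim K_1 \le \dim K_2$, then $\vs{G_i} = \vs{K_1}\oplus\vs{K_2}$ and $G_i\del E_i = \vs{G_i}\del(\vs{K_1}\cup\vs{K_2})$, so a subspace of $\vs{G_i}$ meeting $G_i\del E_i$ only (apart from $0$) meets neither $\vs{K_1}$ nor $\vs{K_2}$ and hence injects under both coordinate projections; thus $\alpha(M_i) = \dim K_1 \le s$ (the graph of an embedding $\vs{K_1}\hookrightarrow\vs{K_2}$ attains it). If $\dim K_1 \le 1$, then $G_i\del E_i$ lies in one coset of a hyperplane and $M_i$ is itself the complement of a triangle-free matroid; otherwise, choosing a hyperplane $W$ of $\vs{K_2}$ and a vector $x \notin (W+\vs{K_1})\cup\vs{K_2}$ and taking $F_i$ with $\vs{F_i} = W\oplus\langle x\rangle$, one checks $\vs{F_i}\cap\vs{K_1} = \{0\}$ (so $F_i$ has codimension $\dim K_1 \le s$ in $G_i$) and $E_i \cap F_i \subseteq W\del\{0\}$, a hyperplane of $F_i$, so $(M_i|F_i)^c$ lies in one coset of a hyperplane and is triangle-free. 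The remaining, essential, case is $M_i$ even-plane, where I would invoke the lemma: \emph{every even-plane matroid $N$ with $\alpha(N)\le s$ has $\dim N \le 2s+4$}. Granting it, take $F_i$ to be a one-element flat contained in $E_i$ (or in $G_i$ if $E_i=\varnothing$); then $M_i|F_i$ is one-dimensional, hence the complement of a triangle-free matroid, with $\dim G_i - \dim F_i \le 2s+3$.

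To prove the even-plane lemma I would use the quadratic-form description of $\cE_3$. A double count of incidences between the points of $E\cap F$ and the planes of a $d$-dimensional flat $F$, using that each point of $F$ lies in $\binom{d-1}{2}_2$ planes of $F$ and that every Gaussian binomial coefficient $\binom{d}{j}_2$ is odd (being a quotient of odd integers), shows that the even-plane condition forces $|E\cap F|$ to be even for \emph{every} flat $F$ with $\dim F\ge 3$; equivalently, the function agreeing with $\mathbf 1_E$ on $G$ and equal to $0$ at the origin is orthogonal to the indicator of every subspace of dimension at least $3$, and since those indicators span the degree-$(n-3)$ Reed--Muller code, this function lies in the degree-$2$ Reed--Muller code. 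Hence $E = \{v\in G : Q(v)=1\}$ for a quadratic form $Q$ with $Q(0)=0$, so $G\del E$ is the set of nonzero isotropic vectors of $Q$ and $\alpha(N)$ equals the largest dimension of a totally isotropic subspace of $Q$. Writing $R$ for the radical of the associated bilinear form and $\vs{G} = R\oplus S$ with $Q|_S$ nondegenerate, one exhibits a totally isotropic subspace of dimension at least $(\dim R - 1) + (\tfrac12\dim S - 1) = \tfrac12(\dim N + \dim R) - 2$, whence $\dim N \le 2\alpha(N) + 4 \le 2s+4$. (Alternatively the bound should follow from the structure theorem for $\cE_3$ in [\ref{bkknp}].) The main obstacle is exactly this even-plane lemma, and within it the delicate point is checking that the parity condition on planes alone — rather than on all affine $3$-flats — already forces membership in the degree-$2$ Reed--Muller code; this is where the $q$-binomial parity computation is needed.
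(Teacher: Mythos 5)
Your proof is correct, and the overall skeleton — decompose $M$ via Theorem~\ref{structure}, cut each factor down to a flat $F_i$ with $(M_i|F_i)^c$ triangle-free and bounded codimension, reassemble with the second part of Lemma~\ref{lj_parameters}, then merge consecutive full pieces to reach exactly $2s+1$ factors — is the same as the paper's. The genuinely different part is the even-plane pieces. The paper bounds $n_i$ by invoking the BKKNP theorem (Theorem~\ref{even_plane_bounded_cn}) to get $\chi(M_i)$ bounded from $\alpha(M_i)\le s$, and then uses $n_i = \alpha(M_i)+\chi(M_i)$, taking $F_i = \varnothing$. You instead prove a self-contained lemma that even-plane matroids are exactly the non-isotropic loci of binary quadratic forms, and then bound $\alpha$ from below via the Witt index. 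That argument is sound: the odd-Gaussian-binomial double count does propagate the parity condition from planes to all flats of dimension at least $3$; differencing a $4$-flat and a plane inside it then handles the $3$-dimensional affine flats, and those are precisely the minimum-weight codewords of $RM(n-3,n)$, which generate it, so orthogonality to them puts $\mathbf 1_E$ (with $\mathbf 1_E(0)=0$) in $RM(2,n)$. The Witt computation then gives $\alpha(N) \ge n/2 - 2$, i.e.\ $\dim N \le 2\alpha(N)+4$. What your route buys is independence from the BKKNP $\chi$-boundedness theorem: the paper's proof of this corollary imports machinery about $\cE_3$ that you replace with classical quadratic-form theory; your bound $2s+4$ is also a bit sharper than what the paper's invocation actually yields (one should apply Theorem~\ref{even_plane_bounded_cn} with $N=O_{s+1}$, giving $\chi(M_i)\le s+5$, not $\chi(M_i)\le 4$ as printed). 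What the paper buys is brevity and unity with its other corollaries, which all use Theorem~\ref{even_plane_bounded_cn} anyway. On the strict PG-sum pieces your construction (a hyperplane $W$ of $\vs{K_2}$ extended by a generic $x$) works but is more work than necessary: the paper simply sets $F_i = K_2$, so $M_i|F_i$ is a full projective geometry and $(M_i|F_i)^c$ is empty, with codimension $\alpha(M_i)=\dim K_1\le s$ falling out of Lemma~\ref{pg_sum_chi}. Your $2s+1$-factor reduction, counting factors with nonempty complement against $\sum_i\alpha(M_i|F_i)\le s$ and merging adjacent full factors, is the same argument as the paper's.
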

\begin{proof}
	By Theorem~\ref{structure}, we have $M = \ls_{i=1}^t M_i$, where each $M_i = (E_i,G_i)$ is either the complement of a triangle-free matroid, a strict PG-sum, or even-plane. Let $n_i = \dim(G_i)$ for each $i$. Let $\ell$ be the number of $M_i$ for which $\alpha(M_i) \ge 2$ (i.e. $M_i^c$ is not triangle-free). Using Lemma~\ref{lj_parameters} we have $s \ge \alpha(M) = \sum_{i=1}^t \alpha(M_i) \ge 2\ell$. For each $i$ we produce a flat $F_i$ of $G_i$ so that $(M|F_i)^c$ is triangle-free, and so that the sum of the codimensions of the $F_i$ is bounded. 
	
	%below $\alpha(M_i) = 2$ was changed to $\alpha(M_i) \leq 1$
	If $\alpha(M_i) \leq 1$, then set $F_i = G_i$. Otherwise $M_i$ is even-plane or a strict PG-sum. In either case, since $n_i - \chi(M_i) = \alpha(M_i) \le s$, we have $n_i \le \chi(M_i) + s$ for each $i$. If $M_i \in \cE_3$, then let $F_i = \varnothing$; since $M_i$ does not contain the empty $s$-dimensional matroid $O_s$, Theorem~\ref{even_plane_bounded_cn} gives $\chi(M_i) \le 4$, and so $n_i \le s+4$ and thus $F_i$ has codimension at most $s+4$. If $M_i$ is a strict PG-sum, then let $F_i$ be a larger of the two flats whose union is $E_i$. Lemma~\ref{pg_sum_chi} gives $s \ge \alpha(M_i) = n_i - \chi(M_i) = n_i - \omega(M_i) = n_i - \dim(F_i)$, so $F_i$ has codimension at most $s$ in $G_i$. Since $F_i \subseteq E_i$, we also have $(M|F_i)^c$ triangle-free. 
	
	Now let $F = \cl(\cup_{i=1}^t F_i)$. For each $M_i$ for which $\alpha(M) \le 1$ we have $\dim(F_i) = n_i$, and for each other $M_i$ we have $\dim(F_i) \ge n_i - (s+4)$. There are at most $\ell \le s/2$ matroids $M_i$ of the second type; it follows that $\dim(F) \ge n - \tfrac{s}{2}(s+4)$. Moreover, by construction each $M|F_i$ is the complement of a triangle-free matroid, and we have $M|F = \ls_{i=1}^t(M|F_i)$. 
	
	It remains to show that we can choose $t \le 2s+1$ (we can assume that $t \ge 2s+1$ by including dimension-zero matroids if necessary). Since $s \ge \alpha(M|F) = \sum_{i=1}^t \alpha(M|F_i)$, there are at most $s$ different $F_i$ for which $\alpha(M|F_i) > 0$; if $t > 2s+1$ then there thus exists $i$ for which $\alpha(M|F_i) = \alpha(M|F_{i+1}) = 0$; i.e. these two $M|F_i$ are both projective geometries. But then $M_i' = ((M|F_i) \ls (M|F_{i+1}))$ is also a projective geometry; replacing these two elements of the sequence by just $M_i'$ would give a shorter sequence of matroids whose lift-join is $M|F$. By repeating this operation as required, we conclude that $M|F$ is the lift-join of exactly $2s+1$ matroids whose complements are triangle-free, as required. 
\end{proof}

\subsection*{Density}
We restate and prove Theorem~\ref{density}. The proof is a little tedious but routine; it amounts to proving that PG-sums where the two flats have almost equal size are sparser than even-plane matroids, the complements of triangle-free matroids, or anything constructed by their lift-joins. 
\begin{theorem}
	Let $M = (E,G)$ be a full-rank, $r$-dimensional claw-free matroid. Then $|E| \ge 2^{\lfloor r/2 \rfloor} + 2^{\lceil r/2 \rceil} - 2$. Equality holds precisely when either $M \cong C_4$, or $E$ is the disjoint union of two flats of dimensions $\lfloor r/2 \rfloor$ and $\lceil r/2 \rceil$. 
\end{theorem}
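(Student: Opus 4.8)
The natural approach is induction on $r$, using Theorem~\ref{main} to split into the four cases: $M$ even-plane, $M^c$ triangle-free, $M$ a strict PG-sum, or $M$ having a decomposer. Since $M$ is full-rank, the ``not full-rank'' subcase of having a hyperplane decomposer is excluded, so a decomposer $F$ gives $M = M_1 \ls M_2$ with both $M_i$ full-rank and $1 \le \dim(M_i) < r$. The base cases $r \le 2$ are immediate ($|E| \ge 0 = 2^0 + 2^1 - 2$ is vacuous; $r=1$ forces $|E| = 1$; actually one checks $r \le 3$ by hand, noting $C_4$ achieves equality at $r=3$ with $|E| = 4 = 2^1 + 2^2 - 2$). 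For the inductive step I would first establish the bound in each of the three ``basic'' cases, then handle the lift-join case, and finally pin down equality.

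For the \emph{PG-sum} case, if $E = F_1 \sqcup F_2$ with $\dim F_i = r_i$ and $r_1 + r_2 = r$ (full-rank forces the dimensions to sum to $r$), then $|E| = 2^{r_1} + 2^{r_2} - 2$, and a short convexity argument shows $2^{r_1} + 2^{r_2}$ is minimized over $r_1 + r_2 = r$ when $\{r_1,r_2\} = \{\lfloor r/2\rfloor, \lceil r/2\rceil\}$, with equality only there. For the \emph{triangle-free complement} case, $M^c$ being triangle-free means $|E^c| = |G| - |E|$ is small; by Bose--Burton (Theorem~\ref{bbt}) a cap in $\PG(r-1,2)$ has at most $2^{r-1}$ elements — actually I should use that a triangle-free set (cap) has size at most $2^{r-1}$ — giving $|E| \ge 2^r - 1 - 2^{r-1} = 2^{r-1} - 1$, which exceeds $2^{\lfloor r/2\rfloor} + 2^{\lceil r/2\rceil} - 2$ for $r \ge 4$ (and for $r \le 3$ one checks directly, the only tight case being $C_4$). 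For the \emph{even-plane} case I would argue $|E|$ is exponentially large: an even-plane full-rank matroid of dimension $r$ has $\omega \le 2$ but its critical number $\chi$ can be large, and more to the point, I claim $|E| \ge 2^{r-1}$ or some comparable bound — this needs care, perhaps via the structural results on $\cE_3$ from [\ref{bkknp}], or a direct counting argument using that every plane meets $E$ evenly; in any case for $r \ge 4$ the even-plane bound comfortably beats the target and equality cannot hold.

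For the \emph{lift-join} case $M = M_1 \ls M_2$ with $\dim M_i = r_i \ge 1$, $r_1 + r_2 = r$, we have $E = E_1 \cup (\vs{G_1} + E_2)$, and since $G_1$ has no mixed cosets, $|E| = |E_1| + 2^{r_1}|E_2|$. By induction $|E_i| \ge 2^{\lfloor r_i/2\rfloor} + 2^{\lceil r_i/2\rceil} - 2$, so $|E| \ge (2^{\lfloor r_1/2\rfloor} + 2^{\lceil r_1/2\rceil} - 2) + 2^{r_1}(2^{\lfloor r_2/2\rfloor} + 2^{\lceil r_2/2\rceil} - 2)$; the remaining task is the elementary (but slightly fiddly) inequality that this lower bound is always $\ge 2^{\lfloor r/2\rfloor} + 2^{\lceil r/2\rceil} - 2$, with strict inequality unless $r_1 = 0$ — which can't happen here. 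I would verify this by treating the parities of $r_1, r_2$ as cases; the dominant term $2^{r_1} \cdot 2^{\lceil r_2/2 \rceil}$ already reaches roughly $2^{r_1 + r_2/2}$, and one checks this plus the lower-order terms dominates. Here the main obstacle is that the inductive hypothesis allows $M_i \cong C_4$ (the sporadic equality case), so I must confirm the lift-join strict inequality still holds even when one or both $M_i$ are extremal — a careful but routine check shows the factor $2^{r_1} \ge 2$ in front of $|E_2|$ always creates slack.

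Finally, for the \emph{equality characterization}: equality in the basic PG-sum case is exactly the claimed ``disjoint union of two flats of dimensions $\lfloor r/2\rfloor, \lceil r/2\rceil$''; equality in the $C_4$ case is $r = 3$; and I have argued equality is impossible in the even-plane, triangle-free-complement, and (proper) lift-join cases for $r \ge 4$. Thus by induction the only extremal matroids are $C_4$ (when $r = 3$) and the balanced PG-sums, completing the proof. The part I expect to be most delicate is nailing down that even-plane matroids are genuinely denser than the target bound for all $r \ge 4$ — this may require invoking the explicit structure theorem for $\cE_3$ from [\ref{bkknp}] rather than a one-line counting estimate — together with the bookkeeping in the lift-join inequality across all parity cases.
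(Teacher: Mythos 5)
Your overall plan is the same as the paper's: induct on $r$, split into the four cases of Theorem~\ref{main}, handle the three basic classes and then the lift-join case. The PG-sum and triangle-free-complement arguments match the paper. There are, however, two points worth flagging.

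\textbf{The decomposer case: a false assumption.} You write that a decomposer $F$ gives $M = M_1 \ls M_2$ ``with both $M_i$ full-rank.'' This is not true in general: $M_1 = M|F$ can easily fail to be full-rank. For instance, if $M$ is a Bose--Burton geometry of order $2$, then the codimension-$2$ flat $F = G \setminus E$ is a decomposer with $M|F = (\varnothing, F)$. The paper sidesteps this by observing that only $M_2$ is forced to be full-rank (if $E_2$ lay in a hyperplane $H_2$ of $G_2$, then $E \subseteq \vs{G_1} + \vs{H_2}$ would contradict $M$ being full-rank), and then \emph{discards $|E_1|$ entirely}: it uses $|E| \ge |\vs{G_1}| \cdot |E_2| \ge 2^{d_1} f(d_2) > f(d_1+d_2)$, where the strict inequality $f(n+i) < 2^i f(n)$ for $n+i > 3$ is established once at the outset. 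This is cleaner than the parity-case bookkeeping you propose, it avoids any need for the inductive hypothesis on $M_1$, and it automatically kills equality in the lift-join case. Your worry about the $C_4$ sporadic case evaporates under the paper's bound since the inequality is already strict.

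\textbf{The even-plane case: a real gap.} You concede this is the delicate part, and indeed the bound $|E| \ge 2^{r-1}$ that you ``claim'' is not established (and the paper does not prove it). The paper's general bound for nonempty full-rank even-plane $M$ is only $|E| \ge 2^{r-2}$ (obtained by fixing a triangle $T$ with $|T\cap E|$ odd and summing over the $2^{r-2}-1$ planes through $T$), or exactly $2^{r-1}$ in the degenerate subcase where every triangle has even intersection with $E$. Since $f(r) < 2^{r-2}$ only holds for $r \ge 6$, the cases $r \in \{4,5\}$ require extra work: the paper separately shows that a full-rank even-plane matroid with an induced $C_4$-restriction satisfies $|E| \ge 4(r-2)$, and in the $C_4$-free subcase uses Lemma~\ref{PG_characterisation} to produce a $K_4$-restriction (unless $M$ is a PG-sum) and derive $|E| \ge 2^{r-2}+4 > f(r)$. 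None of this is a ``one-line counting estimate,'' so your proposal does leave a genuine hole here which would need to be filled along these lines.
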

\begin{proof}
	The theorem is easy to check when $r \le 3$, since full-rank $r$-dimensional matroids have at least $r$ elements, and claw-free ones have at least $4$ elements when $r = 3$. We prove the theorem for $r > 3$. 
	
	Define $f\colon \bZ \to \bZ$ by $f(n) = 2^{\lfloor n/2 \rfloor} + 2^{\lceil n/2 \rceil} -2$.
	It is routine to check that $f(n+1) \le 2f(n)$ for all $n \ge 1$, with equality if and only if $n \in \{1,2\}$; it follows by induction that $f(n+i) \le 2^i f(n)$ for all $n,i \ge 1$, with equality only if $n+i \le 3$. From this, we get
	$f(n) < 2^{n-1}$ for all $n \ge 4$, and that $f(n) < 2^{n-2}$ for all $n \ge 6$.
	
	%If $n$ is even then $f(n) = 2 \cdot 2^{n/2}$ and if $n$ is odd we have $f(n) = \tfrac{3\sqrt{2}}{2} 2^{n/2}$; we wish to show that $|M| \ge f(r)-2$, with equality only if $M$ has the claimed structure. Note that 
	%\[2^{r-1} - (f(r)-2) \ge 2^{r-1} -  \tfrac{3\sqrt{2}}{2}2^{r/2} + 2 = %\tfrac{1}{2}\left(2^{r/2} - \tfrac{3\sqrt{2}}{2}\right)^2 + \tfrac{27}{8} > 0,\]
	%so $2^{r-1} > f(r)-2$. 
	If $M = (E,G)$ is a rank-$r$ strict PG-sum, then $E$ is the disjoint union of flats of dimensions $d_1$ and $d_2$ with $d_1 + d_2 = r$. Thus $|E| = 2^{d_1} + 2^{d_2}- 2$, which is minimized precisely when $\{d_1,d_2\} = \{\floor{r/2},\ceil{r/2}\}$; thus $M$ satisfies the theorem.
	By Theorem~\ref{bbt}, the complement of an $r$-dimensional triangle-free matroid has at least $2^r-(2^r-2^{r-1}) = 2^{r-1}$ elements, so if $M^c$ is triangle-free, then $M$ satisfies the theorem. We now show that the same is true for even-plane matroids and for lift-joins of smaller matroids satisfying the theorem.
	\begin{claim}
		The theorem holds for even-plane matroids. 
	\end{claim}
	\begin{subproof}
		We first show that if $M = (E,G)$ is a full-rank, $r$-dimensional even-plane matroid with an induced $C_4$-restriction, then $|E| \ge 4(r-2)$. Suppose that $M|P \cong C_4$ for a plane $P$ of $M$. Since $M$ is full-rank, there must be at least $r-3$ cosets of $P$ that intersect $E$. We show that each such coset contains at least four elements of $E$, from which it will follow that $|E| \ge 4(r-3) + 4 = 4(r-2)$ as claimed. Consider such a coset $a + \vs{P}$, where $a \in E \del P$. If $a+P \subseteq E$ then $|E \cap (a + \vs{P})| = 8 > 4$. Otherwise there exists $x \in P$ for which $x + a \notin E$. For each triangle $T = \{x,y,z\}$ of $P$ containing $x$, the fact that $M|P \cong C_4$ implies that $|T \cap E|$ is even, so, since the plane $\cl(T \cup \{a\})$ also has even intersection with $E$, the set $E$ must contain exactly one of $y+a$ and $z+a$. Making this argument for each of the three triangles of $P$ containing $x$ implies that $E$ contains exactly three elements of $(a+\vs{P}) \del \{a\}$, so $|E \cap (a + \vs{P})| = 4$ as required, giving $|E| \ge 4(r-2)$.
		
		In particular, if $\dim(M) \in \{4,5\}$ and $M$ has an induced $C_4$-restriction, then since $8 > f(4)$ and $12 > f(5)$, we have $|E| \ge 4(r-2) >  f(r)$, so $M$ satisfies the theorem. 
		
		Now let $M = (E,G) \in \cE_3$ with $\dim(M) \ge 4$. If every triangle $T$ of $G$ has even intersection with $E$, then since $M$ is nonempty, we have $E = G \del H$ for some hyperplane, so $|E| = 2^{r-1} > f(r)$. Otherwise, there is a triangle $T$ of $G$ for which $|T \cap E|$ is odd, and since each of the $2^{r-2}-1$ planes of $G$ containing $T$ has even intersection with $E$, we have $|E| \ge |E \cap T| + 2^{r-2}-1 \ge 2^{r-2}$. If $\dim(G) \ge 6$ then since $f(r) < 2^{r-2}$, this implies the result.
		
		If $\dim(G) \in \{4,5\}$ then, as established, we may assume $M$ to have no induced $C_4$-restriction. If $M$ is a PG-sum then the result holds. Otherwise, by Lemma~\ref{PG_characterisation} and the fact that $M \in \cE_3$, there is a plane $P$ of $G$ for which $M|P \cong K_4$. Let $T'$ be a triangle of $P$ with $T' \subseteq E$. Since each plane $P'$ containing $T'$ has even intersection with $E$, this gives $|E| \ge |E \cap P| + (2^{r-2}-2) = 2^{r-2} + 4$, since $2^{r-2}-2$ is the number of planes other than $P$ that contain $T$. Since $r \in \{4,5\}$ we have $2^{r-2} + 4 > f(r)$, giving the result.
	\end{subproof}
	
	\begin{claim}
		If $M = M_1 \ls M_2$ is a full-rank matroid and $M_1,M_2$ satisfy the theorem, then $M$ satisfies the theorem.
	\end{claim}
	\begin{subproof}
		We may assume that $M_1$ and $M_2$ have positive dimension. Let $M_i = (E_i,G_i)$ and $d_i = \dim(G_i) \ge 1$, so $\dim(M) = d_1 + d_2$, and $E = E_1 \cup (\vs{G_1} + E_2)$. If $E_2$ is contained in a hyperplane $H_2$ of $G_2$, then $E \subseteq \vs{G_1} + \vs{H_2}$ which implies that $M$ is not full-rank; thus $M_2$ is full-rank, and so $|E_2| \ge f(d_2)$, which gives 
		\[|E| \ge |\vs{G_1}|\cdot |E_2| \ge 2^{d_1}f(d_2) > f(d_1+d_2) = f(\dim(M)),\] where we use $4 \le \dim(M) = d_1+d_2$ and the fact established earlier that $f(n+i) < 2^if(n)$ for $n+i > 3$.
	\end{subproof}

	The theorem now follows from an inductive argument using the claims above and Theorem~\ref{main}. 
\end{proof}

\subsection*{Excluding anticlaws}
Finally, we restate and prove Theorem~\ref{anticlaw}. Recall that a anticlaw is the complement of a claw, and that a matroid $M = (E,G)$ is called a target if there are distinct (possibly empty) flats $F_0 \subseteq \dotsc \subseteq F_k \subseteq G$ such that $E$ is the union of $F_{i+1} \del F_i$ over all even $i$. 

	\begin{theorem}\label{anticlaw_restated}
		A matroid $M$ is claw-free and anticlaw-free if and only if $M$ is a target.
	\end{theorem}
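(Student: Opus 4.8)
My plan is to prove the two directions separately, using Theorem~\ref{main} and Lemma~\ref{targetsnice}. For the ``only if'' direction --- that every target is claw-free and anticlaw-free --- I would first observe that, since targets are closed under induced restrictions and complementation (Lemma~\ref{targetsnice}) and a matroid is anticlaw-free exactly when its complement is claw-free, it suffices to show that the claw $I_3$ is itself not a target. To see this, suppose $F_0\subseteq\dotsb\subseteq F_k$ certifies that $M=(E,G)$ is a target; after possibly appending a copy of $F_k$ (to make the top index odd) and then appending $G$, neither of which changes $E$ by the argument in Lemma~\ref{targetsnice}, we may assume $F_k=G$, and set $\ell(v)=\min\{j:v\in F_j\}$ for each $v\in G$, so that $v\in E$ if and only if $\ell(v)$ is odd. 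Since each $\vs{F_j}$ is a subspace, in any triangle $\{x,y,z\}$ of $G$ the maximum of $\ell(x),\ell(y),\ell(z)$ is attained at least twice. If $\{x,y,z\}$ were a claw then $\ell(x),\ell(y),\ell(z)$ are all odd while $\ell(x+y),\ell(y+z),\ell(x+z),\ell(x+y+z)$ are all even; applying the preceding observation to $\{x,y,x+y\}$ and its cyclic variants forces $\ell(x)=\ell(y)=\ell(z)=:m$ with $m$ odd and $\ell(x+y)<m$, and then applying it to $\{x+y,z,x+y+z\}$ forces $\ell(x+y+z)=m$, contradicting that $m$ is odd.

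For the ``if'' direction I would induct on $\dim(M)$. The case $\dim(M)\le 2$ is a direct check ($E=\varnothing$ and $E=G$ are certified by $(\varnothing,G)$, a single point $\{a\}$ by $(\varnothing,\{a\})$, and a two-element set $\{a,b\}$ in a triangle by $(\{a+b\},G)$). For $\dim(M)\ge 3$: if $M$ has a decomposer $F$, then $M=(M|F)\ls(M|J)$ for some flat $J$ disjoint from $F$ by Lemma~\ref{rlj}; both factors are induced restrictions of $M$, hence claw-free and anticlaw-free, and have dimension strictly smaller than $\dim(M)$ (since $1\le\dim(F)\le\dim(M)-1$), so by induction they are targets, and thus so is $M$ by Lemma~\ref{targetsnice}. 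So it remains to prove the key claim: \emph{every claw-free, anticlaw-free matroid of dimension at least $3$ has a decomposer.}

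To prove the key claim, note that $M^c$ is claw-free (because $M$ is anticlaw-free) and that $M$ and $M^c$ have exactly the same decomposers. Assuming for a contradiction that $M$ has no decomposer, Theorem~\ref{main} applied to $M$ and to $M^c$ gives that each of $M,M^c$ is even-plane, is a strict PG-sum, or has a triangle-free complement; I would eliminate every case. First, $M$ and $M^c$ cannot both be even-plane, since $\dim(M)\ge 3$ provides a plane $P$ with $|E\cap P|+|E^c\cap P|=|P|=7$ odd. Next, if $M$ (or, symmetrically, $M^c$, which is also anticlaw-free) is a strict PG-sum $E=F_1\sqcup F_2$, then $\dim(F_1)=\dim(F_2)=1$: otherwise, say $\dim(F_2)\ge 2$, and for $q\in F_1$ and a triangle $L'\subseteq F_2$ the plane $P=\cl(\{q\}\cup L')$ satisfies $F_1\cap P=\{q\}$ and $F_2\cap P=L'$ (a flat of $\PG(2,2)$ disjoint from a line is a single point), so $M|P$ is an anticlaw; but $\dim(M)=\dim(F_1)+\dim(F_2)=2$ then contradicts $\dim(M)\ge 3$. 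Finally, if $M$ (or, by complementation, $M^c$) is triangle-free, I claim it has a decomposer, which is the desired contradiction. This last point --- a triangle-free, claw-free matroid of dimension $\ge 2$ has a decomposer --- is where the only genuine computation lies: if $|E|\le 1$ it is immediate ($\{a\}$ is a decomposer when $E=\{a\}$, any point when $E=\varnothing$); otherwise, triangle-freeness makes any three distinct points of $E$ linearly independent, and for distinct $a,b,c\in E$ the plane $\cl(\{a,b,c\})$ meets $E$ in at least $3$, hence (claw-freeness) at least $4$, hence (no plane contains a $5$-point cap) exactly the $4$-cap $\{a,b,c,a+b+c\}$, so $a+b+c\in E$; fixing $a$, this makes $\{a+v:v\in E\}$ closed under addition (automatically if $|E|=2$), so it equals $\vs{F}$ for a flat $F$ with $a\notin F$ and $E=\vs{F}+a$, whence $f+E=E$ and $\{f\}$ is a decomposer for any $f\in F$. (As a bonus, $E=\cl(E)\del F$, so $(F,\cl(E))$ certifies directly that a triangle-free claw-free matroid is a target.) This exhausts all cases, proving the key claim and hence the theorem.

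The part I expect to be most delicate --- though it turns out short --- is recognising that the genuinely complicated basic class, the even-plane matroids, never has to be analysed here: once $\dim(M)\ge 3$, an even-plane matroid cannot have an even-plane complement (parity of $|P|=7$), and the other two basic classes either force $\dim(M)=2$ or, under the anticlaw-free hypothesis, directly produce a decomposer; so the whole ``if'' direction collapses onto the triangle-free computation above. The most error-prone step is the parity bookkeeping with the level function $\ell$ in the ``only if'' direction, and ensuring the edge cases $|E|\le 2$ are handled when extracting the coset structure from a triangle-free claw-free matroid.
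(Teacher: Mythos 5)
Your proof is correct, and the overall skeleton (induction via Theorem~\ref{main} and Lemma~\ref{targetsnice}, with a triangle-free Bose--Burton step) is the same as the paper's, but your handling of the even-plane case is genuinely different and cleaner. The paper applies Theorem~\ref{main} only to $M$, after first invoking Lemma~\ref{claw_triangle_free} to ensure both $E$ and $G\setminus E$ contain triangles; then the even-plane case is killed by a counting argument that shows $|E|=3\cdot 2^{\dim(M)-2}$ and invokes the Bose--Burton theorem (Theorem~\ref{bbt}) to conclude $M$ is an order-$2$ Bose--Burton geometry. You instead apply Theorem~\ref{main} to both $M$ and $M^c$ (valid, since $M^c$ is claw-free precisely because $M$ is anticlaw-free, and they share decomposers), and observe that $M$ and $M^c$ cannot both be even-plane because a plane has an odd number of points -- a one-line parity argument replacing the counting. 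The price is that you must then re-derive essentially the content of Lemma~\ref{claw_triangle_free} (which the paper cites directly) to conclude that a triangle-free claw-free matroid of dimension at least $2$ has a decomposer, and check a couple of small cases $|E|\le 2$; this is routine, as you do. Your ``only if'' direction via the level function $\ell(v)=\min\{j:v\in F_j\}$ is a nice explicit verification of what the paper dismisses as easy. One small phrasing issue: in the PG-sum case your conclusion ``$\dim(F_1)=\dim(F_2)=1$'' is really a way of saying both must be at most $1$ (else there is an anticlaw through a triangle in the larger flat), which forces $\dim(M)\le 2$, contradicting $\dim(M)\ge 3$; this is fine, just could be said more directly.
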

	
To prove this theorem, we begin with a straightforward lemma that describes the structure of claw-free, triangle-free matroids. The lemma and proof can also be found in [\ref{bkknp}], but the we repeat it here for completeness. 

	\begin{lemma}[{[\ref{bkknp}], Corollary 5.2}]\label{claw_triangle_free}
	A full-rank matroid $M = (E,G)$ is claw-free and triangle-free if and only if $M$ is an order-$1$ Bose-Burton geometry.
	\end{lemma}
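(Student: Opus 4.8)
The plan is to prove the nontrivial (forward) direction: assume $M = (E,G)$ is full-rank, claw-free, and triangle-free, and show $G \del E$ is a hyperplane of $G$. The converse is immediate, since an order-$1$ Bose-Burton geometry $(G \del H, G)$ is visibly triangle-free (a triangle meets a hyperplane) and claw-free (a claw contains a triangle, hence cannot be disjoint from $H$). For the forward direction, the key observation is that triangle-freeness already tells us something about how $E$ behaves on triangles: every triangle $T$ of $G$ satisfies $|T \cap E| \le 2$. The goal is to upgrade this to $|T \cap E|$ being \emph{even} for every triangle, i.e. $|T \cap E| \in \{0,2\}$; once we know $|T \cap E| \ne 1$ for all triangles, the remark in the Preliminaries (``if no triangle contains exactly one element of $E$, then $G \del E$ is a flat'') shows $G \del E$ is a flat, and full-rankness forces it to have codimension exactly $1$, i.e. to be a hyperplane, as needed.

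So the heart of the argument is: show no triangle $T$ has $|T \cap E| = 1$. Suppose for contradiction that $\{x\} = T \cap E$ with $T = \{x,y,x+y\}$ and $y, x+y \notin E$. Since $M$ is full-rank and $\dim(M) \ge 3$ (the small cases $\dim \le 2$ being trivial to check directly — a full-rank claw-free matroid of dimension $\le 2$ has $G \del E$ equal to a hyperplane or is handled by inspection), there is an element $a \in E \del \cl(T)$. Now I examine the plane $P = \cl(\{x,y,a\}) = \cl(T \cup \{a\})$ and the four elements $a, a+x, a+y, a+x+y$ outside $\cl(T)$. Triangle-freeness applied to the triangles $\{x, a, a+x\}$, $\{y,a,a+y\}$, $\{x+y,a,a+x+y\}$, etc., constrains which of these four can lie in $E$; the point is to derive that $\{x, ?, ?\}$ becomes an induced $I_3$ for a suitable choice, contradicting claw-freeness. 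Concretely: since $x \in E$ and $a \in E$ but $x + a$ lies on the triangle $\{x,a,x+a\}$ which already contains two elements of $E$ unless $x + a \notin E$ — wait, that's only a constraint if a \emph{third} element appears; triangle-freeness says $\{x,a,x+a\} \not\subseteq E$, so $x + a \notin E$. Similarly, considering triangles through $a$ and the non-elements $y, x+y$: the triangle $\{a, y, a+y\}$ contains $a \in E$ and $y \notin E$, so it may or may not contain $a + y$. The plan is to case-split on which of $a+y, a+x+y$ lie in $E$ and in each case exhibit a claw among the elements $\{x, a, a+x, a+y, a+x+y, \dots\}$ together with $y, x+y$, using that $x+a, $ and various sums are already pinned down as non-elements by triangle-freeness.

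I expect the main obstacle to be the bookkeeping in this case analysis: one must track, for the (up to) $7$ elements of $P$, which are forced in or out of $E$ by repeated application of ``no triangle lies inside $E$'', and verify that in every configuration either a claw $\{u,v,w\}$ appears (three elements of $E$, linearly independent, with all pairwise sums and total sum outside $E$) or we reach $|P \cap E| \le 2$ in a way that still contradicts having $x \in E$ while $|T \cap E|=1$ and $a \in E$. A clean way to organize it is: first pin down $x + a \notin E$ (done above); then note $\{a, a+x\}$ together with $x$ — if $a + x \notin E$ as shown, consider whether $a+y \in E$: if $a + y \in E$ then $\{x, a, a+y\}$ is independent with sums $x+a \notin E$, $x + a + y \in ?$, $a + y + x \in ?$ — one more triangle check on $\{x+y, a, a+x+y\}$ settles $a+x+y$, and one of the two sub-cases yields the claw. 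Since the paper explicitly says this lemma and proof appear in \cite{bkknp} and are repeated ``for completeness,'' I anticipate the authors give exactly this short triangle-chasing argument, and the write-up should be only a few lines invoking the ``odd triangle $\Rightarrow$ flat'' remark after ruling out the one bad configuration.
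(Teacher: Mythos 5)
Your overall plan (rule out triangles $T$ with $|T \cap E| = 1$, then invoke the Preliminaries remark that this forces $G \del E$ to be a flat, then use full-rankness and triangle-freeness to pin its codimension to $1$) is a legitimate route, and your analysis of the converse and the sub-cases $a+y \in E$ or $a+x+y \in E$ is essentially correct: in those sub-cases you either get a triangle $\{x, a+y, a+x+y\} \subseteq E$ or a claw $\{x, a, a+y\}$ (resp.\ $\{x, a, a+x+y\}$). However, there is a genuine gap in the remaining sub-case, which you gloss over with ``we reach $|P \cap E| \le 2$ in a way that still contradicts having $x \in E$.'' If $a + y \notin E$ \emph{and} $a + x + y \notin E$, then, using $x + a \notin E$ (from triangle-freeness), you get $E \cap \cl(T \cup \{a\}) = \{x, a\}$, and there is simply no claw and no forbidden triangle anywhere inside the plane $\cl(T \cup \{a\})$: every line of that plane meets $E$ in $0$, $1$, or $2$ points and no claw can be assembled from two elements. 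Nothing prevents \emph{every} $a \in E \del \cl(T)$ from falling into this sub-case, so a single-plane analysis cannot deliver the contradiction; you would have to bring in a second element $b \in E$ outside $\cl(T \cup \{a\})$, observe that claw-freeness of $\{x,a,b\}$ forces $x + a + b \in E$, and iterate --- at which point you have essentially rediscovered the paper's global argument.

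The paper's proof avoids the triangle-by-triangle framing entirely. It first observes that for any three distinct $u, v, w \in E$, triangle-freeness makes them independent and rules out $u+v$, $u+w$, $v+w$ as the element of $\cl(\{u,v,w\}) \cap E$ that claw-freeness guarantees, so $u + v + w \in E$. It then fixes $v_0 \in E$ and shows $E + v_0$ is closed under addition, hence a subspace; so $E$ is a coset of a subspace, and full-rankness forces that subspace to be a hyperplane $H$ with $E = G \del H$. This is cleaner and genuinely global, sidestepping exactly the case your local case analysis cannot close. To repair your write-up you would need to replace the plane-by-plane claw-chase in the bad sub-case with (some equivalent of) this ``sum of any three distinct elements of $E$ lies in $E$'' observation; once you have that, the fact that no triangle has $|T \cap E| = 1$ follows easily, but you will have done all the work of the paper's proof anyway.

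As a minor point, in the converse direction ``a claw contains a triangle'' is not what you mean --- a claw is an independent set; what you want is that the \emph{complement} of a claw within its plane contains a triangle, or (more directly) that a flat of $\PG(2,2)$ cannot have $4$ elements, so $|P \cap H| = 4$ is impossible when $H$ is a hyperplane.
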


\begin{proof}
We may assume that $\dim(M) \ge 2$. If $M$ is an order-$1$ Bose-Burton geometry, then it is clear that it is claw-free and triangle-free. For the converse, we make the following observation. Given three distinct elements $u,v,w \in E$, since $\{u,v,w\}$ is not a triangle, and $M$ is claw-free, there exists some $x \in \cl(\{u,v,w\}) \cap E$. Since $M$ is triangle-free, $x \notin \{u+v, v+w, u+w\}$, and therefore it follows that $x = u+v+w$. Thus, for all distinct $u,v,w \in E$ we have $u+v+w \in E$. Now, let $v_0 \in E$, and consider the set $E' = E + v_0$. We claim that $E'$ is a subspace of $\bF_2^n$. Clearly $0 \in E'$, and for any two distinct nonzero $x,y \in E'$, we have $x+y = ((x-v_0)+(y-v_0)+v_0) + v_0 \in E'$ by the above observation applied to $\{x-v_0, y-v_0, v_0\}$. Therefore $E'$ is a subspace of $\bF_2^n$. If $v_0 \in E'$ then $E = E'$ and so $M$ contains a triangle, giving the result or a contradiction. If $v_0 \notin E'$ then $E$ is a coset of $E'$; since $M$ is full-rank, it follows that $M$ is an order-$1$ Bose-Burton geometry.
\end{proof}

We are now ready to prove Theorem~\ref{anticlaw_restated}.  	
	
\begin{proof}[Proof of Theorem~\ref{anticlaw_restated}]
It is easy to see that a claw is not a target. By Lemma~\ref{targetsnice}, targets are closed under complementation and induced restrictions; it follows that targets are claw-free and anticlaw-free. It remains to show that claw-free, anticlaw-free matroids are targets; suppose otherwise and let $M = (E,G)$ be a counterexample of smallest possible dimension. %All $(\le 2)$-dimensional matroids are Bose-Burton geometries and are thus targets, so $\dim(M) \ge 3$. 

By minimality, $M$ is full-rank, and since $M$ is claw-free and anticlaw-free, both $E$ and $G \del E$ contain triangles, as otherwise we can apply Lemma~\ref{claw_triangle_free} to $M$ or $M^c$ to conclude that $M$ is a target, giving a contradiction. By Theorem~\ref{main} and the fact that $G \del E$ contains a triangle, $M$ is either even-plane, a strict PG-sum, or the lift-join of two matroids of smaller dimension. Let $T$ be a triangle contained in $E$. 

Suppose first that $M$ is even-plane. Let $\cP$ be the collection of planes of $G$ containing $T$. For each $P \in \cP$, we have $|E \cap P| \in \{4,6\}$, and since $E \cap P$ contains a triangle but $M|P$ is not an anticlaw, we have $|E \cap P| = 6$ and $|E \cap (P \del T)| = 3$. Therefore \[|E| = |T| + \sum_{P \in \cP}|E \cap (P \del T)| = 3(1+|\cP|) = 3 \cdot 2^{\dim(M)-2}.\] But $E$ contains no plane of $G$, so  
Theorem~\ref{bbt} implies that $M$ is a Bose-Burton geometry of order $2$ and is thus a target, giving a contradiction.

Suppose that $M$ is a strict PG-sum, so $E$ is the disjoint union of two nonempty flats $F_1$ and $F_2$. One of these flats, say $F_1$, must contain $T$, but then $M \rvert \cl(T \cup \{v\})$ is a anticlaw for each $v \in F_2$, a contradiction.

Finally, suppose that $M = M_1 \ls M_2$ for matroids $M_1,M_2$ of smaller dimension that $M$. By minimality, both $M_1$ and $M_2$ are targets; by Lemma~\ref{targetsnice}, it follows that $M$ is a target, giving a contradiction.
\end{proof}

\section*{References}
\newcounter{refs}
\begin{list}{[\arabic{refs}]}
{\usecounter{refs}\setlength{\leftmargin}{10mm}\setlength{\itemsep}{0mm}}

\item\label{bb}
R. C. Bose, R. C. Burton, 
A characterization of flat spaces in a finite geometry and the uniqueness of the Hamming and the MacDonald codes, 
J. Combin. Theory 1 (1966), 96--104. 

\item\label{bkknp}
M. Bonamy, F. Kardo\v{s}, T. Kelly, P. Nelson, L. Postle,
The structure of binary matroids with no induced claw or Fano plane restriction

\item\label{cp}
M. Chudnovsky, P. Seymour, 
The structure of claw-free graphs, Surveys in Combinatorics 2005, 
London Math Soc Leture Note Series Vol. 324, 153--172. 

\item\label{fl}
J. Fox, L. Lov\'asz, 
A tight bound for Green's arithmetic triangle removal lemma in vector spaces,
arXiv:1606.01230 [math.CO].

\item\label{es}
J. Geelen, P. Nelson, 
An analogue of the Erd\H{o}s-Stone theorem for finite geometries
Combinatorica 35 (2015), 209--214.

\item\label{crit_threshold}
J. Geelen, P. Nelson,
The critical number of dense triangle-free binary matroids,
J. Combin. Theory Ser. B 116 (2016), 238--249.

\item\label{gr}
R. L. Graham, B. L. Rothschild,
Ramsey's theorem for $n$-parameter sets,
Proc. Amer. Math. Soc. 36 (1972), 341-346.

\item\label{green}
B. Green,
A Szemer\'{e}di-type regularity lemma in abelian groups, with applications,
Geometric \& Functional Analysis GAFA 15 (2005), 340--376.

\item\label{g85}A. Gy\'arf\'as, Problems from the world surrounding perfect graphs, Proceedings of the International Conference on Combinatorial Analysis and its Applications, (Pokrzywna, 1985), Zastos. Mat. 19 (1987), 413--441.

\item \label{oxley}
J. G. Oxley, 
Matroid Theory,
Oxford University Press, New York (2011).

\item\label{sanders}
T. Sanders, 
Green's sumset problem at density one half,
Acta Arithmetica 146 (2011), 91--101.

\item\label{s81}
D.P. Sumner, 
Subtrees of a graph and chromatic number, in The Theory and Applications of Graphs, (G. Chartrand, ed.), John Wiley \& Sons, New York (1981), 557--576.

\end{list}

\end{document}